\newcommand\Ar[3]{\ar[from={#1}, to={#2}, #3]}
\newcommand\Nname[1]{|[alias=#1]|}
\newcommand{\refitem}[1]{\hyperref[#1]{#1}}
\theoremstyle{plain}
\newtheorem{thm}{Theorem}[section]
\crefname{thm}{Theorem}{Theorems}
\newaliascnt{prp}{thm}
\newtheorem{prp}[prp]{Proposition}
\crefname{prp}{Proposition}{Propositions}
\newaliascnt{lem}{thm}
\newtheorem{lem}[lem]{Lemma}
\crefname{lem}{Lemma}{Lemmas}
\newaliascnt{cor}{thm}
\newtheorem{cor}[cor]{Corollary}
\crefname{cor}{Corollary}{Corollaries}
\theoremstyle{definition}
\newaliascnt{dfn}{thm}
\newtheorem{dfn}[dfn]{Definition}
\crefname{dfn}{Definition}{Definitions}
\newaliascnt{exm}{thm}
\newtheorem{exm}[exm]{Example}
\crefname{exm}{Example}{Examples}
\newaliascnt{rmk}{thm}
\newtheorem{rmk}[rmk]{Remark}
\crefname{rmk}{Remark}{Remarks}
\newaliascnt{ntn}{thm}
\newtheorem{ntn}[ntn]{Notation}
\crefname{ntn}{Notation}{Notations}
\newaliascnt{cvn}{thm}
\crefname{cvn}{Convention}{Conventions}
\newcommand{\thistheoremname}{}
\theoremstyle{plain}  
\newtheorem*{genericthm*}{\thistheoremname}
\newenvironment{namedthm*}[1]
  {\renewcommand{\thistheoremname}{#1}
   \begin{genericthm*}}
  {\end{genericthm*}}
\newcommand\al{\alpha}
\newcommand\be{\beta}
\newcommand\ga{\gamma}
\newcommand\de{\delta}
\newcommand\ep{\varepsilon}
\newcommand\ze{\zeta}
\newcommand\et{\eta}
\newcommand\io{\iota}
\newcommand\ka{\kappa}
\newcommand\la{\lambda}
\newcommand\ro{\rho}
\newcommand\si{\sigma}
\newcommand\ta{\tau}
\newcommand\ph{\phi}
\newcommand\ps{\psi}
\newcommand\Th{\Theta}
\renewcommand\top{\operatorname{top}}
\newcommand\soc{\operatorname{soc}}
\newcommand\Ker{\operatorname{Ker}}
\newcommand\Cok{\operatorname{Coker}}
\newcommand\cok{\operatorname{coker}}
\renewcommand\Im{\operatorname{Im}}
\newcommand\Hom{\operatorname{Hom}}
\newcommand\rad{\operatorname{rad}}
\newcommand\End{\operatorname{End}}
\newcommand\rep{\operatorname{rep}}
\renewcommand\mod{\operatorname{mod}}
\newcommand{\udim}{\operatorname{\underline{dim}}\nolimits}
\newcommand\prj{\operatorname{prj}}
\newcommand\rank{\operatorname{rank}}
\newcommand\calC{{\mathcal C}}
\newcommand\calF{{\mathcal F}}
\newcommand\calL{{\mathcal L}}
\newcommand\bbA{\mathbb{A}}
\newcommand\bbD{\mathbb{D}}
\newcommand\bbE{\mathbb{E}}
\newcommand\bbI{\mathbb{I}}
\newcommand\bbZ{\mathbb{Z}}
\newcommand\bbQ{\mathbb{Q}}
\newcommand\bbR{\mathbb{R}}
\newcommand{\bfg}{\boldsymbol{g}}
\newcommand{\bfzero}{\mathbf{0}}
\newcommand\op{^{\mathrm{op}}} 
\newcommand\inv{^{-1}}
\newcommand\iso{\cong}
\newcommand\ds{\oplus}
\newcommand\udl{\underline}
\newcommand\ovl{\overline}
\newcommand\Ds{\bigoplus}
\def\dsm#1,#2..#3{\bigoplus_{{#1}={#2}}^{#3}}
\def\sm#1,#2..#3{\sum_{{#1}={#2}}^{#3}}
\newcommand\id{1\kern-.25em{\text{{\rm l}}}}
\newcommand\isoto{\ \raise.8ex\hbox{$^{\sim}$}\kern-.7em\hbox{$\to$}\ }
\newcommand\ya[1]{\xrightarrow{#1}}
\newcommand\blank{\operatorname{-}}
\def\repr[#1;#2;#3;#4;#5]{
\left(
\begin{matrix}#1\\#2\end{matrix}
#3
\begin{matrix}#4\\#5\end{matrix}
\right)}
\newcommand\bmat[1]{\begin{bmatrix} #1 \end{bmatrix}}
\newcommand\smat[1]{\begin{smallmatrix} #1 \end{smallmatrix}}
\newcommand\sbmat[1]{\left[\begin{smallmatrix} #1 \end{smallmatrix}\right]}
\newcommand\bfP{\mathbf{P}}
\newcommand\bfZ{\mathbf{Z}}
\newcommand\bfr{\mathbf{r}}
\newcommand\sfJ{\mathsf{J}}
\newcommand\sfP{\mathsf{P}}
\renewcommand\k{\Bbbk}
\newcommand\dom{\operatorname{dom}}
\newcommand\cod{\operatorname{cod}}
\newcommand\com{\mathrm{com}}
\newcommand{\thf}{\boldsymbol{\Th}}
\newcommand{\pE}{\mathcal{E}}
\newcommand{\Cov}{\operatorname{Cov}}
\newcommand{\Seg}{\operatorname{Seg}}
\newcommand{\src}{\operatorname{sc}}
\newcommand{\snk}{\operatorname{sk}}
\newcommand{\conv}{\operatorname{conv}}
\newcommand{\lex}{\mathrm{lex}}
\newcommand{\plex}{\preceq_{\mathrm{lex}}}
\newcommand{\bfM}{\mathbf{M}}
\newcommand{\hatbfM}{\hat{\mathbf{M}}}
\newcommand{\hatM}{\hat{M}}
\newcommand{\tdbfM}{\tilde{\mathbf{M}}}
\newcommand{\tdM}{\tilde{M}}
\newcommand{\checkbfM}{\check{\mathbf{M}}}
\newcommand{\br}[1]{\left[\!\left[#1\right]\!\right]}
\newcommand{\dset}{\operatorname{\downarrow}\hspace{-0.4ex}}
\newcommand{\uset}{\operatorname{\uparrow}\hspace{-0.5ex}}
\newcommand{\Tr}{\operatorname{Tr}}
\newcommand{\tot}{\mathrm{tot}}
\renewcommand{\ss}{\mathrm{ss}}
\newcommand{\zz}{\mathrm{zz}}
\newcommand{\cc}{\mathrm{cc}}
\newcommand{\Ac}{\operatorname{Ac}}
\newcommand{\Pzero}{\bmat{\sfP_{b_1,a_1} & \mathbf{0}\\\mathbf{0}&\mathbf{0}}}
\newcommand{\pr}{\operatorname{pr}}
\newcommand{\DDs}{\displaystyle\Ds}
\renewcommand{\vec}[1]{\smash{\ensurestackMath{\stackengine{1pt}{#1}{\scriptscriptstyle\sim}{U}{c}{F}{F}{S}}}
  \vphantom{#1}
}
\newcommand{\sub}{\mathrm{C}}
\newcommand{\bfa}{\mathbf{a}}
\newcommand{\bfb}{\mathbf{b}}
\newcommand{\free}{\operatorname{free}}
\newcommand{\mult}{\operatorname{mult}}
\newcommand{\ACQ}{\mathrm{ACQ}}
\newcommand{\FP}{\mathrm{FP}}
  \newcommand{\rotldots}{\mathrel{\rotatebox[origin=c]{135}{$\ldots$}}}
\DeclarePairedDelimiterX\setc[2]{\{}{\}}{\,#1 \;\delimsize\vert\; #2\,}
\begin{document}

\title[Interval Replacements of Persistence Modules]{Interval Replacements of Persistence Modules}

\author[1,2,3]{\fnm{Hideto} \sur{Asashiba}}\email{asashiba.hideto@shizuoka.ac.jp}

\author[4]{\fnm{Etienne} \sur{Gauthier}}\email{etienne.gauthier@inria.fr}

\author*[5]{\fnm{Enhao} \sur{Liu}}\email{liu.enhao.b93@kyoto-u.jp}

\affil[1]{\orgdiv{Department of Mathematics, Faculty of Science}, \orgname{Shizuoka University}, \orgaddress{\street{836 Ohya, Suruga-ku}, \city{Shizuoka}, \postcode{4228529}, \country{Japan}}}

\affil[2]{\orgdiv{Institute for Advanced Study}, \orgname{Kyoto University}, \orgaddress{\street{Yoshida Ushinomiya-cho, Sakyo-ku}, \city{Kyoto}, \postcode{6068501}, \country{Japan}}}

\affil[3]{\orgdiv{Osaka Central Advanced Mathematical Institute}, \orgaddress{\street{3-3-138 Sugimoto, Sumiyoshi-ku}, \city{Osaka}, \postcode{5588585}, \country{Japan}}}

\affil[4]{\orgdiv{{\'E}cole Polytechnique}, \orgname{Institut Polytechnique de Paris}, \orgaddress{\street{Route de Saclay}, \city{Paris}, \postcode{91128}, \country{France}}}

\affil*[5]{\orgdiv{Department of Mathematics, Graduate School of Science}, \orgname{Kyoto University}, \orgaddress{\street{Kitashirakawa Oiwake-cho, Sakyo-ku}, \city{Kyoto}, \postcode{6068502}, \country{Japan}}}

\abstract{
We define two notions. The first one is a {\em rank compression system} $\xi$ for a finite poset $\bfP$ that assigns each interval subposet $I$ to an order-preserving map $\xi_I \colon I^{\xi} \to \bfP$ satisfying some conditions, where $I^{\xi}$ is a connected finite poset. An example is given by the {\em total} compression system that assigns each $I$ to the inclusion of $I$ into $\bfP$. The second one is an $I$-{\em rank} of a persistence module $M$ under $\xi$, the family of which is called the {\em interval rank invariant} of $M$ under $\xi$. A compression system $\xi$ makes it possible to define the {\em interval replacement} (also called the interval-decomposable approximation) not only for 2D persistence modules but also for any persistence modules over any finite poset. We will show that the forming of the interval replacement preserves the interval rank invariant, which is a stronger property than the preservation of the usual rank invariant. Moreover, to know what is preserved by the replacement explicitly, we will give a formula of the $I$-rank of $M$ under $\xi$ in terms of the structure linear maps of $M$ for any compression system $\xi$.
The formula leads us to a concept of essential cover, which gives us a sufficient condition for the $I$-rank of $M$ under $\xi$ to coincide with that under another compression system $\ze$.  This is applied to the case where $\xi = \tot$, the value of $I$-rank under which is equal to the generalized rank invariant introduced by Kim--M{\'e}moli, to give an alternative proof of the Dey--Kim--M{\'e}moli theorem computing the generalized rank invariant by using a zigzag path.
}

\keywords{topological data analysis, persistence module, compression system, interval rank invariant, interval replacement, essential cover}

\pacs[MSC Classification]{16G20, 16G70, 55N31, 62R40}

\maketitle

\section{Introduction}\label{sec1}

Persistent homology is one of the main tools used in topological data analysis (TDA), playing an important role in examining the topological property of the data \cite{MR1949898}. A {\em one-parameter} filtration\footnote{Throughout this paper, a \emph{filtration} is defined to be a functor $\mathcal{F}$ from a poset, regarded as a category, to the category of topological spaces. By this, we say that $\mathcal{F}$ is indexed by the poset.} arising from the data yields a representation of a totally ordered set after applying homology to the filtration; this representation is commonly referred to as the {\em $1$-dimensional} {\em persistence module} in the literature~\cite{MR2121296,MR3333456,MR3868218}.

Many geometric models in persistent homology nowadays, such as the multicover modeling~\cite{sheehyMulticoverNerveGeometric2012} and the chromatic alpha complexes~\cite{dimontesanoChromaticAlphaComplexes2026}, involve complicated underlying posets of filtrations beyond totally ordered sets. This scenario naturally extends one-parameter persistence to a multi-parameter framework, leading to the concept of the multi-dimensional persistence module~\cite{carlssonTheoryMultidimensionalPersistence2009}. To be precise, the filtration is indexed by a $d$D-grid poset, defined as a product of $d$ totally ordered sets. From a more general perspective, persistence modules are understood as modules over the incidence category of a poset in general, or equivalently, functors from the poset (regarded as a category) to the category $\mod \k$ of finite-dimensional vector spaces over a field $\k$.

However, except for only a few cases, the category of $d$-dimensional persistence modules has infinitely many indecomposables up to isomorphisms if $d > 1$ \cite{leszczynskiRepresentationTypeTensor1994,MR4091895}.
In these cases, dealing with all indecomposable persistence modules is very difficult and is usually inefficient. In addition, it has been known that no complete invariant exists for multi-dimensional persistence modules~\cite{carlssonTheoryMultidimensionalPersistence2009}. Accordingly, defining meaningful and computationally feasible (incomplete) invariants of multi-dimensional persistence modules remains an active area of research. To address challenges mentioned above, we restrict ourselves to a finite subset of indecomposables, and try to approximate the original persistence module by those selected ones.
As in our previous papers \cite{MR4402576, ASASHIBA2023100007}, we choose as this subset the set of all interval modules because they have simple characterizations and nice properties in practical data analysis.

In what follows, let $\bfP$ be a finite poset, and $I$ an interval subposet (namely, a connected and convex full subposet).
The set of all interval subposets is denoted by $\bbI$.
As mentioned above, we sometimes regard $\bfP$ as a category. As is well-known, there exists an isomorphism from the category of functors
$\bfP \to \mod \k$ to the category $\mod \k[\bfP]$ of $\k$-linear functors $\k[\bfP] \to \mod\k$ that is given by a $\k$-linearization,
where $\k[\bfP]$ is the \emph{incidence category} (Definition \ref{dfn:inc-cat} (1) and (2)) of $\bfP$.
In this paper, we deal with the latter, and call its object a \emph{persistence module} over $\bfP$ (or indexed by $\bfP$). We denote by $V_I$ the interval module defined by $I$ (Definition \ref{dfn:intv}).

In \cite{ASASHIBA2023100007}, the notion
of \emph{interval replacement} $\de^*(M)$
of a persistence module $M$ over a 2D-grid was introduced,
which is an element of the split Grothendieck group, and is given as
a pair of interval decomposable modules.
The important points are that $M$ and $\de^*(M)$ share the same rank invariants (and hence also dimension vectors) for all $* = \tot, \ss, \cc$, three kinds of compression to define it, and that the interval replacement
gives a way to examine the persistence module $M$ by using interval modules.

\subsection{Purposes}
In this paper, we generalize the notion of interval replacement in three ways. 
The first generalization is to broaden the setting from 2D-grids to any finite posets, 
the second is to generalize the three kinds of compression to a compression system $\xi$
(Definition \ref{dfn:comp-sys-simplified-ver}).
A compression system $\xi$ assigns each interval
$I$ to an order-preserving map $\xi_I \colon I^{\xi} \to \bfP$ factoring through the inclusion of $I$ into $\bfP$ and containing all elements of the set $\src(I)$ of minimal elements and the set $\snk(I)$ of maximal elements\footnote{By $\min I$ (resp.\ $\max I$) we denote the minimum
(resp.\ the maximum) of $I$. Therefore, to distinguish minimal/maximal from minimum/maximum, we use the notation $\src/\snk$ for the former, which are short forms of source/sink.} of $I$. Then $\xi_I$ gives the restriction functor $R_I:= R^\xi_I \colon \mod \k[\bfP] \to \mod \k[I^{\xi}]$.
For example, the family $\tot$ of the inclusions $\tot_I \colon I \hookrightarrow \bfP$ for all intervals $I$ turns out to be a compression system, called the total compression system.

Finally, the third is to extend the rank invariants~\cite{carlssonTheoryMultidimensionalPersistence2009} that are regarded as the invariants for segments to the invariants for \emph{any intervals},
called the \emph{interval rank invariant}.
This is done as follows.
Intuitively, it can be observed that the classical rank invariant of a multi-dimensional persistence module can be interpreted as the compression multiplicity of segment modules in that module. By this multiplicity viewpoint we are able to extend the concept of rank from segments to arbitrary intervals, and also from $d$D-grids to any finite index posets. More precisely, let $M \in \mod \k[\bfP]$. Then the multiplicity of $R_I(V_I)$ in the indecomposable decomposition of $R_I(M)$ is denoted by $c^\xi_M(I)$ (also by $\mult^\xi_I(M)$) and called the compression $I$-multiplicity (or shortly the $I$-multiplicity) of $M$ under $\xi$ (\cref{dfn:comp-mult}). Furthermore, if $\xi$ is a {\em rank} compression system (a compression system satisfying an additional condition, see \cref{dfn:comp-sys-simplified-ver} for details), then we call $c^\xi_M(I)$ the $I$-rank of $M$ under $\xi$ (\cref{dfn:int-rk}) and denote it by $\rank_I^\xi M$ instead, as $c^\xi_M$ restricted to all segments exactly coincides with the rank invariant of $M$ in this case. In particular, $\tot$ is indeed a rank compression system and thus $\rank_I^\tot M$ is simply called the {\em total} $I$-rank of $M$.
The family $\mult^\xi_{\bbI} M:= (c^\xi_M(I))_{I \in \bbI}$ (resp.\ $\rank^\xi_{\bbI} M:= (\rank_I^\xi M)_{I \in \bbI}$) is called the interval multiplicity (resp. rank) invariant of $M$ under $\xi$.

The M{\"o}bius inversion $\de_M^\xi$ of
$c^\xi_M \colon \bbI \to \bbZ$
is called the \emph{signed interval multiplicity} of $M$ at $I$
(Definition \ref{dfn:delta}),
which defines the interval replacement $\de^\xi(M)$
under $\xi$ (Definition \ref{dfn:int-repl}).
The interval multiplicity and rank invariants of \emph{interval replacement} $\de^\xi(M)$ of $M$ under $\xi$ can also be naturally defined (Definitions~\ref{dfn:int-mult-inv}, \ref{dfn:int-rk}).
Then we will prove that the forming of $\de^\xi$ preserves the proposed invariants as stated in the following.

\begin{namedthm*}{Main result A (\cref{prp:xi-mult}, \cref{thm:rank})}
\label{thm:intro1}
Let $M \in \mod \k[\bfP]$, and $I$ an interval of $\bfP$. If $\xi$ is a compression system, then
\[
\mult^\xi_I \de^\xi(M) = \mult^\xi_I M.
\]
Moreover, if $\xi$ is a rank compression system, then
\[
\rank^\xi_I \de^\xi(M) = \rank^\xi_I M.
\]
\end{namedthm*}

Now we are interested in what the interval multiplicity (resp.\ rank) invariant under any compression system actually is.
To know this, for any fixed compression system $\xi$,
we will give an explicit formula of the $I$-multiplicity of $M$ under $\xi$ in terms of structure linear maps of $M$ (see~\cref{dfn:structure-linear-map}).
More precisely,
we have the following theorem.

\begin{namedthm*}{Main result B (\cref{thm:general w/o conditions})}
\label{thm:intro2}
Let $\xi = (\xi_I \colon I^{\xi} \to \bfP)$ be a compression system for $\k[\bfP]$,
$M \in \mod \k[\bfP]$, and $I$ an interval of $\bfP$
with $\src(I^{\xi}):= \{a_1,\dots, a_n\}$, $\snk(I^{\xi}):= \{b_1,\dots, b_m\}$ $($elements are pairwise distinct$)$ for some $m,n \ge 1$. Obviously, for each $a \in \src(I^{\xi})$, there exists some $b \in \snk(I^{\xi})$ such that $a \le b$. Hence we may assume that $a_1 \le b_1$ without loss of generality.
Then  we have
\begin{equation}
    \mult_I^{\xi} M =
\rank \bmat{\tdbfM & \mathbf{0}\\
\bmat{M_{\xi_I(b_1),\xi_I(a_1)}&\mathbf{0}\\\mathbf{0}&\mathbf{0}} & \hatbfM\\
}
- \rank \tdbfM
-\rank \hatbfM,
\end{equation}
where $\tdbfM, \hatbfM$ are the matrices defined in 
Theorems \ref{thm:(n,1)case w/o conditions} and \ref{thm:(1,n)case w/o conditions},
whose nonzero entries are given by structure linear maps 
$M_{b,a}\colon M(a) \to M(b)$ of $M$ corresponding to the unique morphism
from $a$ to $b$ in $\bfP$ for all $a, b \in \bfP$.
If $m = 1$ $($resp.\ $n = 1)$, then
$\hatbfM$ $($resp.\ $\tdbfM)$ is an empty matrix, and hence the formula
has one of the special forms given in Proposition~\ref{prp:mult-for-(1,1)-case} and Theorems \ref{thm:(n,1)case w/o conditions}, \ref{thm:(1,n)case w/o conditions}.
\end{namedthm*}

As the above result shows, one can compute compression multiplicities or interval ranks from the persistent homology (persistence module). However, computing persistent homology from an arbitrary filtration of topological spaces is generally inefficient in practice. To address this, we introduce the {\em essential-cover} technique, which computes the invariants by focusing on those essential structure linear maps. Roughly speaking, the essential cover $\ze\colon \bfZ\to \bfP$ is an order-preserving map, and we say that $\ze$ {\em essentially covers} an interval $I$ relative to a compression system $\xi$ if $\ze(\bfZ)$ contains necessary morphisms in $\bfP$ for computing $I$-multiplicity of any $M\in \mod \k[\bfP]$ under $\xi$. We refer the reader to~\cref{sec:Essential cover relative to compression systems} and Appendix~\ref{sec:form-add-hull} for a fuller treatment. Then we have the following.

\begin{namedthm*}{Main result C (\cref{thm:ess-cov-int-rk-inv})}
\label{thm:intro3}
Let $\xi = \left(\xi_I \colon I^{\xi} \to \bfP\right)_{I\in \bbI}$ be a compression system. Fix an interval $I$ of $\bfP$ and let $\ze\colon \bfZ\to \bfP$ be an order-preserving map that essentially covers $I$ relative to $\xi$. Then for every $M\in \mod \k[\bfP]$ we have
\begin{align*}
\label{eq:ess-cov-int-rk-inv-intro}
\mult_I^\xi M = \bar{d}_{R_{\ze}(M)}(R_{\ze}(V_I)).
\end{align*}
where $R_{\ze}$ denotes the restriction functor induced by $\ze$, and $\bar{d}_N(L)$ denotes the maximal number of copies of $L$ that can be taken as a direct summand of $N$ such that no further copies of $L$ remain in the complement. If $L$ is indecomposable, then $\bar{d}_N(L)$ is just the usual multiplicity of $L$ in $N$. 
\end{namedthm*}

We show some examples (Examples~\ref{exm:int-rk_D_4_cases}, \ref{exm:int-rk_D_4_cases_2nd}) to demonstrate how the essential-cover technique is used for computing interval multiplicities under compression systems. 

\textbf{Main result C} provides us a sufficient condition under which two compression systems induce the same invariants. We state in the following.

\begin{namedthm*}{Main result D (\cref{cor:sufficient condition when two compression systems induce the same interval rank invariants})}
\label{thm:intro4}
    Let $\xi = \left(\xi_I \colon I^{\xi} \to \bfP\right)_{I\in \bbI}$ and $\ze = \left(\ze_I \colon I^{\ze} \to \bfP\right)_{I\in \bbI}$ be two compression systems. If for every interval $I$ of $\bfP$, $\ze_I$ essentially covers $I$ relative to $\xi$ or $\xi_I$ essentially covers $I$ relative to $\ze$, then for each $M\in \mod \k[\bfP]$, 
\[
\mult^{\xi}_{\bbI} M = \mult^{\ze}_{\bbI} M
\]
holds. In particular, if for every interval $I$ of $\bfP$, $\xi_I$ essentially covers $I$ relative to $\tot$, then $\xi$ is also a rank compression system, and
\[
\rank^\xi_{\bbI} M = \rank^{\tot}_{\bbI} M
\]
holds.
\end{namedthm*}

\subsection{Related works}
In \cite{kim2021generalized}, Kim and M{\'e}moli introduced the generalized rank invariant for persistence modules over posets, by using concepts of limit and colimit in the category theory. In fact, the generalized rank invariant coincides with our proposed interval rank invariant under a specified compression system, namely the total compression system (see Example~\ref{exm:total}, \cref{rmk:concide-gri-int-rk}, and \cref{lem:gen-rk-inv}). However, from the perspective of representation theory, we provide a more general framework of defining the interval rank invariant and interval replacement of persistence modules under any rank compression system $\xi$, involving not only the total compression system but also some other rank compression systems (for instance, a source-sink compression system, see Example~\ref{exm:ss}). Moreover, we give a sufficient condition under which two compression systems induce the same interval rank invariants. This condition also allows us to construct another compression system whose interval rank invariants coincide with those of the given compression system (see Corollaries~\ref{cor:sufficient condition when two compression systems induce the same interval rank invariants}, \ref{cor:tot-zz-case}).

In \cite{botnan2022signed}, Botnan, Oppermann, and Oudot introduced a general framework mainly focusing on decomposing any persistence modules using the signed barcodes in the (generalized) rank level. In detail, given any collection $\mathcal{I}$ of intervals of a poset and arbitrary map $r\colon \mathcal{I}\to \bbZ$, there uniquely exist two disjoint multi-sets $R$ and $S$ of elements of $\mathcal{I}$ such that $r$ equals to the generalized rank invariant of interval-decomposable module $\Ds_{I\in R} V_{I}$ subtracts the generalized rank invariant of interval-decomposable module $\Ds_{I\in S} V_{I}$ (see \cite[Corollary 2.5]{botnan2022signed}). From this result, one can obtain a specified consequence of \textbf{Main result A}, that is, the persistence module and its signed barcodes decomposition share the same generalized rank invariant once we let $\mathcal{I}=\bbI$ and take $r$ to be our interval rank invariant $\rank^\tot_{\bbI} M\colon \bbI \to \bbZ$. However, \textbf{Main result A} shows that the interval replacement preserves the interval rank invariant, not only using the total compression system (i.e., generalized rank invariant) but also using other different compression systems. Another remarkable note is that they do not only focus on the locally finite collection but also on the larger collection $\mathcal{I}$ of intervals of an arbitrary poset. Compared with their results, we shed light on the concept of the compression system and propose a new rank invariant of persistence modules based on the compression system. In our framework, the interval ranks we propose for a persistence module $M$ are defined as the multiplicities of interval modules appearing in the decomposition of its ``restriction''. From this viewpoint, we could theoretically compute and give explicit formulas for this new interval rank invariant by utilizing the powerful Auslander--Reiten theory.

Concerning the computation aspect. The generalized rank invariant is reasonably simple because \cite[Theorem 3.12]{deyComputingGeneralizedRank2024} reduces its computation to the zigzag path (boundary cap in their terminology) that concatenates the lower and upper zigzags of each interval. In the same spirit, a closely related development is~\cite{deyComputingGeneralizedRanks2024}, which reduces the computation of generalized ranks to zigzag persistence and extends the underlying index poset from 2D-grids to finite posets via an unfolding technique.
This way of computing has the benefit of utilizing many mature algorithms to compute the indecomposable decomposition in the 1D persistence context. In comparison, our work has two contributions. First, we provide explicit formulas for directly computing the interval multiplicity (resp.\ rank) invariant under any (resp.\ rank) compression system by utilizing structure linear maps of persistence modules (\textbf{Main result B}). Second, we introduce the essential-cover technique, which transforms computing invariants of persistence modules over the original poset to that of restricted modules over another poset (\textbf{Main result C}). In some cases, the new poset can be chosen to be algorithmically tractable---e.g., a zigzag poset---so that fast algorithms are applicable. For example, using the essential-cover technique, we explain that in the 2D persistence case, the total $I$-ranks can always be computed by finding zigzag posets, yielding a new compression system $\zz:=(\zz_I)_{I\in \bbI}$ (Example \ref{exm:zz}). Moreover, since $\zz_I$ essentially covers $I$ relative to $\tot$, \textbf{Main result D} gives an alternative proof of \cite[Theorem 3.12]{deyComputingGeneralizedRank2024} because $\rank^\tot_{\bbI} M$ coincides with their generalized rank invariant of $M$. The latter statement follows by \cite[Lemma 3.1]{chambersPersistentHomologyDirected2018}, but the description of the proof was imprecise; in formalizing it we found a minor gap, which we close by providing a complete proof (see the proof of Lemma \ref{lem:gen-rk-inv}).

In \cite{hiraokaRefinementIntervalApproximations2025}, Hiraoka, Nakashima, Obayashi, and Xu also established the general theory for approximating any persistence modules over a finite fully commutative acyclic quiver by interval decomposable modules, which shares the same spirit with ours. They defined the so-called interval approximation (which, essentially, coincides with our interval replacement $\delta^{\xi}(M)$). For the sake of fast computation, they consider defining interval approximation on the restriction of the collection $\bbI$ of all intervals, called the partial interval approximation (which shares a similar idea of considering those intervals having ``good'' shapes in \cite{botnan2022signed}). For instance, they define the partial interval approximation restricted to the collection of $k$-essential intervals and estimate the computational complexity of (partial) interval approximation. Their remarkable distinction is treating the collection of interval approximations as a rank invariant of persistence modules (see~\cite[Definition 3.37, Example 3.38]{hiraokaRefinementIntervalApproximations2025}). On the contrary, the collection of compression multiplicities is treated as a rank invariant in our work. Moreover, \textbf{Main result A} extends~\cite[Theorem 3.30]{hiraokaRefinementIntervalApproximations2025}, in the sense that forming the interval replacements preserves $I$-ranks not only for all segments $I$ but also for all intervals $I$. One of their main contributions is providing an efficient method to compute the indecomposable decomposition of persistent homology indexed by a 2D-grid with 2 rows and 4 columns (called a \emph{commutative ladder} and denoted by ${\rm CL}(4)$). By finding 76 linearly independent rank functions using zigzags of the grid and then solving the linear equations system, they achieve the desired decomposition without obtaining the representation (persistence module) of ${\rm CL}(4)$. See the list \cite{HNOXb} of zigzags they selected. Another main contribution in~\cite{hiraokaRefinementIntervalApproximations2025} is the introduction of the connected persistence diagram, a new visualization of interval approximation in the commutative-ladder context (see~\cite[Definition 4.7]{hiraokaRefinementIntervalApproximations2025}).

\subsection{Our contributions}
\begin{enumerate}
\item We introduce the compression system and the interval multiplicity (resp.\ rank) invariant under the (resp.\ rank) compression system. These allow us to extend the concept of interval replacement defined on the commutative grid in \cite{ASASHIBA2023100007} to the finite poset (\textbf{Main result A}). We follow the convention in \cite{ASASHIBA2023100007} to view the interval replacement of the persistence module as an element in the split Grothendieck group.

\item We provide explicit formulas in \textbf{Main result B} to directly compute the invariants under compression systems, utilizing the Auslander--Reiten theory. To this end, we first give a formula to compute the dimension of $\Hom(X, Y)$ for any persistence modules $X, Y$ in terms of a projective presentation of $X$ (see~\cref{lem:dim-Hom-coker}), and then for each compression system $\xi$ and each interval $I$, we compute the almost split sequence starting from $V_{I^{\xi}}$ over the incidence category $\k[I^{\xi}]$ (resp.\ the canonical epimorphism from $V_{I^{\xi}}$ to its factor module by the socle) when $V_{I^{\xi}}$ is not injective (resp.\ is injective), and also give the projective presentations of all these terms to compute the necessary Hom dimensions. These computations can also be used for later research. In addition, the explicit formulas provide us with an intuition about which types of compression systems induce the same invariant.

\item
We give a sufficient condition for the $I$-multiplicity of a
persistence module $M$ under a compression system $\xi$ to coincide with the $I$-multiplicity under another compression system (\textbf{Main result D}).
As stated above, this together with a correction of the proof of
\cite[Lemma 3.1]{chambersPersistentHomologyDirected2018} gives an alternative proof of \cite[Theorem 3.12]{deyComputingGeneralizedRank2024} .

\item We make a computer program that computes interval rank invariant and interval replacement under the total and source-sink compression systems of persistence modules over any $d$D-grid ($d\ge 2$). See Remark \ref{rmk:code} for details.
\end{enumerate}

\subsection{Organization}
The paper is organized as follows. Section \ref{sec2} is devoted to collecting necessary terminologies and fundamental properties for the later use, in particular, incidence categories and incidence algebras defined by a finite poset, and the M{\"o}bius inversion.

In Section \ref{sec3}, we introduce the notion of compression
systems $\xi$, the compression multiplicity, and the signed interval multiplicities under $\xi$. The latter makes it possible to define the interval replacement and the interval multiplicity (resp.\ rank) invariant of a persistence module under the (resp.\ rank) compression system in Section \ref{sec4}, where we prove the preservation of interval multiplicity (resp.\ rank) invariant under forming the interval replacement (\textbf{Main result A}).

In Section~\ref{sec5}, we give an explicit formula of the interval multiplicity (resp.\ rank) invariant for any (resp.\ rank) compression system $\xi$ (\textbf{Main result B}) by computing the almost split sequence starting from $V_{I^{\xi}}$ (resp.\ the canonical epimorphism from $V_{I^{\xi}}$ to its factor module by the socle) for any interval $I \in \bbI$ when $V_{I^{\xi}}$ is not injective (resp. is injective), and projective presentations of all these modules. 

In Section~\ref{sec:Essential cover relative to compression systems}, we introduce the essential-cover technique and show that computing the invariants of persistence modules is the same as computing the decomposition of restricted modules via the essential cover (\textbf{Main result C}). In addition, this gives a sufficient condition under which two compression systems induce the same invariants, particularly if one of them is the total compression system (\textbf{Main result D}).

Finally, in Section \ref{sec6}, we give some examples to show the incompleteness of the interval rank invariant. At the end, we demonstrate the use of interval replacement to distinguish different filtrations.

\section{Preliminaries}\label{sec2}

Throughout this paper, $\k$ is a field, $\bbR$ (resp.\ $\bbQ$) denotes the real (resp.\ rational) field. $\bbZ$ denotes the ring of integers. The category of finite-dimensional $\k$-vector spaces is denoted by $\mod \k$. We let $\bfP = (\bfP, \le)$ denote a finite poset.

For each positive integer $n$, we denote by $[n]$ the set $\{1, 2, \dots, n\}$ endowed with the usual linear order $i < i+1$ for $i = 1, 2, \dots, n-1$. Then $[n]$ becomes a totally ordered set. Posets of this type play an important role in one-parameter persistent homology. As another example of a poset, given two posets $\bfP_1$ and $\bfP_2$, we define their direct product $\bfP_1 \times \bfP_2$ to be the poset whose partial order is given by $(x, y) \le (x', y')$ if and only if $x \le x'$ and $y \le y'$ for all $(x, y), (x', y') \in \bfP_1 \times \bfP_2$. In particular, we set $G_{m,n}:=[m] \times [n]$, and call it a {\em 2D-grid} (with $n$ rows and $m$ columns). When $n$ is equal to 2, we further call $G_{m, 2}$ the commutative ladder and denote it by ${\rm CL}(m)$. More generally, for any $d\ge 2$ and positive integers $n_1,\dots,n_d$, we define the \emph{$d$D-grid}
\[
G_{n_1,\dots,n_d}:= [n_1]\times\cdots\times [n_d],
\]
endowed with the product order. Namely, $(x_1,\dots,x_d)\le (x'_1,\dots,x'_d)$ if and only if $(x_i\le x'_i)$ for all $i=1,\dots,d$.

\subsection{Incidence categories}

\begin{dfn}
\label{dfn:left-right-mod}
A $\k$-linear category $\calC$ is said to be {\em finite}
if it has only finitely many
objects and for each pair $(x,y)$ of objects,
the Hom-space $\calC(x,y)$ is finite-dimensional.

Covariant functors $\calC \to \mod \k$
are called {\em left $\calC$-modules}.
They together with natural transformations between them
as morphisms form a $\k$-linear category, which is denoted by $\mod \calC$.

Similarly, contravariant functors $\calC \to \mod \k$ are called
\emph{right $\calC$-modules},
which are usually identified with covariant functors
$\calC\op \to \mod \k$.
The category of right $\calC$-modules is denoted by $\mod \calC\op$.

We denote by $D$ the usual $\k$-duality $\Hom_\k(\blank, \k)$,
which induces the duality functors
$\mod \calC \to \mod \calC\op$ and $\mod \calC\op \to \mod \calC$.
\end{dfn}

\begin{dfn}
\label{dfn:inc-cat}
The poset $\bfP$ is regarded as a category as follows.
The set $\bfP_0$ of objects is defined by $\bfP_0:= \bfP$.
For each pair $(x,y) \in \bfP \times \bfP$,
the set $\bfP(x,y)$ of morphisms from $x$ to $y$
is defined by $\bfP(x,y):= \{p_{y,x}\}$ if $x \le y$, 
and $\bfP(x,y):= \emptyset$ otherwise, where
we set $p_{y,x}:= (y,x)$.
The composition is defined by $p_{z,y}p_{y,x} = p_{z,x}$
for all $x, y, z \in \bfP$ with $x \le y \le z$.
The identity $\id_x$ at an object $x \in \bfP$ is
given by $\id_x = p_{x,x}$.
\begin{enumerate}
\item 
The \emph{incidence category} $\k[\bfP]$ of $\bfP$ is 
defined as the $\k$-linearization of the category $\bfP$.
Namely, it is
a $\k$-linear category defined as follows.
The set of objects $\k[\bfP]_0$ is equal to $\bfP$,
for each pair $(x,y) \in \bfP \times \bfP$, the set of morphisms $\k[\bfP](x,y)$ is the vector space with basis $\bfP(x,y)$; thus it is a one-dimensional vector space $\k p_{y,x}$ if $x \le y$, or zero otherwise. The composition is defined as the $\k$-bilinear extension of that of $\bfP$. Note that $\k[\bfP]$ is a finite $\k$-linear category.

\item
Covariant ($\k$-linear) functors $\k[\bfP] \to \mod\k$ are called
\emph{persistence modules} (over $\bfP$ or indexed by $\bfP$). Particularly, when $\bfP$ is a totally ordered set (resp.\ $d$D-grid), persistence modules are referred to as \emph{$1$-dimensional} (resp.\ \emph{$d$-dimensional}) in the literature.

\item
Let $M$ denote a persistence module over $\bfP$. Then the \emph{dimension vector of $M$}, denoted by $\udim M$, is a function
	\[
	\udim M\colon \bfP\to \bbZ,\, x\mapsto \dim M(x),
	\]
where $\dim M(x)$ denotes the dimension of vector space $M(x)$. Note that $\bfP$ is finite. We call $\dim M\coloneqq \sum_{x\in \bfP}\dim M(x)$ the \emph{dimension of $M$}. If $\dim M$ is finite, then we say that $M$ is \emph{finite-dimensional}. Since each $M(x)$ is a finite-dimensional $\k$-vector space, persistence modules over finite posets defined in this paper is always finite-dimensional.

\end{enumerate}
In the sequel, 
we set $[\le]_\bfP:= \{(x,y) \in \bfP \times \bfP \mid x \le y\}$,
and $A:= \k[\bfP]$ (therefore, $A_0 = \bfP$),
and so the category of persistence modules over $\bfP$ is denoted by $\mod A$.
\end{dfn}

\begin{rmk}
The $\k$-linearization of posets is functorial. More concretely, if $f\colon \bfP_1\to \bfP_2$ is an order-preserving map, then $\k[f]\colon \k[\bfP_1]\to \k[\bfP_2]$ is a linear functor between incidence categories given by
\begin{equation}
\label{eq:k_linear_ext_ob}
	\k[f](x)\coloneqq f(x),
\end{equation}
for all $x\in \k[\bfP_1]_0$, and
\begin{equation}
\label{eq:k_linear_ext_mor}
	\k[f](0)\coloneqq0,\ \text{and}\ \k[f](kp_{y, x})\coloneqq kp_{f(y), f(x)}\ (k\in \k),
\end{equation}
where $p_{y,x}$ is the unique basis of $\k[\bfP_1](x, y)$ whenever $(x, y)\in [\leq]_{\bfP_1}$, and $p_{f(y),f(x)}$ is the unique basis of $\k[\bfP_2](f(x), f(y))$.
\end{rmk}

\begin{dfn}
\label{dfn:structure-linear-map}
Let $M\in \mod A$. Then we call linear maps $M(p_{y,x})\colon M(x)\to M(y)$ the \emph{structure linear maps of $M$} for all $(x,y)\in [\leq]_{\bfP}$. To shorten the notation, $M(p_{y,x})$ is also written as $M_{y,x}$. Clearly, for all $x\in \bfP$, $M_{x,x}$ is the identity map between $M(x)$.
\end{dfn}

\begin{dfn}
\label{dfn:ss-intervals}
Let $I$ be a nonempty full subposet of $\bfP$.
\begin{enumerate}
\item
For any $(x, y) \in [\le]_\bfP$, we set $[x,y]:= \{z \in \bfP \mid x \le z \le y\}$, and call it the \emph{segment} from $x$ to $y$ in $\bfP$.
The set of all segments in $\bfP$ is denoted by $\Seg(\bfP)$.
\item
The \emph{Hasse quiver}\footnote{In this paper, we use the term Hasse quiver instead of directed Hasse diagram because directed diagrams are indeed quivers, and the term quiver is commonly used in representation theory of algebras. We collect some basic definitions and conventions about quivers in Appendix~\ref{Original definition of a compression system}.} $H(\bfP)$ of $\bfP$ is a directed graph defined as follows.
The set of vertices, denoted by $H(\bfP)_0$, is equal to $\bfP$. The set of arrows, denoted by $H(\bfP)_1$, is given by the set $\{a_{y,x} \mid (x, y) \in [\le]_\bfP, [x,y] = \{x,y\}\}$, and the source and the target of $a_{y,x}$ are $x$ and $y$,
respectively, where we set $a_{y,x}:= p_{y,x}$. In the sequel, we frequently visualize and express finite posets by their Hasse quivers. For example, 2D-grid $G_{5,2}$ has the following Hasse quiver $H(G_{5,2})$:
\begin{equation}
\label{eq:G52}
\begin{tikzcd}
(1,2) & (2,2) & (3,2) & (4,2) & (5,2)\\
(1,1) & (2,1) & (3,1) & (4,1) & (5,1)
\Ar{1-1}{1-2}{}
\Ar{1-2}{1-3}{}
\Ar{1-3}{1-4}{}
\Ar{1-4}{1-5}{}
\Ar{2-1}{2-2}{}
\Ar{2-2}{2-3}{}
\Ar{2-3}{2-4}{}
\Ar{2-4}{2-5}{}
\Ar{2-1}{1-1}{}
\Ar{2-2}{1-2}{}
\Ar{2-3}{1-3}{}
\Ar{2-4}{1-4}{}
\Ar{2-5}{1-5}{}
\end{tikzcd}.
\end{equation}
We say that a poset is of \emph{Dynkin type} if its Hasse quiver is of Dynkin type\footnote{By this we mean that the underlying graph of the quiver is a Dynkin graph. We refer the reader to \cite[Section~VII.2]{assemElementsRepresentationTheory2006} for the complete list of Dynkin graphs ($\bbA$, $\bbD$, and $\bbE$).}. In particular, posets of Dynkin type $\bbA$ are precisely those that are either totally ordered sets or zigzag posets.

\item
A \emph{source} (resp.\ \emph{sink}) of $I$ is nothing but a minimal (resp.\ maximal) element in $I$,
which is characterized as an element $x \in I$ such that in the Hasse quiver $H(I)$, there is no arrow with target (resp.\ source) $x$.
The set of all sources (resp.\ sinks) in $I$ is denoted by $\src(I)$
(resp.\ $\snk(I)$).

\item
$I$ is said to be \emph{connected} if for all $x, y\in I$, there is a sequence of elements $x = z_0, z_1, \ldots, z_{n-1}, z_n = y$ in $I$ satisfying that every two consecutive elements $z_i$ and $z_{i+1}$ are comparable. Namely, either $z_i \leq z_{i+1}$ or $z_{i+1}\leq z_i$ holds for $i = 0, \ldots, n-1$.

\item
$I$ is said to be \emph{convex} if
for any $x, y \in I$ with $x \le y$, we have $[x,y] \subseteq I$.
\item
The \emph{convex hull} $\conv(I)$ of $I$ is defined as the smallest (with respect to the inclusion) convex subset of $\bfP$ that contains $I$. Equivalently, $\conv(I)$ is the union of all segments between elements of $I$.
\item
$I$ is called an \emph{interval} if $I$ is connected and convex.
\item
The set of all intervals of $\bfP$ is denoted by $\bbI(\bfP)$, or simply by $\bbI$. We regard $\bbI$ as a poset $\bbI = (\bbI, \le)$ with the inclusion relation: $I \le J \Leftrightarrow I \subseteq J$
for all $I, J \in \bbI$. 
Since $\bfP$ is finite, $\bbI$ is also finite. 
\item 
Let $I \in \bbI$. The \emph{cover} of $I$ is defined as
\[
\Cov(I) := \{L \in \bbI \mid I < L \text{ and } [I, L] = \{I,L\}\}.
\]
\item
Let $U$ be a subset of $\bbI$. The least upper bound of $U$ is called the \emph{join} of $U$, and is denoted by $\bigvee U$.
As the smallest element, it is unique if it exists.
\end{enumerate}
\end{dfn}

\begin{rmk}
Any segment $[x,y]$ in $\Seg(\bfP)$ is an interval with source $x$ and sink $y$.
Hence $\Seg(\bfP) \subseteq \bbI(\bfP)$
(see the statements just after Lemma \ref{lem:intv-as-[K,L]} for more precise relation).
\end{rmk}

Note that in general, the join of a poset might not exist. In our setting, we have the following:

\begin{prp}
\label{prp:JOIN}
Let $U$ be a subset of $\bbI$.
If $U$ has a lower bound, then the join of $U$ exists.
\end{prp}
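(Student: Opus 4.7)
The plan is to exhibit the join explicitly as
\[
K := \conv\Bigl(\bigcup_{I\in U} I\Bigr),
\]
the convex hull of the union, and to verify it lies in $\bbI$ and is the least upper bound. Fix a lower bound $J \in \bbI$ of $U$ and any element $j_0 \in J$ (intervals are nonempty by definition), and set $S := \bigcup_{I\in U} I$.

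First I would show that $S$ itself is connected in the full subquiver of $H(\bfP)$ on $S$. Given $x \in I_1$ and $y \in I_2$ with $I_1, I_2 \in U$, connectedness of $I_1$ and $I_2$, together with the fact that $j_0 \in J \subseteq I_1 \cap I_2$, produces a Hasse path $x \leadsto j_0 \leadsto y$ that stays inside $S$.

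The core step is to prove that $K$ remains connected. Since $\bfP$ is finite, $K$ can be built iteratively by $S_0 := S$ and
\[
S_{k+1} := S_k \cup \{z \in \bfP \mid \exists\, x, y \in S_k,\ x \le z \le y\},
\]
which stabilises after finitely many steps to the smallest convex set containing $S$, hence to $K$. I would prove by induction on $k$ that each $S_k$ is connected. For the inductive step, pick $z \in S_{k+1}\setminus S_k$, choose $x, y \in S_k$ with $x \le z \le y$, and take a maximal chain $x = x_0 < x_1 < \cdots < x_n = z$ in $\bfP$. Every $x_i$ lies in the segment $[x,y]$, hence in $S_{k+1}$, and each consecutive pair $(x_i, x_{i+1})$ is a cover relation in $\bfP$, so the arrow $a_{x_{i+1}, x_i}$ of $H(\bfP)$ belongs to the induced subquiver on $S_{k+1}$. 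This yields a Hasse path from $x$ to $z$ inside $S_{k+1}$, which, combined with the connectedness of $S_k$, proves $S_{k+1}$ connected.

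Once $K$ is convex (immediate) and connected, it is an interval containing every $I \in U$. For minimality, any upper bound $L \in \bbI$ contains $S$, and $L$ being convex forces $K = \conv(S) \subseteq L$, so $K = \bigvee U$. The main obstacle is the inductive connectedness argument: a cover relation in a subposet need not be one in the ambient poset $\bfP$, so I cannot reason inside the Hasse quivers of the intermediate $S_k$; the way out is to pull cover relations from $\bfP$ itself via a maximal chain and observe that the intermediate elements are forced into the next stage of the convex closure.
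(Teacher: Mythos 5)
Your proof is correct and follows essentially the same route as the paper: the join is $\conv\bigl(\bigcup U\bigr)$, the shared lower bound forces the union to be connected, and minimality follows because any competing upper bound is convex and contains the union. The one thing you add that the paper leaves implicit is an actual argument that the convex hull of a connected set remains connected, via the iterative $S_k$ construction and maximal chains in $\bfP$ supplying Hasse arrows; the paper simply asserts that step without justification, so this is a useful filling-in of a gap rather than a different approach.
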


\begin{proof}
Let $I$ be a lower bound of $U$. Let us write $U := \{ I_1, ..., I_n \}$ with $n \ge 1$. Then the subset of $\bfP$ defined by $\bigcup_{k\in [n]} I_k$ is connected since $I \le I_k$ for all $k\in [n]$. It follows that $\conv \left( \bigcup_{k\in [n]} I_k \right)$ is connected, convex, and containing each element of $U$, and hence it is an upper bound of $U$. Now, let $W$ be an upper bound of $U$. Since $\bigcup_{k\in [n]} I_k \subseteq W$ and $W$ is convex, we have $\conv \left( \bigcup_{k\in [n]} I_k \right) \subseteq W$. Thus $\conv \left( \bigcup_{k\in [n]} I_k \right) = \bigvee U$.
\end{proof}

\begin{dfn}[Interval modules]
\label{dfn:intv}
Let $I$ be an interval of $\bfP$.
\begin{enumerate}
\item
A persistence module $V_I$ over $\bfP$ is defined as follows. For $x\in \bfP$,
\[
V_{I}(x)\coloneqq\begin{cases}
	\k, & \textnormal{if } x\in I,\\
	0, & \textnormal{otherwise},
\end{cases}
\]
and for $p\in \k[\bfP](x, y)$,
\[
V_{I}(p)\coloneqq\begin{cases}
	k \id_\k, & \textnormal{if } (x, y)\in [\leq]_{I}\ \textnormal{and } p \coloneqq kp_{y,x}\ \textnormal{for some } k\in \k,\\
	0, & \textnormal{otherwise}.
\end{cases}
\]
It is easy to check that $V_I$ is indecomposable.
\item
A persistence module isomorphic to $V_I$ for some $I \in \bbI$
is called an \emph{interval module}.
\item
A persistence module is said to be \emph{interval decomposable} if
it is isomorphic to a finite direct sum of interval modules.
Thus 0 is trivially interval decomposable.
\end{enumerate}
\end{dfn}

We will use the notation $d_M(L)$ to denote 
the multiplicity of an indecomposable direct summand $L$
of a module $M$ in its indecomposable decomposition
as explained in the following well-known theorem.

\begin{thm}[Krull--Schmidt]
\label{thm:KS}
Let $\calC$ be a finite $\k$-linear category, and
fix a complete set $\calL=\calL_\calC$
of representatives of isoclasses of indecomposable
objects in $\mod \calC$.
Then every finite-dimensional left $\calC$-module $M$ is
isomorphic to the direct sum $\Ds_{L \in \calL} L^{d_M(L)}$
for some unique function $d_M \colon \calL \to \bbZ_{\ge 0}$.
Therefore another finite-dimensional left $\calC$-module $N$
is isomorphic to $M$ if and only if $d_M = d_N$.
In this sense, the function $d_M$ is a complete invariant
of $M$ under isomorphisms.
\end{thm}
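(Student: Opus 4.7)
The plan is to reduce the statement to the classical Krull--Schmidt/Azumaya theorem, which guarantees essentially unique decomposition into indecomposables whenever each indecomposable summand has a local endomorphism ring. Three facts then need to be verified: (i) $\mod \calC$ is an abelian $\k$-linear category with finite-dimensional Hom spaces; (ii) every $M \in \mod \calC$ admits a decomposition into finitely many indecomposables; and (iii) the endomorphism ring of every finite-dimensional indecomposable $L \in \mod \calC$ is local.

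For (i), since $\calC_0$ is finite and each $\calC(x,y)$ is finite-dimensional, a natural transformation between finite-dimensional functors $\calC \to \mod\k$ amounts to a finite tuple of $\k$-linear maps between finite-dimensional spaces, so $\Hom_\calC(M,N)$ is finite-dimensional; the abelian structure is inherited pointwise from $\mod\k$. For (ii), I would induct on $\dim_\k M := \sum_{x \in \calC_0}\dim_\k M(x)$: if $M$ is indecomposable we are done (including $M = 0$ as the empty sum), and otherwise $M \iso M_1 \ds M_2$ with both summands nonzero and of strictly smaller total dimension, so the induction hypothesis applies.

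The main step is (iii), and this is where finite-dimensionality really enters. Since $L$ has finite total dimension, $\End_\calC(L)$ is a finite-dimensional, hence Artinian, $\k$-algebra. Direct sum decompositions of $L$ correspond bijectively to decompositions of $1$ as a sum of orthogonal idempotents in $\End_\calC(L)$, so indecomposability of $L$ is equivalent to the assertion that $0$ and $1$ are the only idempotents. A finite-dimensional $\k$-algebra $\Lambda$ with this property is local: its Jacobson radical $J$ is nilpotent, so idempotents lift from the semisimple quotient $\Lambda/J$, and a finite-dimensional semisimple $\k$-algebra containing only trivial idempotents is forced to be a division algebra, whence $\Lambda$ is local.

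With (i)--(iii) in place, Azumaya's theorem supplies the desired uniqueness: given two decompositions $\Ds_{i} L_i \iso M \iso \Ds_{j} L'_j$ into indecomposables, there is a bijection between the index sets under which corresponding summands are isomorphic. Collecting summands by isomorphism class in $\calL$ and counting multiplicities yields the function $d_M \colon \calL \to \bbZ_{\ge 0}$, whose uniqueness yields the final equivalence $M \iso N \Leftrightarrow d_M = d_N$.
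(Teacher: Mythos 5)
Your argument is correct and is the standard proof of the Krull--Schmidt theorem in this generality. The paper itself states \thmref{thm:KS} as a known fact without proof, so there is no argument in the text to compare against; your route (decomposition by induction on total dimension, locality of the endomorphism ring of an indecomposable via lifting idempotents through the nilpotent radical, then Azumaya uniqueness) is exactly the expected one. One could also observe that $\mod\calC$ is equivalent to $\mod A$ for a finite-dimensional algebra $A = \bigoplus_{x,y}\calC(x,y)$ and invoke Krull--Schmidt for finite-dimensional algebras directly, but that is merely a repackaging of the same argument and not a genuinely different approach.
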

In one-parameter persistent homology, this function
$d_M$ corresponds to the persistence diagram of $M$,
which is a graph plotting
each $d_M(L)$ as a colored point on $\calL$.

\subsection{Incidence algebra and M{\"o}bius inversion}

Let us recall some basic facts about M{\"o}bius functions. For more details we refer the reader to \cite{stanley2011enumerative}. In Section~\ref{sec3}, we apply the following statements on a finite poset and a field to the case where the poset is given by the set $(\bbI, \le)$ of intervals
for some finite poset $\bfP$ and the field is given by $\bbR$
(actually for a field $\k$,
we only need a field containing $\bbZ$, and hence $\bbQ$ is enough).
To avoid confusion, we change the notation of a poset from $\bfP$ to $S$.
Therefore the set of all segments $\Seg(S)$ there should be replaced
by $\Seg(\bbI)$, and not by $\Seg(\bfP)$ in our application in Section~\ref{sec3}.
Throughout this subsection, $S$ denotes a \emph{finite poset}.

\begin{dfn}[Incidence algebra of $S$]
\label{dfn:incid-alg}
We define the \emph{incidence algebra} $\k S$ of $S$ by using the incidence category $\k[S]$ as
\[
\k S\coloneqq \Ds_{(x,y) \in S \times S}\k[S](x,y) = \Ds_{(x,y)\in [\le]_S}\k p_{y,x},
\]
with the multiplication
\begin{align}
\label{eq:mult-inc-alg}
\sum_{(y,z)\in [\le]_S} k_{z,y}\, p_{z,y}\cdot \sum_{(w,x)\in [\le]_S} k'_{x,w}\, p_{x,w}
& \coloneqq \sum_{(y,z)\in [\le]_S}\sum\limits_{(w,x)\in [\le]_S} \de_{x,y}k^{}_{z,y}k'_{x,w}\ p_{z,w} \nonumber\\
&\; = \sum_{\substack{w,x,z\in S \\ w\leq x\leq z}} k^{}_{z,x}k'_{x,w}\ p_{z,w},
\end{align}
where coefficients $k^{}_{z,y}$ and $k'_{x,w}$ are elements of $\k$. Here and subsequently, $\de_{x,y}$ denotes Kronecker's delta symbol.
\end{dfn}

\begin{rmk}
\label{rmk:inc-alg}
In the definition above, we remark the following.
\begin{enumerate}
\item (As a matrix algebra with blocks $\k$ or $\bfzero$)
To express each element of $\k S$ as a matrix, we fix a total order on $S$ extending the original partial order. By regarding the isomorphism $\k \to \k p_{y,x}$ sending $1$ to $p_{y,x}$ as the identity map, we can regard $\k S$ as a matrix algebra over $\k$ with the set of $(y,x)$-entries $\k$ if $x \le y$, and $\bfzero$ otherwise.

\item (As a set of functions from $\Seg(S)$ to $\k$) Note that we have a bijection $\Seg(S) \to \setc*{p_{y,x}}{x, y \in S, x \le y}$ sending $[x,y]$ to $p_{y,x}$, and that each element $m$ of $\k S$ can be regarded as a function $\setc*{p_{y,x}}{x, y \in S, x \le y} \to \k$ sending each $p_{y,x}$ to the $(y,x)$-entry $m_{y,x}$ of $m$. By combining these we can also regard $\k S$ as the set $\k^{\Seg(S)}$ of functions $\Seg(S) \to \k$, namely, by identifying an element $m \in \k S$ with the function sending each segment $[x,y]$ to the $(y,x)$-entry $m_{y,x}$ of $m$.

\item (Right action on $\k^S$)
Let $\k^S$ be the vector space of all $\k$-valued functions with the domain $S$. Then $\k^S$ has a right $\k S$-module structure, the explicit definition of which is given as follows: Let $f \in \k^S$ and $(x, y) \in [\leq]_S$. Then
\begin{equation}
\label{eq:rt-action}
(f\cdot p_{y,x})(z) \coloneqq \de_{x,z}f(y)
\end{equation}
for all $z \in S$.
\end{enumerate}
\end{rmk}

We denote by $\mod \k S$ the category of finite-dimensional left $\k S$-modules.

\begin{lem}
There exists an equivalence $\ps \colon \mod \k[S] \to \mod \k S$ defined as follows:
For each $V \in \mod \k[S]$, $\ps(V):= \Ds_{x \in S}V(x)$
with a left $\k S$-action defined by
$$
[a_{y,x}]_{x,y \in S} \cdot (v_x)_{x \in S}
:= \left[\sum_{x \in S}V(a_{y,x})(v_x)\right]_{y \in S}
$$
for all $[a_{y,x}]_{x,y \in S} \in \k S$ {\em (Remark \ref{rmk:inc-alg} (1))}
and $(v_x)_{x \in S} \in \Ds_{x \in S}V(x)$.

For each morphism $f = (f_x \colon V(x) \to W(x))_{x \in S} \colon V \to W$ in $\mod \k[S]$,
$\ps(f):= \Ds_{x\in S}\ps_x \colon \Ds_{x\in S}V(x) \to \Ds_{x\in S}W(x)$.
\end{lem}

\begin{proof}
A quasi-inverse $\ph \colon \mod \k S \to \mod \k[S]$ is defined as follows:
For each $M \in \mod \k S$, $\ph(M) \colon \k[S] \to \mod \k$ is a functor
given by $\ph(M)(x):= \id_x M$ for all $x \in S$, and for each morphism
$a \colon x \to y$ in $\k[S]$, a linear map
$\ph(M)(a) \colon \id_x M \to \id_y M$ is defined by
the left multiplication by $a = \id_y a \id_x$.

For each morphism $f \colon M \to N$ in $\mod \k S$,
$\ph(f) \colon \ph(M) \to \ph(N)$ is given as a natural transformation
defined by the restriction maps
$\ph(f)_x:= f|_{\id_x M} \colon \id_x M \to \id_x N$ for all $x \in S$.

It is easy to verify that both $\ps$ and $\ph$ are
well-defined functors and that $\ph$ is a quasi-inverse of $\ps$.
\end{proof}

By these equivalences $\ph, \ps$, we identify $\mod \k[S]$ and $\mod \k S$, thus we also call a persistence module over $S$ a (left) $\k[S]$-\emph{module} subsequently.

\begin{dfn}
\label{dfn:rad-top-soc}
Let $\bfr$ be the Jacobson radical of $\k S$.  Namely, in this case, we have
$$
\bfr =\Ds_{x < y \text{ in } S}\k p_{y,x}.
$$
Then for any left $\k S$-module $M$,
$\rad M:= \bfr M$ is called the \emph{radical} of $M$, $\top M:= M/\bfr M$
is called the \emph{top} of $M$, and $\soc M:= \{m \in M \mid \bfr m = 0\}$
is called the \emph{socle} of $M$.
It is well-known that $\soc M$ is given by the sum of all simple submodules of $M$,
and both $\top M$ and $\soc M$ are semisimple.

For any $\k[S]$-module $V$, these are interpreted as follows:
$\bfr$ is the ideal of $\k[S]$ generated (or spanned) by all morphisms
$p_{y,x}$ with $x < y$ in $S$, and for any $x \in S$ we have
$$
(\rad V)(x) = \sum_{z < x} \Im V(p_{x,z}), \quad\text{and}\quad
(\soc V)(x) = \bigcap_{x < y} \Ker V(p_{y,x}).
$$
\end{dfn}

\begin{exm}
Let $\bfP = G_{5,2}$ as in \eqref{eq:G52}, and $M$ an interval module given
by the left diagram below:
\[
\begin{tikzcd}
	\k & \k & \k & \k & 0 \\
	0 & \k & \k & \k & \k
	\arrow["1", from=1-1, to=1-2]
	\arrow["1", from=1-2, to=1-3]
	\arrow["1", from=1-3, to=1-4]
	\arrow[from=1-4, to=1-5]
	\arrow[from=2-1, to=1-1]
	\arrow[from=2-1, to=2-2]
	\arrow["1", from=2-2, to=1-2]
	\arrow["1"', from=2-2, to=2-3]
	\arrow["1", from=2-3, to=1-3]
	\arrow["1"', from=2-3, to=2-4]
	\arrow["1", from=2-4, to=1-4]
	\arrow["1"', from=2-4, to=2-5]
	\arrow[""', from=2-5, to=1-5]
\end{tikzcd},
\quad
\begin{tikzcd}
	0 & \k & \k & \k & 0 \\
	0 & 0 & \k & \k & \k
	\arrow["", from=1-1, to=1-2]
	\arrow["1", from=1-2, to=1-3]
	\arrow["1", from=1-3, to=1-4]
	\arrow[from=1-4, to=1-5]
	\arrow[from=2-1, to=1-1]
	\arrow[from=2-1, to=2-2]
	\arrow["", from=2-2, to=1-2]
	\arrow[""', from=2-2, to=2-3]
	\arrow["1", from=2-3, to=1-3]
	\arrow["1"', from=2-3, to=2-4]
	\arrow["1", from=2-4, to=1-4]
	\arrow["1"', from=2-4, to=2-5]
	\arrow[""', from=2-5, to=1-5]
\end{tikzcd}.
\]
Then $\rad M$ is given by the right diagram above; and
$\top M$ and $\soc M$ are given by the left and the right
diagrams below, respectively:
\[
\begin{tikzcd}
	\k & 0 & 0 & 0 & 0 \\
	0 & \k & 0 & 0 & 0
	\arrow["", from=1-1, to=1-2]
	\arrow["", from=1-2, to=1-3]
	\arrow["", from=1-3, to=1-4]
	\arrow[from=1-4, to=1-5]
	\arrow[from=2-1, to=1-1]
	\arrow[from=2-1, to=2-2]
	\arrow["", from=2-2, to=1-2]
	\arrow[""', from=2-2, to=2-3]
	\arrow["", from=2-3, to=1-3]
	\arrow[""', from=2-3, to=2-4]
	\arrow["", from=2-4, to=1-4]
	\arrow[""', from=2-4, to=2-5]
	\arrow[""', from=2-5, to=1-5]
\end{tikzcd},
\quad
\begin{tikzcd}
	0 & 0 & 0 & \k & 0 \\
	0 & 0 & 0 & 0 & \k
	\arrow["", from=1-1, to=1-2]
	\arrow["", from=1-2, to=1-3]
	\arrow["", from=1-3, to=1-4]
	\arrow[from=1-4, to=1-5]
	\arrow[from=2-1, to=1-1]
	\arrow[from=2-1, to=2-2]
	\arrow["", from=2-2, to=1-2]
	\arrow[""', from=2-2, to=2-3]
	\arrow["", from=2-3, to=1-3]
	\arrow[""', from=2-3, to=2-4]
	\arrow["", from=2-4, to=1-4]
	\arrow[""', from=2-4, to=2-5]
	\arrow[""', from=2-5, to=1-5]
\end{tikzcd}.
\]
\end{exm}

\begin{rmk}
\label{rmk:prj-cov}
We can use the top (resp.\ socle) of $M$ in order to reduce the computation of a
projective cover (resp.\ injective hull) of $M$ to that of the semisimple module
$\top M$ (resp.\ $\soc M$) as follows:
Let $\pi \colon M \to \top M$ be the canonical epimorphism,
and $\si \colon \soc M \to M$ the inclusion.
Then if $f \colon P \to \top M$ is a projective cover of $\top M$,
then any lift $f' \colon P \to M$ of $f$ is a projective cover of $M$;
and if $g \colon \soc M \to E$ is an injective hull of $\soc M$,
then any extension $g' \colon M \to E$ is an injective hull of $M$.
Moreover, top and socle are used to compute almost split sequences
as seen in the proof of Proposition \ref{prp:n,m-ge-2 w/o conditions}.
\end{rmk}

\begin{dfn}[Zeta and M{\"o}bius functions]
We set
\[
\ze\coloneqq \sum_{x\le y}p_{y,x} \in\Ds_{x\le y}\k p_{y,x} =  \k S \iso \k^{\Seg(S)}
\]
(see \cref{rmk:inc-alg} (2)), and call it the \emph{zeta function} (on $S$). We note that $\ze$ is expressed as a lower triangular matrix with all diagonal entries $1$ in $\k S$ as a matrix algebra (see~\cref{rmk:inc-alg} (1)). Thus it is invertible in $\k S$, the inverse is given by the adjoint matrix of $\ze$, which is denoted by $\mu$, and called the \emph{M{\"o}bius function} (on $S$).

Note that for any $f \in \k^S$, we have
\begin{equation}
\label{eq:zeta}
(f\cdot \ze)(z) = \sum_{x\le y}\de_{x,z}f(y) = \sum_{z \le y}f(y)
\end{equation}
for all $z \in S$ by \eqref{eq:rt-action}.
\end{dfn}

\begin{thm}[M{\"o}bius inversion formula]
\label{thm:MIF}
For any $f, g \in \k^S$ and $x \in S$, the following statements are equivalent:
\begin{enumerate}[label=(\arabic*),font=\normalfont]
\item
$f(x) = \sum_{x\le y \in S} g(y)$; and
\item
$g(x) = \sum_{x\le y \in S} f(y) \mu([x,y])$.
\end{enumerate}
\end{thm}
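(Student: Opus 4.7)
The plan is to reinterpret both conditions as identities in the right $\bbR\bbI$-module $\bbR^{\bbI}$ described in Remark~\ref{rmk:inc-alg}(3), and then deduce the equivalence from the fact that $\mu$ is by definition the inverse of $\ze$ in $\bbR\bbI$. The strategy is therefore purely formal once the right translation is set up.

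First, I would rewrite condition (1). Formula \eqref{eq:zeta} says precisely that $(g\cdot \ze)(I) = \sum_{I\le J}g(J)$ for every $I \in \bbI$. Hence condition (1) is equivalent to the equality $f = g\cdot \ze$ in $\bbR^{\bbI}$. Next I would carry out the analogous computation for $\mu$. Since the bijection in Remark~\ref{rmk:inc-alg}(2) identifies the segment $[I,J]$ with the basis element $p_{J,I}$, we may write
\[
\mu = \sum_{I\le J} \mu([I,J])\, p_{J,I} \in \bbR\bbI.
\]
Applying \eqref{eq:rt-action} termwise, for any $f \in \bbR^{\bbI}$ and any $K \in \bbI$ we get
\[
(f\cdot \mu)(K) = \sum_{I\le J}\mu([I,J])\,(f\cdot p_{J,I})(K) = \sum_{I\le J}\mu([I,J])\,\de_{I,K}f(J) = \sum_{K\le J}f(J)\,\mu([K,J]).
\]
Thus condition (2) is equivalent to the identity $g = f\cdot \mu$ in $\bbR^{\bbI}$.

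Finally, since $\mu = \ze^{-1}$ in $\bbR\bbI$, associativity of the right action gives $(g\cdot \ze)\cdot \mu = g\cdot(\ze\mu) = g$ and $(f\cdot \mu)\cdot \ze = f\cdot(\mu\ze) = f$. Therefore $f = g\cdot \ze$ implies $g = f\cdot \mu$ by right-multiplication by $\mu$, and conversely $g = f\cdot \mu$ implies $f = g\cdot \ze$ by right-multiplication by $\ze$. This gives the equivalence of (1) and (2).

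There is essentially no obstacle here; the only point that requires care is the bookkeeping of indices when translating between the matrix viewpoint of $\bbR\bbI$, the function viewpoint $\bbR^{\Seg(\bbI)}$, and the right action on $\bbR^{\bbI}$. The potentially confusing step is making sure that the factor $\mu([I,J])$ (rather than $\mu([J,I])$) appears on the correct side when expanding $f\cdot \mu$, which is settled by carefully applying \eqref{eq:rt-action} with the indices $I$ and $J$ as written in the definition of $\mu$.
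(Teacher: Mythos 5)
Your proposal is correct and follows essentially the same route as the paper: you use \eqref{eq:zeta} to rewrite condition (1) as $f = g\cdot\ze$, compute $(f\cdot\mu)(K) = \sum_{K\le J} f(J)\mu([K,J])$ from \eqref{eq:rt-action} to rewrite condition (2) as $g = f\cdot\mu$, and conclude from $\mu = \ze^{-1}$. The paper compresses the last step into the single remark that $f = g\ze$ if and only if $f\mu = g$, which is exactly the associativity argument you spell out.
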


\begin{proof}
Since $\mu = \sum_{x\le y}\mu([x,y])p_{y,x}$,
we have
\begin{equation}
(f\cdot \mu)(z) = \sum_{x\le y} \de_{x,z}f(y)\mu([x,y])
= \sum_{z \le y}f(y)\mu([z,y]). \label{formula-mu}
\end{equation}
By equality~\eqref{formula-mu} together with equality~\eqref{eq:zeta}, the equivalence follows from 
the fact that $f = g \cdot \ze$ if and only if $f \cdot \mu = g$.
\end{proof}

\section{Compressions and multiplicities}\label{sec3}

\subsection{Compression systems}

\begin{dfn}
\label{dfn:comp-sys-simplified-ver}
A \emph{compression system} for $A$ ($:=\k[\bfP]$) is a family $\xi = (\xi_I)_{I \in \bbI}$ of
order-preserving maps $\xi_I \colon I^{\xi} \to \bfP$ from a connected finite poset $I^{\xi}$
satisfying the following conditions for all $I \in \bbI$; and
\begin{enumerate}
\item
$\xi_I$ factors through the poset inclusion $I \hookrightarrow \bfP$. Namely, the image of $\xi_I$ is in $I$.
\item
The image of $\xi_I$ contains $\src(I) \cup \snk(I)$.
\end{enumerate}
A compression system $\xi$ for $A$ is called a \emph{rank compression system}
if it satisfies the following\footnote{By Definition \ref{dfn:covers},
this is expressed as ``$\xi_I$ covers $p_{y,x}$,''
and is related to an essential cover in \cref{sec:Essential cover relative to compression systems}. See Remark \ref{rmk:cond3=ess-cov} for details.
}:
\begin{enumerate}
\item[(3)]
If $I = [x,y] \in \Seg(\bfP)$, then there exists a pair
$(x', y') \in [\le]_{I^{\xi}}$ such that
$(\xi_I(x'), \xi_I(y')) = (x, y)$.
\end{enumerate}
When $A$ (or the poset $\bfP$) is clear from context, we would simply write the ``(rank) compression system'' for the family $(\xi_I \colon I^{\xi} \to \bfP)_{I\in \bbI}$.
\end{dfn}

Let $I \in \bbI$. Then the restriction functor
$R_I^\xi \colon \mod A \to \mod \k[I^\xi]$ is defined by
sending $M$ to $M \circ \k[\xi_I]$ for all $M \in \mod A$.

\begin{lem}
\label{lem:com-mult-full-int}
Let $\xi$ be a compression system for $A$.
Then for each $I \in \bbI$, we have $R_I(V_I) = V_{I^\xi}$ as a persistence module over $I^\xi$.
Here, we slightly abuse notation by using the same symbol $V_{(\blank)}$ for interval modules in different categories: $V_I$ denotes the interval module in $\mod A$, whereas $V_{I^\xi}$ denotes the interval module in $\mod \k[I^\xi]$.
\end{lem}

\begin{proof}
Let $x \in I^\xi$. Then since $\xi_I(x) \in I$ holds by definition of $\xi$, we have $R_I(V_I)(x) = V_I(\k[\xi_I](x)) = V_I(\xi_I(x)) = \k$. Now let $p_{y, x}\in \k[I^\xi]_1$ for $(x, y)\in [\leq]_{I^\xi}$. Then $\xi_I(x) \le \xi_I(y)$ in $\bfP$, where $\xi_I(x), \xi_I(y) \in I$. Hence $R_I(V_I)(p_{y, x}) = V_I(\k[\xi_I](p_{y, x})) = V_I(p_{\xi_{I}(y), \xi_{I}(x)}) = \id_\k$. As a consequence, we have $R_I(V_I) = V_{I^\xi}$.
\end{proof}

\begin{exm}[tot]
\label{exm:total}
For each $I \in \bbI$, set $I^\tot:= I$, and let $\tot_I$ be the inclusion $I \hookrightarrow \bfP$.
This defines a rank compression system $\tot:= (\tot_I)_{I \in \bbI}$ for $A$, which is called the \emph{total} compression system for $A$.
\end{exm}

\begin{exm}[ss]
\label{exm:ss}
For each $I \in \bbI$, set $I^{\ss}$ to be the full subposet $\snk(I)\cup \src(I)$ of $I$,
and let 
$\ss_I \colon I^{\ss} \hookrightarrow \bfP$ be the inclusion.
This defines a rank compression system $\ss:= (\ss_I)_{I \in \bbI}$ for $A$, which is called the \emph{source-sink} compression system for $A$.
\end{exm}

\begin{exm}
Let $\bfP:= G_{5,2}$ as in Example \ref{exm:2D-grid}, and
$I$ be the interval with $\src(I):=\{(1,2), (2,1)\}$ and $\snk(I):= \{(3,2)\}$.
Take
$I^{\xi}$ to be the full subposet $\{(1,2), (2,2), (3,2), (2,1)\}$ of $I$, and let
$\xi_I \colon I^{\xi} \to \bfP$ be the inclusion, then
this $\xi_I$ can be taken as a component of a compression system $\xi$ for $A$,
which satisfies $\ss_I \ne \xi_I \ne \tot_I$.
\end{exm}

\begin{exm}
\cite[Section~5.1]{ASASHIBA2023107397} introduces a compression system for commutative ladders that is not a rank compression system. Here we give a simpler example of a compression system that is not a rank compression system.
Let $\bfP$ be the totally ordered set $(\{1,2,3\}, \le)$ with the usual order,
and let $I \in \bbI$.
When $I = [1,3] = \bfP$, we set $I^\xi$ to be a poset $\{w, x, y, z\}$
with the order $w < x > y < z$, and
define an order-preserving map $\xi_I$ by $\xi_I(w) = 1$,
$\xi_I(x) = \xi_I(y) = 2$ and $\xi_I(z) = 3$.
For all other intervals $I$, we set $\xi_I:= \tot_I$.
Then $\xi$ is a compression system for $A$.
For the segment $I = [1,3] = \bfP$, we have
$\xi_I\inv(1) = \{w\}$ and $\xi_I\inv(3) = \{z\}$,
but we do not have $w \le z$ in $I^\xi$.
Therefore, this $\xi$ does not satisfy the condition (3)
in Definition \ref{dfn:comp-sys-simplified-ver}.
Thus $\xi$ is not a rank compression system.
\end{exm}

\begin{ntn}
\label{ntn:tot}
Let $\xi$ be a compression system and $I \in \bbI$.
Then by the condition (1) in Definition~\ref{dfn:comp-sys-simplified-ver} (the commutativity of the following diagram on the left),
the functor $\k[\xi_I] \colon \k[I^\xi] \to A$ factors through $\k[\tot_I]$ as in the following diagram on the right:
\[
\begin{tikzcd}
I^{\xi} && \bfP\\
& I
\Ar{1-1}{1-3}{"\xi_I"}
\Ar{1-1}{2-2}{"\xi'"'}
\Ar{2-2}{1-3}{"\tot_I"'}
\end{tikzcd},
\qquad
\begin{tikzcd}
\k[I^\xi] && \k[\bfP]\\
& \k[I]
\Ar{1-1}{1-3}{"{\k[\xi_I]}"}
\Ar{1-1}{2-2}{"{\k[\xi']}"'}
\Ar{2-2}{1-3}{"{\k[\tot_I]}"'}
\end{tikzcd}.
\]
Hence we have the corresponding factorization of $R^\xi_I$ by $R^\tot_I$ as in the diagram
\[
\begin{tikzcd}
\mod \k[I^\xi] && \mod \k[\bfP]\\
& \mod \k[I]
\Ar{1-3}{1-1}{"R_I^\xi"'}
\Ar{2-2}{1-1}{"{R'}"}
\Ar{1-3}{2-2}{"R^\tot_I"}
\end{tikzcd}.
\]
\end{ntn}

\subsection{Compression multiplicities}

Throughout the rest of this paper, $\xi$ is a compression system for $A$,
and we simply write $R_I(M)$ for $R^\xi_I(M)$ if there seems to be no confusion.

Let $\calL_I:= \calL_{\k[I^\xi]}$ be a complete set of representatives of isoclasses of indecomposable left $\k[I^\xi]$-modules (see the notations in Theorem \ref{thm:KS}).
Since $R_I(V_I)$ is indecomposable by Lemma \ref{lem:com-mult-full-int},
we may assume that $R_I(V_I) \in \calL_I$.

\begin{dfn}
\label{dfn:comp-mult}
Let $M \in \mod A$, and $I \in \bbI$.
Then the number
\[
c^{\xi}_M(I):= d_{R_I(M)}(R_I(V_I))
\]
is called
the \emph{compression $I$-multiplicity} (or shortly, $I$-multiplicity) of $M$ under $\xi$, and also denoted by $\mult_I^\xi(M)$
when we consider a map $\mult^\xi_I \colon \mod A \to \bbZ$ sending
each $M \in \mod A$ to $\mult_I^\xi(M)$.
\end{dfn}

Following \cite{BBH2024approximations}, we introduce the subsequent definition.

\begin{dfn}
\label{dfn:antichain}
A subset $K$ of $\bfP$ is called an \emph{antichain} in $\bfP$ if any distinct elements of $K$ are incomparable. We denote by $\Ac(\bfP)$ the set of all antichains in $\bfP$.
For any $K, L \in \Ac(\bfP)$, we define $K \le L$ if for all $x \in K$, there exists $z_x \in L$ such that $x \le z_x$, and for all $z \in L$, there exists $x_z \in K$ such that $x_z \le z$. In this case, we define $[K, L]:= \{y \in \bfP \mid x \le y \le z \text{ for some }
x \in K \text{ and for some }z \in L\}$.
\end{dfn}

\begin{lem}
\label{lem:intv-as-[K,L]}
$\{[K, L] \mid K, L \in \Ac(\bfP), \ K \le L\}$
forms the set of all convex subsets in $\bfP$.
\end{lem}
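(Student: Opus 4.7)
The plan is to prove the two inclusions separately: every set of the form $[K,L]$ with $K, L \in \Ac(\bfP)$ and $K \le L$ is convex, and conversely every (nonempty) convex subset $S$ of $\bfP$ arises as $[K,L]$ for suitable antichains $K, L$.

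For the first inclusion, I would fix $K, L \in \Ac(\bfP)$ with $K \le L$ and take $y_1, y_2 \in [K,L]$ with $y_1 \le y_2$ together with any $w \in [y_1, y_2]$. By definition of $[K,L]$ there are $x_1 \in K$ with $x_1 \le y_1$ and $z_2 \in L$ with $y_2 \le z_2$. Chaining inequalities gives $x_1 \le w \le z_2$, so $w \in [K,L]$. This is routine; no condition on $K \le L$ is really used here beyond the existence of the flanking elements $x_1, z_2$.

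For the converse, given a convex subset $S$, the natural guess is $K := \src(S)$ and $L := \snk(S)$, the sets of minimal and maximal elements of $S$ in $\bfP$. These are antichains by definition. To check $K \le L$, I would use finiteness of $\bfP$: for any $x \in K \subseteq S$, pick a maximal element $z_x$ of $S$ above $x$, which necessarily belongs to $L$; symmetrically for $z \in L$. To check $S = [K,L]$, the inclusion $[K,L] \subseteq S$ follows from convexity of $S$ since $K, L \subseteq S$, and the reverse inclusion uses the same finiteness argument to sandwich any $y \in S$ between a minimal and a maximal element of $S$.

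I do not expect any serious obstacle; the argument is essentially bookkeeping with minimal and maximal elements, and the only place where something could go wrong is the empty case. If $S = \emptyset$, one can take $K = L = \emptyset$, which are (trivially) antichains with $K \le L$ vacuously, and then $[K,L] = \emptyset$ as required, so the statement holds uniformly. Care should be taken to note explicitly that finiteness of $\bfP$ is what guarantees the existence of the requisite minimal and maximal elements used throughout.
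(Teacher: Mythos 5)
Your proof is correct. The converse direction (every convex $S$ equals $[\src(S),\snk(S)]$) is essentially identical to the paper's, including the use of finiteness to find flanking minimal and maximal elements. Where you diverge is the forward direction. The paper does not check convexity of $[K,L]$ directly; instead it proves the sharper identity $[K,L] = \conv(K\cup L)$, which requires the hypothesis $K \le L$ and the antichain property of $K$ and $L$ in a four-way case analysis (depending on whether the flanking pair $(x_0,z_0)$ lies in $K\times K$, $L\times L$, $K\times L$, or $L\times K$). Your argument is more economical: picking $y_1 \le w \le y_2$ in $[K,L]$, you sandwich $w$ between $x_1 \in K$ below $y_1$ and $z_2 \in L$ above $y_2$, and you correctly observe that this uses neither $K\le L$ nor the antichain property at all. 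What the paper's longer route buys is the identity $[K,L]=\conv(K\cup L)$ itself, which feeds directly into Corollary~\ref{cor:EV} ($I=\conv(\src(I)\cup\snk(I))$); with your proof that corollary would need a small separate argument, though it still follows readily from the converse direction. Your handling of the empty set is also a reasonable clarification that the paper glosses over.
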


\begin{proof}
    Let $K, L \in \Ac(\bfP)$ such that $K \le L$. First, let us show that $[K, L] = \conv(K \cup L)$. Let $y \in \conv(K \cup L)$. By definition, there exist $x_0, z_0 \in K \cup L$ such that $x_0 \le y \le z_0$. Now assume both $x_0 \in K$ and $z_0 \in K$. In this case, since $K \in \Ac(\bfP)$ and $x_0$ and $z_0$ are comparable, then necessarily $x_0 = z_0$ and so $y = x_0 = z_0 \in K \subseteq [K,L]$. Similarly, if both $x_0 \in L$ and $z_0 \in L$, we have $y \in L \subseteq [K,L]$. So either $x_0 \in K, z_0 \in L$ or $x_0 \in L, z_0 \in K$. If $x_0 \in K, z_0 \in L$, then by definition we have $y \in [K,L]$. Now assume we have $x_0 \in L, z_0 \in K$. Since $K \le L$, there exists $k \in K$ such that $k \le x_0$. So we have $k \le x_0 \le y \le z_0$ with both $k, z_0 \in K$. Therefore $k = z_0$ because $K \in \Ac(\bfP)$, and so $y = k = z_0 \in K \subseteq [K,L]$. This proves that $[K, L] \supseteq \conv(K \cup L)$, and so $[K, L] = \conv(K \cup L)$. In particular, $[K, L]$ is a convex set. Now let $S$ be a convex subset in $\bfP$. Because $\bfP$ is finite, we can define $K:= \src(S),\, L:= \snk(S)$, and we have $K \le L$. Then it is clear that $S = \conv(K \cup L)$, and so $S = [K,L].$
\end{proof}

Hence we have $\bbI = \{[K, L] \mid K, L \in \Ac(\bfP), \ K \le L, \ [K, L] \text{ is connected}\}$.
In particular, we have $I = [\src(I), \snk(I)]$ for all $I \in \bbI$
and $\Seg(\bfP) = \{I \in \bbI(\bfP) \mid |\src(I)| = 1 = |\snk(I)|\}$. 
The following is immediate from Lemma \ref{lem:intv-as-[K,L]}.

\begin{cor}
\label{cor:EV}
Let $I \in \bbI$. 
Then $I = \conv(\src(I) \cup \snk(I))$.
In particular, if $\src(I) \cup \snk(I) \subseteq J \in \bbI$,
then $I \le J$. \qed
\end{cor}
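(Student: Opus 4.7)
The plan is to deduce both assertions directly from Lemma~\ref{lem:intv-as-[K,L]}, which already does the heavy lifting by identifying convex subsets of $\bfP$ with sets of the form $[K,L] = \conv(K \cup L)$ for antichains $K \le L$.

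For the first assertion, I would start from an interval $I \in \bbI$. By definition $I$ is convex, so by Lemma~\ref{lem:intv-as-[K,L]} there exist antichains $K, L \in \Ac(\bfP)$ with $K \le L$ and $I = [K,L]$. The proof of that lemma in fact chooses $K = \src(I)$ and $L = \snk(I)$ (since $\bfP$ is finite, every element of $I$ lies above some minimal element and below some maximal element of $I$), and establishes the equality $[K,L] = \conv(K \cup L)$. Hence
\[
I = [\src(I), \snk(I)] = \conv(\src(I) \cup \snk(I)),
\]
which is exactly the first claim.

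For the \emph{in particular} part, suppose $\src(I) \cup \snk(I) \subseteq J$ with $J \in \bbI$. Since $J$ is convex, the convex hull of any subset of $J$ is still contained in $J$; therefore
\[
I = \conv(\src(I) \cup \snk(I)) \subseteq J,
\]
i.e., $I \le J$ in $\bbI$.

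This argument is essentially a one-line corollary, so there is no real obstacle. The only subtlety is to confirm that the representation $I = [\src(I), \snk(I)]$ used in the first step really comes out of Lemma~\ref{lem:intv-as-[K,L]} with these particular choices of antichains, which is clear from the last sentence of that lemma's proof. After that, convexity of $J$ immediately upgrades the set-theoretic inclusion $\src(I)\cup\snk(I)\subseteq J$ to $I\subseteq J$.
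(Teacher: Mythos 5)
Your proof is correct and matches the paper's intent exactly: the paper also declares the corollary immediate from Lemma~\ref{lem:intv-as-[K,L]}, having just noted $I = [\src(I),\snk(I)]$ as a consequence, and your argument simply spells out the convexity step for the ``in particular'' clause.
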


\begin{prp}
\label{prp:IJ}
Let $I, J \in \bbI$. Then
\[
c^{\xi}_{V_J}(I) = 
\begin{cases}
1, & \textnormal{if } I \le J, \\
0, & \textnormal{otherwise}.
\end{cases}
\]
\end{prp}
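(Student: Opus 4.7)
The plan is to directly compute the restricted module $R_I(V_J)$ and read off the multiplicity of the indecomposable $R_I(V_I) = V_{(Q_I)_0}$ in its Krull--Schmidt decomposition. The key observation is that both $V_J$ and $R_I(V_I)$ are ``full'' interval-type modules, so the question reduces to whether $\xi_I(Q_I)$ lands entirely inside $J$.

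First I would unwind definitions: for each vertex $q \in Q_I$, $R_I(V_J)(q) = V_J(\xi_I(q))$, which equals $\k$ if $\xi_I(q) \in J$ and $0$ otherwise; and for each arrow $\alpha \colon q \to q'$ in $Q_I$, the induced map $R_I(V_J)(\alpha) = V_J(F_I(\alpha))$ is $\id_\k$ when both endpoints $\xi_I(q), \xi_I(q')$ lie in $J$, and $0$ otherwise (using that $V_J$ sends $p_{y,x}$ to $\id_\k$ exactly when $x,y \in J$).

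For the case $I \le J$: by condition (1) of Definition \ref{dfn:assignment}, $\xi_I$ factors through $U(\k[I]) \hookrightarrow U(A)$, so $\xi_I(Q_I) \subseteq I \subseteq J$. Consequently $R_I(V_J)$ takes value $\k$ at every vertex of $Q_I$, with identity maps along every arrow. This shows $R_I(V_J) \cong V_{(Q_I)_0} = R_I(V_I)$, and since this is indecomposable the multiplicity is exactly $1$.

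For the case $I \not\le J$: suppose for contradiction that $\xi_I(Q_I) \subseteq J$. By condition (2) of Definition \ref{dfn:assignment} we have $\src(I) \cup \snk(I) \subseteq \xi_I(Q_I) \subseteq J$, and then Corollary \ref{cor:EV} forces $I \le J$, a contradiction. Hence there exists $q \in Q_I$ with $\xi_I(q) \notin J$, so $R_I(V_J)(q) = 0$, whereas $R_I(V_I)(q) = V_{(Q_I)_0}(q) = \k \ne 0$. Any direct summand isomorphic to $R_I(V_I)$ would contribute a nonzero vector space at $q$, which is impossible, so $c^\xi_{V_J}(I) = 0$. There is no real obstacle here; the proof is essentially a bookkeeping argument, and the only substantive input is Corollary \ref{cor:EV}, which upgrades the set-theoretic containment $\src(I) \cup \snk(I) \subseteq J$ to the interval order relation $I \le J$.
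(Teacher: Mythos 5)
Your proof is correct and takes essentially the same route as the paper: in the $I \le J$ case both arguments observe that $R_I(V_J)$ coincides with $R_I(V_I) = V_{(Q_I)_0}$ (since $\xi_I(Q_I) \subseteq I \subseteq J$), and in the $I \not\le J$ case both use condition (2) of Definition \ref{dfn:assignment} together with Corollary \ref{cor:EV} to exhibit a vertex of $Q_I$ at which $R_I(V_J)$ vanishes while $R_I(V_I)$ does not. The only cosmetic difference is that you phrase the second case as a proof by contradiction, whereas the paper directly applies the contrapositive of Corollary \ref{cor:EV} to locate the offending vertex.
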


\begin{proof}
If $I \le J$, then $c^{\xi}_{V_J}(I) = d_{R_I(V_J)}(R_I(V_I)) = d_{R_I(V_I)}(R_I(V_I)) = 1$. Otherwise, Corollary \ref{cor:EV} ensures the existence of a vertex $x \in \src(I) \cup \snk(I)$ that is not in $J$. By the defining condition (2) of the compression system $\xi$,
there exists $x' \in I^{\xi}$ such that $\xi_I(x')=x$. By definition, $x'$ satisfies $R_I(V_I)(x') = \k$ and $R_I(V_J)(x') = 0$. Hence in particular, $R_I(V_I)$ is not a direct summand of $R_I(V_J)$. Thus, $c^{\xi}_{V_J}(I) = 0$. 
\end{proof}

\begin{prp}
\label{prp:AD}
Let $M, N \in \mod A$, and $I \in \bbI$.
Then
\[
c^{\xi}_{M \oplus N}(I) = c^{\xi}_M(I) + c^{\xi}_N(I).
\]
\end{prp}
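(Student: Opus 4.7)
The plan is to reduce the statement to two elementary facts about the restriction functor and the Krull--Schmidt decomposition. First I would observe that $R_I = R^\xi_I \colon \mod A \to \mod B_I$ is additive: by Definition \ref{dfn:assignment}, it is defined by precomposition with the functor $F^\xi_I \colon B_I \to A$, and precomposition with any functor preserves finite direct sums of module-valued functors objectwise. Hence there is a natural isomorphism $R_I(M \oplus N) \iso R_I(M) \oplus R_I(N)$ in $\mod B_I$.

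Second, I would invoke the Krull--Schmidt theorem (Theorem \ref{thm:KS}) applied to $\calC = B_I$. For any two modules $X, Y \in \mod B_I$, concatenating their indecomposable decompositions gives an indecomposable decomposition of $X \oplus Y$, so the multiplicity function satisfies $d_{X \oplus Y}(L) = d_X(L) + d_Y(L)$ for every $L \in \calL_I$. Recall that by the last sentence of Definition \ref{dfn:assignment}, $R_I(V_I)$ is indecomposable, and by our choice of $\calL_I$ we may take $R_I(V_I) \in \calL_I$, so the multiplicity function $d_{(-)}(R_I(V_I))$ is well-defined on the relevant modules.

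Combining these, the proof is a short chain of equalities: by Definition \ref{dfn:comp-mult},
\[
c^\xi_{M \oplus N}(I) = d_{R_I(M \oplus N)}(R_I(V_I)) = d_{R_I(M) \oplus R_I(N)}(R_I(V_I)),
\]
which by additivity of multiplicities equals $d_{R_I(M)}(R_I(V_I)) + d_{R_I(N)}(R_I(V_I)) = c^\xi_M(I) + c^\xi_N(I)$. There is no real obstacle here; the argument is purely formal, resting only on the definitions and on Krull--Schmidt. The only point to verify carefully is that $R_I$ is additive, which is immediate from its definition as precomposition with $F^\xi_I$.
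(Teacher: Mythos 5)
Your proof is correct and takes the same approach as the paper, which simply cites the additivity of $R_I$ and the uniqueness part of the Krull--Schmidt theorem; you have merely spelled out these two steps in more detail.
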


\begin{proof}
This is a direct consequence of the additivity of $R_I$ and the uniqueness of $d_{M \oplus N}$ in Theorem \ref{thm:KS}.
\end{proof}

Under a certain condition on a compression system $\xi$, compression multiplicities have a ``monotonically decreasing'' property, namely, for any $I, J\in \bbI$,
$I \le J$ implies
 $c^\xi_M(I)\geq c^\xi_M(J)$ for all $M\in \mod A$.
We will give an example below.

\begin{dfn}
\label{dfn:mono-compression}
Let $\xi = (\xi_{I})_{I\in \bbI}$ be a compression system for $A$. Then $\xi$ is said to be \emph{monotonic} if for all intervals $I, J\in \bbI$ with $I \leq J$, there exists an order-preserving map $\Phi\colon I^\xi \to J^\xi$ such that $\xi_{I} = \xi_{J} \circ \Phi$ (this can be read as $\xi_I$ is ``weaker'' than $\xi_J$).
\end{dfn}

\begin{exm}
\label{exm:tot-monoic}
The total compression system $\tot$ is monotonic. Indeed, $I^\tot = I$, $J^\tot = J$ and we take $\Phi$ in \cref{dfn:mono-compression} to be the inclusion map $I\hookrightarrow J$.
\end{exm}
 
\begin{prp}
\label{prp:mono-decrease}
Let $\xi = (\xi_{I})_{I\in \bbI}$ be a monotonic compression system for $A$. Then for any intervals $I, J\in \bbI$ such that $I \leq J$ and for every $M\in \mod A$, we have
\[
c^{\xi}_{M}(I) \geq c^{\xi}_{M}(J).
\]
\end{prp}

\begin{proof}
We write $s = c^{\xi}_{M}(J)$ and $r = c^{\xi}_{M}(I)$. Recall the notation $R_{I}$ given in \cref{ntn:tot}. By \cref{dfn:comp-mult}, we have the decomposition
\begin{equation}
\label{eq:decomp_R_J_M}
	R_{J}(M)\cong \left[R_{J}(V_{J})\right]^{s}\ds N \text{ in } \mod \k[J^\xi],
\end{equation}
where $N$ has no direct summand that is isomorphic to $R_{J}(V_{J})$. Applying the restriction functor $R_{\Phi}$ induced by $\Phi$ to~\eqref{eq:decomp_R_J_M} yields
\[
R_{I}(M)\cong \left[R_{I}(V_{J})\right]^{s}\ds R_{\Phi}(N) \text{ in } \mod \k[I^\xi].
\]
Since $R_{I}(V_{J}) = R_{I}(V_{I})$, we have $R_{I}(M)\cong \left[R_{I}(V_{I})\right]^{s}\ds R_{\Phi}(N)$. Hence $r\geq s$.
\end{proof}

The following is immediate from~\cref{exm:tot-monoic} and \cref{prp:mono-decrease}.

\begin{cor}
\label{prp:mono-decrease-tot}
	Let $\xi = \tot$. Then for all intervals $I, J\in \bbI$ with $I \leq J$ and for every $M\in \mod A$, we have
\begin{equation*}
c^{\tot}_{M}(I) \geq c^{\tot}_{M}(J).
\end{equation*}
\end{cor}

\begin{rmk}
\cref{lem:gen-rk-inv} and \cref{prp:mono-decrease-tot} give an alternative proof of \cite[Proposition~3.8]{kim2021generalized}.
\end{rmk}

For any $M\in \mod A$ and $I\in \bbI$, the $I$-multiplicity of $M$ under the total compression system is the least among all compression systems, which we conclude as follows.

\begin{prp}
\label{prp:least-comp-mult}
Let $\tot$ be the total compression system for $A$. Then for any compression system $\xi$ for $A$, any interval $I\in \bbI$, and any $M\in \mod A$, we have
\[
c^{\xi}_{M}(I) \geq c^{\tot}_{M}(I).
\]
\end{prp}

\begin{proof}
We write $s = c^{\tot}_{M}(I)$ and $r = c^{\xi}_{M}(I)$. By the definition of $I$-multiplicity under $\tot$, we have the decomposition
\begin{equation}
\label{eq:decomp_R_M}
	R^{\tot}_I(M)\cong \left[R^{\tot}_I(V_{I})\right]^{s}\ds N \text{ in } \mod \k[I],
\end{equation}
where $N$ has no direct summand that is isomorphic to $R^{\tot}_I(V_{I})$. Applying the functor $R'$ (given in \cref{ntn:tot}) to~\eqref{eq:decomp_R_M} yields
\[
R^{\xi}_{I}(M)\cong \left[R^{\xi}_{I}(V_{I})\right]^{s}\ds R'(N) \text{ in } \mod \k[I^\xi].
\]
Therefore $r\geq s$, and the assertion follows.
\end{proof}

When $M \in \mod A$ is interval decomposable, it is possible to express the compression multiplicities of interval modules by multiplicities of interval modules and vice versa.

\begin{prp}
\label{prp:cd}
Let $M \in \mod A$ and $I \in \bbI$. If $M$ is interval decomposable, then
\[
c^{\xi}_M(I) = \sum_{I \le J \in \bbI}{d_M(V_J)}.
\]
This can be rewritten as
\[
c^{\xi}_M =  d_M \zeta,
\]
where $d_M(I) := d_M(V_I)$.
\end{prp}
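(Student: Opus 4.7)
The plan is to reduce the statement to the two preceding propositions (additivity of the compression multiplicity under direct sums, and its value on interval summands), then recognize the resulting sum as the right action of $\zeta$ on $d_M$.

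First, since $M$ is interval decomposable, by the Krull--Schmidt Theorem \ref{thm:KS} applied to the complete set of isoclasses $\calL_A$, the multiplicity function satisfies $d_M(L) = 0$ for every indecomposable $L \in \calL_A$ that is not an interval module, so we obtain an isomorphism
\[
M \iso \bigoplus_{J \in \bbI} V_J^{d_M(V_J)}.
\]
Next, I would apply Proposition \ref{prp:AD} inductively to this finite direct sum, which gives
\[
c^{\xi}_M(I) = \sum_{J \in \bbI} d_M(V_J)\, c^{\xi}_{V_J}(I).
\]

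The key input is now Proposition \ref{prp:IJ}, which tells us that $c^{\xi}_{V_J}(I) = 1$ exactly when $I \le J$ and vanishes otherwise. Substituting this into the sum above immediately yields the first formula
\[
c^{\xi}_M(I) = \sum_{I \le J \in \bbI} d_M(V_J),
\]
which is the desired identity.

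For the reformulation $c^{\xi}_M = d_M \zeta$, I would invoke formula \eqref{eq:zeta} from the preliminaries: with the convention $d_M(I) := d_M(V_I)$, the right action of $\zeta$ on any function $f \in \bbR^\bbI$ produces $(f \cdot \zeta)(I) = \sum_{I \le J} f(J)$. Applied to $f = d_M$, this is precisely the sum appearing in the first identity, so $c^{\xi}_M(I) = (d_M \cdot \zeta)(I)$ for every $I \in \bbI$, yielding $c^{\xi}_M = d_M \zeta$ as elements of $\bbR^{\bbI}$. There is no genuine obstacle here; the only mild subtlety is keeping the identification between the function notation $d_M$ on $\bbI$ and on isoclasses of interval modules straight, which is handled by the hypothesis that $M$ is interval decomposable (ensuring no non-interval indecomposable summands contribute).
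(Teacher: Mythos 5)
Your proof is correct and takes essentially the same route as the paper: decompose $M$ into interval modules, apply Proposition \ref{prp:AD} (additivity) and then Proposition \ref{prp:IJ}, and finally identify the resulting sum with the right action of $\zeta$ via \eqref{eq:zeta}. The paper's proof is simply a more compressed version of exactly these steps.
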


\begin{proof}
By assumption, $M$ can be decomposed as a direct sum of interval modules: $M = \bigoplus_{J \in \bbI} V_J^{d_M(V_J)}$. Now, Proposition \ref{prp:AD} yields 
\[
c^{\xi}_M(I) = \sum_{J \in \bbI}{d_M(V_J) \ c^{\xi}_{V_J}(I)}.
\]
Proposition \ref{prp:IJ} leads to the desired formula.
\end{proof}

\begin{cor}
\label{cor:dc}
Let $M \in \mod A$. If $M$ is interval decomposable, then
\[
d_M = c^{\xi}_M \mu .
\]
\end{cor}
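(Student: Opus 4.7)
The plan is to combine Proposition~\ref{prp:cd} with the M\"obius Inversion Formula (Theorem~\ref{thm:MIF}). Proposition~\ref{prp:cd} gives, for an interval decomposable $M$ and any $I \in \bbI$,
\[
c^{\xi}_M(I) = \sum_{I \le J \in \bbI} d_M(V_J),
\]
which, identifying $d_M$ with the function $I \mapsto d_M(V_I)$ on $\bbI$, is exactly condition (1) of Theorem~\ref{thm:MIF} with $f = c^{\xi}_M$ and $g = d_M$.

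Applying the equivalence of (1) and (2) in Theorem~\ref{thm:MIF}, I would then read off
\[
d_M(I) = \sum_{I \le J \in \bbI} c^{\xi}_M(J)\,\mu([I,J])
\]
for every $I \in \bbI$. In the proof of Theorem~\ref{thm:MIF} it is established that the right-hand side is precisely $(c^{\xi}_M \cdot \mu)(I)$ under the right $\bbR\bbI$-action on $\bbR^\bbI$ described in Remark~\ref{rmk:inc-alg}(3), so the identity $d_M = c^{\xi}_M \mu$ follows immediately.

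Equivalently, and perhaps more transparently, one can argue purely algebraically: Proposition~\ref{prp:cd} may be rephrased as $c^{\xi}_M = d_M\,\zeta$ in the right $\bbR\bbI$-module $\bbR^\bbI$. Since $\zeta$ is invertible in $\bbR\bbI$ with inverse $\mu$ by construction, right-multiplying by $\mu$ yields $c^{\xi}_M\,\mu = d_M\,\zeta\mu = d_M$. There is no real obstacle; all the content is already in Proposition~\ref{prp:cd} (which uses the interval decomposability hypothesis) together with the invertibility of $\zeta$ in $\bbR\bbI$.
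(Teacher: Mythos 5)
Your proof is correct and takes essentially the same route as the paper: the paper's proof is simply the one-line statement that the corollary follows from Theorem~\ref{thm:MIF}, and you have just spelled out the identification $f = c^\xi_M$, $g = d_M$ and the equivalent algebraic step $c^\xi_M\,\mu = d_M\,\zeta\,\mu = d_M$.
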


\begin{proof}
This follows directly from Theorem \ref{thm:MIF}.
\end{proof}

\begin{ntn}
For any poset $S = (S, \le)$ and $x \in S$, we set
$$
\begin{aligned}
\uset_S\, x&:= \{y \in S \mid x \le y\}, \text{ and}\\
\dset_S\, x&:= \{y \in S \mid y \le x\}.
\end{aligned}
$$
\end{ntn}

By adopting the argument used in \cite[Theorem 4.23]{ASASHIBA2023100007}, it is possible to write $\mu$ explicitly. 

\begin{thm}
\label{thm:mu}
Let us define $\mu' \in \bbR\bbI$ by
\[
\mu'([I,J]) := \sum_{S \in \pE}{(-1)^{\lvert S \rvert}},
\]
for $I, J \in \bbI$ with $I \le J$, and where $\pE$ is the set of all sets $S$ such that $S \subseteq \Cov(I)$ and $\bigvee S = J$. Note that if $S$ is nonempty, then $\bigvee S$ is well defined by Proposition \ref{prp:JOIN}. We artificially define $\bigvee \emptyset := I$ to simplify notations. Then
\[
\mu = \mu'.
\]
\end{thm}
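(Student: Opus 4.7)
The plan is to exploit the fact that $\mu$ is the unique two-sided inverse of $\zeta$ in the incidence algebra $\bbR\bbI$. So it suffices to prove that $\mu'$ is a left (or right) inverse of $\zeta$, i.e.\
\[
\sum_{I \le K \le J} \mu'([I,K]) = \de_{I,J}
\]
for all $I \le J$ in $\bbI$; uniqueness of the inverse will then give $\mu' = \mu$.

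First I would substitute the definition of $\mu'$ and swap the order of summation: the double sum
\[
\sum_{I \le K \le J} \sum_{S \subseteq \Cov(I),\ \bigvee S = K} (-1)^{|S|}
\]
collapses, after reindexing by $S$, to $\sum_{S \subseteq \Cov(I),\ \bigvee S \le J}(-1)^{|S|}$. Note that for every $S \subseteq \Cov(I)$ the join $\bigvee S$ exists by \prpref{prp:JOIN}, since $I$ is a lower bound of $S$ (and we set $\bigvee \emptyset := I \le J$). Then the key observation is that $\bigvee S \le J$ if and only if every element of $S$ is $\le J$, i.e.\ $S \subseteq T$ where $T := \{L \in \Cov(I) \mid L \le J\}$. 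Therefore the sum equals $\sum_{S \subseteq T}(-1)^{|S|}$, which by the standard inclusion-exclusion identity is $0$ if $T \ne \emptyset$ and $1$ if $T = \emptyset$.

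It remains to show $T = \emptyset \iff I = J$. The direction $I = J \Rightarrow T = \emptyset$ is immediate, since any $L \in \Cov(I)$ satisfies $L > I = J$. Conversely, if $I < J$ I would produce a cover of $I$ below $J$ as follows: the set $\{K \in \bbI \mid I < K \le J\}$ is nonempty (it contains $J$) and finite (since $\bbI$ is finite), so it has a minimal element $L$. By minimality, no $K$ with $I < K < L$ can be $\le J$; but any $K$ with $I < K < L$ would automatically satisfy $K < L \le J$, a contradiction. Hence $[I,L] = \{I,L\}$, so $L \in \Cov(I)$ and $L \le J$, showing $T \ne \emptyset$.

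Combining, $\sum_{I \le K \le J}\mu'([I,K]) = \de_{I,J}$, so $\mu' \zeta = 1$ in $\bbR\bbI$, hence $\mu' = \mu$. I expect the main (minor) obstacle to be bookkeeping in the swap of summations and making sure the empty-set convention $\bigvee \emptyset = I$ is consistently handled; the underlying combinatorics is just the classical characterization of the Möbius function via covers (the ``Philip Hall formula'').
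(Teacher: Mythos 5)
Your proof is correct, and it takes a genuinely different (and shorter) route than the paper. After the common initial reindexing to $\sum_{S \subseteq \Cov(I),\, \bigvee S \le J}(-1)^{|S|}$, the paper converts the remaining evaluation into a measure-theoretic inclusion-exclusion: it introduces the auxiliary finite measure $f(Z) = \sum_{L \in Z} d_{V_J}(V_L)$ on the power set of $\uset_\bbI I$, uses $\bigcap_{L\in S}\uset_\bbI L = \uset_\bbI\bigvee S$, and then invokes Propositions~\ref{prp:IJ} and~\ref{prp:cd} to interpret $f(\uset_\bbI \bigvee S)$ as $c^\xi_{V_J}(\bigvee S)$, finally extracting $d_{V_J}(V_I)=\de_{I,J}$. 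You instead make the direct combinatorial observation that, since $\bigvee S$ is a least upper bound, the condition $\bigvee S \le J$ is equivalent to $S \subseteq T := \{L \in \Cov(I) \mid L \le J\}$, so the sum collapses to $(1-1)^{|T|}$; you then verify $T = \emptyset \iff I = J$ by taking a minimal element of $\{K \in \bbI \mid I < K \le J\}$. This is cleaner and more self-contained than the paper's argument, which reimports the module-theoretic invariants $c^\xi$ and $d$ into what is ultimately a statement in pure poset combinatorics. One cosmetic slip: given the paper's convention for multiplication in $\bbR\bbI$, the quantity $\sum_{I\le K\le J}\mu'([I,K])$ you compute is $(\zeta\mu')([I,J])$, not $(\mu'\zeta)([I,J])$, so your closing line should read $\zeta\mu' = 1$; this is harmless since, as you correctly note at the outset, either one-sided inverse suffices to force $\mu' = \mu$.
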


\begin{proof}
Let us prove that $\zeta \mu' = 1_{\bbR\bbI}$. Let $I,J \in \bbI$ with $I \le J$. We have

\begin{align*}
    (\zeta \mu')([I,J]) &= \sum_{I \le L \le J} \mu'([I,L]) \\
    &= \sum_{I \le L \le J} \ \sum_{S \in \pE} (-1)^{\lvert S \rvert} \\
    &= \sum_{\substack{S \subseteq \Cov(I) \\ \bigvee S \le J}} (-1)^{\lvert S \rvert} \\
    &= 1 - \sum_{\substack{\emptyset \neq S \subseteq \Cov(I) \\ \bigvee S \le J}} (-1)^{\lvert S \rvert - 1}
\end{align*}
Now define the function $f$ as follows:
\begin{align*}
  f \colon 2^{\uset_{\bbI} I} &\to \mathbb{R}\\
  Z &\mapsto \sum_{L \in Z} d_{V_J}(V_L),
\end{align*}
where $2^{\uset_{\bbI} I}$ is the power set of $\uset_{\bbI} I$. Note that by definition
\[
\bigcap_{L \in S} \uset_{\bbI} L = \uset_{\bbI} \bigvee S.
\]
Therefore, we have
\begin{align*}
  f \left(\bigcap_{L \in S} \uset_{\bbI} L \right) &=  f\left(\uset_{\bbI} \bigvee S\right)\\
    &= \sum_{\bigvee S \le L} d_{V_J}(V_L) \\
    &= c^\xi_{V_J}\left(\bigvee S\right) \\
    &= \begin{cases}
        1 & \text{if } \bigvee S \le J, \\
        0 & \text{otherwise}
\end{cases}
\end{align*}
where the last two equalities come from Propositions \ref{prp:cd} and \ref{prp:IJ}, respectively. Thus we can write:
\[
(\zeta \mu')([I,J]) = 1 - \sum_{\emptyset \neq S \subseteq \Cov(I)} (-1)^{\lvert S \rvert - 1} \ f \left(\bigcap_{L \in S} \uset_{\bbI} L\right).
\]
It is easily seen that $(\uset_{\bbI} I,2^{\uset_{\bbI} I},f)$ is a finite measure space. So, by the inclusion-exclusion principle
\begin{align*}
  (\zeta \mu')([I,J]) &= 1 - f\left(\bigcup_{L \in \Cov(I)} \uset_{\bbI} L\right)\\
    &= 1 - \sum_{I < L} d_{V_J}(V_L) \\
    &= 1 - \left(\sum_{I \le L} d_{V_J}(V_L) - d_{V_J}(V_I)\right) \\
    &= 1 - (c^\xi_{V_J}(V_I) - d_{V_J}(V_I)) \\
    &= d_{V_J}(V_I)
\end{align*}
where the last two equalities also come from Propositions \ref{prp:cd} and \ref{prp:IJ} respectively. Finally, since $d_{V_J}(V_I) = 1$ if and only if $I = J$, we have $(\zeta \mu')([I,J]) = 1_{\bbR\bbI}([I,J])$, and hence $\zeta \mu' = 1_{\bbR\bbI}$.
Hence $\mu = \mu'$.
\end{proof}

\subsection{Signed interval multiplicities}

It is now possible to rewrite Corollary \ref{cor:dc} in the following way:

\begin{cor}
\label{cor:dc2}
Let $M \in \mod A$ and $I \in \bbI$. If $M$ is interval decomposable, then
\[
d_M(V_I) = \sum_{S \subseteq \Cov(I)} (-1)^{|S|} \ c^{\xi}_M\left(\bigvee S\right).
\]
\end{cor}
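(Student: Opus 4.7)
The plan is to derive the claimed identity by combining Corollary~\ref{cor:dc} with the explicit formula for $\mu$ given in Theorem~\ref{thm:mu}, and then reorganizing the resulting double sum into a single sum indexed by subsets of $\Cov(I)$.

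First, since $M$ is interval decomposable, Corollary~\ref{cor:dc} yields the identity $d_M = c^\xi_M \mu$ in $\bbR^\bbI$, regarded as a right action of $\bbR\bbI$ on $\bbR^\bbI$ as in Remark~\ref{rmk:inc-alg}(3). Evaluating at $I$ and unpacking the right action via~\eqref{eq:rt-action} — or equivalently invoking the implication (1)$\Rightarrow$(2) of Theorem~\ref{thm:MIF} with $f := c^\xi_M$ and $g := d_M$, whose hypothesis holds again by Corollary~\ref{cor:dc} — gives
\[
d_M(V_I) \;=\; \sum_{I \le J \in \bbI} c^\xi_M(J)\, \mu([I,J]).
\]

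Next, I would substitute the explicit description of $\mu([I,J])$ furnished by Theorem~\ref{thm:mu}: namely $\mu([I,J]) = \sum_{S} (-1)^{|S|}$, where $S$ runs over subsets of $\Cov(I)$ with $\bigvee S = J$ (with the convention $\bigvee \emptyset = I$). Plugging this in turns the right-hand side into the double sum
\[
d_M(V_I) \;=\; \sum_{I \le J \in \bbI}\ \sum_{\substack{S \subseteq \Cov(I)\\ \bigvee S = J}} (-1)^{|S|}\, c^\xi_M(J).
\]

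To finish, I would reindex. Every $S \subseteq \Cov(I)$ has a well-defined join $\bigvee S \in \bbI$: for $S \neq \emptyset$ this follows from Proposition~\ref{prp:JOIN} applied with the lower bound $I$, since each element of $\Cov(I)$ dominates $I$; and $\bigvee \emptyset = I$ by convention. Moreover $\bigvee S \ge I$, so the pair $(J, S)$ in the inner sum is completely determined by $S$. Swapping the order of summation thus collapses the two sums to a single sum over $S$:
\[
d_M(V_I) \;=\; \sum_{S \subseteq \Cov(I)} (-1)^{|S|}\, c^\xi_M\!\left(\bigvee S\right),
\]
as desired.

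There is no substantive obstacle here: the only thing to be careful about is the bookkeeping of the empty subset (which contributes the term $c^\xi_M(I)$) and the fact that every $\bigvee S$ genuinely lies in $\uset_\bbI I$, so the reindexing is a bijection between subsets of $\Cov(I)$ and pairs $(J,S)$ appearing in the double sum.
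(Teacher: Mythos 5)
Your proposal is correct and takes the same route the paper intends: combine Corollary~\ref{cor:dc} ($d_M = c^\xi_M\mu$) with the explicit description of $\mu$ in Theorem~\ref{thm:mu}, then reindex the double sum over $S \subseteq \Cov(I)$ using the fact (Proposition~\ref{prp:JOIN} and the convention $\bigvee\emptyset := I$) that each such $S$ determines a unique $J = \bigvee S \ge I$. The paper leaves this reindexing implicit, and your write-up simply fills in that bookkeeping.
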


\begin{proof}
By \cref{cor:dc}, equality~\eqref{formula-mu} and \cref{thm:mu}, we have
\begin{align*}
&\ \ \ \ d_M(V_I)  = d_M(I) = (c^\xi_M \mu) (I) = \sum_{I\leq J} c^\xi_M(J) \mu([I,J]) = \sum_{I\leq J} c^\xi_M(J) \sum_{S\in \pE} (-1)^{|S|} \\
& = \sum_{\substack{I\leq J \\ S\in \pE}} (-1)^{|S|} c^\xi_M(J) = \sum_{\substack{I\leq J \\ S \subseteq \Cov(I), \bigvee S = J}} (-1)^{|S|} c^\xi_M(J) = \sum_{S \subseteq \Cov(I)} (-1)^{|S|} \ c^{\xi}_M\left(\bigvee S\right).
\end{align*}

\end{proof}

\begin{dfn}
\label{dfn:delta}
Let $M \in \mod A$ and $I \in \bbI$. We define the \emph{signed interval multiplicity} $\de^\xi_M$ of $M$
under $\xi$ as the function $\de^\xi_M \colon \bbI \to \bbZ$ by setting
\[
\de^\xi_M(I) := \sum_{S \subseteq \Cov(I)} 
(-1)^{|S|} \ c^{\xi}_M\left(\bigvee S\right)
\]
for all $I \in \bbI$.
By Theorem \ref{thm:mu}, this can be rewritten as
\[
\de^\xi_M :=  c^{\xi}_M \mu.
\]
\end{dfn}
\begin{rmk}
Note that in Definition \ref{dfn:delta}, $M$ is not necessarily interval decomposable anymore. If $M$ is interval decomposable, it is clear that $\de^\xi_M = d_M(V_{(\cdot)})$ 
as functions on $\bbI$ by Corollary \ref{cor:dc2}.
\end{rmk}

\begin{prp}
\label{prp:cdelta}
Let $M \in \mod A$. For all $I \in \bbI$, we have
\[
c^{\xi}_M(I) = \sum_{I \le J \in \bbI}{\de^\xi_M(J)},
\]
that is to say
\[
c^{\xi}_M =  \de^\xi_M \ze.
\]
\end{prp}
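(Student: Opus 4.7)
The proposition is essentially a direct consequence of the Möbius Inversion Formula (Theorem \ref{thm:MIF}), together with the identification of $\de^\xi_M$ with $c^\xi_M \mu$ in the incidence algebra. The plan is to show that the identity $c^\xi_M = \de^\xi_M \zeta$ holds in $\bbR\bbI$ and then evaluate both sides as functions on $\bbI$.

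First I would unpack Definition \ref{dfn:delta}: the signed interval multiplicity is given by the sum
\[
\de^\xi_M(I) = \sum_{S \subseteq \Cov(I)} (-1)^{|S|}\, c^\xi_M\!\left(\bigvee S\right),
\]
which by Theorem \ref{thm:mu} can be rewritten as $\de^\xi_M(I) = \sum_{I \le J} c^\xi_M(J)\, \mu([I,J])$, that is, $\de^\xi_M = c^\xi_M \mu$ as elements of $\bbR\bbI$ (viewed as functions on $\Seg(\bbI)$ via Remark \ref{rmk:inc-alg}). This is precisely condition (2) of the Möbius Inversion Formula with $f = c^\xi_M$ and $g = \de^\xi_M$.

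Next, by the equivalence of (1) and (2) in Theorem \ref{thm:MIF}, condition (1) must hold as well, which reads $c^\xi_M(I) = \sum_{I \le J \in \bbI} \de^\xi_M(J)$, exactly the first asserted equality. Alternatively, and more concretely, one can compute algebraically: since $\mu$ is by construction the inverse of $\zeta$ in $\bbR\bbI$, multiplying $\de^\xi_M = c^\xi_M \mu$ on the right by $\zeta$ yields $\de^\xi_M\, \zeta = c^\xi_M\, \mu\, \zeta = c^\xi_M$, which is the displayed algebraic identity. Then using the explicit formula \eqref{eq:zeta} for the right action of $\zeta$ on $\bbR^\bbI$, namely $(f \cdot \zeta)(I) = \sum_{I \le J} f(J)$, evaluating $\de^\xi_M \, \zeta$ at $I$ gives back the sum $\sum_{I \le J} \de^\xi_M(J)$, completing the verification.

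There is no real obstacle here: the only thing to be careful about is bookkeeping between the two viewpoints on $\bbR\bbI$ (as matrices versus as functions on $\Seg(\bbI)$), and to invoke Theorem \ref{thm:mu} in order to recognize $\de^\xi_M$, defined via the sum over subsets of $\Cov(I)$, as the product $c^\xi_M \mu$. Once this identification is made, the statement is just the well-known fact that $\mu$ inverts $\zeta$.
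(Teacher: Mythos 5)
Your proof is correct and takes essentially the same route as the paper, which simply cites Theorem~\ref{thm:MIF}; you just unpack the application by identifying $f = c^\xi_M$, $g = \de^\xi_M$ in the inversion formula (or, equivalently, by multiplying $\de^\xi_M = c^\xi_M\mu$ on the right by $\zeta$).
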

\begin{proof}
This is a direct consequence of Theorem \ref{thm:MIF}.
\end{proof}

\begin{rmk}
\label{rmk:altdefdelta}
Proposition \ref{prp:cdelta} gives an alternative definition of the signed interval multiplicity $\de^\xi_M$ without using M{\"o}bius inversion formula. Indeed, it is possible to define $\de^\xi_M$ by induction in the following way: first, define $\de^\xi_M(I) := c^{\xi}_M(I)$ for every maximal interval $I$. Then define inductively $\de^\xi_M(I) := c^{\xi}_M(I) - \sum_{I < J}\de^\xi_M(J)$.
\end{rmk}

We remark here that~\cite{ASASHIBA2023107397} shows that, for any persistence module over a commutative ladder, both signed and unsigned interval multiplicities (under the specified compression system) can be recovered from the module's relative Betti numbers by noticing~\cref{prp:cdelta}  (see~\cite[Theorem~5.5, Definition~5.6, and Corollary~5.7]{ASASHIBA2023107397}).

\section{Interval replacement and interval rank invariant}\label{sec4}

Noting that for each $I \in \bbI$,
$\de_M^\xi(I)$ can be defined even for
modules $M$ that are not necessarily interval decomposable,
we introduce the following.

\subsection{Interval replacement}
Recall that the \emph{split Grothendick group} $K^\ds(A)$ of $A$ is defined
to be the factor group $F_A/R_A$ of the free abelian group $F_A$ with basis the set of isomorphism classes $[M]$ of all $M\in\mod A$ modulo
the subgroup $R_A$ generated by the elements $[M \ds N] - [M] - [N]$
for all $M, N \in \mod A$.
We set $\br{M}:= [M] + R_A \in K^\ds(A)$ for all $M \in \mod A$.
Let $\calL$ be a complete set of representatives of the isoclasses of
indecomposable $A$-modules.
Then we have
\begin{equation}
\label{eq:sp-Gr-gp}
K^\ds(A) = \Ds_{L \in \calL} \bbZ \br{L}.
\end{equation}

\begin{dfn}
\label{dfn:pm-map}
By \eqref{eq:sp-Gr-gp}, for each $x \in K^\ds(A)$, there exists a unique family
$(a_L)_{L \in \calL} \in \bbZ^\calL$ such that
$x = \sum_{L \in \calL} a_L \br{L}$.
We set
$$
x_+:= \sum_{\smat{L \in \calL\\a_L > 0}}L^{(a_L)}
\quad
\text{and}\quad
x_-:= \sum_{\smat{L \in \calL\\a_L < 0}}L^{(a_L)}
$$
and call them
the \emph{positive} and \emph{negative} \emph{part} of $x$, respectively,
which are $A$-modules.
Then we have $x = \br{x_+} - \br{x_-}$.

For any map $f \colon \mod A \to \bbZ$, we define a map
$K^\ds(A) \to \bbZ$ by sending each $x \in K^\ds(A)$ to
$f(x_+) - f(x_-)$, and denote it by the same letter $f$.
Note that if $f$ is \emph{additive},
namely if $f(M \ds N) = f(M) + f(N)$ for all $M, N \in \mod A$,
then $f(x) = \sum_{L \in \calL} a_L f(L)$ for all
$x = \sum_{L \in \calL} a_L \br{L}$ above.
\end{dfn}

\begin{dfn}
\label{dfn:int-repl}
Let $M \in \mod A$. We set
\[
\begin{aligned}
\de^\xi(M)_+ &:=
\bigoplus\limits_{\substack{I\in\bbI\\\de^\xi_M(I)>0}} {V_I}^{\de^\xi_M(I)},
\quad
\de^\xi(M)_- :=
\bigoplus\limits_{\substack{I\in\bbI\\\de^\xi_M(I)<0}} {V_I}^{(-\de^\xi_M(I))},
 \text{ and}\\
\de^\xi(M)&:= \br{\de^\xi(M)_+} - \br{\de^\xi(M)_-}
\end{aligned}
\] 
in $K^\ds(A)$. We call $\de^\xi(M)$ the \emph{interval replacement} of $M$. Note that $\de^\xi(M)$ is not a module, just an element of the split Grothendieck group, while both $\de^\xi(M)_+$ and $\de^\xi(M)_-$ are interval decomposable modules, and that $\de^\xi(M)$ can be presented by the pair of these interval decomposable modules.
\end{dfn}

\begin{dfn}
Let $M \in \mod A$ and $[x,y] \in \Seg(\bfP)$.
Recall that we have a unique morphism
$p_{y,x} \colon x \to y$ in $\bfP$ (see~\cref{dfn:inc-cat}),
and $M$ yields a structure linear map (see~\cref{dfn:structure-linear-map})
\[
M_{y,x}:= M(p_{y,x}) \colon M(x) \to M(y).
\]
Using this we set $\rank_{[x,y]}M:= \rank M_{y,x}$.
This is called the $[x,y]$-\emph{rank} of $M$.
Then the family $\rank_{\Seg(\bfP)}M:= (\rank_{[x,y]} M)_{[x,y] \in \Seg(\bfP)}$
is just the so-called \emph{rank invariant} of $M$.
\end{dfn}

\begin{dfn}
\label{dfn:rank-de}
We apply Definition \ref{dfn:pm-map} as follows.
For each $[x,y] \in \Seg(\bfP)$, we define
the $[x,y]$-\emph{rank} of $\de^\xi(M)$ to be
\[
\rank_{[x,y]} \de^\xi(M):= \rank_{[x,y]} \de^\xi(M)_+ - \rank_{[x,y]} \de^\xi(M)_{-}
\]
and the \emph{dimension vector} of $\de^\xi(M)$ to be
\[
\udim \de^\xi(M):= \udim \de^\xi(M)_+ - \udim \de^\xi(M)_{-}.
\]
Then by Definition \ref{dfn:int-repl}, we have
\[
\begin{aligned}
\rank_{[x,y]} \de^\xi(M)&= \sum_{I\in \bbI} \de^\xi_M(I) \cdot \rank_{[x,y]} V_I,\
\text{and}\\
\udim \de^\xi(M)&=\sum_{I\in \bbI} \de^\xi_M(I) \cdot \udim (V_I).
\end{aligned}
\]
\end{dfn}

\begin{dfn}
\label{dfn:int-mult-inv}
Let $M \in \mod A$, and $I \in \bbI$.
Recall that
\[
\mult^\xi_I M:= c^\xi_M(I)
\]
is called the $I$-\emph{multiplicity} of $M$ under $\xi$.
The family
$\mult^\xi_\bbI M:= (\mult^\xi_I M)_{I \in \bbI}$
is called the \emph{interval multiplicity invariant} of $M$ under $\xi$.

For each $I \in \bbI$,
the $I$-\emph{multiplicity} of $\de^\xi(M)$ under $\xi$ is defined by
Definition \ref{dfn:pm-map} as follows:
\[
\mult^\xi_I \de^\xi(M):= \mult^\xi_I \de^\xi(M)_+ - \mult^\xi_I \de^\xi(M)_{-}.
\]
\end{dfn}

Using the notations given above, we obtain the following.

\begin{prp}
\label{prp:xi-mult}
Let $M \in \mod A$, and $I \in \bbI$.
Then
\[
\mult^\xi_I \de^\xi(M) = \mult^\xi_I M.
\]
\end{prp}

Thus,
$\de^\xi$ preserves the interval multiplicity invariants of all persistence modules $M$ under $\xi$.

\begin{proof}
By Definition \ref{dfn:int-repl} and Propositions \ref{prp:AD}, \ref{prp:cdelta}, we have
\[
\mult^\xi_I \de^\xi(M) = 
\sum_{J\in \bbI} \de^\xi_M(J) \cdot \mult^\xi_I V_J
= \sum_{I \le J \in \bbI} \de^\xi_M(J) = c^\xi_M(I) = \mult^\xi_I(M). \tag*{\qedhere}
\]
\end{proof}

\begin{ntn}
\label{ntn:Yoneda}
Let $S$ be a finite poset, $M \in \mod \k[S]$, and $x, y \in S$.
\begin{enumerate}
\item
We set $P_x:= \k[S](x,\blank)$ (resp.\ $P'_x:= \k[S\op](x,\blank)$)
to be the projective indecomposable $\k[S]$-module (resp.\ $\k[S\op]$-module) corresponding to the vertex $x$, and $I_x:= D(\k[S](\blank, x))$ (resp.\ $I'_x:= D(\k[S\op](\blank, x))$)
to be the injective indecomposable $\k[S]$-module (resp.\ $\k[S\op]$-module) corresponding to the vertex $x$.

\item
By the Yoneda lemma, we have an isomorphism
\[
M(x) \to \Hom_{\k[S]}(P_x, M),\quad
m \mapsto \ro_m\ (m \in M(x)),
\]
where $\ro_m \colon P_x \to M$ is a morphism $\left(\ro_{m,y} \colon P_x(y) \to M(y)\right)_{y \in S}$ in $\mod \k[S]$, where $\ro_{m,y}$ is
defined by
\begin{equation}
\label{eq:Yoneda_corres}
	\ro_{m,y}(p)\coloneqq p\cdot m = M(p)(m)
\end{equation}
for all $y \in S$ and $p \in P_x(y) = \k[S](x,y)$. Sometimes we just write $\ro_{m}(p):= M(p)(m)$ by omitting $y$.

Similarly, for the opposite poset $S\op$ of $S$,
by considering an $\k[S\op]$-module $N$ to be a right $\k[S]$-module, we have an isomorphism
\[
N(x) \to \Hom_{\k[S\op]}(P'_x, N),\quad
m \mapsto \ro'_m\ (m \in N(x)),
\]
where $\ro'_m \colon P'_x \to N$ is a morphism $\left(\ro'_{m,y} \colon P'_x(y) \to N(y)\right)_{y \in S}$ in $\mod \k[S\op]$, where $\ro'_{m,y}$ is defined by
\begin{equation}
\label{eq:Yoneda_corres_op}
	\ro'_{m,y}(p)\coloneqq m\cdot p = N(p)(m)
\end{equation}
for all $y \in S$ and $p \in P'_x(y) = \k[S\op](x,y)$. Sometimes we just write $\ro'_{m}(p):= N(p)(m)$ by omitting $y$.

\item
Since $p_{y,x} \in \k[S](x,y) = P_x(y)$,
we can set $\sfP_{y,x}:= \ro_{p_{y,x}} \colon P_y \to P_x$.
Similarly, we set $p\op_{x,y}:= p_{y,x} \in \k[S\op](y,x) = \k[S](x,y)$
for all $(x,y) \in [\le]_{S} $.
It induces a morphism $\sfP'_{x,y}:= \ro_{p\op_{x,y}} \colon P'_x \to P'_y$ in $\mod \k[S\op]$.
\item
Let $B =\k$ or $A$, and suppose that $V, W \in \mod B$
is decomposed as $V = \Ds_{i\in [m]} V_i$ (resp.\ $W = \Ds_{j\in [n]} W_j$),
say with the canonical projections $\pi^V_i \colon V \to V_i$
(resp.\ $\pi^W_j \colon W \to W_j$) and the canonical injections
$\si^V_i \colon V_i \to V$ (resp.\ $\si^W_j \colon W_j \to W$).
Then recall that a morphism $f \colon W \to V$ in $\mod B$ is expressed as a
$m \times n$ matrix $f = [f_{i,j}]_{(i,j) \in [m]\times [n]}$,
where $f_{i,j}:= \pi^V_i \circ f \circ \si^W_j$ for all $(i,j) \in [m]\times [n]$.
Note that if $f' \colon W \to V$ is another morphism in $\mod B$,
then $f = f'$ if and only if $f_{i,j} = f'_{i,j}$ for all  $(i,j) \in [m]\times [n]$,
which justifies the expression of $f = [f_{i,j}]_{(i,j) \in [m]\times [n]}$.
Suppose further that $U \in \mod B$ is decomposed as
$U = \Ds_{h\in [l]} U_h$ and let $e \colon V \to U$ be in $\mod B$ with
a matrix expression $e = [e_{h,i}]_{(h,i) \in [l] \times [m]}$
with respect to these decompositions of $V, U$.
Then the matrix expression of $ef$ is given by the usual matrix multiplication
of $[e_{h,i}]_{(h,i)} \cdot [f_{i,j}]_{(i,j)}$.
We denote by ${}^t(\blank)$ the formal transpose:
${}^t[f_{i,j}]_{(i,j) \in [m]\times [n]}:=[f_{i,j}]_{(j,i) \in [n]\times [m]}$.
See Lemma \ref{lem:blockwise-transpose} and Remark \ref{rmk:transpose}.
\end{enumerate}
\end{ntn}

For any finite poset $S$ and any $C, M \in \mod \k[S]$,
the following lemma makes it possible to compute
the dimension of $\Hom_{\k[S]}(C,M)$
by using a projective presentation of $C$ and the module structure of $M$.

\begin{lem}
\label{lem:dim-Hom-coker}
Let $S$ be a finite poset and $C, M \in \mod \k[S]$.
Assume that $C$ has a projective presentation
\[
\Ds_{j\in [n]} P_{y_j} \ya{\ep} \Ds_{i\in [m]} P_{x_i} \ya{\ka} C
 \to 0\]
for some
$x_1, x_2 \dots, x_m, y_1, y_2, \dots, y_n \in S$, and $\ep = [\ep_{i, j}]_{(i,j) \in [m]\times [n]}:= [a_{ji}\sfP_{y_j,x_i}]_{(i,j) \in [m]\times [n]}$ with $a_{ji} \in \k$.
Then we have
\[
\dim \Hom_{\k[S]}(C, M) = \sum_{i\in [m]} \dim M(x_i) -\rank{}^t\!\!\left([a_{ji}M_{y_j, x_i}]_{(i,j) \in [m]\times [n]}\right).
\]
\end{lem}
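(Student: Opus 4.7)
The plan is to apply the left-exact contravariant functor $\Hom_{\k[S]}(\blank, M)$ to the given projective presentation and identify the induced map of vector spaces via the Yoneda isomorphism of Notation \ref{ntn:Yoneda}.

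First I would obtain the left-exact sequence
\[
0 \to \Hom_{\k[S]}(C, M) \to \Hom_{\k[S]}\!\left(\Ds_{i=1}^m P_{x_i}, M\right) \ya{\mu^*} \Hom_{\k[S]}\!\left(\Ds_{j=1}^n P_{y_j}, M\right)
\]
and use additivity together with the Yoneda isomorphism $\Hom_{\k[S]}(P_z, M) \iso M(z)$ to identify the middle term with $\Ds_{i=1}^m M(x_i)$, of dimension $\sum_{i=1}^m \dim M(x_i)$, and the right term with $\Ds_{j=1}^n M(y_j)$. Exactness of the sequence above then immediately yields
\[
\dim \Hom_{\k[S]}(C, M) = \sum_{i=1}^m \dim M(x_i) - \rank \mu^*.
\]

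The heart of the proof is then computing $\mu^*$ under these identifications. By functoriality of Yoneda, the component $a_{ji} P_{y_j, x_i} \colon P_{y_j} \to P_{x_i}$ of $\mu$ induces the linear map $M(x_i) \to M(y_j)$ sending $m$ to the value at $\id_{y_j}$ of the composite $\ro_m \circ (a_{ji} P_{y_j, x_i})$, which unwinds via the definitions of $\ro_m$ and $P_{y_j, x_i}$ to $a_{ji}\,\ro_m(p_{y_j, x_i}) = a_{ji}\,M_{y_j, x_i}(m)$. Hence the $(j,i)$-block of $\mu^*$ is $a_{ji} M_{y_j, x_i}$, so $\mu^*$ is presented by the blockwise transpose ${}^t\!\left([a_{ji} M_{y_j, x_i}]_{(i,j) \in [m]\times [n]}\right)$, and the claimed formula follows from the previous display.

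The main, though mild, obstacle is keeping track of which index denotes rows and which denotes columns: because $\Hom_{\k[S]}(\blank, M)$ is contravariant, the original matrix presentation of $\mu$ (with $i$ labeling the row corresponding to the target $P_{x_i}$, and $j$ labeling the column corresponding to the source $P_{y_j}$) turns into its blockwise transpose after $\Hom_{\k[S]}(\blank, M)$ is applied, which is exactly what the statement of the lemma records. The remaining ingredients---left-exactness of $\Hom_{\k[S]}(\blank, M)$, the Yoneda lemma, and the relation $\dim \Ker = \dim\text{(domain)} - \rank$---are entirely standard.
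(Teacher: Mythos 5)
Your proposal is correct and takes essentially the same approach as the paper: apply the contravariant functor $\Hom_{\k[S]}(\blank, M)$ to the projective presentation, use the Yoneda identification $\Hom_{\k[S]}(P_z, M)\iso M(z)$, and recognize the induced map as the blockwise transpose $[a_{ji}M_{y_j,x_i}]_{(i,j)}$. The paper carries out the identification by an explicit kernel computation rather than your slightly more abstract appeal to functoriality, but the content is the same.
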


\begin{proof}
Set $Y:=\Ds_{j\in [n]} P_{y_j}, X:=\Ds_{i\in [m]} P_{x_i}$ for short.
Then we have an exact sequence $Y \ya{\ep} X \ya{\ka} C \to 0$, which yields an exact sequence
\[
0 \to \Hom_{\k[S]}(C, M) \to \Hom_{\k[S]}(X, M) \ya{\Hom_{\k[S]}(\ep,M)} \Hom_{\k[S]}(Y,M).
\]
Hence $\Hom_{\k[S]}(C, M) \iso \Ker\Hom_{\k[S]}(\ep,M)$.
Now we have
\[
\begin{aligned}
&\Ker\Hom_{\k[S]}(\ep,M) = \{f \in \Hom_{\k[S]}(X,M) \mid f\ep = 0\}\\
&\iso \left.\left\{(f_1,\dots,f_m) \in \Ds_{i\in [m]} \Hom_{\k[S]}(P_{x_i},M) \right| (f_1,\dots,f_m)[a_{ji}\sfP_{y_j, x_i}]_{(i,j)} = 0 \right\}\\
&= \left.\left\{(f_1,\dots,f_m) \in \Ds_{i\in [m]} \Hom_{\k[S]}(P_{x_i},M) \right|
\left(\sum_{i\in [m]} a_{ji} f_i \sfP_{y_j, x_i}\right)_{j\in [n]} = 0 \right\}\\
&\iso \left.\left\{\sbmat{b_1\\\vdots\\b_m} \in \Ds_{i\in [m]} M(x_i) \right|
\left(\sum_{i\in [m]} a_{ji}M_{y_j, x_i}(b_i)\right)_{j\in [n]} = 0 \right\}\\
&= \left.\left\{\sbmat{b_1\\\vdots\\b_m} \in \Ds_{i\in [m]} M(x_i) \right| {}^t\!\!\left([a_{ji}M_{y_j, x_i}]_{(i,j)}\right)\sbmat{b_1\\\vdots\\b_m} = 0 \right\}\\
& = \Ker\left(\,{}^t\!\!\left([a_{ji}M_{y_j, x_i}]_{(i,j)}\right) \colon \Ds_{i\in [m]} M(x_i) \to \Ds_{j\in [n]} M(y_j)\right).
\end{aligned}
\]
Hence $\dim \Hom_{\k[S]}(C, M) = \sum_{i\in [m]} \dim M(x_i) -\rank{}^t\!\!\left([a_{ji}M_{y_j,x_i}]_{(i,j)\in [m]\times [n]}\right)$.
\end{proof}

To show the following statement, we need condition (3) in the
definition of rank compression system (see Definition \ref{dfn:comp-sys-simplified-ver}).
Recall that both compression systems $\tot$ and $\ss$ are rank compression systems.

\begin{prp}
\label{prp:seg-rank}
Let $\xi$ be a compression system,
$M \in \mod A$, and $[x,y] \in \Seg(\bfP)$.
Then we have
\[
c^\xi_M([x,y]) \ge \rank_{[x,y]} M.
\]
If $\xi$ is a rank compression system, then the equality holds:
\[
c^\xi_M([x,y]) = \rank_{[x,y]} M.
\]
Therefore,
in that case, $c^\xi_M([x,y])$ does not depend on $\xi$.
\end{prp}

\begin{proof}
For simplicity, we put $I:= [x,y]$ and $R:= R_I^\tot$.
We use Notation \ref{ntn:tot} and Notation \ref{ntn:Yoneda} for $S:= I$.
Then we have
\[
c^\xi_M(I) = d_{R_I(M)}(R_I(V_I)) = d_{R'(R(M))}(R'(R(V_I))) = d_{R'(R(M))}(R'(V_I))
\]
because $R(V_I) = V_I$.
Note that as a $\k[I]$-module, we have
\[
V_I \iso P_x \iso I_y.
\]
We first compute $d_{R(M)}(V_I)$.
By applying the formula given in \cite{Asashiba2017} to $V_I = I_y$, we have
\begin{equation}
\label{eq:d_R(M)}
d_{R(M)}(V_I) = \dim\Hom_{\k[I]}(I_y, R(M)) - \dim\Hom_{\k[I]}(I_y/\soc I_y, R(M)).
\end{equation}
Here, the first term is given by
\[
\dim\Hom_{\k[I]}(I_y, R(M)) = \dim\Hom_{\k[I]}(P_x, R(M)) = \dim R(M)(x) = \dim M(x).
\]
For the second term, consider the canonical short exact sequence
\[
0 \to \soc I_y \ya{\mu} I_y \ya{\ep} I_y/\soc I_y \to 0
\]
in $\mod \k[I]$.
Since $I_y \iso P_x$ and $\soc I_y = \k p_{y,x} \iso P_y$,
we see that this turns out to be a projective presentation
of $I_y/\soc I_y$, where $\mu$ is given by $\sfP_{y,x}$:
\[
0 \to P_y \ya{\sfP_{y,x}} P_x \ya{\ep} I_y/\soc I_y \to 0.
\]
Hence by Lemma \ref{lem:dim-Hom-coker}, we see that the second term  of \eqref{eq:d_R(M)} is given by
\[
\dim\Hom_{\k[I]}(I_y/\soc I_y, R(M))
= \dim M(x) - \rank M_{y,x}
= \dim M(x) -  \rank_I M.
\]
Therefore, we have 
\[
\begin{aligned}
d_{R(M)}(V_I) &= \dim M(x) - (\dim M(x) -  \rank_I M)\\
&= \rank_{I}M.
\end{aligned}
\]
This means that $R(M)$ has a decomposition of the form
$R(M) \iso V_I^{(\rank_{I}M)} \ds N$
for some $N \in \mod \k[I]$.
Then $R'(R(M)) \iso R'(V_I)^{(\rank_{I}M)} \ds R'(N)$,
which shows that $c_M^\xi(I) = d_{R'(R(M))}(R'(V_I)) \ge \rank_{I}M$.

Next, assume that $\xi$ is a rank compression system.
Then we show the converse inequality.
Set $c:= c_M^\xi(I) = d_{R_I(M)}(R_I(V_I))$.
Then we have an isomorphism
\begin{equation}
\label{eq:in_BI}
R_I(M) \iso R_I(V_I)^c \oplus N' \text{ in } \mod \k[I^\xi].
\end{equation}
Since $\xi$ is a rank compression system,
we have $x,y \in \xi_I(I^{\xi} )$, and there exists a morphism $q$ in $I^{\xi}$
with $\k[\xi_I](q) = p_{y,x}$.
Then from \eqref{eq:in_BI}, we have
$R_I(M)(q) \iso R_I(V_I)(q)^c \oplus N'(q)$
and hence
\[
\begin{aligned}
\rank_I M &= \rank M(p_{y,x}) = \rank M(\k[\xi_I](q))\\
&= \rank R_I(M)(q) \ge c \cdot \rank R_I(V_I)(q) = c
\end{aligned}
\]
because $\rank R_I(V_I)(q) = \rank V_I(p_{y,x}) = 1$.
Thus, $\rank_I M \ge c_M^\xi(I)$.
\end{proof}

The following is an immediate consequence of the proposition above.

\begin{cor}
\label{cor:non-interval indecomposable has zero rank invariant}
Let $M \in \mod A$ and $I = [x,y] \in \Seg(\bfP)$.
If $R^{\tot}_{I}(M) \in \mod \k[I]$ is indecomposable
and not isomorphic to $V_I$, then $M(p_{y,x}) = 0$.
\end{cor}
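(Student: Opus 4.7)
The plan is to apply Proposition \ref{prp:seg-rank} directly with $\xi = \tot$ and deduce the conclusion from the Krull--Schmidt multiplicity vanishing.

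First I would unpack $c^\tot_M(I)$. By definition $c^\tot_M(I) = d_{R_I^\tot(M)}(R_I^\tot(V_I))$, and for the total compression system the restriction functor $R_I^\tot \colon \mod A \to \mod \k[I]$ coincides with $R$ (since $B_I = \k[I]$ and $F^\tot_I$ is the inclusion $\k[I] \hookrightarrow A$). In particular $R_I^\tot(V_I) = V_I$, so
\[
c^\tot_M(I) = d_{R(M)}(V_I).
\]

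Next I would use the hypothesis that $R(M)$ is indecomposable and not isomorphic to $V_I$. By Theorem \ref{thm:KS} (Krull--Schmidt), the multiplicity of $V_I$ as a direct summand of $R(M)$ is zero, i.e.\ $d_{R(M)}(V_I) = 0$. Combined with the previous step this gives $c^\tot_M(I) = 0$.

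Finally I would invoke Proposition \ref{prp:seg-rank}, which yields $c^\tot_M(I) = \rank_{[x,y]} M = \rank M(p_{y,x})$. Hence $\rank M(p_{y,x}) = 0$, so $M(p_{y,x}) = 0$. There is no real obstacle here; the whole argument is a one-line composition of the preceding proposition with the Krull--Schmidt uniqueness, which is why the statement is labeled as an immediate corollary.
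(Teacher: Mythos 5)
Your proof is correct and follows exactly the same route as the paper: take $\xi = \tot$, observe that the Krull--Schmidt multiplicity $d_{R(M)}(V_I) = c^\tot_M(I)$ vanishes by the indecomposability hypothesis, and then apply Proposition~\ref{prp:seg-rank} to conclude $\rank M(p_{y,x}) = 0$. The paper merely compresses your first two steps into ``by assumption, we have $c^\tot_M(I) = 0$''.
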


\begin{proof}
Here we take the total compression system $\tot$ in Example \ref{exm:total}. Then by assumption, we have $c^\tot_M(I) = 0$. Hence by the proposition above, we obtain $\rank M(p_{y,x}) = \rank_{I} M = 0$, which shows the assertion.
\end{proof}

\begin{rmk}
The statement above is also shown by using \cite[Lemma 3.1]{chambersPersistentHomologyDirected2018}
(Lemma \ref{lem:gen-rk-inv} in this paper).
Nevertheless, since, as far as we are aware, the gap in the proof of this statement has been addressed only in this paper, it remains unclear whether this phenomenon occurs in general.
Upon checking this, we found only cases in which either $M(x) = 0$ or $M(y) = 0$, so that
$M(p_{y,x}) = 0$ holds trivially.
This motivated us to search for an example in which both $M(x) \ne 0$ and $M(y) \ne 0$.
Finally, we located it in Example \ref{exm:2D-grid}.
This example suggested to us that the statement of Lemma \ref{lem:gen-rk-inv}
is valid and prompted us to develop a complete proof.
From this perspective, Corollary \ref{cor:non-interval indecomposable has zero rank invariant} constitutes a new result.
\end{rmk}

\begin{exm}
\label{exm:2D-grid}
Let $\bfP = G_{5,2}$ in \eqref{eq:G52} and $I = [x,y]=\bfP$
with $x = (1,1),\, y = (5,2)$.
It is known that there exists an indecomposable $\k[I]$-module $M$ with
dimension vector $\sbmat{2\ 3\ 3\ 2\ 1\\1\ 2\ 3\ 3\ 2}$.
Then since $M \not\iso V_I$, we have to have $M(p_{y,x}) = 0$ by the corollary above.

Indeed, it is not hard to check that the following module $M$
is indecomposable with this dimension vector
(thus this dimension vector is realized as this $M$):
\[
\begin{tikzcd}[ampersand replacement=\&]
\Nname{1'}\k^2 \& \Nname{2'}\k^3 \& \Nname{3'}\k^3 \& \Nname{4'}\k^2 \& \Nname{5'}\k\\
\Nname{1}\k \& \Nname{2}\k^2 \& \Nname{3}\k^3 \& \Nname{4}\k^3 \& \Nname{5}\k^2
\Ar{1}{2}{"\sbmat{0\\1}" '}
\Ar{2}{3}{"\sbmat{1&0\\0&1\\0&-1}" '}
\Ar{3}{4}{"\id" '}
\Ar{4}{5}{"\sbmat{1&0&0\\0&0&1}" '}
\Ar{1'}{2'}{"\sbmat{1&1\\-1&0\\0&-1}"}
\Ar{2'}{3'}{"\id"}
\Ar{3'}{4'}{"\sbmat{1&0&0\\0&1&0}"}
\Ar{4'}{5'}{"\sbmat{1&0}"}
\Ar{1}{1'}{"\sbmat{-1\\1}"}
\Ar{2}{2'}{"\sbmat{1&0\\0&1\\0&-1}" '}
\Ar{3}{3'}{"\id" '}
\Ar{4}{4'}{"\sbmat{1&0&0\\0&1&0}" '}
\Ar{5}{5'}{"\sbmat{1&0}" '}
\end{tikzcd}.
\]
It is certain that $M$ satisfies the condition $M(p)=0$ stated above.
We note that this $M$ is obtained as the Auslander--Reiten translation $\ta M_\la$ of the indecomposable module $M_\la$ with $\la = 1$ in Example \ref{exm:M-lambda}.
\end{exm}

\subsection{Interval rank invariant}

Proposition \ref{prp:seg-rank} suggests us to define the following.

\begin{dfn}
\label{dfn:int-rk}
Assume that $\xi$ is a \emph{rank} compression system.
Let $M \in \mod A$, and $I \in \bbI$.
Then we set
\[
\rank^\xi_I M:= c^\xi_M(I) = \mult^\xi_I M,
\]
and call it the $I$-\emph{rank} of $M$ under $\xi$, and the family
$\rank^\xi_\bbI M:= (\rank^\xi_I M)_{I \in \bbI}$
is called the \emph{interval rank invariant} of $M$ under $\xi$.
Note that since for each $J \in \bbI$, $\rank^\xi_I V_J$ does not depend on $\xi$ 
by Proposition \ref{prp:IJ}, we see that for every interval decomposable module $N$,
$\rank^\xi_I N$ does not depend on $\xi$, and hence we may write it $\rank_I N$.

For each $I \in \bbI$,
the $I$-\emph{rank} of $\de^\xi(M)$ under $\xi$ is defined by Definition
\ref{dfn:pm-map} as follows:
\[
\begin{aligned}
\rank^\xi_I \de^\xi(M)&:= \rank^\xi_I \de^\xi(M)_+ - \rank^\xi_I \de^\xi(M)_{-}\\
&(= \rank_I \de^\xi(M)_+ - \rank_I \de(M)^\xi_{-}).
\end{aligned}
\]
Note that $\rank^\xi_I \de^\xi(M)$ may depend on $\xi$ because $\de^\xi(M)_{\pm}$ depend on
$\xi$.
\end{dfn}

As a direct consequence of Proposition \ref{prp:xi-mult},
we have the following.

\begin{thm} \label{thm:rank}
Assume that $\xi$ is a rank compression system.
Let $M \in \mod A$, and $I \in \bbI$.
Then
\[
\rank^\xi_I \de^\xi(M) = \rank^\xi_I M.
\]
In particular, for any $[x,y] \in \Seg(\bfP)$, we have
\[
\begin{aligned}
\label{eq:rank}
\rank^\xi_{[x,y]} \de^\xi(M) &= \rank_{[x,y]} M,\\
\udim \de^\xi(M) &= \udim M.
\end{aligned}
\]
\end{thm}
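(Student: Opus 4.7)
The plan is to unfold the expression for $\rank^\xi_I\de^\xi(M)$ given at the end of Definition~\ref{dfn:int-rk} into a sum over all intervals $J$, reduce each summand to a purely combinatorial coefficient using Proposition~\ref{prp:IJ}, and then recognise the resulting sum as the zeta-convolution that characterises $c^\xi_M$ in terms of $\de^\xi_M$, as recorded in Proposition~\ref{prp:cdelta}.

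Concretely, I would start from the identity
\[
\rank^\xi_I \de^\xi(M) \;=\; \sum_{J\in\bbI} \de^\xi_M(J)\cdot\rank_I V_J
\]
displayed at the end of Definition~\ref{dfn:int-rk}. Since each $V_J$ is interval decomposable (indeed indecomposable), the parenthetical remark in that same definition guarantees that $\rank_I V_J$ does not depend on $\xi$ and in fact equals $c^\xi_{V_J}(I)$. Proposition~\ref{prp:IJ} evaluates this coefficient to $1$ when $I\le J$ and to $0$ otherwise, so the sum collapses to
\[
\rank^\xi_I \de^\xi(M) \;=\; \sum_{I\le J\in\bbI} \de^\xi_M(J).
\]
By Proposition~\ref{prp:cdelta} the right-hand side is exactly $c^\xi_M(I)$, which by definition is $\rank^\xi_I M$. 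This gives the main equality.

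For the two consequences, specialising the main equality to $I=[x,y]\in\Seg(\bfP)$ and invoking Proposition~\ref{prp:seg-rank} (which identifies $c^\xi_M([x,y])$ with $\rank_{[x,y]} M$) yields the segment identity. The dimension-vector identity then drops out by taking $y=x$: for any interval decomposable module $N$, the $x$-th entry of $\udim N$ equals $\rank_{[x,x]} N$ (the identity map of $N(x)$), so by Definition~\ref{dfn:rank-de} the $x$-th entry of $\udim\de^\xi(M)$ is $\rank_{[x,x]}\de^\xi(M)_+ - \rank_{[x,x]}\de^\xi(M)_- = \rank^\xi_{[x,x]}\de^\xi(M)$, which we have just shown equals $\rank_{[x,x]} M = \dim M(x)$.

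Because the substantive work has already been packaged into Propositions~\ref{prp:IJ}, \ref{prp:seg-rank} and \ref{prp:cdelta}, there is no real obstacle in this theorem; the only subtlety is to justify the $\xi$-independence of $\rank_I V_J$ and rewrite it as $c^\xi_{V_J}(I)$ before applying Proposition~\ref{prp:IJ}, after which everything reduces to the M\"obius/zeta duality already established.
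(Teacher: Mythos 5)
Your proof is correct and follows essentially the same route as the paper's: unfold $\rank^\xi_I\de^\xi(M)$ via Definition~\ref{dfn:int-rk}, evaluate the coefficients $\rank_I V_J$ by Proposition~\ref{prp:IJ}, and recognise the resulting sum as $c^\xi_M(I)$ via Proposition~\ref{prp:cdelta}. The only difference is that you spell out the two specialisations (to $I=[x,y]$ with Proposition~\ref{prp:seg-rank}, and to $[x,x]$ for the dimension vector) in more detail than the paper, which simply asserts them.
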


Thus,
$\de^\xi$ preserves the interval rank invariants of
all persistence modules $M$.
In this sense, we called $\de^\xi(M)$
an \emph{interval replacement} of $M$.

\begin{proof}
For a rank compression system $\xi$, we have
$\rank^\xi_I M = \mult^\xi_I M$ for all $M \in \mod A$ by definition,
and hence the assertion follows by Proposition \ref{prp:xi-mult}.
The second (resp.\ third) one follows by considering the case that $I = [x,y]$ (resp.\ 
the cases that $[x,x]$ for all $x \in \bfP$).
\end{proof}

\section{The formula of \texorpdfstring{$\xi$}{xi}-multiplicity of \texorpdfstring{$I$}{I}}
\label{sec5}

Throughout this section, we fix a compression system $\xi = (\xi_I \colon I^{\xi} \to \bfP)_{I \in \bbI}$,
where $I^{\xi}$ are nonempty connected posets for all $I \in \bbI$.

The purpose of this section is to compute the $I$-multiplicity $\mult_I^\xi M$ of $M\in \mod A$ under $\xi$.
To this end we fix an interval $I \in \bbI$ and use Notation \ref{ntn:Yoneda} for $S:= I^{\xi}$. For brevity we simply write $\uset, \dset$ for $\uset_{I^{\xi}}, \dset_{I^{\xi}}$,
respectively, unless otherwise stated.

\begin{rmk}
When $\xi$ is a rank compression system, we can replace $\mult^\xi_I$
by $\rank^\xi_I$ in all the statements below.
\end{rmk}

To begin with, we introduce the following notation, which will be used in what follows. Let $S$ be a nonempty finite poset. Apparently, $\src(S)$ (resp. $\snk(S)$) is an antichain in terms of the order relation inherited from $S$ (see~\cref{dfn:antichain}). However, by finiteness, we can label elements of $\src(S)$ (resp. $\snk(S)$) and thus endow it with an additional total order $\preceq$ defined by the natural number ordering of the subscripts.

\begin{dfn}
\label{dfn:(n,m)-type-QI}
The poset $S$ is said to be of $(n,m)$-type if $|\src(S)|=n$ and $|\snk(S)|=m$, where $n,m$ are positive integers because $S \ne \emptyset$.
We give a total order on the set $\src(S)$ (resp.\ $\snk(S)$)
by giving a poset isomorphism
$a \colon [n] \to \src(S)$, $i \mapsto a_i$
(resp.\ $b \colon [m] \to \snk(S)$, $i \mapsto b_i$). 
\end{dfn}

\subsection{General case}
\label{General case}

From now on, we assume that the poset $I^{\xi}$ is of $(n,m)$-type for some $m, n \ge 1$.
We divide the cases as follows.

Case 1. $I^{\xi}$ is of $(1, 1)$-type.

Case 2. $I^{\xi}$ is of $(n,1)$-type with $n \ge 2$.

Case 3. $I^{\xi}$ is of $(1,m)$-type with $m \ge 2$.

Case 4. $I^{\xi}$ is of $(n,m)$-type with $n, m \ge 2$.

\subsubsection{{\rm (1,1)}-type}
We first consider the case where $I^{\xi}$ is a poset of $(1,1)$-type.
The following lemma gives a relationship between types of
$I^{\xi}$ and of $I$.

\begin{lem}
\label{lem:1,1-type}
If $I^{\xi}$ is of $(1,1)$-type,
then so is $I$.
\end{lem}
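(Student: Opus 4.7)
The plan is to exploit the fact that in a finite poset, having a unique minimal element forces that element to be the minimum, and then to push this through the compression system axioms to transfer the $(1,1)$-property from $Q_I$ to $I$.

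First, I would observe the following elementary fact: if a finite poset $P$ has a unique minimal element $a$, then $a$ is in fact the minimum of $P$. Indeed, for any $x \in P$, finiteness gives some minimal element $a' \le x$; uniqueness forces $a' = a$, so $a \le x$. Applying this to $Q_I$, which is of $(1,1)$-type, produces a minimum $a \in Q_I$ and a maximum $b \in Q_I$, and $a \le x \le b$ for every $x \in Q_I$.

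Next, I would apply the compression system axioms from Remark~\ref{rmk:simplified-ver}. By axiom (1), $\xi_I$ factors as a poset morphism $Q_I \to I$ composed with the inclusion $I \hookrightarrow \bfP$; in particular $\xi_I(a), \xi_I(b) \in I$, and since $\xi_I$ is order-preserving, $\xi_I(a) \le \xi_I(x) \le \xi_I(b)$ in $I$ for every $x \in Q_I$. By axiom (2), any $s \in \src(I)$ equals $\xi_I(x)$ for some $x \in Q_I$, and hence $\xi_I(a) \le s$ in $I$. Since $\xi_I(a) \in I$ and $s$ is minimal in $I$, this forces $\xi_I(a) = s$. Therefore $\src(I) = \{\xi_I(a)\}$, a singleton. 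The symmetric argument applied to $b$ and $\snk(I)$ yields $\snk(I) = \{\xi_I(b)\}$, again a singleton. Hence $I$ is of $(1,1)$-type.

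There is no real obstacle here; the statement is essentially a direct unpacking of the definition of a compression system. The only subtlety worth flagging is the passage from unique minimal (resp. maximal) element to actual minimum (resp. maximum), which relies on finiteness of $Q_I$. Connectedness of $Q_I$ is not needed for this argument, though it is part of the standing assumption on compression systems.
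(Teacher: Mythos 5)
Your proof is correct and follows essentially the same route as the paper: both use that the unique source $a$ (resp.\ sink $b$) of $Q_I$ is its minimum (resp.\ maximum), pull back sources and sinks of $I$ through axiom (2), apply order-preservation of $\xi_I$, and conclude by minimality/maximality in $I$. Your write-up is slightly more explicit about the ``unique minimal element implies minimum'' step, which the paper's proof uses without comment.
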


\begin{proof}
We can set $\src(I^{\xi}) = \{a\}$ and $\snk(I^{\xi}) = \{b\}$
for some $a, b \in I^{\xi}$.
Assume that $I$ is of $(n,m)$-type, and set
\begin{equation}
\label{eq:min-max-rel}
\src(I) = \{x_1, \dots, x_n\},\, \snk(I) = \{y_1, \dots, y_m\}.
\end{equation}
Since the image $\xi_I(I^{\xi})$ contains $\src(I) \cup \snk(I)$,
there exist subsets $\{a_1, \dots, a_n\}$ and $\{b_1,\dots, b_m\}$
of $I^{\xi}$ such that $\xi_I(a_i) = x_i,\, \xi_I(b_j) = y_j$
for all $i \in [n],\, j \in [m]$.
Then for any  $i \in [n],\, j \in [m]$, the relations
$a \le a_i,\, b_j \le b$ in $I^{\xi}$ show that
$\xi_I(a) \le x_i,\, y_j \le \xi_I(b)$.
Hence $\xi_I(a) = x_i,\, y_j = \xi_I(b)$ by \eqref{eq:min-max-rel}
for all $i \in [n],\, j \in [m]$.
Thus $n = 1 = m$.
\end{proof}

\begin{rmk}
The converse of the lemma above does not hold in general.
For example, let $\bfP = G_{2,2} = I$, and
$I^{\xi}:= \{(1,1), (1,2), (2,2)\}$ with the order relations
defined by the Hasse quiver $(1,1) \to (2,2) \leftarrow (1,2)$,
not a full subposet.
Define $\xi_I \colon I^{\xi} \to I$ as the inclusion.
For each of other interval $J \in \bbI(\bfP)$,
let $\xi_J \colon J^\xi \to J$ be the identity of $J$.
Then this $\xi$ is a rank compression system.
In this case, $I$ is of $(1,1)$-type, but $I^{\xi}$ is of $(2,1)$-type.
It is interesting to see that even in this case, Proposition \ref{prp:seg-rank} follows because $I$ is an interval of $(1,1)$-type. We will give a formula of $\rank^\xi_I$ for a poset $I^{\xi}$ of $(2,1)$-type, which looks different, but is shown to coincide with $\rank_I$ in this case.
\end{rmk}

Lemma \ref{lem:1,1-type} and Proposition \ref{prp:seg-rank} imply the following result. We note that there is also a direct proof of this statement, obtained by applying the argument in Proposition \ref{prp:seg-rank} to $\k[I^{\xi}]$.

\begin{prp}
\label{prp:mult-for-(1,1)-case}
Let $M \in \mod A$, and assume that $I^{\xi}$ is of $(1,1)$-type.
Set $\src(I^{\xi}) = \{a\},\, \snk(I^{\xi})=\{b\}$. Then
\begin{equation*}
    \mult^\xi_I M = \rank M_{\xi_I(b),\xi_I(a)}.
\end{equation*}
\end{prp}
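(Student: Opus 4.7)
The cleanest route is to combine the lemma just stated with \prpref{prp:seg-rank}, so my plan is to reduce to a segment of $\bfP$ and then invoke that proposition. First I would apply \lemref{lem:1,1-type}: since $Q_I$ is of $(1,1)$-type, so is $I$, and hence there exist unique $x,y \in \bfP$ with $\src(I)=\{x\}$, $\snk(I)=\{y\}$, giving $I = [x,y] \in \Seg(\bfP)$. Next, I would extract from the proof of \lemref{lem:1,1-type} the information that $\xi_I(a) = x$ and $\xi_I(b) = y$: indeed, by the condition that $\xi_I(Q_I)$ contains $\src(I) \cup \snk(I)$, there must be preimages of $x$ and $y$ in $Q_I$, and the relations $a \le \xi_I^{-1}(x)$ and $\xi_I^{-1}(y) \le b$ force $\xi_I(a) = x$ and $\xi_I(b) = y$ by the uniqueness of source and sink in $I$.

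With this identification in hand, \prpref{prp:seg-rank} applied to the segment $[x,y]$ directly yields
\[
\rank^\xi_I M \;=\; c^\xi_M(I) \;=\; \rank_{[x,y]} M \;=\; \rank M_{y,x} \;=\; \rank M_{\xi_I(b),\xi_I(a)},
\]
which is the desired equality. Note that the appeal to \prpref{prp:seg-rank} is legitimate because that proposition holds for every compression system, not just $\tot$.

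As an alternative, if a self-contained argument is preferred, I would mimic the proof of \prpref{prp:seg-rank} inside $\mod \k[Q_I]$: identify $V_{Q_I} \iso P_a \iso I_b$ as $\k[Q_I]$-modules, use the exact sequence $0 \to \soc I_b \to I_b \to I_b/\soc I_b \to 0$ with $\soc I_b \iso P_b$ and $\mu = P_{b,a}$, apply the Auslander-type formula $d_{R_I(M)}(V_{Q_I}) = \dim\Hom(I_b,R_I(M)) - \dim\Hom(I_b/\soc I_b, R_I(M))$, and invoke \lemref{lem:dim-Hom-coker} to get $d_{R_I(M)}(V_{Q_I}) = \rank R_I(M)(p_{b,a})$. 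Since by definition $R_I(M)(p_{b,a}) = M(\xi_I(p_{b,a})) = M_{\xi_I(b),\xi_I(a)}$, and the reverse inequality $c^\xi_M(I) \le \rank M_{\xi_I(b),\xi_I(a)}$ follows from the existence of a path from $a$ to $b$ in $Q_I$ (as in the last paragraph of the proof of \prpref{prp:seg-rank}), the equality follows.

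The only potentially delicate point in either approach is the identification $\xi_I(a)=x$, $\xi_I(b)=y$; once that is pinned down, the rest is essentially a citation of \prpref{prp:seg-rank}. There is no real obstacle here, so this proposition should be very short.
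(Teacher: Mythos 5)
Your main argument --- reducing to $I \in \Seg(\bfP)$ via \lemref{lem:1,1-type}, identifying $\xi_I(a)$ and $\xi_I(b)$ with the unique source and sink of $I$, and then citing \prpref{prp:seg-rank} --- is exactly how the paper establishes this proposition, and your alternative is the direct proof the paper alludes to. One small remark on the alternative: since $R_I(V_I) = V_{Q_I}$ as $\k[Q_I]$-modules, the Auslander-type formula together with \lemref{lem:dim-Hom-coker} already yields the equality $c^\xi_M(I) = \rank M_{\xi_I(b),\xi_I(a)}$ outright, so the separate reverse-inequality step you borrow from the end of the proof of \prpref{prp:seg-rank} (which is needed there only to pass from $\mod\k[I]$ down to $\mod B_I$) is not needed here.
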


\subsubsection{A projective presentation of \texorpdfstring{$V_{I^{\xi}}$}{VQI}}

To give our formulas for the remaining cases,
we will need to compute $\dim \Hom_{\k[I^{\xi}]}(V_{I^{\xi}}, R_I(M))$, which will be done
by using Lemma \ref{lem:dim-Hom-coker},
and hence we need to compute a projective presentation of $V_{I^{\xi}}$.
For this sake, we need the following definition.

\begin{ntn}
\label{ntn: notations for general case}
For any totally ordered set $S$ and a positive integer $l$,
we denote by $\sub_lS$ the set of
totally ordered subsets of $S$ with cardinality $l$.
For example,
$\sub_2S = \{\{i,j\} \subseteq S \mid i \ne j\}$.
In this case, for any $\bfa \in \sub_2S$,
we set $\udl{\bfa}:=\min \bfa$ and
$\ovl{\bfa}:= \max \bfa$.
Thus $\bfa = \{\udl{\bfa},\, \ovl{\bfa}\}$.
\begin{enumerate}
\item
Applying Definition~\ref{dfn:(n,m)-type-QI} to $I^\xi$, then $\src(I^\xi)$ (resp. $\snk(I^\xi)$) becomes a totally ordered set. Then for any $\bfa \in \sub_2\src(I^{\xi})$
(resp.\ $\bfb \in \sub_2\snk(I^{\xi})$),
we set 
\begin{align*}
\vee'\bfa := \src (\uset \udl{\bfa} \cap \uset \ovl{\bfa})\ \ (\text{resp.\ } \wedge'\!\bfb := \snk (\dset \udl{\bfb} \cap \dset \ovl{\bfb}).
\end{align*}
Again, recalling the way of giving additional total order of the finite antichain $\vee'\bfa$ (resp. $\wedge'\bfb$) provided in Definition~\ref{dfn:(n,m)-type-QI}, we fix such an additional total order on $\vee'\bfa$ (resp.\ $\wedge'\bfb$).
By convention, 
we set $\src (\emptyset) := \emptyset$
and $\snk (\emptyset) := \emptyset$. 

\item We set
\[
\begin{aligned}
\src_{1}(I^{\xi})&:=\bigsqcup_{\bfa \in \sub_2\src(I^{\xi})} \vee'\bfa
= \{\bfa_c:=(\bfa, c) \mid \bfa \in \sub_2\src(I^{\xi}),\, c \in \vee'\bfa\},\\
\snk_{1}(I^{\xi})&:=\bigsqcup_{\bfb \in \sub_2\snk(I^{\xi})} \wedge'\bfb = \{\bfb_d:= (\bfb, d) \mid \bfb \in \sub_2\snk(I^{\xi}),\, d \in \wedge'\bfb\}.
\end{aligned}
\]
Note here that the family $(\vee' \bfa)_{\bfa \in \sub_2\src(I^{\xi})}$
(resp.\ $(\wedge' \bfb)_{\bfb \in \sub_2\snk(I^{\xi})}$) does not need to be disjoint.
Furthermore, we equip $\src_{1}(I^{\xi})$ with another total order $\plex$, defined by
$\bfa_c \plex \bfa'_{c'}$ if and only if
$(\udl{\bfa}, \ovl{\bfa}, c) \le_{\lex} (\udl{\bfa'}, \ovl{\bfa'}, c')$, where $\leq_{\lex}$ denotes the lexicographic order from left to right. We note that, in the case of $\bfa = \bfa'$, the total order on the third coordinate is given as in (1).
Similarly, we give a total order to $\snk_{1}(I^{\xi})$.
These total orders are used to express matrices having
$\src_{1}(I^{\xi})$ or $\snk_{1}(I^{\xi})$ as an index set.

\item For any nonempty subset $X$ of $\bfP$,
or a disjoint union $X = \bigsqcup_{s \in S} X_s:= \{s_x \mid s \in S, \, x \in X_s\}$
of nonempty subsets $X_s$ of $\bfP$ with nonempty
index set $S$, we set
$P_X:= \Ds_{t \in X}P_t$ and $P'_X:= \Ds_{t \in X}P'_t$,
where $P_{t}:= P_x,\, P'_{t}:= P'_x$
if $t = s_x \in X = \bigsqcup_{s \in S} X_s$ with $s \in S$ and $x \in X_s$.
In addition, we set $P_X$ and $P'_X$ to be the zero modules if $X=\emptyset$.
\end{enumerate} 
\end{ntn}

Note that $V_{I^{\xi}}$ is projective (resp.\ injective) if and only if
$n = 1$ (resp. $m = 1$) because $V_{I^{\xi}}$ is indecomposable over $\k[I^{\xi}]$ and
$\dim V_{I^{\xi}}/\rad V_{I^{\xi}} = n$ (resp. $\dim \soc V_{I^{\xi}} = m$).
To show that the set $\src_1(I^{\xi})$ (resp.\ $\snk_1(I^{\xi})$) is not empty
if $V_{I^{\xi}}$ is not projective (resp.\ not injective),
we review a fundamental property of finite posets.

\begin{dfn}
\label{dfn:Alexandrov}
Let $S$ be a finite poset, and $U \subseteq S$.
Then $U$ is called an \emph{upset} (resp.\ \emph{downset}) of $S$
if for any $x \in U$ and $y \in S$, the condition $x \le y$ (resp.\ $x \ge y$)
implies $y \in U$.
A topology on $S$ is defined by setting the set of upsets to be the open sets
of $S$,
which is called the \emph{Alexandrov topology} on $S$.
It is easy to see that it has a basis $\{\uset_S x \mid x \in S\}$.
\end{dfn}

The following is easy to show and the proof is left to the reader.

\begin{lem}
\label{lem:top-conn}
Let $S$ be a finite poset considered as a topological space by
the Alexandrov topology on $S$.
Then $S$ is a connected space if and only if $S$ is a connected poset.
\qed
\end{lem}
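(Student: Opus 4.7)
The plan is to prove both implications simultaneously by characterizing the clopen subsets of $S$ in the Alexandrov topology as exactly the unions of connected components of the Hasse quiver $H(S)$.

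First, I would note that by Definition \ref{dfn:Alexandrov}, a subset $U \subseteq S$ is open if and only if $U$ is an upset, so $U$ is closed (i.e., $S \setminus U$ is open) if and only if $U$ is a downset. Consequently, $U$ is clopen if and only if $U$ is simultaneously an upset and a downset.

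Next I would show that a subset which is both an upset and a downset is precisely a union of connected components of $H(S)$. If such a $U$ contains $x$ and $y$ is adjacent to $x$ in $H(S)$, then either $x < y$ or $y < x$ holds, and in either case $y \in U$ by the upset/downset property. Induction along zigzag paths in $H(S)$ then shows that $U$ contains the entire Hasse-component of each of its elements. Conversely, given a connected component $C$ of $H(S)$, $x \in C$, and $y \in S$ with $x \le y$, the finiteness of $S$ yields a chain of covering relations $x = z_0 \lessdot z_1 \lessdot \cdots \lessdot z_k = y$, so $y$ is joined to $x$ by a path in $H(S)$ and hence $y \in C$; the dual argument shows $C$ is a downset, so $C$ is clopen.

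Combining these facts, $S$ is topologically connected (its only clopen subsets being $\emptyset$ and $S$) if and only if $H(S)$ has exactly one connected component, i.e., $S$ is connected as a poset. The proof is essentially routine; the only point requiring a moment's care is the standard fact that the relation $x \le y$ in a finite poset can always be witnessed by a chain of covering relations, which is what bridges the order-theoretic notion of upset/downset with the graph-theoretic notion of Hasse-connectedness.
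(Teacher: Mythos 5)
Your proof is correct. The paper leaves this lemma to the reader, so there is no authorial proof to compare against; your argument, characterizing the clopen sets of the Alexandrov topology as precisely the subsets that are simultaneously upsets and downsets and then identifying those with unions of connected components of the Hasse quiver, is the standard route and handles both implications cleanly. The one step you correctly flag as needing care, that $x \le y$ in a finite poset can be refined to a chain of covering relations, is exactly the bridge between order-connectedness and Hasse-graph-connectedness that makes the equivalence go through.
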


Under the preparation above, we prove the following.

\begin{prp}
Let $I^{\xi}$ be a poset of $(n,m)$-type.
\begin{enumerate}
\item 
If $n \ge 2$ , then $\src_1(I^{\xi}) \ne \emptyset$.
\item
If $m \ge 2$, then $\snk_1(I^{\xi}) \ne \emptyset$.
\end{enumerate}
\end{prp}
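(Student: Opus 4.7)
The plan is to handle (1) by a clopen-cover argument in the Alexandrov topology (Definition~\ref{dfn:Alexandrov}) together with Lemma~\ref{lem:top-conn}. The key preliminary observation is that $\vee'\bfa = \src(\uset\udl{\bfa} \cap \uset\ovl{\bfa})$ is non-empty whenever the intersection $\uset\udl{\bfa} \cap \uset\ovl{\bfa}$ is non-empty, since any non-empty finite subposet of $Q_I$ admits at least one minimal element. Consequently, to conclude that $\src_{1}(Q_I) \ne \emptyset$ it suffices to exhibit two distinct sources $a, a' \in \src(Q_I)$ with $\uset a \cap \uset a' \ne \emptyset$.

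Assume for contradiction that the upsets $\uset a_1, \dots, \uset a_n$ are pairwise disjoint. First I would verify that they cover $Q_I$: for any $x \in Q_I$ the finite non-empty poset $\dset x$ admits a minimal element $y$, and any such $y$ is automatically minimal in $Q_I$ (anything strictly below $y$ in $Q_I$ still lies in $\dset x$, contradicting minimality of $y$ there). Hence $y \in \src(Q_I)$ and $x \in \uset y$, so $Q_I = \bigsqcup_{i=1}^{n} \uset a_i$. Each $\uset a_i$ is an upset and therefore open in the Alexandrov topology on $Q_I$, and is non-empty since it contains $a_i$. Because $n \ge 2$, this exhibits a non-trivial partition of $Q_I$ into open sets, which by Lemma~\ref{lem:top-conn} contradicts the connectedness of $Q_I$ that is built into the definition of a compression system.

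For (2) I would dualize. The cleanest route is to remark that the undirected Hasse graph of $Q_I$ coincides with that of $Q_I\op$, so $Q_I\op$ is itself a connected finite poset with $\src(Q_I\op) = \snk(Q_I)$ and $\uset_{Q_I\op} = \dset_{Q_I}$; applying (1) to $Q_I\op$ furnishes distinct sinks $b_i, b_j \in \snk(Q_I)$ with $\dset b_i \cap \dset b_j \ne \emptyset$, whence $\snk(\dset b_i \cap \dset b_j) \ne \emptyset$ exhibits an element of $\snk_1(Q_I)$. An equivalent direct argument replaces upsets by downsets throughout, using that a finite disjoint cover of $Q_I$ by downsets $\dset b_j$ makes each piece clopen (the complement is a finite union of downsets, hence closed), and then invokes Lemma~\ref{lem:top-conn} as before.

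The arguments above are short, so there is no substantial technical obstacle. The only conceptual step that has to be recognized is that the poset-theoretic connectedness hypothesis on $Q_I$ is, via Lemma~\ref{lem:top-conn}, exactly the right tool to rule out a partition of $Q_I$ into pairwise disjoint non-empty upsets (or downsets); once this translation is made, everything else is a routine cover-by-clopens contradiction, and the passage from an intersection being non-empty to $\vee'\bfa$ or $\wedge'\bfb$ being non-empty is immediate from the finiteness of $Q_I$.
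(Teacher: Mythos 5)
Your proof is correct and follows essentially the same route as the paper: reduce to showing some two upsets $\uset a_i, \uset a_j$ intersect, then observe that if they were pairwise disjoint the upsets would form a non-trivial clopen partition of $Q_I$, contradicting connectedness via Lemma~\ref{lem:top-conn}. You are slightly more careful than the paper in explicitly verifying that $Q_I = \bigcup_i \uset a_i$, but the underlying argument (including dualizing for part (2)) is the same.
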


\begin{proof}
(1) Set $\{a_1, \dots, a_n\} = \src(I^{\xi})$
as in Definition \ref{dfn:(n,m)-type-QI}, and
assume that $n \ge 2$. Then we have
\begin{equation}
\label{eq:cup-QI}
I^{\xi} = \uset a_1 \cup (\uset a_2 \cup \cdots \cup \uset a_n).
\end{equation}
Now suppose that $\src_1(I^{\xi}) = \emptyset$.
Then for any $\{i, j\} \in \sub_2[n]$,
we have $\src(\uset a_i \cap \uset a_j) = \emptyset$, and hence $\uset a_i \cap \uset a_j = \emptyset$.
This shows that
\begin{equation}
\label{eq:cap-QI}
\uset a_1 \cap (\uset a_2 \cup \cdots \cup \uset a_n) = \emptyset.
\end{equation}
Equalities \eqref{eq:cup-QI} and \eqref{eq:cap-QI} show that
the topological space $I^{\xi}$ with Alexandrov topology
is not connected.
Hence $I^{\xi}$ is not a connected poset by Lemma \ref{lem:top-conn},
a contradiction. As a consequence, $\src_1(I^{\xi}) \ne \emptyset$.

(2) This is shown similarly.
\end{proof}

We are now in a position to give a projective presentation of $V_{I^{\xi}}$
in the case where $n \ge 2$.

\begin{prp}
\label{prp:prjpres-VI w/o conditions}
Assume that $I^{\xi}$ is a poset of $(n,m)$-type with $n \ge 2$. Then we have the following projective presentation (may not be minimal) of $V_{I^{\xi}}$ in $\mod \k[I^{\xi}]$:
\begin{equation}
\label{eq:min-proj-resol-V_I-nsrccase-lem w/o conditions}
P_{\src_1(I^{\xi})} \ya{\ep_{1}} P_{\src(I^{\xi})} \ya{\ep_{0}} V_{I^{\xi}} \to 0.
\end{equation}
Here $\ep_{0}, \ep_{1}$ are given by
\begin{align}\label{eq:matrix form of ep_0 w/o conditions}
    \ep_{0}:= (\ro_{1_{a_{1}}}, \ro_{1_{a_{2}}}, \dots, \ro_{1_{a_{n}}}),
\end{align}
where we set
$1_u := 1_\k \in \k = V_{I^{\xi}}(u)$ for all $u \in I^{\xi}$, and
\begin{align}\label{eq:matrix form of ep_1 w/o conditions}
\ep_{1}:=
\bmat{\tilde{\sfP}_{a,\bfa_c}}_{(a,\bfa_c) \in \src(I^{\xi}) \times \src_1(I^{\xi})},
\end{align}
where the entry is given by
\begin{align}\label{eq:entries of ep_1 w/o conditions}
\tilde{\sfP}_{a,\bfa_c}:=
\begin{cases}
\sfP_{c,a} & (a = \udl{\bfa}),\\
-\sfP_{c,a} & (a = \ovl{\bfa}),\\
\mathbf{0} & (a \not\in \bfa),
\end{cases}
\end{align}
for all $\bfa_c \in \src_1(I^{\xi})$ and $a \in \src(I^{\xi})$. Here and subsequently, we write the matrices following the lexicographic order $\plex$ (see Notation \ref{ntn: notations for general case} (2)) of indices.
\end{prp}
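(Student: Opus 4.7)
The strategy is to verify the exactness of the sequence \eqref{eq:min-proj-resol-V_I-nsrccase-lem w/o conditions} pointwise: for each $v \in Q_I$, evaluate each term at $v$ and check surjectivity of $\epsilon_{0,v}$ and the equality $\operatorname{Im}\epsilon_{1,v} = \operatorname{Ker}\epsilon_{0,v}$. At the vertex $v$, the free module $P_{\src(Q_I)}(v)$ has a canonical $\k$-basis indexed by $\{a \in \src(Q_I) \mid a \leq v\}$, and $V_{Q_I}(v) = \k$, while $P_{\src_1(Q_I)}(v)$ has a basis indexed by those $\bfa_c \in \src_1(Q_I)$ with $c \leq v$.

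First I would treat the easier direction. Surjectivity of $\epsilon_0$ is immediate because every $v \in Q_I$ lies above at least one source $a_i$ (a standard consequence of the finiteness of $Q_I$), and $\epsilon_{0,v}$ sends the basis vector corresponding to $a_i$ to $1_v = 1_\k \in V_{Q_I}(v)$. The composition $\epsilon_0 \circ \epsilon_1$ vanishes by a direct calculation from \eqref{eq:matrix form of ep_0 w/o conditions}--\eqref{eq:entries of ep_1 w/o conditions}: for each generator indexed by $\bfa_c$, the image under $\epsilon_1$ is $P_{c,\udl{\bfa}}$ in the $\udl{\bfa}$-summand minus $P_{c,\ovl{\bfa}}$ in the $\ovl{\bfa}$-summand, and pre-composing these with $\rho_{1_{\udl{\bfa}}}$ and $\rho_{1_{\ovl{\bfa}}}$ respectively both produce $\rho_{1_c}$, so the difference is zero.

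The main step is the reverse inclusion $\operatorname{Ker}\epsilon_{0,v} \subseteq \operatorname{Im}\epsilon_{1,v}$. After identifying $P_{\src(Q_I)}(v)$ with $\k^{J_v}$ where $J_v := \{a \in \src(Q_I) \mid a \leq v\}$, the map $\epsilon_{0,v}$ is the sum map $\k^{J_v} \to \k$, so its kernel is spanned by the differences $e_{a} - e_{a'}$ with $a, a' \in J_v$ distinct. It therefore suffices to exhibit, for every such pair $a,a' \in J_v$, some $\bfa_c \in \src_1(Q_I)$ with $\bfa = \{a,a'\}$ and $c \leq v$, since then $\epsilon_1$ applied to the corresponding basis element of $P_c(v) = \k$ yields exactly $e_a - e_{a'}$ (up to sign). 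The existence of such a $c$ is the crucial geometric input: the set $\uset a \cap \uset a' \cap \dset v$ is non-empty because it contains $v$ itself, hence has a minimal element $c$; this $c$ is in fact minimal in the larger set $\uset a \cap \uset a'$, because any strict predecessor $c' < c$ in $\uset a \cap \uset a'$ would automatically satisfy $c' < c \leq v$, contradicting the minimality of $c$ in $\uset a \cap \uset a' \cap \dset v$. Thus $c \in \vee'\bfa$ and $c \leq v$, as required.

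I expect the main obstacle to be largely bookkeeping rather than a genuine difficulty: one must be careful to align the sign convention of \eqref{eq:entries of ep_1 w/o conditions} with the ordering on $\bfa = \{\udl{\bfa}, \ovl{\bfa}\}$, and to keep the matrix conventions (using the $\plex$ total order from Notation \ref{ntn: notations for general case} (2)) consistent throughout. The key content is the minimality argument of the preceding paragraph, which ensures that the generators of $\src_1(Q_I)$ see enough relations to cut out the full kernel at each vertex simultaneously.
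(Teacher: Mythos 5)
Your proof is correct and takes essentially the same approach as the paper: pointwise verification of exactness, using surjectivity of $\ep_0$, vanishing of $\ep_0\ep_1$, and the same key geometric observation (the paper's displayed inclusion $\src(\uset\udl{\bfa}\cap\uset\ovl{\bfa}\cap\dset x)\subseteq\vee'\bfa\cap\dset x$, which is precisely your minimality argument). The only stylistic difference is in the last step: the paper establishes $\dim\Im(\ep_1)_x\geq\dim\Ker(\ep_0)_x$ by exhibiting a $w\times\binom{w}{2}$ submatrix of $\ep_1(x)$ of rank $w-1$, whereas you show $\Ker(\ep_0)_v\subseteq\Im(\ep_1)_v$ directly by writing the kernel as the span of differences $e_a-e_{a'}$ and producing a preimage for each — a slightly more transparent route to the same conclusion.
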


\begin{proof}
We verify the exactness of sequence \eqref{eq:min-proj-resol-V_I-nsrccase-lem w/o conditions} in the following steps:

(a) showing that $\ep_{0}$ is surjective;
(b) showing that $\ep_{0}\ep_{1} = 0$;
(c) showing that $\dim\Im\ep_{1} \geq   \dim\Ker\ep_{0}$.

(a) It is enough to show that for all $x\in \bfP$, 
\[
\left(\ep_{0}\right)_{x} = \bmat{\left(\ro_{1_a}\right)_{x}}_{a\in \src(I^\xi)}\colon \Ds_{i\in [n]}P_{a_{i}}(x)\to V_{I^\xi}(x)
\]
is surjective. If $x\notin I^\xi$, then $V_{I^\xi}(x) = 0$ and the assertion trivially holds. Otherwise, $V_{I^\xi}(x) = \k$ with the unique basis $1_{x}$, hence it suffices to show that $1_{x}$ has a preimage in $\Ds_{i\in [n]}P_{a_{i}}(x)$. In this case, it is evident that there is $a_{j}\in \src (I^\xi)$ such that $a_j\leq x$. Set $\left(m_{i}\right)_{i\in [n]}\in \Ds_{i\in [n]}P_{a_{i}}(x)$ with all entries zero except for the $j$-th entry $m_j \coloneqq p_{x, a_{j}}$. Then
\[
\left(\ep_{0}\right)_{x} \left(\left(m_{i}\right)_{i\in [n]}\right)= \sum_{i\in [n]}\left(\ro_{1_{a_{i}}}\right)_{x} \left(m_{i}\right) = \left(\ro_{1_{a_{j}}}\right)_{x} \left(m_{j}\right) = 1_{x},
\]
where the last equality holds by the Yoneda lemma and noticing \eqref{eq:Yoneda_corres}:
\[
\left(\ro_{1_{a_{j}}}\right)_{x}\colon P_{a_{j}}(x) \to V_{I^\xi}(x),\quad
m_j = p_{x, a_{j}} \mapsto p_{x, a_{j}}\cdot 1_{a_{j}} = V_{I^\xi}(p_{x, a_{j}})(1_{a_{j}}) = 1_{x}.
\]
Therefore for all $x\in I^\xi$, $1_{x}$ has the preimage and thus $\ep_{0}$ is an epimorphism.

(b) It suffices to show that the composition
\begin{equation}
\label{eq:compositionzero-lem w/o conditions}
P_{\bfa_c} \ya{\ep'_{1}} \bigoplus_{i\in[n]} P_{a_{i}} \ya{\ep_{0}} V_{I^{\xi}}
\end{equation}
is zero for all $\bfa_c \in \src_1(I^{\xi})$,
where
\begin{align*}
\ep'_{1}:=
\begin{blockarray}{cc}
& \scalebox{0.7}{$\bfa_c$} \\
\begin{block}{c[c]}
  \scalebox{0.7}{$a_1$} &  \mathbf{0}\\
  \scalebox{0.7}{$\vdots$} &  \mathbf{0} \\
  \scalebox{0.7}{$\udl{\bfa}$} & \sfP_{c,\udl{\bfa}} \\
  \scalebox{0.7}{$\vdots$} &  \mathbf{0} \\
  \scalebox{0.7}{$\ovl{\bfa}$} & -\sfP_{c,\ovl{\bfa}} \\
  \scalebox{0.7}{$\vdots$} &  \mathbf{0}\\
  \scalebox{0.7}{$a_n$} &  \mathbf{0} \\
\end{block}
\end{blockarray}.
\end{align*}
Let $c\leq t\in I^{\xi}$. Then
\begin{align*}
(\ep_{0} \ep'_{1})(p_{t,c})
& =
(\ro_{1_{\udl{\bfa}}}\sfP_{c,\udl{\bfa}})(p_{t,c})- (\ro_{1_{\ovl{\bfa}}} \sfP_{c,\ovl{\bfa}})(p_{t,c})\\
&=
\ro_{1_{\udl{\bfa}}}(p_{t,\udl{\bfa}})
- \ro_{1_{\ovl{\bfa}}} (p_{t,\ovl{\bfa}})\\
& = 
V_{I^{\xi}}(p_{t,\udl{\bfa}})(1_{\udl{\bfa}})
- V_{I^{\xi}}(p_{t,\ovl{\bfa}})(1_{\ovl{\bfa}})\\
&= 1_t - 1_t = 0.
\end{align*}

(c) It suffices to show that
$\dim\Im\, (\ep_{1})_x \geq \dim\Ker\, (\ep_{0})_x$
for all $x\in I^{\xi}$.
Fix $x\in I^{\xi}$, and let
$w:= |\src(I^{\xi})\cap \dset x|$. Then $w \ge 1$.
We may set
$\{a_1, \dots, a_w\}:= \src(I^{\xi})\cap \dset x$
without loss of generality.

{\bf Case 1.}
Consider the case where $w = 1$. If $P_{\src_1(I^{\xi})}(x) \ne 0$, then there exists some
$\bfa_c \in \src_1(I^{\xi})$
such that $P_{\bfa_c}(x) \ne 0$, which shows that
$\udl{\bfa} \le c,\, \ovl{\bfa} \le c,\, c \le x$.
Thus $w \ge |\bfa| = 2$, a contradiction.
Therefore in this case, $P_{\src_1(I^{\xi})}(x) = 0$.
Hence the evaluation of 
\eqref{eq:min-proj-resol-V_I-nsrccase-lem w/o conditions} at $x$ becomes
\begin{equation*}
0 \to \k p_{x,a_1} \ya{\al} \k \to 0,
\end{equation*}
where $\al$ is an isomorphism defined by
$\al(p_{x,a_1}):= 1_\k$.
The claim follows since
$\Im\, (\ep_{1})_x = 0 = \Ker\, (\ep_{0})_x$.

{\bf Case 2.}
Consider the case where $w \ge 2$. By Notation~\ref{ntn: notations for general case}, 
$P_{\src(I^{\xi})}(x)
= \bigoplus_{a \in \src(I^{\xi})} P_a(x) 
= \bigoplus_{a \in \src(I^{\xi}) \cap \dset x}\k p_{x,a}$,
and
$$
P_{\src_1(I^{\xi})}(x)
= \bigoplus_{\bfa_c \in \src_1(I^{\xi})} P_{\bfa_c}(x)
= \bigoplus_{\smat{\bfa\in \sub_2\src(I^{\xi})\\
c \in \vee'\bfa \cap \dset x}} P_{c}(x)
= \bigoplus_{\smat{\bfa\in \sub_2\src(I^{\xi})\\
c \in \vee'\bfa \cap \dset x}} \k p_{x,c}.
$$
Then the evaluation of 
\eqref{eq:min-proj-resol-V_I-nsrccase-lem w/o conditions} at $x$ becomes
the first row of the diagram
$$
\begin{tikzcd}
\Nname{P1}
\DDs_{\smat{\bfa\in \sub_2\src(I^{\xi})\\
c \in \vee'\bfa \cap \dset x}} \k p_{x,c}
& \Nname{P0}
\DDs_{a \in \src(I^{\xi}) \cap \dset x}\k p_{x,a}
& \Nname{k}\k  & \Nname{0} 0\\
\Nname{k1}
\DDs_{\smat{\bfa\in \sub_2\src(I^{\xi})\\
c \in \vee'\bfa \cap \dset x}} \k
& \Nname{k0}
\DDs_{a \in \src(I^{\xi}) \cap \dset x}\k
& \Nname{k'}\k
\Ar{P1}{P0}{"(\ep_1)_x"}
\Ar{P0}{k}{"(\ep_0)_x"}
\Ar{k1}{k0}{"\ep_1^x"}
\Ar{k0}{k'}{"\ep_0^x"}
\Ar{k}{0}{}
\Ar{k1}{P1}{"\al"}
\Ar{k0}{P0}{"\be"}
\Ar{k'}{k}{equal}
\end{tikzcd},
$$
where $\al = \Ds_{\bfa_c}\al_c$ (resp.\ $\be = \Ds_{a}\be_a$) is the isomorphism defined by $\al_c(1_\k):= p_{x,c}$ (resp.\ $\be_a(1_\k):= p_{x,a}$)
for all $\bfa \in \sub_2\src(I^{\xi}),\, c \in \vee'\bfa \cap \dset x$
(resp.\ $a \in \src(I^{\xi}) \cap \dset x$). We define the linear maps $\ep_1^x$ and $\ep_0^x$ above
in such a way that the diagram commutes.
Then we can compute their entries
by the equality
$\sfP_{a,b}(p_{x,a}) = p_{x,b}$ for all $(b, a) \in [\le]_I$.

We note here that
\begin{equation}
\label{eq:cap-x}
\src(\uset\udl{\bfa} \cap \uset\ovl{\bfa} \cap \dset x) \subseteq \vee'\bfa \cap \dset x
\end{equation}
holds for all $\bfa \in \sub_2\src(I^{\xi})$.
Indeed,
assume that $y$ is an element of the left-hand side.
Let $z < y$ in $I^{\xi}$. Then by this assumption,
$z \not\in \uset\udl{\bfa} \cap \uset\ovl{\bfa} \cap \dset x$.
Since $z < y \le x$, we have $z \in \dset x$.
Thus $z \not\in \uset\udl{\bfa} \cap \uset\ovl{\bfa}$.
Hence $y$ is of the right-hand side, as required.

Since $w \ge 2$,
$\sub_2(\src(I^{\xi}) \cap \dset  x) \ne \emptyset$.
Note that for each $\bfa \in \sub_2(\src(I^{\xi}) \cap \dset  x)$,
we have $\uset\udl{\bfa} \cap \uset\ovl{\bfa} \cap \dset x
\ne \emptyset$ because the left-hand side contains $x$.
Hence
$\src(\uset\udl{\bfa} \cap \uset\ovl{\bfa} \cap \dset x)
\ne \emptyset$. 
Take any $c_{\bfa,x}$ from this set.
Then by \eqref{eq:cap-x}, we have
$c_{\bfa,x} \in \vee'\bfa \cap \dset x$.
The pair $\bfa_{c_{\bfa,x}} = (\bfa, c_{\bfa,x})$ is denoted by $\bfa_{c_x}$ for short. Therefore, 
the matrix $\ep_{1}^x$ has a $w\times\binom{w}{2}$ submatrix $\tilde{\ep}_{1}^x$ given by
\begin{align}\label{ep1matrixform w/o conditions}
\begin{blockarray}{cccccccccc}
& \scalebox{0.7}{$\{a_{1},a_{2}\}_{c_{x}}$} & \scalebox{0.7}{$\{a_{1},a_{3}\}_{c_{x}}$} & \scalebox{0.7}{$\cdots$} & \scalebox{0.7}{$\{a_{1},a_{w}\}_{c_{x}}$} & \scalebox{0.7}{$\{a_{2},a_{3}\}_{c_{x}}$} & \scalebox{0.7}{$\cdots$} & \scalebox{0.7}{$\{a_{2},a_{w}\}_{c_{x}}$} & \scalebox{0.7}{$\cdots$} & \scalebox{0.7}{$\{a_{w-1},a_{w}\}_{c_{x}}$} \\
\begin{block}{c[ccccccccc]}
  \scalebox{0.7}{$a_{1}$} & 1 & 1  & \cdots & 1 & 0 & \cdots & 0 &\cdots & 0 \\
  \scalebox{0.7}{$a_{2}$} & -1  & 0 & \cdots & 0 & 1 & \cdots & 1 & \cdots & 0 \\
  \scalebox{0.7}{$a_{3}$} & 0 & -1 & \cdots & 0 & -1 & \cdots & 0 & \cdots & 0 \\
  \scalebox{0.7}{$\vdots$} & \vdots & \vdots & & \vdots & \vdots & & \vdots & & \vdots \\
  \scalebox{0.7}{$a_{w-1}$} & 0 & 0 & \cdots & 0 & 0 & \cdots & 0 & \cdots & 1 \\
  \scalebox{0.7}{$a_{w}$} & 0 & 0 & \cdots & -1 & 0 & \cdots & -1 & \cdots & -1 \\
\end{block}
\end{blockarray}
\end{align}
and the matrix $\ep_{0}^x$ is given by $(\underbrace{1, 1, \cdots, 1}_{w})$. It is clear that $\rank\ep_{0}^x = 1$.
For the matrix $\tilde{\ep}_1^x$,
note that the last $w-1$ rows are linearly independent, and that the sum of all rows is a zero row vector. This shows that $\rank\tilde{\ep}_{1}^x = w-1$. Thus $\dim\Im\ep_{1}^x = \rank\ep_{1}^x \ge \rank\tilde{\ep}_{1}(x) = w - 1 = \dim\Ker\ep_{0}^x$.
\end{proof}

\subsubsection{\texorpdfstring{$(n,1)$}{(n,1)}-type}

Next we consider the case where $I^{\xi}$ is of $(n,1)$-type with $n \ge 2$,
and set $\snk(I^{\xi}) = \{b\}$.
The following is immediate from Lemma \ref{lem:intv-as-[K,L]}.

\begin{lem}
\label{lem:topandatom}
For each $x\in I^{\xi}$, there exists $a_{i}\in \src(I^{\xi})$ such that $a_{i}\leq x\leq b$. \qed
\end{lem}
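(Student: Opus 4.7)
The plan is to deduce both inequalities $a_i \le x \le b$ directly from finiteness of $Q_I$ together with the hypothesis that its unique sink is $b$, essentially mimicking the source/sink argument used in Lemma~\ref{lem:intv-as-[K,L]} but with $Q_I$ playing the role of the ambient poset rather than $\bfP$.

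First I would fix $x \in Q_I$ and consider the down-set $\dset_{Q_I} x = \{y \in Q_I \mid y \le x\}$. Since $Q_I$ is finite and contains $x$, this set is non-empty and finite, hence admits a minimal element $a$. I would then upgrade ``minimal in $\dset_{Q_I} x$'' to ``minimal in $Q_I$'': any $z \in Q_I$ with $z \le a$ satisfies $z \le x$ by transitivity, so $z \in \dset_{Q_I} x$, and minimality of $a$ inside that set forces $z = a$. Hence $a \in \src(Q_I)$, so $a = a_i$ for some $i \in [n]$, giving $a_i \le x$.

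For the inequality $x \le b$ I would run the dual argument on the up-set $\uset_{Q_I} x$. It is again finite and non-empty, so it has a maximal element $m$, which by the symmetric transitivity check must be a maximal element of $Q_I$ itself. Since $\snk(Q_I) = \{b\}$, this forces $m = b$, whence $x \le b$.

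There is no serious obstacle here; the only thing to monitor is that the promotion from ``minimal/maximal in a prescribed up/down-set'' to ``minimal/maximal in all of $Q_I$'' really is a one-line transitivity step. Alternatively, one can simply invoke Lemma~\ref{lem:intv-as-[K,L]} with $Q_I$ substituted for $\bfP$ and with $Q_I$ itself taken as the (trivially convex) subset: this gives $Q_I = [\src(Q_I),\snk(Q_I)]$, and since $\snk(Q_I) = \{b\}$ the resulting description $Q_I = [\src(Q_I),\{b\}]$ is exactly the required statement.
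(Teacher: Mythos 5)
Your proposal is correct. The paper offers no proof body at all: it simply remarks that the lemma is immediate from Lemma~\ref{lem:intv-as-[K,L]}, which is precisely the alternative route you sketch in your closing sentence (apply that lemma with $Q_I$ in the role of the ambient poset, so $Q_I=[\src(Q_I),\snk(Q_I)]=[\src(Q_I),\{b\}]$, which unwinds to the stated claim). Your primary argument is a self-contained elementary unpacking of the same fact: since $Q_I$ is a finite poset, $\dset_{Q_I}x$ and $\uset_{Q_I}x$ are nonempty and finite, hence contain a minimal and a maximal element respectively, and the one-line transitivity check promotes these to a global minimal element $a_i\in\src(Q_I)$ and a global maximal element, which must equal $b$ because $\snk(Q_I)=\{b\}$. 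Both routes are valid; the only hypothesis either one genuinely uses is that $Q_I$ is a finite poset with a unique sink, so nothing is missing.
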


To give the formula in the case of $(n,1)$-type, we need one more dimension:
$\dim \Hom_{\k[I^{\xi}]}(V_{I^{\xi}}/\soc V_{I^{\xi}}, R_I(M))$.
Again this will be done
by using Lemma \ref{lem:dim-Hom-coker},
and hence we will next compute a projective presentation of $V_{I^{\xi}}/\soc V_{I^{\xi}}$. Define a morphism $\la: P_{b}\to P_{\src(I^{\xi})}$ by setting
$$
\la:=\bmat{\sfP_{b,a_{1}}\\ \mathbf{0}\\ \vdots\\ \mathbf{0}}.
$$
Since $\soc V_{I^{\xi}}$ is a simple socle of $V_{I^{\xi}}$ and
$V_{I^{\xi}} \supseteq \ep_0(P_{a_1}) \ne 0$,
we have $\soc V_{I^{\xi}} \subseteq \ep_0(P_{a_1})$.
Moreover there exists an isomorphism $P_{b} \to \soc V_{I^{\xi}}$.
Since $P_b$ is a projective $\k[I^{\xi}]$-module, the composite
$P_{b} \to \soc V_{I^{\xi}} \hookrightarrow \ep_0(P_{a_1})$
factors through the epimorphism $\ep_0|_{P_{a_1}} \colon P_{a_1} \to \ep_0(P_{a_1})$.
Hence $\ep_0(\la(e_b)) = \ep_0(p_{b,a_1})$
is a nonzero element of the simple module $\soc V_{I^{\xi}}$.
Thus 
we have $\Im (\ep_{0}\circ \la)=\soc V_{I^{\xi}}$,
which shows the exactness of the right column in diagram \eqref{eq:snake-lem-nsrc} below.

\begin{prp}
Assume that $I^{\xi}$ is a poset of $(n,1)$-type with $n \ge 2$
with $\snk(I^{\xi}) = \{b\}$, and let
$\pi \colon V_I \to V_I/\soc V_I$ be the canonical projection.
Then we have the following projective presentation of $V_{I^{\xi}}/\soc V_{I^{\xi}}$ in $\mod \k[I^{\xi}]$:
\begin{equation}
\label{eq:min-proj-resol-V_I/socV_I-nsrccase w/o conditions}
P_{\src_1(I^{\xi})}\ds P_{b} \ya{\ep'_{1}} P_{\src(I^{\xi})} \ya{\ep'_{0}} V_I/\soc V_{I^{\xi}} \to 0,
\end{equation}
where $\ep'_{1}$, $\ep'_{0}$ are given by
\[
\ep'_{1}:= \bmat{\ep_{1}\ \la},\ \ep'_{0}:= \pi \circ \ep_{0}.
\]
\end{prp}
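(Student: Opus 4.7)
The plan is to assemble the desired projective presentation out of the one for $V_{Q_I}$ already given in \prpref{prp:prjpres-VI w/o conditions} and the map $\la \colon P_b \to P_{\src(Q_I)}$ that lifts the projective cover $P_b \twoheadrightarrow \soc V_{Q_I}$ through $\ep_0$. In effect this amounts to the Horseshoe Lemma applied to the canonical short exact sequence $0 \to \soc V_{Q_I} \to V_{Q_I} \xrightarrow{\pi} V_{Q_I}/\soc V_{Q_I} \to 0$, using that $P_b$ itself serves as a projective cover of $\soc V_{Q_I}$ (since $b$ is the unique sink, $P_b$ is simple, and in fact $P_b \iso \soc V_{Q_I}$), which the authors have just exploited to set up $\la$.

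Concretely, I would verify exactness of \eqref{eq:min-proj-resol-V_I/socV_I-nsrccase w/o conditions} in three short steps. First, $\ep'_0 = \pi \circ \ep_0$ is surjective as a composition of two surjections (the surjectivity of $\ep_0$ was established in \prpref{prp:prjpres-VI w/o conditions}). Second, $\ep'_0 \circ \ep'_1 = 0$ checked block by block: on the $\ep_1$ block it follows from $\ep_0 \circ \ep_1 = 0$ in \prpref{prp:prjpres-VI w/o conditions}, and on the $\la$ block it follows from $\Im(\ep_0 \circ \la) = \soc V_{Q_I} = \Ker \pi$, which was already established in the paragraph immediately preceding the statement.

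The only nontrivial step is the reverse inclusion $\Ker \ep'_0 \subseteq \Im \ep'_1$, which I would handle by a one-line diagram chase. Given $x \in \Ker \ep'_0$, one has $\ep_0(x) \in \Ker \pi = \soc V_{Q_I}$; by the surjectivity $\ep_0 \circ \la \colon P_b \twoheadrightarrow \soc V_{Q_I}$ there exists $y \in P_b$ with $\ep_0(\la(y)) = \ep_0(x)$, so $x - \la(y) \in \Ker \ep_0 = \Im \ep_1$ by \prpref{prp:prjpres-VI w/o conditions}, exhibiting $x$ as an element of $\Im \la + \Im \ep_1 = \Im \ep'_1$.

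I expect no substantial obstacle: the heavy lifting—exactness of the presentation of $V_{Q_I}$ and the identity $\Im(\ep_0 \circ \la) = \soc V_{Q_I}$—is already in hand, and the present statement is essentially a formal consequence obtained by pasting these two ingredients together via the Horseshoe / snake construction on the socle short exact sequence.
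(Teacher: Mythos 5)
Your proof is correct, but it follows a genuinely different path from the paper's. The paper factors $\ep_1$ as $\iota_0 \circ \tau_1$ with $\iota_0 = \ker\ep_0$ and $\tau_1$ epi, then extends this to $\ep'_1 = \iota'_0 \circ \tau'_1$, and finally invokes the snake lemma on the $3\times3$ diagram \eqref{eq:snake-lem-nsrc} to deduce exactness of $0 \to \Ker\ep_0 \ds P_b \ya{\iota'_0} P_{\src(Q_I)} \ya{\ep'_0} V_{Q_I}/\soc V_{Q_I} \to 0$. You instead run a direct element-level diagram chase: given $x \in \Ker\ep'_0$, use $\Ker\pi = \soc V_{Q_I} = \Im(\ep_0 \circ \la)$ to produce $y \in P_b$ with $\ep_0(\la(y)) = \ep_0(x)$, whence $x - \la(y) \in \Ker\ep_0 = \Im\ep_1$ and $x \in \Im\ep'_1$. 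This is more elementary and bypasses the snake lemma entirely; the paper's approach is more structural and keeps everything at the level of morphisms (which some readers prefer in a functor-category setting), but the element chase is perfectly valid in $\mod\k[Q_I]$ and is arguably the shorter argument. One minor remark: your framing of the result as an instance of the Horseshoe Lemma is slightly off — that lemma builds a resolution of the \emph{middle} term of a short exact sequence from resolutions of the two outer terms, whereas here you are resolving the \emph{cokernel} from a resolution of the middle term and a (trivial) resolution of the kernel; but this is only an aside, and the actual proof you give does not depend on it.
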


\begin{proof}
Since the first and second terms of sequence
\eqref{eq:min-proj-resol-V_I/socV_I-nsrccase w/o conditions}
are projective, it is enough to show the exactness of this sequence.
We start from projective presentation \eqref{eq:min-proj-resol-V_I-nsrccase-lem w/o conditions} of $V_{I^{\xi}}$, and express $\ep_1$ as the composite
$\ep_1 = \iota_0 \ta_1$ as in the commutative diagram, 
$$
\begin{tikzcd}
P_{\src_1(I^{\xi})} && P_{\src(I^{\xi})} & V_{I^{\xi}} & 0, \\
& \Ker\ep_{0}\\
\Ar{1-1}{1-3}{"\ep_{1}"}
\Ar{1-3}{1-4}{"{\ep_{0}}"}
\Ar{1-4}{1-5}{}
\Ar{2-2}{1-3}{"\iota_{0}"', tail}
\Ar{1-1}{2-2}{"{\tau_{1}}"', two heads}
\end{tikzcd}
$$
where $\iota_0$ is the kernel of $\ep_0$ and
$\ta_1$ is an epimorphism obtained from $\ep_1$ by restricting the codomain.
This yields the diagram
$$
\begin{tikzcd}
P_{\src_1(I^{\xi})} \ds P_b && P_{\src(I^{\xi})} & V_{I^{\xi}}/\soc V_{I^{\xi}} & 0, \\
& \Ker\ep_{0} \ds P_b\\
\Ar{1-1}{1-3}{"\ep'_{1}"}
\Ar{1-3}{1-4}{"{\ep'_{0}}"}
\Ar{1-4}{1-5}{}
\Ar{2-2}{1-3}{"\iota'_{0}"', tail}
\Ar{1-1}{2-2}{"{\tau'_{1}}"', two heads}
\end{tikzcd}
$$
where $\ta'_1:= \sbmat{\tau_{1}\ \mathbf{0}\\ \mathbf{0}\ \id}$ and
$\iota'_{0}:= \bmat{\iota_{0}\ \la}$.
Then $\ta'_1$ is an epimorphism because so is $\ta_1$, and
the diagram is commutative.
Indeed,
$\iota'_{0} \circ \tau'_{1} = \bmat{\iota_{0}\ \la}\sbmat{\tau_{1}\ \mathbf{0}\\ \mathbf{0}\ \id} = \bmat{\iota_{0}\tau_{1}\ \la} = \bmat{\ep_{1}\ \la} = \ep'_{1}$.
Therefore, it remains to show that the sequence
\begin{equation}
\label{eq:necess-ex}
0\to \Ker\ep_{0}\ds P_{b}\ya{\iota'_{0}} P_{\src(I^{\xi})}\ya{\ep'_{0}} V_{I^{\xi}}/\soc V_{I^{\xi}}\to 0
\end{equation}
is exact.
Consider the following commutative diagram of solid arrows
with exact rows surrounded by dashed lines:
\begin{equation}
\label{eq:snake-lem-nsrc}
\begin{tikzcd}
0 & 0 & \Ker\iota'_{0}& 0\\
\Nname{lu}0 & \Ker\ep_{0} & \Ker\ep_{0} \ds P_b & P_b &\Nname{ru} 0\\
\Nname{ld}0 & \Ker\ep_{0} & P_{\src(I^{\xi})} & V_{I^{\xi}} & \Nname{rd}0\\
& 0 & \Cok \iota'_{0} & V_{I^{\xi}}/\soc V_{I^{\xi}} & 0
\Ar{1-1}{1-2}{}
\Ar{1-2}{1-3}{}
\Ar{1-3}{1-4}{}
\Ar{2-1}{2-2}{}
\Ar{2-2}{2-3}{"{\sbmat{\id\\\mathbf{0}}}"}
\Ar{2-3}{2-4}{"{\sbmat{\mathbf{0},\id}}"}
\Ar{2-4}{2-5}{}
\Ar{3-1}{3-2}{}
\Ar{3-2}{3-3}{"\iota_{0}"}
\Ar{3-3}{3-4}{"\ep_{0}"}
\Ar{3-4}{3-5}{}
\Ar{4-2}{4-3}{}
\Ar{4-3}{4-4}{"\overline{\ep_{0}}"}
\Ar{4-4}{4-5}{}
\Ar{1-2}{2-2}{}
\Ar{1-3}{2-3}{}
\Ar{1-4}{2-4}{}
\Ar{2-2}{3-2}{equal}
\Ar{2-3}{3-3}{"\iota'_{0}"}
\Ar{2-4}{3-4}{"\ep_{0}\circ \la"}
\Ar{2-4}{3-3}{"\la" ', dashed}
\Ar{3-2}{4-2}{}
\Ar{3-3}{4-3}{"\cok \iota'_{0}"}
\Ar{3-4}{4-4}{"\pi"}
\ar[to path={([yshift=8pt]lu.north east)--([yshift=8pt]ru.north east)--
(rd.south east)--(ld.south west)--([yshift=8pt]lu.north west)--([yshift=8pt]lu.north east)}, dash, dashed, rounded corners]
\end{tikzcd}
\end{equation}
By applying the snake lemma to this diagram, we obtain that $\Ker\iota'_{0} = 0$
and that $\overline{\ep_{0}}\colon \Cok\iota'_{0}\to V_{I^{\xi}}/\soc V_{I^{\xi}}$ is an isomorphism.
Since $\overline{\ep_{0}}\circ \cok \iota'_{0} = \pi \circ \ep_{0} = \ep'_{0}$,
the center column yields exact sequence \eqref{eq:necess-ex}.
\end{proof}

We are now in a position to prove the formula of $\mult^\xi_I$
in Case 2.

\begin{thm}
\label{thm:(n,1)case w/o conditions}
Let $M\in \mod A$, and assume that $I^{\xi}$ is of $(n,1)$-type with $n \ge 2$, $\src(I^{\xi})=\{a_1,\dots, a_n\}$, and $\snk(I^{\xi}) =\{b\}$. Then we have
\begin{equation}
\label{eq:nsrc1snkrank w/o conditions}
\mult_I^\xi M =
\rank 
\bmat{\tdbfM \\
\tilde{\beta}
}
- \rank \tdbfM.
\end{equation}
Here $\tilde{\beta}$ is given by
\[\tilde{\beta} =
 \big[M_{\xi_I(b),\xi_I(a_{1})}\ \underbrace{\mathbf{0}\ \cdots\ \mathbf{0}}_{n-1}\big],
\]
and $\tdbfM$ is given by
$$
\tdbfM = \bmat{\tdM_{\bfa_c,a}}_{(\bfa_c,a) \in \src_1(I^{\xi})\times \src(I^{\xi})},
$$
where
\begin{align}\label{eq:entries of M w/o conditions, dual version}
    \tdM_{\bfa_c,a}:=
    \begin{cases}
M_{\xi_I(c),\xi_I(a)}, & \textnormal{if } a = \udl{\bfa},\\
-M_{\xi_I(c),\xi_I(a)}, & \textnormal{if } a = \ovl{\bfa},\\
\mathbf{0}, & \textnormal{if } a \not\in \bfa,
\end{cases}
\end{align}
for all $\bfa_c \in \src_1(I^{\xi})$ and $a \in \src(I^{\xi})$.
\end{thm}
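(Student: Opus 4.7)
The plan is to mimic the strategy used in Proposition~\ref{prp:seg-rank}, which reduced the computation of a compression multiplicity to two $\Hom$-dimensions through a $V_{Q_I}$-versus-$V_{Q_I}/\soc V_{Q_I}$ formula. Since $Q_I$ has a unique sink $b$, the module $V_{Q_I}$ is indecomposable injective in $\mod \k[Q_I]$ (being isomorphic to $I_b$), so the same formula from \cite{Asashiba2017} applies here:
\[
\rank_I^{\xi}M = d_{R_I(M)}(V_{Q_I}) = \dim \Hom_{\k[Q_I]}(V_{Q_I},R_I(M)) - \dim \Hom_{\k[Q_I]}(V_{Q_I}/\soc V_{Q_I},R_I(M)).
\]

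First I would feed the projective presentation of $V_{Q_I}$ from Proposition~\ref{prp:prjpres-VI w/o conditions} into Lemma~\ref{lem:dim-Hom-coker} with $S = Q_I$ and the module $R_I(M)$. Since $R_I(M)(x) = M(\xi_I(x))$ and $R_I(M)(p_{y,x}) = M_{\xi_I(y),\xi_I(x)}$, replacing each block entry $P_{c,a}$ of $\ep_1$ by $M_{\xi_I(c),\xi_I(a)}$ and taking the blockwise transpose produces precisely the matrix $\tdbfM$ from \eqref{eq:entries of M w/o conditions, dual version}. Hence
\[
\dim \Hom_{\k[Q_I]}(V_{Q_I},R_I(M)) = \sum_{a \in \src(Q_I)}\dim M(\xi_I(a)) - \rank \tdbfM.
\]

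Next I would carry out the analogous computation for $V_{Q_I}/\soc V_{Q_I}$ using its projective presentation \eqref{eq:min-proj-resol-V_I/socV_I-nsrccase w/o conditions}, where the augmented map $\ep'_1 = \bmat{\ep_1\ \la}$ adds exactly one column $\la$ whose only nonzero entry is $P_{b,a_1}$ in the $a_1$-row. After applying Lemma~\ref{lem:dim-Hom-coker} again, this new column contributes a new row under the blockwise transpose, namely $\bigl[M_{\xi_I(b),\xi_I(a_1)}\ \mathbf{0}\ \cdots\ \mathbf{0}\bigr] = \tilde\beta$, stacked below $\tdbfM$. Since the number of summands $\sum_a \dim M(\xi_I(a))$ is unchanged, we obtain
\[
\dim \Hom_{\k[Q_I]}(V_{Q_I}/\soc V_{Q_I},R_I(M)) = \sum_{a \in \src(Q_I)}\dim M(\xi_I(a)) - \rank\bmat{\tdbfM\\ \tilde\beta}.
\]

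Subtracting the two expressions, the $\sum_a \dim M(\xi_I(a))$ terms cancel and we arrive at the desired formula \eqref{eq:nsrc1snkrank w/o conditions}. The only potentially delicate point is the bookkeeping of indices and the blockwise transpose convention, in particular making sure that the ordering of $\src_1(Q_I)$ and the augmented column match the ordering used to write the matrix $\bmat{\tdbfM\\ \tilde\beta}$; once this is checked against the lexicographic convention $\plex$ fixed in Notation~\ref{ntn: notations for general case}, the identification is immediate, and the rest is a clean application of the two previously established propositions.
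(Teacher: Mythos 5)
Your proposal follows exactly the same route as the paper: invoke the injectivity of $V_{Q_I}$ (as $I_b$) to reduce $\rank_I^\xi M = d_{R_I(M)}(V_{Q_I})$ via the formula from \cite{Asashiba2017} to a difference of two $\Hom$-dimensions, then feed the projective presentations from Proposition~\ref{prp:prjpres-VI w/o conditions} and from \eqref{eq:min-proj-resol-V_I/socV_I-nsrccase w/o conditions} into Lemma~\ref{lem:dim-Hom-coker}, producing $\tdbfM$ and $\bmat{\tdbfM\\\tilde\beta}$ respectively after the blockwise transpose, and subtract so the $\sum_a \dim M(\xi_I(a))$ terms cancel. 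This matches the paper's proof step for step, including the index bookkeeping you flag at the end.
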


\begin{proof}
Since $I^{\xi}$ has a unique sink $b$,
$V_{I^{\xi}}$ is isomorphic to an injective indecomposable $\k[I^{\xi}]$-module
$I_b:= D(\Hom_{\k[I^{\xi}]}(\blank, b))$.
Then again by 
applying the formula in \cite{Asashiba2017} to $V_{I^{\xi}}$, we have
\begin{equation}
\label{eq:d_R(M)-nsrc w/o conditions}
d_{R_I(M)}(V_{I^{\xi}}) = \dim\Hom_{\k[I^{\xi}]}(V_{I^{\xi}}, R_I(M)) - \dim\Hom_{\k[I^{\xi}]}(V_{I^{\xi}}/\soc V_{I^{\xi}}, R_I(M)).
\end{equation}
A projective presentation of $V_{I^{\xi}}$ in $\mod \k[I^{\xi}]$ 
is given by \eqref{eq:min-proj-resol-V_I-nsrccase-lem w/o conditions} in
Proposition~\ref{prp:prjpres-VI w/o conditions}.
Hence by Lemma~\ref{lem:dim-Hom-coker},
we have
\begin{equation}
\label{eq:1st-eq-nsrc w/o conditions}
\dim \Hom_{\k[I^{\xi}]}(V_{I^{\xi}} ,R_I(M)) = \sum_{i\in [n]}\dim M(\xi_I(a_{i})) - \rank \tdbfM
\end{equation}
because $R_I(M)(a_i) = M(\xi_I(a_{i}))$.

On the other hand, a projective presentation of $V_{I^{\xi}}/\soc V_{I^{\xi}}$ is given by
\eqref{eq:min-proj-resol-V_I/socV_I-nsrccase w/o conditions}.
Hence by Lemma \ref{lem:dim-Hom-coker}, we have
\begin{equation}
\label{eq:2nd-eq-nsrc w/o conditions}
\dim \Hom_{\k[I^{\xi}]}(V_{I^{\xi}}/\soc V_{I^{\xi}}, R_I(M)) = \sum_{i\in [n]}\dim M(\xi_I(a_{i})) - \rank \bmat{\tdbfM \\ \tilde{\beta}}.
\end{equation}
By equations \eqref{eq:d_R(M)-nsrc w/o conditions}, \eqref{eq:1st-eq-nsrc w/o conditions}, \eqref{eq:2nd-eq-nsrc w/o conditions}, we
obtain \eqref{eq:nsrc1snkrank w/o conditions}.
\end{proof}

\subsubsection{\texorpdfstring{$(1,m)$}{(1,m)}-type}
We will obtain our formula of $\mult^\xi_I$ in the case where $I^{\xi}$ is of $(1,m)$-type
from that in the case of $(n,1)$-type
by applying the usual $\k$-duality $D:= \Hom_\k(\blank, \k) \colon \mod\k[S] \to \mod \k[S\op]$ for all finite posets $S$.

There exists a canonical isomorphism $D V_{I^{\xi}} \iso V_{(I^{\xi})\op}$ in $\mod \k[(I^{\xi})\op]$,
by which we identify these modules.
Note that $(I^{\xi})\op$ is a connected poset of $(m,n)$-type with $m\ge 2$, $\src((I^{\xi})\op) = \snk(I^{\xi})$, and $\src_1((I^{\xi})\op) = \snk_1(I^{\xi})$. We need the following three lemmas for this purpose. Here we denote by $R_I\op$ the restriction functor $\mod \k[\bfP\op] \to \mod \k[(I^{\xi})\op]$ defined by the inclusion functor $\k[(I^{\xi})\op] \to \k[\bfP\op]$.

\begin{lem}
\label{lem:dual-comprs}
For any $M \in \mod A$, we have
\[
d_{R_I(M)}(R_I(V_I)) = d_{R_I\op(DM)}(V_{(I^{\xi})\op}).
\]
\end{lem}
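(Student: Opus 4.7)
The plan is to deduce the equality from the fact that the $\k$-duality $D$ commutes (up to natural isomorphism) with the restriction functors, together with the fact that $D$ preserves the Krull--Schmidt data of any finite-dimensional module.

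First I would observe the naturality of $D$ with respect to restriction. The restriction $R_I(M) = M \circ F_I$ is a functor $\k[Q_I] \to \mod \k$, where $F_I \colon \k[Q_I] \to A$ is the inclusion (in this simplified setting). Applying $D$ gives a contravariant functor $\k[Q_I] \to \mod \k$, which may be rewritten as a covariant functor $\k[Q_I]\op \to \mod \k$, i.e.\ an object of $\mod \k[Q_I\op]$. On the other side, $DM \in \mod \k[\bfP\op]$ is $D \circ M$ viewed as a covariant functor on $\k[\bfP\op]$, and $R_I\op(DM) = DM \circ F_I\op$, where $F_I\op \colon \k[Q_I\op] \to \k[\bfP\op]$ is the inclusion induced by $F_I$. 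Since $F_I\op$ is literally $F_I$ on underlying data with arrows reversed, the two composites $D \circ M \circ F_I$ and $D \circ M \circ F_I\op$ agree as objects of $\mod \k[Q_I\op]$, giving a natural isomorphism $D \circ R_I \iso R_I\op \circ D$.

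Next, I would invoke the Krull--Schmidt theorem (Theorem \ref{thm:KS}) applied in $\mod \k[Q_I]$. Write $R_I(M) \iso \bigoplus_{L \in \calL_{\k[Q_I]}} L^{d_{R_I(M)}(L)}$. Since $D \colon \mod \k[Q_I] \to \mod \k[Q_I\op]$ is an exact contravariant equivalence, it preserves indecomposability and induces a bijection between the isoclasses of indecomposables on both sides via $L \mapsto DL$. Hence
\[
D R_I(M) \iso \bigoplus_{L} (DL)^{d_{R_I(M)}(L)},
\]
so the multiplicity of $V_{Q_I\op}$ in $DR_I(M)$ equals the multiplicity of any indecomposable $L$ with $DL \iso V_{Q_I\op}$ in $R_I(M)$. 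Using the canonical isomorphism $D V_{Q_I} \iso V_{Q_I\op}$ (equivalently, $D V_{Q_I\op} \iso V_{Q_I}$, since $D$ is a duality and $V_{Q_I}$ is determined up to isomorphism by the pointwise data $\k$ on $(Q_I)_0$ and identities on arrows), this indecomposable is $V_{Q_I}$. Combining with $R_I(V_I) \iso V_{Q_I}$ established just before the lemma, we obtain
\[
d_{R_I(M)}(R_I(V_I)) = d_{R_I(M)}(V_{Q_I}) = d_{DR_I(M)}(V_{Q_I\op}) = d_{R_I\op(DM)}(V_{Q_I\op}),
\]
which is the claimed identity.

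The only potentially subtle point is to verify carefully the naturality $D \circ R_I \iso R_I\op \circ D$: one must ensure that the identifications of covariant and contravariant functors are consistent with the identification of $\k[Q_I\op]$ with the opposite of $\k[Q_I]$, and similarly for $A$. This is a bookkeeping check rather than a real obstacle; once stated correctly, it follows immediately from $F_I\op$ being the opposite of the inclusion functor $F_I$. Everything else is a direct application of Krull--Schmidt and the fact that $D$ is a duality.
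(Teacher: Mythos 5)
Your proof is correct and takes essentially the same approach as the paper's: identify $D \circ R_I$ with $R_I\op \circ D$, use that $D$ is a duality preserving indecomposability and multiplicities, and translate via $DV_{Q_I} \iso V_{Q_I\op}$ and $R_I(V_I) = V_{Q_I}$. The paper phrases the multiplicity step slightly more concretely (it writes $R_I(M) \iso R_I(V_I)^c \oplus N$ with $N$ having no such summand and applies $D$ directly to this decomposition, rather than invoking the full Krull--Schmidt sum over all indecomposables, and it asserts strict commutativity of the $D$/restriction square rather than commutativity up to natural isomorphism), but these are cosmetic differences and the argument is the same.
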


\begin{proof}
Denote the $\k$-duality $\mod \k[I^{\xi}] \to \mod\k[(I^{\xi})\op]$ by the same symbol $D$.
Then it is easy to see that the following is a strict commutative diagram of functors
and contravariant functors:
\[
\begin{tikzcd}
\mod \k[\bfP] & \mod \k[\bfP\op]\\
\mod \k[I^{\xi}] & \mod \k[(I^{\xi})\op]
\Ar{1-1}{1-2}{"D"}
\Ar{2-1}{2-2}{"D" '}
\Ar{1-1}{2-1}{"R_I" '}
\Ar{1-2}{2-2}{"R_I\op"}
\end{tikzcd}.
\]
Set $c:= d_{R_I(M)}(R_I(V_I))$.
Then we have $R_I(M) \iso R_I(V_I)^c \ds N$ for some $N \in \mod \k[I^{\xi}]$
having no direct summand isomorphic to $R_I(V_I)$.
By sending this isomorphism by $D$, we obtain
\[
\begin{aligned}
(D\circ R_I)(M) &\iso (D \circ R_I)(V_I)^c \ds DN,\\
R_I\op(DM) &\iso R_I\op(DV_I)^c \ds DN,\\
\end{aligned}
\]
where $DN$ does not have direct summand isomorphic to $D(R_I(V_I)) \iso R_I\op(DV_I)$.
Hence 
$d_{R_I\op(DM)}(R_I\op(DV_{I})) = c = d_{R_I(M)}(R_I(V_I))$.
Here, we have
$R_I\op(DV_{I}) \iso D(R_I(V_I)) \iso D(V_{I^{\xi}}) \iso V_{(I^{\xi})\op}$,
which finish the proof.
\end{proof}

\begin{lem}
\label{lem:rank-D}
Let $f \colon V \to W$ be a linear map in $\mod \k$.
Then $\rank D(f) = \rank f$.
\end{lem}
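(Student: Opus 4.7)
The plan is to reduce the statement to the standard fact that $\dim D(U) = \dim U$ for every finite-dimensional $\k$-vector space $U$, by factoring $f$ through its image and exploiting the exactness of the duality functor $D$ on $\mod \k$.

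First I would factor $f$ as $f = \iota \circ \pi$, where $\pi \colon V \twoheadrightarrow \Im f$ is the canonical surjection and $\iota \colon \Im f \hookrightarrow W$ is the canonical injection. Applying $D$ yields $D(f) = D(\pi) \circ D(\iota)$, where now $D(\iota) \colon D(W) \to D(\Im f)$ and $D(\pi) \colon D(\Im f) \to D(V)$.

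Next I would use that $D$ is exact on $\mod \k$: since $\iota$ is injective, $D(\iota)$ is surjective; since $\pi$ is surjective, $D(\pi)$ is injective. Therefore $\Im D(f) = D(\pi)(D(\Im f))$ and this image has dimension equal to $\dim D(\Im f)$ because $D(\pi)$ is injective.

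Finally, since $\Im f$ is finite-dimensional and $D$ preserves dimensions of finite-dimensional vector spaces, we have $\dim D(\Im f) = \dim \Im f = \rank f$. Combining these gives $\rank D(f) = \dim \Im D(f) = \dim D(\Im f) = \rank f$, as desired. I do not expect any real obstacle here; this is a textbook fact, and the only thing to watch for is to state the exactness of $D$ on finite-dimensional spaces explicitly (or equivalently, to note that choosing bases for $V$ and $W$ represents $D(f)$ by the transpose of the matrix of $f$ with respect to the dual bases, and transposition preserves rank).
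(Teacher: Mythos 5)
Your proposal is correct and takes essentially the same route as the paper: both factor $f$ as a surjection onto $\Im f$ followed by an injection into $W$, apply $D$, use that $D$ sends injections to surjections and surjections to injections, and conclude via $\dim D(\Im f) = \dim \Im f$. The paper phrases the middle step as $\Im D(f) \iso D(\Im f)$ rather than invoking exactness by name, but the content is identical.
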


\begin{proof}
The linear map $f$ is expressed as the composite
$f = f_1 \circ f_2$ for some epimorphism $f_2 \colon V \to \Im f$
and some monomorphism $f_1 \colon \Im f \to W$.
Then $D(f)$ is expressed as $D(f) = D(f_2) \circ D(f_1)$,
where $D(f_1) \colon D(W) \to D(\Im f)$ is an epimorphism and $D(f_2) \colon D(\Im f) \to D(V)$ is a monomorphism.
Hence we have $\Im D(f) \iso D(\Im f)$.
Then the assertion follows from
$\dim \Im f = \dim D(\Im f) = \dim \Im D(f)$.
\end{proof}

\begin{lem}
\label{lem:blockwise-transpose}
Let $f \colon V \to W$ be in $\mod \k$ and 
$V = \Ds_{i \in I} V_i$, $W = \Ds_{j \in J} W_j$
direct sum decompositions.
If $f = [f_{j,i}]_{(j,i) \in J \times I}$ with $f_{j,i} \colon V_i \to W_j$ is a matrix expression
of $f$ with respect to these direct sum decompositions, then $D(f)$ has a matrix expression
$D(f) = [D(f_{j,i})]_{(i,j) \in I \times J}$
with $D(f_{j,i}) \colon D(W_j) \to D(V_i)$
with respect to the direct sum decompositions
$D(V) \iso \Ds_{i\in I} D(V_i)$ and
$D(W) \iso \Ds_{j\in J} D(W_j)$.
Hence by Lemma \ref{lem:rank-D}, we have
\[
\rank\, [D(f_{j,i})]_{(i,j) \in I \times J}
= \rank\, [f_{j,i}]_{(j,i) \in J \times I},
\]
where $[f_{j,i}]_{(j,i) \in J \times I}$ can be seen
as the transpose of the formal matrix
$[f_{j,i}]_{(i,j) \in I \times J}$.
\end{lem}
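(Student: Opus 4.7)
The plan is to exploit the fact that the $\k$-duality $D = \Hom_\k(\blank, \k)$ is an additive contravariant functor, which consequently interchanges the roles of inclusions and projections in any biproduct decomposition. Concretely, for $V = \Ds_{i \in I} V_i$ with structural inclusions $\iota_i \colon V_i \to V$ and projections $\pi_i \colon V \to V_i$ satisfying $\pi_i \iota_j = \de_{ij}\id_{V_j}$ and $\sum_i \iota_i \pi_i = \id_V$, applying $D$ yields the identities $D(\iota_j) D(\pi_i) = \de_{ij}\id_{D(V_j)}$ and $\sum_i D(\pi_i) D(\iota_i) = \id_{D(V)}$. Hence $(D(V), D(\pi_i), D(\iota_i))$ is a biproduct, giving a canonical decomposition $D(V) \iso \Ds_{i \in I} D(V_i)$, and analogously $D(W) \iso \Ds_{j \in J} D(W_j)$. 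These are the reference direct sum decompositions intended in the statement.

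First I would record this dualized biproduct structure carefully, since it supplies the reference direct sum decompositions on $D(V)$ and $D(W)$ and fixes which are inclusions and which are projections. Then, because the matrix entries of $f$ are by definition $f_{j,i} = \pi_j \circ f \circ \iota_i$, the $(i,j)$-entry of the matrix of $D(f) \colon D(W) \to D(V)$ with respect to the dualized decompositions is computed by
\[
D(\iota_i) \circ D(f) \circ D(\pi_j) = D(\pi_j \circ f \circ \iota_i) = D(f_{j,i}),
\]
which immediately exhibits the matrix expression $D(f) = [D(f_{j,i})]_{(i,j) \in I \times J}$ asserted in the lemma.

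The rank equality then follows without further work: the matrix $[D(f_{j,i})]_{(i,j) \in I \times J}$ is literally the matrix of the single linear map $D(f)$ with respect to the fixed decompositions, so its rank equals $\rank D(f)$; and $\rank D(f) = \rank f = \rank\,[f_{j,i}]_{(j,i) \in J \times I}$ by Lemma \ref{lem:rank-D}. I do not anticipate any genuine obstacle; the only care needed is in bookkeeping of the index sets and the fact that $D$ reverses the direction of arrows, which is precisely what makes $[D(f_{j,i})]_{(i,j) \in I \times J}$ the blockwise transpose in the sense of Notation \ref{ntn:Yoneda}(4).
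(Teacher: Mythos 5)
Your proof is correct and follows essentially the same route as the paper's: both identify the $(i,j)$-entry of $D(f)$ as $D(\text{projection})\circ D(f)\circ D(\text{inclusion}) = D(\pi_j \circ f \circ \iota_i) = D(f_{j,i})$, using that $D$ swaps inclusions and projections, and then invoke Lemma~\ref{lem:rank-D}. Your explicit spelling-out of the dual biproduct identities is a slightly more careful justification of why the dualized maps give the reference decomposition, but the argument is the same.
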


\begin{proof}
Let $(\si_i^V \colon V_i \to V)_{i \in I}$ be the 
family of canonical injections,
and let $(\pi_j^W \colon W \to W_j)_{j \in J}$ be the family of canonical projections with respect to the decompositions of $V, W$ above,
respectively.
Then $f_{j,i} = \pi_j^W \circ f \circ \si_i^V$ for all
$i \in I, j \in J$.
Now $(D(\si_i^V) \colon D(V) \to D(V_i))_{i \in I}$ forms
the family of the canonical projections, and $(D(\pi_j^W) \colon D(W_j) \to D(W))_{j \in J}$ the canonical injections with respect to the decompositions
$D(V) \iso \Ds_{i\in I} D(V_i)$ and
$D(W) \iso \Ds_{j\in J} D(W_j)$, respectively.
Hence $D(f)$ has the matrix expression
$D(f) = [D(f)_{i,j}]_{(i,j) \in I \times J}$,
where
$D(f)_{i,j}  = D(\si_i^V) \circ D(f) \circ D(\pi_j^W)
= D(\pi_j^W \circ f \circ \si_i^V) = D(f_{j,i})$.
\end{proof}

These lemmas give a formula for the case of $(1,m)$-type with $m \ge 2$ as follows.

\begin{thm}
\label{thm:(1,n)case w/o conditions}
Let $M\in \mod A$, and $I^{\xi}$ be of $(1,m)$-type with $m \ge 2$, $\src(I)=\{a\}$, and
$\snk(I) =\{b_1,\dots, b_m\}$. Then we have
\begin{equation}
\label{eq:(1,n)rank w/o conditions}
\mult^\xi_I(M) =
\rank 
\bmat{\hatbfM\ \hat{\beta}}
- \rank \hatbfM.
\end{equation}
Here $\hat{\beta}$ is given by
\[\hat{\beta} =
\sbmat{M_{\xi_I(b_{1}),\xi_I(a)}\\ \hspace{15pt} \left.\smat{\mathbf{0}\\ \vdots \\[5pt] \mathbf{0}}\right\}m-1},
\]
and $\hatbfM$ is given by
\begin{align*}
    \hatbfM = \bmat{\hatM_{b, \bfb_d}}_{(b, \bfb_d) \in \snk(I^{\xi}) \times \snk_1(I^{\xi})},
\end{align*}
where
\begin{align}\label{eq:entries of N w/o conditions, dual version}
    \hatM_{b, \bfb_d}:=
    \begin{cases}
    M_{\xi_I(b),\xi_I(d)}, & \textnormal{if } b=\udl{\bfb},\\
    -M_{\xi_I(b),\xi_I(d)}, & \textnormal{if } b=\ovl{\bfb},\\
    \mathbf{0}, & \textnormal{if }b \not\in \bfb,
    \end{cases}
\end{align}
for all $b \in \snk(I^{\xi})$ and $\bfb_d \in \snk_1(I^{\xi})$.
\end{thm}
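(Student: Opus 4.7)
The plan is to deduce this from the already-established $(n,1)$-type formula (Theorem \ref{thm:(n,1)case w/o conditions}) via the $\k$-duality $D\colon \mod A \to \mod A\op$. Lemma \ref{lem:dual-comprs} gives
\[
\rank^\xi_I M \;=\; d_{R_I(M)}(V_{Q_I}) \;=\; d_{R_I\op(DM)}(V_{Q_I\op}).
\]
Now $Q_I\op$ is a connected poset with unique sink $a$ and $m$ distinct sources $b_1,\dots,b_m$, hence is of $(m,1)$-type with $m \ge 2$. Viewing $\xi_I$ as a morphism $Q_I\op \to \bfP\op$ supplies an $I\op$-component of a compression system $\xi\op$ for $\k[\bfP\op]$, so the right-hand side above becomes $\rank^{\xi\op}_{I\op}(DM)$, to which Theorem \ref{thm:(n,1)case w/o conditions} applies with $n := m$.

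Applying that theorem yields
\[
\rank^{\xi\op}_{I\op}(DM) \;=\; \rank \bmat{\bfN \\ \ga} - \rank \bfN,
\]
where $\ga = \bmat{(DM)_{\xi_I(a),\xi_I(b_1)} & \mathbf{0} & \cdots & \mathbf{0}}$ and $\bfN$ is indexed by $\src_1(Q_I\op) \times \src(Q_I\op)$ with signed entries as in \eqref{eq:entries of M w/o conditions, dual version}. The key combinatorial identification is $\src_1(Q_I\op) = \snk_1(Q_I)$: for each $\bfb \in \sub_2\snk(Q_I)$, the set of sources of $\uset_{Q_I\op}\udl{\bfb}\cap\uset_{Q_I\op}\ovl{\bfb}$ in $Q_I\op$ is exactly $\wedge'\bfb = \snk(\dset_{Q_I}\udl{\bfb}\cap\dset_{Q_I}\ovl{\bfb})$ in $Q_I$. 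Moreover, for all $x,y \in \bfP$ with $x \le y$ we have $(DM)_{x,y} = D(M_{y,x})$, and hence (with matching signs) each nonzero entry of $\bfN$ is the $\k$-dual of the corresponding entry of $\hatbfM$, while $\ga$ is the entrywise $\k$-dual of $\hat{\beta}$.

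Consequently $\bfN$ is the blockwise transpose of the entrywise $\k$-dual of $\hatbfM$, and $\bmat{\bfN \\ \ga}$ is the blockwise transpose of the entrywise $\k$-dual of $\bmat{\hatbfM & \hat{\beta}}$. Lemmas \ref{lem:rank-D} and \ref{lem:blockwise-transpose} then give $\rank \bfN = \rank \hatbfM$ and $\rank \bmat{\bfN \\ \ga} = \rank \bmat{\hatbfM & \hat{\beta}}$, which upon substitution produces \eqref{eq:(1,n)rank w/o conditions}. The main obstacle I foresee is administrative rather than conceptual: one must fix compatible total orders on $\snk(Q_I) = \src(Q_I\op)$ and on $\snk_1(Q_I) = \src_1(Q_I\op)$ so that blockwise transposition literally identifies the two matrices, and verify that the $\pm$-convention \eqref{eq:entries of M w/o conditions, dual version} applied in $Q_I\op$ transports to \eqref{eq:entries of N w/o conditions, dual version} under the correspondence $\udl{\bfa}, \ovl{\bfa} \leftrightarrow \udl{\bfb}, \ovl{\bfb}$.
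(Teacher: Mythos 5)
Your proof is correct and follows the same route as the paper's own proof: reduce to the $(n,1)$-type formula (Theorem \ref{thm:(n,1)case w/o conditions}) by applying the $\k$-duality to pass to $DM$ over $\k[\bfP\op]$, using Lemma \ref{lem:dual-comprs}, the identity $(DM)_{x,y} = D(M_{y,x})$, and Lemma \ref{lem:blockwise-transpose} (which already incorporates Lemma \ref{lem:rank-D}) to match ranks. The administrative issues you flag (choice of total orders and consistency of the sign convention under $\src_1(Q_I\op) = \snk_1(Q_I)$) are handled implicitly in the paper by fixing the identifications $\src(Q_I\op)=\snk(Q_I)$, $\src_1(Q_I\op)=\snk_1(Q_I)$ before the theorem, and your treatment of them is fine.
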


\begin{proof}
By Lemma \ref{lem:dual-comprs}, we have
$\mult^\xi_I(M) = d_{R_I(M)}(R_I(V_I)) = d_{R_I\op(DM)}(V_{(I^{\xi})\op})$.
As stated before, $(I^{\xi})\op$ is of $(m,1)$-type
with $\src(I\op) = \{b_1,\dots, b_m\}$
and $\snk(I\op) = \{a\}$. Hence to compute $d_{R_I\op(DM)}(V_{(I^{\xi})\op})$,
we can apply Theorem~\ref{thm:(n,1)case w/o conditions} to the following setting:
poset $\bfP\op$, module $DM$, the interval $I\op$,
the poset $(I^{\xi})\op$, and the order-preserving map
$\xi_I\op \colon (I^{\xi})\op \to \bfP\op$ that is defined by
$\xi_I\op(x):= \xi_I(x)$ for all $x \in (I^{\xi})\op$. Then we have the following.
\[
d_{R_I\op(DM)}(V_{(I^{\xi})\op}) = 
\rank 
\bmat{\vec{\bfM} \\
{\vec{\beta}}
}
- \rank \vec{\bfM}.
\]
Here
\begin{align*}
    \vec{\bfM} := \bmat{\vec{M}_{\bfb_d, b}}_{(\bfb_d, b) \in \snk_1(I^{\xi}) \times \snk(I^{\xi})},
\end{align*}
where
\begin{align}\label{eq:entries of DM w/o conditions}
    \vec{M}_{\bfb_d, b}:=
    \begin{cases}
    (DM)_{\xi_I(d),\xi_I(b)}, & \textnormal{if } b=\udl{\bfb},\\
    -(DM)_{\xi_I(d),\xi_I(b)}, & \textnormal{if } b=\ovl{\bfb},\\
    \mathbf{0}, & \textnormal{if } b \not\in \bfb,
    \end{cases}
\end{align}
for all $\bfb_d \in \snk_1(I^{\xi})$ and $b \in \snk(I^{\xi})$,
and
\[\vec{\beta} =
 \big[(DM)_{\xi_I(a),\xi_I(b_{1})}\ \underbrace{\mathbf{0}\ \cdots\ \mathbf{0}}_{m-1}\big].
\]
Now for any $x, y \in \bfP\op$ with $x \le\op y$ in $\bfP\op$,
let $p\op_{y,x}$ be the
unique morphism in $\bfP\op(x,y)$.
Then we have $y \le x$ in $\bfP$, and $p_{x,y} = p\op_{y,x}$,
which is the unique morphism in $\bfP(y,x) = \bfP\op(x,y)$.
Hence we have
\[
(DM)_{y,x} = (DM)(p\op_{y,x}) = (DM)(p_{x,y}) = D(M(p_{x,y})) = D(M_{x,y}).
\]
Then \eqref{eq:(1,n)rank w/o conditions} follows by
Lemma \ref{lem:blockwise-transpose}.
\end{proof}

\subsubsection{A projective presentation of \texorpdfstring{$\ta\inv V_{I^{\xi}}$}{tauVQI}}

Following the usual convention in representation theory, we denote by $(\blank)^t$ the contravariant functors
\[
\begin{aligned}
\Hom_{\k[I^{\xi}]}(\blank, \k[I^{\xi}](\cdot, ?)) &\colon \mod \k[I^{\xi}] \to \mod \k[(I^{\xi})\op],\\
&M \mapsto \Hom_{\k[I^{\xi}]}({}_{?}M, \k[I^{\xi}](\cdot, ?)), \text{ and}\\
\Hom_{\k[(I^{\xi})\op]}(\blank, \k[(I^{\xi})\op](\cdot, ?)) &\colon \mod \k[(I^{\xi})\op] \to \mod \k[I^{\xi}],\\
&M \mapsto \Hom_{\k[(I^{\xi})\op]}(M_{?}, \k[I^{\xi}](?, \cdot)),
\end{aligned}
\]
which are dualities between 
$\prj \k[I^{\xi}]$ and $\prj \k[(I^{\xi})\op]$, where $\prj B$ denotes the full subcategory
of $\mod B$ consisting of projective modules for any finite $\k$-category $B$.
We use the notation $P'_x$ provided in Notation~\ref{ntn:Yoneda} for $S:= I^{\xi}$. Then by the Yoneda lemma, we have
\[
P^t_x = \Hom_{\k[I^{\xi}]}(\k[I^{\xi}](x, ?), \k[I^{\xi}](\cdot, ?)) \iso \k[I^{\xi}](\cdot, x) = \k[(I^{\xi})\op](x, \cdot) = P'_x
\]
for all $x \in I^{\xi}$.
By this natural isomorphism, we usually identify $P'_x$ with $P^t_x$, and $\sfP'_{x,y}$ with $(\sfP_{y,x})^t$ for all $x, y \in I^{\xi}$. For this reason, we write $P^t$ instead of $P'$ in the sequel if there is no confusion. 

To give a formula of $\mult^\xi_I$ for the case where $I^{\xi}$ is of $(n,m)$-type with $m,n \ge 2$,
we need to compute a projective presentation of $\ta\inv V_{I^{\xi}}$.
Remember that $\ta\inv M = \Tr D M$ for all $M \in \mod \k[I^{\xi}]$,
where for each $N \in \mod \k[(I^{\xi})\op]$,
the {\em transpose} $\Tr N$ of $N$ is defined as the cokernel of some $f^t$
with $P_1 \ya{f} P_0 \to N \to 0$ a minimal projective presentation of $N$.
By applying Proposition~\ref{prp:prjpres-VI w/o conditions}, we first obtain a projective presentation
of $DV_{I^{\xi}}$ as follows.

\begin{prp}
\label{prp:prjpres-DVI w/o conditions}
Let $I^{\xi}$ be of $(n,m)$-type with $m \ge 2$. Then we have the following projective presentation (may not be minimal) of $D V_{I^{\xi}}$ in $\mod \k[(I^{\xi})\op]$:
\begin{equation}
\label{eq:proj-resol-DV_I-nsrccase-lem w/o conditions}
P'_{\snk_1(I^{\xi})} \ya{\psi_{1}} P'_{\snk(I^{\xi})} \ya{\psi_{0}} DV_{I^{\xi}} \to 0.
\end{equation}
Here $\psi_{0}, \psi_{1}$ are given by
\begin{align*}
    \psi_{0}:= (\ro'_{1_{b_{1}}}, \ro'_{1_{b_{2}}}, \cdots, \ro'_{1_{b_{m}}})
\end{align*}
$($see Notation \ref{ntn:Yoneda} for $\ro')$,
where $1_u := 1_\k \in \k = V_{(I^{\xi})\op}(u)$ for all $u \in I^{\xi}$, and
\begin{align}\label{eq:matrix form of ep_1 w/o conditions, dual version}
\psi_{1}:=
\bmat{\vec{\sfP}_{b,\bfb_d}}_{(b, \bfb_d) \in \snk(I^{\xi}) \times \snk_1(I^{\xi})},
\end{align}
where the entry is given by
\begin{align}\label{eq:entries of ep_1 w/o conditions, dual version}
\vec{\sfP}_{b,\bfb_d}:=
\begin{cases}
\sfP'_{d,b}, & \textnormal{if } b=\udl{\bfb},\\
-\sfP'_{d,b}, & \textnormal{if } b=\ovl{\bfb},\\
\mathbf{0}, & \textnormal{if } b \not\in \bfb,
\end{cases}
\end{align}
for all $\bfb_d \in \snk_1(I^{\xi})$ and $b \in \snk(I^{\xi})$.
\qed
\end{prp}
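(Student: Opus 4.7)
The plan is to reduce this statement to Proposition~\ref{prp:prjpres-VI w/o conditions} via $\k$-duality. Since we have the canonical isomorphism $D V_{Q_I} \iso V_{Q_I\op}$ in $\mod \k[Q_I\op]$ (already noted before Lemma~\ref{lem:dual-comprs}), it suffices to produce the claimed presentation for $V_{Q_I\op}$ inside $\mod \k[Q_I\op]$ and identify the matrices appropriately.

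First I would observe that $Q_I\op$ is itself a connected finite poset of $(m,n)$-type with $m \ge 2$ sources, and that the combinatorial data used in Proposition~\ref{prp:prjpres-VI w/o conditions} transports across the opposition: $\src(Q_I\op) = \snk(Q_I)$, and for each $\bfb \in \sub_2 \snk(Q_I) = \sub_2 \src(Q_I\op)$, we have
\[
\wedge'\bfb \;=\; \snk\!\bigl(\dset_{Q_I}\udl{\bfb} \cap \dset_{Q_I}\ovl{\bfb}\bigr)
\;=\; \src\!\bigl(\uset_{Q_I\op}\udl{\bfb} \cap \uset_{Q_I\op}\ovl{\bfb}\bigr),
\]
so that $\snk_1(Q_I)$ coincides with the $\src_1$-set for the poset $Q_I\op$. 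Thus the very same proposition, applied with $Q_I$ replaced by $Q_I\op$, produces a projective presentation
\[
P^{\k[Q_I\op]}_{\src_1(Q_I\op)} \ya{\ep_1} P^{\k[Q_I\op]}_{\src(Q_I\op)} \ya{\ep_0} V_{Q_I\op} \to 0
\]
in $\mod \k[Q_I\op]$.

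Next I would translate this sequence into the notation of the statement. Under the identification $P^{\k[Q_I\op]}_x = P'_x$, the left and middle terms become $P'_{\snk_1(Q_I)}$ and $P'_{\snk(Q_I)}$. The morphism $\ep_0$, which is the row of Yoneda maps $\ro_{1_{b_i}}$ sending the generator of $P^{\k[Q_I\op]}_{b_i}$ to $1_{b_i} \in V_{Q_I\op}(b_i) = \k$, becomes exactly $\psi_0 = (\ro'_{1_{b_1}}, \dots, \ro'_{1_{b_m}})$. Each nonzero entry $P_{c,a}$ of $\ep_1$ (with $a \le c$ in $Q_I\op$, i.e., $c \le a$ in $Q_I$) is the $\k[Q_I\op]$-linear map $P^{\k[Q_I\op]}_c \to P^{\k[Q_I\op]}_a$ induced by $p^\op_{c,a} = p_{a,c}$; in our notation this is precisely $P'_{c,a}$, which matches the entries $\vec{P}_{b,\bfb_d} = \pm P'_{d,b}$ in \eqref{eq:entries of ep_1 w/o conditions, dual version}. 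The signs $\pm$ corresponding to $b = \udl{\bfb}$ versus $b = \ovl{\bfb}$ are inherited directly from \eqref{eq:entries of ep_1 w/o conditions}.

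The only point that requires care is bookkeeping the direction of the partial order under opposition, together with the convention that $\udl{\bfb}, \ovl{\bfb}$ refer to the fixed total order on $\snk(Q_I)$ (not the partial order), so that the min/max convention is preserved when we pass to $Q_I\op$. Once this is verified, no further computation is needed: the exactness and projectivity of the sequence are inherited from Proposition~\ref{prp:prjpres-VI w/o conditions}, and the statement follows.
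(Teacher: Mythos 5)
Your argument is correct and is exactly what the paper intends: the paper states the proposition with a \qed symbol and precedes it with ``By applying Proposition~\ref{prp:prjpres-VI w/o conditions}, we first obtain a projective presentation of $DV_{Q_I}$,'' so the intended proof is precisely the dualization you carry out (apply Proposition~\ref{prp:prjpres-VI w/o conditions} to $Q_I\op$, observe $\src(Q_I\op)=\snk(Q_I)$, $\src_1(Q_I\op)=\snk_1(Q_I)$, identify the projectives in $\mod\k[Q_I\op]$ with $P'_x$, and use $DV_{Q_I}\iso V_{Q_I\op}$). Your write-up fills in the bookkeeping details that the paper leaves implicit, but it is the same route.
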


\begin{rmk}
\label{rmk:transpose}
Let $f \colon V \to W$ be in $\mod A$ and 
$V = \Ds_{i \in I} V_i$, $W = \Ds_{j \in J} W_j$
direct sum decompositions.
If $f = [f_{j,i}]_{(j,i) \in J \times I}$ with $f_{j,i} \colon V_i \to W_j$ is a matrix expression of $f$ with respect to these direct sum decompositions, then it is clear that $f^{t}$ has a matrix expression
$f^{t}=[f_{j,i}]^{t}_{(j,i) \in J \times I}
=[f^t_{j,i}]_{(i,j) \in I \times J}
= {}^t\!\left([f^t_{j,i}]_{(j,i) \in J \times I}\right)$
(see Notation \ref{ntn:Yoneda} (4))
with entries $f^t_{j,i} \colon W_j^t \to V_i^t$
with respect to the direct sum decompositions
$V^t \iso \Ds_{i \in I} V_i^t$ and $W^t \iso \Ds_{j \in J} W_j^t$.
\end{rmk}

We note here that $\psi_0$ is a projective cover of $DV_{I^{\xi}}$ in
\eqref{eq:proj-resol-DV_I-nsrccase-lem w/o conditions}
because it induces an isomorphism $\top P^t_{\snk(I^{\xi})} \iso \top DV_{I^{\xi}}$ (see Definition \ref{dfn:rad-top-soc} and Remark \ref{rmk:prj-cov}), but $\psi_1 \colon P^t_{\snk_1(I^{\xi})}\to \Im \psi_1$ is not always a projective cover.
Then we can set 
\begin{equation}
\label{eq:decomp-P'snk1-I}
P^t_{\snk_1({I^{\xi}})} = P^t_1 \ds P^t_2
\end{equation}
with $\psi_{11} \colon P^t_1 \to \Im \psi_1$ a projective cover, where
$\psi_1 = (\psi_{11}, \mathbf{0})$ is a matrix expression of $\psi_1$ with respect to
this decomposition of $P^t_{\snk_1(I^{\xi})}$.

\begin{lem}
In the setting above, we can give a projective presentation of
$\ta\inv V_{I^{\xi}} \ds P_2$ as follows:
\begin{equation}
\label{eq:proj-resol-ta-invV_I+P-nsrccase-lem w/o conditions}
P_{\snk(I^{\xi})} \ya{{\psi^t_{1}}=\bmat{\psi^t_{11}\\ \mathbf{0}}} P_1 \ds P_2 = P_{\snk_1(I^{\xi})} \ya{\cok{\psi^t_{11}}\ds \id_{P_2}} \ta\inv V_{I^{\xi}} \ds P_2 \to 0.
\end{equation}
Here by 
\eqref{eq:matrix form of ep_1 w/o conditions, dual version}
and \eqref{eq:entries of ep_1 w/o conditions, dual version},
the precise form of $\ps_1^t$ is given as follows:
\begin{align*}
\psi^t_{1} &= 
\bmat{\vec{\sfP}_{b,\bfb_d}}_{(b, \bfb_d) \in \snk(I^{\xi}) \times \snk_1(I^{\xi})}^t
= \bmat{\vec{\sfP}_{b,\bfb_d}^t}_{(\bfb_d, b) \in \snk_1(I^{\xi}) \times \snk(I^{\xi})}\\
&=: \bmat{\hat{\sfP}_{b,\bfb_d}}_{(\bfb_d, b) \in \snk_1(I^{\xi}) \times \snk(I^{\xi})},
\end{align*}
where the entry is given by
\begin{align}\label{eq:entries of psi_1 w/o conditions, dual version}
\hat{\sfP}_{b,\bfb_d}:=
    \begin{cases}
    \sfP_{b,d}, & \textnormal{if } b=\udl{\bfb},\\
    -\sfP_{b,d}, & \textnormal{if } b=\ovl{\bfb},\\
    \mathbf{0}, & \textnormal{if } b \not\in \bfb,
    \end{cases}
\end{align}
for all $b \in \snk(I^{\xi})$ and $\bfb_d \in \snk_1(I^{\xi})$.
\end{lem}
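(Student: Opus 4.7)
The plan is to derive the stated presentation from the projective presentation \eqref{eq:proj-resol-DV_I-nsrccase-lem w/o conditions} of $DV_{Q_I}$ together with the definition $\ta\inv = \Tr \circ D$, after first passing to a minimal projective presentation using the decomposition in \eqref{eq:decomp-P'snk1-I}.

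First I would observe that, by the remark preceding \eqref{eq:decomp-P'snk1-I}, the map $\psi_0$ is already a projective cover of $DV_{Q_I}$, so to obtain a minimal presentation from \eqref{eq:proj-resol-DV_I-nsrccase-lem w/o conditions} it is enough to replace $\psi_1$ by a projective cover of $\Ker\psi_0 = \Im\psi_1$. By the hypothesis that $\psi_{11}\colon P^t_1 \to \Im\psi_1$ is a projective cover and that $\psi_1 = (\psi_{11},\mathbf{0})$ with respect to the decomposition $P^t_{\snk_1(Q_I)} = P^t_1 \ds P^t_2$, the sequence
\[
P^t_1 \ya{\psi_{11}} P^t_{\snk(Q_I)} \ya{\psi_0} DV_{Q_I} \to 0
\]
(where the first arrow is read via the inclusion $\Im\psi_1 \hookrightarrow P^t_{\snk(Q_I)}$) is a minimal projective presentation of $DV_{Q_I}$ in $\mod \k[Q_I\op]$.

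Next I would apply the contravariant functor $(\blank)^t$ to this minimal presentation. By definition of the transpose, this yields an exact sequence
\[
P_{\snk(Q_I)} \ya{\psi_{11}^t} P_1 \ya{\cok\psi_{11}^t} \Tr DV_{Q_I} \to 0,
\]
and by definition $\ta\inv V_{Q_I} = \Tr DV_{Q_I} = \Cok\psi_{11}^t$. Taking the direct sum of this sequence with the trivial presentation $P_2 \ya{\id_{P_2}} P_2 \to 0$ then produces the desired presentation of $\ta\inv V_{Q_I} \ds P_2$, whose first map is $\bmat{\psi_{11}^t\\ \mathbf{0}}$ and whose second map is $\cok\psi_{11}^t \ds \id_{P_2}$.

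Finally I would check that $\bmat{\psi_{11}^t\\ \mathbf{0}}$ coincides with $\psi_1^t$ under the identification $P_1 \ds P_2 = P_{\snk_1(Q_I)}$. This is immediate from the fact that $(\blank)^t$ is an additive contravariant functor: it converts the row decomposition $\psi_1 = (\psi_{11},\mathbf{0})\colon P^t_1 \ds P^t_2 \to P^t_{\snk(Q_I)}$ into the column decomposition $\psi_1^t = \bmat{\psi_{11}^t\\ \mathbf{0}^t}$, and $\mathbf{0}^t = \mathbf{0}$. The entrywise form of $\psi_1^t$ asserted in \eqref{eq:entries of psi_1 w/o conditions, dual version} then follows by applying $(\blank)^t$ to each block of \eqref{eq:entries of ep_1 w/o conditions, dual version} and using the identification $(P'_{d,b})^t = P_{b,d}$ inherited from $(P_{y,x})^t = P'_{x,y}$. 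The only delicate point in the argument is the bookkeeping ensuring that the projective cover extracted from the (possibly non-minimal) $\psi_1$ does give a minimal projective presentation, which is why the hypothesis on the decomposition \eqref{eq:decomp-P'snk1-I} is stated in exactly this form; once this is in place, the rest is formal.
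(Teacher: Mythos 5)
Your proof is correct and takes essentially the same route as the paper: pass to the minimal projective presentation of $DV_{Q_I}$ via the projective cover $\psi_{11}$, apply $(\blank)^t$ to obtain a minimal presentation of $\ta\inv V_{Q_I}$, and then add back $P_2$ by taking the direct sum with the trivial presentation. The paper condenses the final steps into ``Hence the assertion follows,'' while you spell out the direct-sum step and the identification of $\bmat{\psi_{11}^t\\ \mathbf{0}}$ with $\psi_1^t$, which are exactly what that phrase elides.
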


\begin{proof}
By the construction above, $DV_{I^{\xi}}$ has a minimal projective presentation
\begin{equation}
\label{eq:min-proj-resol-DV_I-nsrccase-lem w/o conditions}
P^t_1 \ya{\psi_{11}} P^t_{\snk(I^{\xi})} \ya{\psi_{0}} DV_{I^{\xi}} \to 0.
\end{equation}
Hence by applying $(\blank)^t:= \Hom_{\k[(I^{\xi})\op]}(\blank, \k[(I^{\xi})\op])$
to $\psi_{11}$ in \eqref{eq:min-proj-resol-DV_I-nsrccase-lem w/o conditions},
we have a minimal projective presentation
\begin{equation}
\label{eq:min-proj-resol-ta-invV_I-nsrccase-lem w/o conditions}
P_{\snk(I^{\xi})} \ya{\psi^t_{11}} P_1 \ya{\cok{\psi^t_{11}}} \ta\inv V_{I^{\xi}} \to 0
\end{equation}
of $\ta\inv V_{I^{\xi}} = \Tr D V_{I^{\xi}}$ in $\mod \k[I^{\xi}]$.
Hence the assertion follows.
\end{proof}

Note that in projective presentation 
\eqref{eq:proj-resol-ta-invV_I+P-nsrccase-lem w/o conditions} of $\ta\inv V_{I^{\xi}} \ds P_2$,
both of the projective terms and the form of the morphism $\ps_1^t$
between them is explicitly given, whereas those in projective presentation \eqref{eq:min-proj-resol-ta-invV_I-nsrccase-lem w/o conditions},
the forms of $P_1$ and $\ps_{11}^t$ are not clear.
Therefore, we will use the former presentation in our computation.
Fortunately, as seen in \eqref{eq:formula-dRV-2-2 w/o conditions},
the unnecessary $P_2$ does not disturb it
because we can give an explicit form of projective presentation of $E \ds P_2$
as in Proposition \ref{prp:n,m-ge-2 w/o conditions} below.

\subsubsection{\texorpdfstring{$(n,m)$}{(n,m)}-type with \texorpdfstring{$m, n \ge 2$}{m, n 2}}

Finally, we give a formula of $\mult_I^{\xi} M$
in the case where $I^{\xi}$ is of $(n,m)$-type with $n, m \ge 2$.

\begin{thm}
\label{thm:n,m-ge-2 w/o conditions}
Let $M \in \mod A$ and $I^{\xi}$ be
of $(n,m)$-type with $m,n \ge 2$, $\src(I^{\xi})=\{a_1,\dots, a_n\}$, and
$\snk(I^{\xi}) =\{b_1,\dots, b_m\}$.
Obviously, for $b_1 \in \snk(I^{\xi})$, there exists some $a_{i} \in \src(I^{\xi})$
such that $a_{i} \le b_1$.
Hence we may assume that $a_1 \le b_1$ without loss of generality.
Then we have
\begin{equation}
\label{eq:nsrcmsnk multiplicity w/o conditions}
    \mult^\xi_I M =
\rank \bmat{\tdbfM & 0\\
\bmat{M_{\xi_I(b_1),\xi_I(a_1)}&\mathbf{0}\\\mathbf{0}&\mathbf{0}} & \hatbfM\\
}
- \rank \tdbfM
-\rank \hatbfM,
\end{equation}
where $\tdbfM, \hatbfM$ are defined in Theorems \ref{thm:(n,1)case w/o conditions} and \ref{thm:(1,n)case w/o conditions}.
\end{thm}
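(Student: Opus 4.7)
The plan is to use the Auslander--Reiten multiplicity formula for the indecomposable module $V_{Q_I}$ in $R_I(M)$. Since $n \ge 2$ and $m \ge 2$, $V_{Q_I}$ is neither projective (as $\dim V_{Q_I}/\rad V_{Q_I} = n \ge 2$) nor injective (as $\dim \soc V_{Q_I} = m \ge 2$), so there exists an almost split sequence
\[
0 \to V_{Q_I} \to E \to \ta\inv V_{Q_I} \to 0
\]
in $\mod \k[Q_I]$. Applying $\Hom_{\k[Q_I]}(\blank, R_I M)$ and using the left almost split property of $V_{Q_I} \to E$ yields the standard formula
\[
\rank_I^\xi M = \dim\Hom(V_{Q_I}, R_I M) - \dim\Hom(E, R_I M) + \dim\Hom(\ta\inv V_{Q_I}, R_I M).
\]
Since only a projective presentation of $\ta\inv V_{Q_I} \ds P_2$ (not $\ta\inv V_{Q_I}$ alone) is available from \eqref{eq:proj-resol-ta-invV_I+P-nsrccase-lem w/o conditions}, I rewrite this by adding and subtracting $\dim\Hom(P_2, R_I M)$, replacing $E$ and $\ta\inv V_{Q_I}$ with $E \ds P_2$ and $\ta\inv V_{Q_I} \ds P_2$ respectively.

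Next, I apply the Horseshoe Lemma to the short exact sequence $0 \to V_{Q_I} \to E \ds P_2 \to \ta\inv V_{Q_I} \ds P_2 \to 0$ (obtained from the almost split sequence by direct summing with the trivial split sequence $0 \to 0 \to P_2 \to P_2 \to 0$), combining the projective presentation of $V_{Q_I}$ from Proposition \ref{prp:prjpres-VI w/o conditions} with that of $\ta\inv V_{Q_I} \ds P_2$ from \eqref{eq:proj-resol-ta-invV_I+P-nsrccase-lem w/o conditions}. This produces a projective presentation of $E \ds P_2$ of the form
\[
P_{\src_1(Q_I)} \ds P_{\snk(Q_I)} \xrightarrow{\Phi} P_{\src(Q_I)} \ds P_{\snk_1(Q_I)} \to E \ds P_2 \to 0, \quad \Phi = \bmat{\ep_1 & \gamma \\ \mathbf{0} & \ps_1^t},
\]
for some connecting morphism $\gamma \colon P_{\snk(Q_I)} \to P_{\src(Q_I)}$.

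The main obstacle is to identify $\gamma$ explicitly. The claim is that $\gamma$ is (up to a nonzero scalar) the block matrix whose only nonzero entry is $P_{b_1, a_1} \colon P_{b_1} \to P_{a_1}$ at position $(a_1, b_1)$, well-defined by $a_1 \le b_1$. The argument relies on Auslander--Reiten duality,
\[
\Ext^1_{\k[Q_I]}(\ta\inv V_{Q_I}, V_{Q_I}) \iso D\,\uEnd_{\k[Q_I]}(V_{Q_I}) \iso \k,
\]
which is one-dimensional since $V_{Q_I}$ is indecomposable. Hence the class of the almost split sequence spans this group, and any non-split extension of $\ta\inv V_{Q_I}$ by $V_{Q_I}$ is isomorphic to it up to a scalar. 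It then suffices to verify that the proposed $\gamma$ yields a non-split extension, which can be checked by direct analysis of the cokernel (e.g., showing that the dimension vector cannot be realised by $V_{Q_I} \ds \ta\inv V_{Q_I}$, or equivalently, that the connecting morphism in the long exact Ext sequence induced by $\gamma$ is nonzero).

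Finally, applying Lemma \ref{lem:dim-Hom-coker} to each of the three projective presentations gives
\[
\begin{aligned}
\dim\Hom(V_{Q_I}, R_I M) &= \sum_{i=1}^n \dim M(\xi_I(a_i)) - \rank \tdbfM,\\
\dim\Hom(\ta\inv V_{Q_I} \ds P_2, R_I M) &= \sum_{\bfb_d \in \snk_1(Q_I)} \dim M(\xi_I(d)) - \rank \hatbfM,\\
\dim\Hom(E \ds P_2, R_I M) &= \sum_{i=1}^n \dim M(\xi_I(a_i)) + \sum_{\bfb_d} \dim M(\xi_I(d)) - \rank \bmat{\tdbfM & \mathbf{0} \\ \bmat{M_{\xi_I(b_1),\xi_I(a_1)}&\mathbf{0}\\\mathbf{0}&\mathbf{0}} & \hatbfM}.
\end{aligned}
\]
Substituting these into the Auslander formula, the $\sum_i \dim M(\xi_I(a_i))$ and $\sum_{\bfb_d} \dim M(\xi_I(d))$ terms cancel in pairs, leaving exactly the identity \eqref{eq:nsrcmsnk multiplicity w/o conditions}.
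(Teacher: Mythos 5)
Your overall strategy — the AR-sequence multiplicity formula of \cite{Asashiba2017}, replacing $E$ and $\ta\inv V_{Q_I}$ with $E\ds P_2$ and $\ta\inv V_{Q_I}\ds P_2$, and then applying Lemma~\ref{lem:dim-Hom-coker} to projective presentations — matches the paper, and your final Hom-dimension bookkeeping is correct. Where you diverge is in how the connecting block $\gamma$ (the paper's $\et'$) in the presentation of $E\ds P_2$ is justified. The paper invokes Gabriel's construction of the AR sequence as a pushout of the minimal presentation of $\ta\inv V_{Q_I}$ along an explicit map $\et\colon P_{\snk(Q_I)}\to V_{Q_I}$ built from the Nakayama functor; after arguing that $S:=V_{\{b_1\}}$ is an admissible choice and lifting $\et$ through $\ep_0$, it obtains $\et'=\Pzero$ directly, and the pushout automatically yields a short exact sequence. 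You instead invoke the Horseshoe Lemma for abstract existence of a presentation of the form $\sbmat{\ep_1 & \gamma\\ \mathbf{0}&\ps_1^t}$, and then try to pin down $\gamma$ a posteriori via $\dim_\k\Ext^1(\ta\inv V_{Q_I},V_{Q_I})=1$, reducing everything to checking that your guessed $\gamma$ gives a non-split extension.

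That last reduction is where a genuine gap appears, in two places. First, for an arbitrary choice of $\gamma$, the cokernel of $\sbmat{\ep_1&\gamma\\\mathbf{0}&\ps_1^t}$ need not sit in a short exact sequence $0\to V_{Q_I}\to\Cok\to\ta\inv V_{Q_I}\ds P_2\to 0$ at all: one needs $\ep_0\gamma$ to vanish on $\Ker\ps_1^t=\Ker\ps_{11}^t$, a condition you never address (the Horseshoe Lemma's $\gamma$ satisfies it automatically, but that $\gamma$ is not identified). Second, the non-splitness check you sketch does not work: the middle term of \emph{any} short exact sequence $0\to V_{Q_I}\to X\to\ta\inv V_{Q_I}\ds P_2\to 0$, split or not, has dimension vector $\udim V_{Q_I}+\udim\ta\inv V_{Q_I}+\udim P_2$, so a dimension-vector comparison against $V_{Q_I}\ds\ta\inv V_{Q_I}\ds P_2$ distinguishes nothing. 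Your alternative phrase about a ``connecting morphism in the long exact Ext sequence'' is essentially a restatement of what must be proved (the extension class is nonzero); made precise it would require showing that $\ep_0\gamma$ does not factor through $\ps_{11}^t$, which would need explicit control of the minimal summand $P_1$ and of $\ps_{11}^t$ that neither you nor the paper compute. The paper's pushout route avoids both difficulties at once, which is exactly why it is the argument used.
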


\begin{proof}
Since $m, n \ge 2$, note first that we can apply Propositions \ref{prp:prjpres-VI w/o conditions} and \ref{prp:prjpres-DVI w/o conditions}.
The condition $m \ge 2$ also shows that
$V_{I^{\xi}}$ is not injective.
Hence there exists an almost split sequence in $\mod \k[I^{\xi}]$
\begin{equation}
\label{eq:ass-VI w/o conditions}
0 \to V_{I^{\xi}} \to E \to \ta\inv V_{I^{\xi}} \to 0
\end{equation}
starting from $V_{I^{\xi}}$.
The value of $\mult_I^\xi M:= d_{R_I(M)}(V_{I^{\xi}})$ can be computed from the three terms
of this almost split sequence
by using the
formula of \cite[Theorem 3]{Asashiba2017} as follows:
\begin{equation}
\label{eq:formula-dRV-2-2 w/o conditions}
\begin{aligned}
d_{R_I(M)}(V_{I^{\xi}}) = \dim \Hom_{\k[I^{\xi}]}(V_{I^{\xi}}, R_I(M)) &- \dim \Hom_{\k[I^{\xi}]}(E, R_I(M))\\
&+ \dim \Hom_{\k[I^{\xi}]}(\ta\inv V_{I^{\xi}}, R_I(M))\\
= \dim \Hom_{\k[I^{\xi}]}(V_{I^{\xi}}, R_I(M)) &- \dim \Hom_{\k[I^{\xi}]}(E \ds P_2, R_I(M))\\
&+ \dim \Hom_{\k[I^{\xi}]}(\ta\inv V_{I^{\xi}} \ds P_2, R_I(M)),
\end{aligned}
\end{equation}
where $P_2$ is a direct summand of $P_{\snk_1(I^{\xi})}$ as in \eqref{eq:decomp-P'snk1-I}.
Hence the assertion follows by the following proposition
together with projective presentation \eqref{eq:min-proj-resol-V_I-nsrccase-lem w/o conditions}
of $V_{I^{\xi}}$, projective presentation \eqref{eq:proj-resol-ta-invV_I+P-nsrccase-lem w/o conditions} of $\ta\inv V_{I^{\xi}} \ds P_2$, and
Lemma \ref{lem:dim-Hom-coker}.
\end{proof}

\begin{prp}
\label{prp:n,m-ge-2 w/o conditions}
Let $M \in \mod A$ and $I^{\xi}$ be
of $(n,m)$-type with $m,n \ge 2$, $\src(I^{\xi})=\{a_1,\dots, a_n\}$, $\snk(I^{\xi}) =\{b_1,\dots, b_m\}$, $E$ the middle term in \eqref{eq:ass-VI w/o conditions},
and $P_2$ a direct summand of $P_{\snk_1(I^{\xi})}$ as in \eqref{eq:decomp-P'snk1-I}. Assume that $a_1 \le b_1$ without loss of generality.
Then the following is a projective presentation of $E \ds P_2$:
\[
P_{\src_1(I^{\xi})} \ds P_{\snk(I^{\xi})} \ya{\mu_E} P_{\src(I^{\xi})}\ds P_{\snk_1(I^{\xi})}
\ya{\ep_E} E \ds P_2  \to 0.
\]
Here $\mu_E$ is given by
\[
\mu_E:= \bmat{
\ep_1 & \bmat{\sfP_{b_1,a_1} & \mathbf{0}\\\mathbf{0}&\mathbf{0}}\\
\mathbf{0} & \psi^t_1},
\]
where $\ep_1 \colon P_{\src_1(I^{\xi})}\to P_{\src(I^{\xi})}$ is given in \eqref{eq:matrix form of ep_1 w/o conditions}, and $\psi^t_1 \colon P_{\snk(I^{\xi})}\to P_{\snk_1(I^{\xi})}$ is given in \eqref{eq:proj-resol-ta-invV_I+P-nsrccase-lem w/o conditions}.
\end{prp}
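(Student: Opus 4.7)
The plan is to apply the horseshoe lemma to the short exact sequence obtained by direct-summing the almost split sequence \eqref{eq:ass-VI w/o conditions} with $0 \to 0 \to P_2 \ya{\id} P_2 \to 0$, namely
\[
0 \to V_{Q_I} \to E \ds P_2 \to \ta\inv V_{Q_I} \ds P_2 \to 0,
\]
using the projective presentation of $V_{Q_I}$ from Proposition~\ref{prp:prjpres-VI w/o conditions} and that of $\ta\inv V_{Q_I} \ds P_2$ from \eqref{eq:proj-resol-ta-invV_I+P-nsrccase-lem w/o conditions}. The almost split sequence exists because $V_{Q_I}$ is not injective: indeed, $\dim \soc V_{Q_I} = m \ge 2$. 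The horseshoe construction immediately yields a projective presentation of $E \ds P_2$ whose middle map is block-upper-triangular with diagonal entries $\ep_1$ and $\ps_1^t$; the upper-right block $h \colon P_{\snk(Q_I)} \to P_{\src(Q_I)}$ is determined by a choice of lift $\tilde{\ep} \colon P_{\snk_1(Q_I)} \to E \ds P_2$ of the projective cover of $\ta\inv V_{Q_I} \ds P_2$ and satisfies $\ep_0 \circ h = \tilde{\ep} \circ \ps_1^t$ (the latter landing in $V_{Q_I}$ because its composite with the quotient to $\ta\inv V_{Q_I} \ds P_2$ vanishes).

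It remains to identify $h$ with $\bmat{P_{b_1,a_1} & \mathbf{0}\\ \mathbf{0} & \mathbf{0}}$. Since $V_{Q_I}$ is an indecomposable non-projective brick (connectedness of $Q_I$ forces $\End V_{Q_I} \iso \k$), and since $P_2$ is projective, the Auslander--Reiten formula yields
\[
\Ext^1(\ta\inv V_{Q_I} \ds P_2, V_{Q_I}) \iso \Ext^1(\ta\inv V_{Q_I}, V_{Q_I}) \iso D\,\uEnd(V_{Q_I}) \iso \k,
\]
so this Ext space is one-dimensional, and any nonzero class in it represents the almost split extension up to scalar. Hence it suffices to verify that the asserted $h$ produces a morphism $g := \ep_0 \circ h \colon P_{\snk(Q_I)} \to V_{Q_I}$ with $[g] \ne 0$ in $\Ext^1$. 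Using the explicit form \eqref{eq:matrix form of ep_0 w/o conditions} of $\ep_0$ and the equality $V_{Q_I}(p_{b_1, a_1}) = \id_\k$, we see that $g$ sends the generator $1_{b_1}$ to $1_{b_1} \in \soc V_{Q_I}$ and sends $1_{b_j}$ to $0$ for $j \ge 2$; that is, $g$ is the ``socle-at-$b_1$'' map.

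The main obstacle I anticipate is showing $[g] \ne 0$, namely that $g$ does not factor through $\ps_1^t$. Using the explicit signed entries \eqref{eq:entries of psi_1 w/o conditions, dual version}, a hypothetical factorization $g = f \circ \ps_1^t$ would, after tracking signs at each $\bfb \in \sub_2\snk(Q_I)$ containing $b_1$, impose simultaneous constraints linking the $b_1$-summand to the other sink summands of $P_{\snk(Q_I)}$, which cannot simultaneously produce the single-coordinate socle map above. Once $[g] \ne 0$ is established, the one-dimensionality of $\Ext^1$ forces $[g]$ to represent the AR extension class, so the constructed complex gives a projective presentation of $E \ds P_2$ of the asserted form. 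An alternative route that bypasses this bookkeeping is to compute $\cok \mu_E$ directly via the snake lemma applied to the natural short exact sequence of two-term complexes, conclude that $\cok \mu_E$ sits in a non-split extension of $\ta\inv V_{Q_I} \ds P_2$ by $V_{Q_I}$, and invoke uniqueness of the AR sequence to identify $\cok \mu_E \iso E \ds P_2$.
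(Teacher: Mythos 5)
Your proof takes a genuinely different route from the paper's. The paper builds the almost split sequence \eqref{eq:ass-VI w/o conditions} as an explicit pushout (following Gabriel) along the socle morphism $\et=[\ro_{1_{b_1}},\mathbf{0},\dots,\mathbf{0}]$, lifts $\et$ to $\et'$ over $\ep_0$, and then verifies directly that $\ep_E$ is a cokernel of $\mu_E$. You instead run the horseshoe lemma on $0\to V_{Q_I}\to E\ds P_2\to\ta\inv V_{Q_I}\ds P_2\to0$ and propose to pin down the upper-right block using the one-dimensionality of $\Ext^1(\ta\inv V_{Q_I},V_{Q_I})$, which does hold since $V_{Q_I}$ is a non-injective brick (minor slip: AR duality gives $D\oEnd V_{Q_I}$ or $D\uEnd(\ta\inv V_{Q_I})$, not $D\uEnd V_{Q_I}$). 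The overall structure is sound and is a clean conceptual alternative.

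The crux of the argument, however --- verifying $[g]\ne0$ for $g=\ep_0\circ h=[\ro_{1_{b_1}},\mathbf{0},\dots,\mathbf{0}]$ --- is exactly where you stop, flagging it as ``the main obstacle I anticipate''; and the ``alternative route'' you sketch does not sidestep it, since concluding the extension is non-split \emph{is} the assertion $[g]\ne0$. This is a genuine gap, though it closes cleanly: a factorization $g=f\circ\ps^t_1$ is encoded by scalars $\la_{\bfb_d}\in\k$ (since $V_{Q_I}(d)=\k$), and by \eqref{eq:entries of psi_1 w/o conditions, dual version} the identity $g=f\circ\ps^t_1$ unpacks to the system, for $j\in[m]$,
\[
\sum_{\bfb_d\,\colon\, b_j=\udl{\bfb}}\la_{\bfb_d}\ -\sum_{\bfb_d\,\colon\, b_j=\ovl{\bfb}}\la_{\bfb_d}
\ =\ \begin{cases}1 & j=1,\\ 0 & j\ge 2.\end{cases}
\]
Summing over all $j$ makes every $\la_{\bfb_d}$ occur exactly once with each sign, so the left side telescopes to $0$ while the right side is $1$; hence no such $f$ exists. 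You should also record that $[g]$ is well-defined, i.e.\ $g$ vanishes on $\Ker\ps^t_1$: in fact $\Ker\ps^t_1=0$ here, since $P_{\snk(Q_I)}$ is semisimple (the $b_j$ are sinks) and connectedness of $Q_I$ forces every column of $\ps^t_1$ to be nonzero.
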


\begin{proof}
By \cite[Section 3.6]{gabriel2006auslander}, an almost split sequence
in \eqref{eq:ass-VI w/o conditions} can be obtained as a pushout of sequence
\eqref{eq:min-proj-resol-ta-invV_I-nsrccase-lem w/o conditions}
along a morphism $\et \colon P_{\snk(I^{\xi})} \to V_{I^{\xi}}$ as follows:
\begin{equation}
\label{eq:pushout w/o conditions}
\begin{tikzcd}
\Nname{P_1}P_{\snk(I^{\xi})} & \Nname{P_0}P_{1} & \Nname{ta1}\ta\inv V_{I^{\xi}} & \Nname{01}0\\
\Nname{VI}V_{I^{\xi}} & \Nname{E}E & \Nname{ta2}\ta\inv V_{I^{\xi}} & \Nname{02}0
\Ar{P_1}{P_0}{"\psi^t_{11}"}
\Ar{P_0}{ta1}{}
\Ar{ta1}{01}{}
\Ar{VI}{E}{}
\Ar{E}{ta2}{}
\Ar{ta2}{02}{}
\Ar{P_1}{VI}{"\et"}
\Ar{P_0}{E}{}
\Ar{ta1}{ta2}{equal}
\end{tikzcd}.
\end{equation}
Here, $\et$ is the composite of morphisms
\[
P_{\snk(I^{\xi})} \ya{\text{can.}} \top P_{\snk(I^{\xi})} \isoto \soc \nu P_{\snk(I^{\xi})}
\isoto \soc V_{I^{\xi}} \ya{\al} S \hookrightarrow \soc V_{I^{\xi}} \hookrightarrow V_{I^{\xi}}
\]
(see Definition \ref{dfn:rad-top-soc}), where $\nu$ is the Nakayama functor $\nu:= D\circ \Hom_{\k[{I^{\xi}}]}(\blank, \k[{I^{\xi}}])$, $S$ is any simple $\k[{I^{\xi}}]$-$\End_{\k[{I^{\xi}}]}(V_{I^{\xi}})$-subbimodule of
$\soc V_{I^{\xi}}$, and $\al$ is a retraction.

Here we claim that
any simple $\k[{I^{\xi}}]$-submodule of $\soc V_{I^{\xi}}$ is automatically
a simple $\k[{I^{\xi}}]$-$\End_{\k[{I^{\xi}}]}(V_{I^{\xi}})$-subbimodule of $\soc V_{I^{\xi}}$. Indeed, this follows from the fact that
$\soc V_{I^{\xi}} = \Ds_{i \in [m]}V_{\{b_i\}}$, where $V_{\{b_i\}}$ are mutually non-isomorphic simple $\k[{I^{\xi}}]$-modules.
More precisely, it is enough to show that $f(S) \subseteq S$ for any $f \in \End_{\k[{I^{\xi}}]}(V_{I^{\xi}})\op$
because if this is shown, then $S$ turns out to be a right $\End_{\k[{I^{\xi}}]}(V_{I^{\xi}})$-submodule
and a simple $\k[{I^{\xi}}]$-$\End_{\k[{I^{\xi}}]}(V_{I^{\xi}})$-subbimodule of $\soc V_{I^{\xi}}$.
Let $T$ be any simple $\k[{I^{\xi}}]$-submodule of $\soc V_{I^{\xi}}$, then
by the fact above $T \iso V_{\{b_i\}}$ for a unique $i \in [m]$, and hence
$\pr_j(T) = 0$ for all $j \in [m]\setminus \{i\}$,
where $\pr_j \colon \soc V_{I^{\xi}} \to V_{\{b_j\}}$ is the canonical projection.
Thus $T \subseteq V_{\{b_i\}}$, which shows that $T = V_{\{b_i\}}$ because the both hand sides are simple.
Now there exists a unique $i \in [m]$ such that $S = V_{\{b_i\}}$.
If $f = 0$, then $f(S) = 0 \subseteq S$; otherwise $f(S) \iso S$,
and then $f(S) = V_{\{b_i\}} = S$ by the argument above. This proves our claim.

Therefore, we may take $S:= V_{\{b_1\}}$, and
\[
\et:= [\ro_{1_{b_1}}, \mathbf{0}, \dots, \mathbf{0}] \colon P_{\snk({I^{\xi}})} = P_{b_1} \ds \cdots \ds P_{b_m} \to V_{I^{\xi}}.
\]
By assumption, $a_1 \le b_1$ in ${I^{\xi}}$.
Hence we have a commutative diagram
\[
\begin{tikzcd}
& P_{\src({I^{\xi}})}= P_{a_1} \ds \cdots \ds P_{a_n}\\
P_{\snk({I^{\xi}})} & V_{I^{\xi}}
\Ar{1-2}{2-2}{"{\ep_0 = (\ro_{1_{a_{1}}},\dots, \ro_{1_{a_{n}}})}"}
\Ar{2-1}{2-2}{"\et" '}
\Ar{2-1}{1-2}{"\et':={\Pzero}"}
\end{tikzcd}
\]
We recall that $\ep_0 \colon P_{\src({I^{\xi}})}\to V_{I^{\xi}}$ is given in~\eqref{eq:matrix form of ep_0 w/o conditions}. The above diagram commutes because for each $p \in P_{b_1}$, we have
\[
\ro_{1_{a_1}}(\sfP_{b_1,a_1}(p)) = \ro_{1_{a_1}}(p\cdot p_{b_1,a_1}) = V_{I^{\xi}}(p\cdot p_{b_1,a_1})(1_{a_1}) = V_{I^{\xi}}(p)(1_{b_1}) = \ro_{1_{b_1}}(p).
\]
Pushout diagram \eqref{eq:pushout w/o conditions} yields the following exact sequences:
\[
P_{\snk({I^{\xi}})} \ya{\bmat{\et\\ \psi^t_{11}}} V_{I^{\xi}} \ds P_{1} \to E \to 0,\text{ and }
P_{\snk({I^{\xi}})} \ya{\bmat{\et\\ \psi^t_{1}}} V_{I^{\xi}} \ds P_{\snk_1({I^{\xi}})} \ya{\pi} E \ds P_2 \to 0.
\]
The latter is extended to the following commutative diagram with the bottom row exact:
\[
\begin{tikzcd}[row sep=35pt, ampersand replacement=\&]
P_{\src_1({I^{\xi}})} \ds P_{\snk({I^{\xi}})} \&[35pt] P_{\src({I^{\xi}})}\ds P_{\snk_1({I^{\xi}})} \& E \ds P_2 \& 0\\
P_{\src_1({I^{\xi}})} \ds P_{\snk({I^{\xi}})} \& V_{I^{\xi}}\ds P_{\snk_1({I^{\xi}})} \& E \ds P_2 \& 0
\Ar{1-1}{1-2}{"\mu_E:= \bmat{\ep_1 & \et'\\\mathbf{0} & \psi^t_1}"}
\Ar{1-2}{1-3}{"\ep_E"}
\Ar{1-3}{1-4}{}
\Ar{2-1}{2-2}{"\bmat{\mathbf{0} & \et\\ \mathbf{0} & \psi^t_1}" '}
\Ar{2-2}{2-3}{"\pi" '}
\Ar{2-3}{2-4}{}
\Ar{1-1}{2-1}{equal}
 \Ar{1-2}{2-2}{"\bmat{\ep_0 & \mathbf{0}\\ \mathbf{0} & \id}"}
 \Ar{1-3}{2-3}{equal}
\end{tikzcd},
\]
where we set $\ep_E:= \pi \circ \sbmat{\ep_0 & \mathbf{0}\\ \mathbf{0} & \id}$,
which is an epimorphism as the composite of epimorphisms.
\par
It remains to show that $\ep_E$ is a cokernel morphism of $\mu_E$.
By the commutativity of the diagram and the exactness of
the bottom row, we see that $\ep_E \mu_E = 0$.
Let $(f,g) \colon P_{\src({I^{\xi}})}\ds P_{\snk_1({I^{\xi}})} \to X$ be a morphism
with $(f,g)\mu_E = 0$.
Then $f \ep_1 = 0$.
Since $\ep_0$ is a cokernel morphism of $\ep_1$,
there exists some $f' \colon V_{I^{\xi}} \to X$ such that
$f = f' \ep_0$.
Then we have $(f,g) = (f', g)\sbmat{\ep_0 &\mathbf{0}\\ \mathbf{0}&\id}$.
Now $(f',g)\sbmat{\mathbf{0}& \et\\ \mathbf{0}& \psi^t_1} = (f',g) \sbmat{\ep_0 &\mathbf{0}\\ \mathbf{0}&\id} \mu_E = (f,g)\mu_E = 0$.
Hence $(f',g)$ factors through $\pi$, that is,
$(f',g) = h \pi$ for some $h \colon E\ds P_2 \to X$.
Therefore, we have $(f,g) = h \pi \sbmat{\ep_0 &\mathbf{0}\\ \mathbf{0}&\id} = h\, \ep_E$.
The uniqueness of $h$ follows from the fact that $\ep_E$ is an
epimorphism.
As a consequence, $\ep_E$ is a cokernel morphism of $\mu_E$.
\end{proof}

The formula in Theorems~\ref{thm:n,m-ge-2 w/o conditions} covers all cases by
using an empty matrix convention (see Remark \ref{rmk:usage-empty-mat}),
namely, it is valid even if $m$ or $n$ is equal to 1.
We summarize the result as follows.

\begin{thm}
\label{thm:general w/o conditions}
Let $M \in \mod A$, and $I^{\xi}$ be of $(n,m)$-type $(m,n \ge 1)$ with $\src({I^{\xi}})=\{a_1,\dots, a_n\}$, $\snk({I^{\xi}}) =\{b_1,\dots, b_m\}$. Assume that $a_1 \le b_1$ without loss of generality. Then we have
\begin{equation}
\label{eqn:general formula for (n,m)-type w/o conditions}
\mult_{I}^{\xi} M =
\rank \bmat{\tdbfM & \mathbf{0}\\
\bmat{M_{\xi_I(b_1),\xi_I(a_1)}&\mathbf{0}\\\mathbf{0}&\mathbf{0}} & \hatbfM\\
}
- \rank \tdbfM
-\rank \hatbfM,
\end{equation}
where if $m = 1$ $($resp.\ $n = 1)$, then
$\hatbfM$ $($resp.\ $\tdbfM)$ is an empty matrix, and hence the formula
has the form in Theorems \ref{thm:(n,1)case w/o conditions}, \ref{thm:(1,n)case w/o conditions},
or Proposition \ref{prp:seg-rank}.
\end{thm}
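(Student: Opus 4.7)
The plan is to prove the unified formula by a direct case analysis on the $(n,m)$-type of $Q_I$, showing in each case that the right-hand side of \eqref{eqn:general formula for (n,m)-type w/o conditions} reduces, after applying the empty-matrix convention, to the formula already established in the relevant earlier result. Since the heavy lifting (almost split sequences, projective presentations, Lemma \ref{lem:dim-Hom-coker}) has already been done for each type, the task here is essentially bookkeeping: checking that the block-matrix expression on the right-hand side degenerates correctly when one of $n$ or $m$ equals $1$.

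First I would dispose of the case $n,m \ge 2$, which is exactly Theorem \ref{thm:n,m-ge-2 w/o conditions} with no modification. Next, for the case $n \ge 2$ and $m = 1$, I observe that $\snk(Q_I) = \{b_1\}$ contains no pair of distinct elements, so $\sub_2 \snk(Q_I) = \emptyset$ and hence $\snk_1(Q_I) = \emptyset$; by Notation \ref{ntn: notations for general case}(3), $P'_{\snk_1(Q_I)} = 0$, and the matrix $\hatbfM$ is empty, contributing $0$ to both the rank term and the subtracted $\rank \hatbfM$. The right column in the big block matrix therefore disappears, and the bottom block $\bmat{M_{\xi_I(b_1),\xi_I(a_1)}&\mathbf{0}\\\mathbf{0}&\mathbf{0}}$ collapses to the single block-row $[M_{\xi_I(b_1),\xi_I(a_1)}\ \mathbf{0}\ \cdots\ \mathbf{0}]$ (indexed by the one sink $b_1$ against the sources $a_1,\dots,a_n$). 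This is precisely $\tilde{\beta}$ in Theorem \ref{thm:(n,1)case w/o conditions}, and the formula becomes $\rank\sbmat{\tdbfM\\ \tilde{\beta}} - \rank \tdbfM$, matching \eqref{eq:nsrc1snkrank w/o conditions}.

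The case $n = 1$ and $m \ge 2$ is dually analogous: here $\src_1(Q_I) = \emptyset$, so $\tdbfM$ is empty, and the top-left block together with its row disappears. The surviving matrix on the right-hand side is $\bmat{\hat\beta\ \ \hatbfM}$ up to a block transposition of the written form, giving exactly Theorem \ref{thm:(1,n)case w/o conditions}. Finally, when $n = m = 1$, both $\tdbfM$ and $\hatbfM$ are empty, and \eqref{eqn:general formula for (n,m)-type w/o conditions} reduces to $\rank M_{\xi_I(b_1),\xi_I(a_1)}$. By Lemma \ref{lem:1,1-type} $I$ is itself of $(1,1)$-type, so $I = [\xi_I(a_1),\xi_I(b_1)] \in \Seg(\bfP)$, and Proposition \ref{prp:seg-rank} gives $\rank_I^\xi M = \rank_I M = \rank M_{\xi_I(b_1),\xi_I(a_1)}$, as required.

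The main obstacle I anticipate is not a conceptual difficulty but a notational one: one must be careful about the block-row/column ordering induced by the lexicographic order $\plex$ on $\src_1(Q_I)$ and $\snk_1(Q_I)$, and the placement of the distinguished entry $M_{\xi_I(b_1),\xi_I(a_1)}$ in the $(b_1,a_1)$ corner. In particular, one should verify that in the degenerate cases $m=1$ or $n=1$, the convention of indexing the empty direct sums as zero modules makes the four formulas literally coincide (not merely equal up to reindexing). Once this bookkeeping is done, the theorem follows immediately as a unified restatement of the already-established results.
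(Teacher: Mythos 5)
Your proposal is correct and follows essentially the same route as the paper: the paper's proof consists precisely of the observation that the formula of Theorem~\ref{thm:n,m-ge-2 w/o conditions} degenerates correctly under the empty-matrix convention to the formulas of Theorems~\ref{thm:(n,1)case w/o conditions}, \ref{thm:(1,n)case w/o conditions} and Proposition~\ref{prp:seg-rank} when $m=1$, $n=1$, or both. You simply spell out the bookkeeping (noting that $\snk_1(Q_I)=\emptyset$ when $m=1$, $\src_1(Q_I)=\emptyset$ when $n=1$, and that the residual column permutation in the $(1,m)$ case does not affect rank), which is exactly what the paper asserts implicitly.
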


\subsection{Under the existence condition of pairwise joins/meets}
In this subsection, we assume that $I^{\xi}$ is of $(n,m)$-type with $m,n \ge 1$.
By adding some assumptions on $I^{\xi}$, we will make the obtained formulas simpler. 

\begin{dfn}
\label{dfn: existence condition of pairwise joins/meets}
The poset $I^{\xi}$ is said to {\em satisfy the existence condition of pairwise joins} in $\src(I^{\xi})$ (resp. {\em meets} in $\snk(I^{\xi})$) if $a_{i}\vee a_{j}$ (resp. $b_{i}\wedge b_{j}$) exists in $I^{\xi}$ for every $i\neq j$. If this is the case, to shorten notation, we set $a_{ij}:=a_{i}\vee a_{j}$ (resp. $b_{ij}:=b_{i}\wedge b_{j}$).
Note that we have $a_{ij} = a_{ji}$ for all $i \ne j$ in $[n]$
(resp.\ $b_{ij} = b_{ji}$ for all $i \ne j$ in $[m]$).
\end{dfn}

Under the Notation \ref{ntn: notations for general case}, we have the following.
\begin{rmk}
If $I^{\xi}$ satisfies the existence condition of pairwise join in $\src(I^{\xi})$
(resp.\ meets in $\snk(I^{\xi}$), then we have
$\vee'\{a_i, a_j\} = \{a_{ij}\}$ for all $i \ne j$ in $[n]$
(resp.\ $\wedge'\{b_i, b_j\} = \{b_{ij}\}$ for all $i \ne j$ in $[m]$). Hence in this case, we may set
\[
\begin{aligned}
\src_{1}(I^{\xi})&:=\{a_{i_{1}i_{2}} \mid i_{1}, i_{2}\in [n]\ \text{with}\ i_{1}<i_{2}\},\\
\snk_{1}(I^{\xi})&:=\{b_{i_{1}i_{2}} \mid i_{1}, i_{2}\in [m]\ \text{with}\ i_{1}<i_{2}\}.
\end{aligned}
\]
Namely, the ordered pair $(i_1, i_2)$ with $i_1 < i_2$ in the subscripts stands for the
subset $\{i_1, i_2\}$ of $I^{\xi}$ with cardinality 2.
\end{rmk}

\begin{rmk}
\label{rmk:exist-cond-case}
If $I^{\xi}$ satisfies the existence condition of pairwise joins in $\src(I^{\xi})$ and
meets in $\snk(I^{\xi}$), then the matrices $\tdbfM,\, \hatbfM$ have the following forms,
where we denote $M_{\xi_I(b),\xi_I(a)}$ simply by $M^{\xi_I}_{b,a}$ for all $a, b \in I^{\xi}$:
\[\scalebox{0.9}{$
\begin{blockarray}{ccccccc}
& \scalebox{0.7}{$1$} & \scalebox{0.7}{$2$} & \scalebox{0.7}{$3$} & \scalebox{0.7}{$\cdots$} & \scalebox{0.7}{$n-1$} & \scalebox{0.7}{$n$} \\
\begin{block}{c[cccccc]}
  \scalebox{0.7}{$12$} & M^{\xi_I}_{a_{12},a_{1}} & -M^{\xi_I}_{a_{12},a_{2}} & \mathbf{0} & \cdots & \mathbf{0} & \mathbf{0} \\
  \scalebox{0.7}{$13$} & M^{\xi_I}_{a_{13},a_{1}} & \mathbf{0} & -M^{\xi_I}_{a_{13},a_{3}} & \cdots & \mathbf{0} & \mathbf{0} \\
  \scalebox{0.7}{$\vdots$} & \vdots & \vdots & \vdots &  & \vdots & \vdots \\
  \scalebox{0.7}{$1n$} & M^{\xi_I}_{a_{1n},a_{1}} & \mathbf{0} & \mathbf{0} &\cdots & \mathbf{0} & -M^{\xi_I}_{a_{1n},a_{n}} \\
  \scalebox{0.7}{$23$} & \mathbf{0} & M^{\xi_I}_{a_{23},a_{2}} & -M^{\xi_I}_{a_{23},a_{3}} & \cdots & \mathbf{0} & \mathbf{0} \\
  \scalebox{0.7}{$\vdots$} & \vdots & \vdots & \vdots &  & \vdots & \vdots \\
  \scalebox{0.7}{$2n$} & \mathbf{0} & M^{\xi_I}_{a_{2n},a_{2}} & \mathbf{0} & \cdots & \mathbf{0} & -M^{\xi_I}_{a_{2n},a_{n}} \\
  \scalebox{0.7}{$\vdots$} & \vdots & \vdots & \vdots &  & \vdots & \vdots \\
  \scalebox{0.7}{$n-1,n$} & \mathbf{0} & \mathbf{0} & \mathbf{0} & \cdots & M^{\xi_I}_{a_{n-1,n},a_{n-1}} & -M^{\xi_I}_{a_{n-1,n},a_{n}}\\
\end{block}
\end{blockarray}
$}
\]
 and
\[
\scalebox{0.9}{$
\begin{blockarray}{ccccccccc}
& \scalebox{0.7}{$12$} & \scalebox{0.7}{$\cdots$} & \scalebox{0.7}{$1m$} & \scalebox{0.7}{$23$} & \scalebox{0.7}{$\cdots$} &\scalebox{0.7}{$2m$} & \scalebox{0.7}{$\cdots$} & \scalebox{0.7}{$m-1, m$}\\
\begin{block}{c[cccccccc]}
  \scalebox{0.7}{$1$} & M^{\xi_I}_{b_{1},b_{12}} & \cdots & M^{\xi_I}_{b_{1},b_{1m}} & \mathbf{0} & \cdots & \mathbf{0} &\cdots & \mathbf{0} \\
  \scalebox{0.7}{$2$} & -M^{\xi_I}_{b_{2},b_{12}} & \cdots & \mathbf{0} & M^{\xi_I}_{b_{2},b_{23}} & \cdots & M^{\xi_I}_{b_{2},b_{2m}}& \cdots & \mathbf{0} \\
  \scalebox{0.7}{$3$} & \mathbf{0} & \cdots & \mathbf{0} & -M^{\xi_I}_{b_{3},b_{23}}&\cdots & \mathbf{0} & \cdots & \mathbf{0}\\
  \scalebox{0.7}{$\vdots$} & \vdots & & \vdots & \vdots & &\vdots & & \vdots  \\
  \scalebox{0.7}{$m-1$} & \mathbf{0} & \cdots & \mathbf{0} & \mathbf{0} & \cdots& \mathbf{0} & \cdots & M^{\xi_I}_{b_{m-1},b_{m-1,m}}\\
  \scalebox{0.7}{$m$} & \mathbf{0} & \cdots & -M^{\xi_I}_{b_{m},b_{1m}}  & \mathbf{0} & \cdots &-M^{\xi_I}_{b_{m},b_{2m}} &\cdots & -M^{\xi_I}_{b_{m},b_{m-1,m}}\\
\end{block}
\end{blockarray}
$}
\]
 respectively.
\end{rmk}

\begin{rmk}
\label{rmk:redund-matrix}
Even in the case where $I^{\xi}$ satisfies the existence condition of pairwise joins in
$\src(I^{\xi})$ and meets in $\snk(I^{\xi})$, formula \eqref{eqn:general formula for (n,m)-type w/o conditions}
is still quite redundant because projective
presentations~\eqref{eq:min-proj-resol-V_I-nsrccase-lem w/o conditions} and
\eqref{eq:min-proj-resol-V_I/socV_I-nsrccase w/o conditions}
are not minimal in general if there are order relations between pairwise joins in $I^{\xi}$. We provide the following lemma and corollary to explain this redundancy.
\end{rmk}

In \eqref{eq:nsrc1snkrank w/o conditions},
let  $\{i,j\} \in \sub_2[n]$.
For the next lemma, we note the fact that $a_{ij} = a_{ji}$.
Thus for the notation $a_{ij}$, we do not care about the order relation between $i$ and $j$,
and just assume that $i \ne j$.

\begin{lem}
\label{lem:degeneratedcase}
We keep the setting of Theorem \ref{thm:general w/o conditions}
and assume the existing condition of pairwise joins in $\src(I^{\xi})$ and meets in $\snk(I^{\xi})$. Let $\{i,j,k\} \subseteq [n]$. For any distinct $S$ and $T$ in $\sub_2\{i,j,k\}$,
the intersection $S \cap T$ has cardinality 1.
Without loss of generality, we may set $S:= \{i, j\}$ and $T:= \{i, k\}$ with $S \cap T = \{i\}$.
Keeping this in mind, consider $a_{ij}$ and $a_{ik}$.
Then the following are equivalent:
\begin{enumerate}
\item
$a_{ij} \le a_{ik}$;
\item
$a_j \le a_{ik}$;
\item
$a_i, a_j, a_k \le a_{ik}$.
\end{enumerate}
If one of the above holds, then formula \eqref{eq:nsrc1snkrank w/o conditions} remains valid even if
we replace $\tdbfM$ in Remark \ref{rmk:exist-cond-case}
with the matrix obtained by deleting the $\{i,k\}$ row of $\tdbfM$. The dual statement works for $\hatbfM$.
\end{lem}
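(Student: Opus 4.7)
The plan is to first establish the equivalence of (1), (2), and (3) from the universal property of the join, and then to show that the $\{i,k\}$ block row of $\tdbfM$ is a matrix-linear combination of the $\{i,j\}$ and $\{j,k\}$ rows, from which the claimed rank-preserving deletion follows by an invertible block row operation. Throughout, I will adopt the abbreviation $M_{b,a} := M_{\xi_I(b), \xi_I(a)}$ from Remark~\ref{rmk:exist-cond-case}.

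The equivalences are routine. For (1) $\Rightarrow$ (2), combine $a_j \le a_{ij}$ (by definition of $a_{ij}$) with $a_{ij} \le a_{ik}$. For (2) $\Rightarrow$ (3), add the trivial relations $a_i, a_k \le a_{ik}$ coming from the definition of $a_{ik}$. For (3) $\Rightarrow$ (1), note that $a_{ik}$ is then an upper bound of $\{a_i, a_j\}$ while $a_{ij}$ is the least such.

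Now assume the equivalent conditions hold. Then $a_{ij} \le a_{ik}$, and from (3) also $a_j, a_k \le a_{ik}$, hence $a_{jk} \le a_{ik}$ (since $a_{jk}$ is the least upper bound of $\{a_j, a_k\}$), producing well-defined structure maps $M_{a_{ik}, a_{ij}}$ and $M_{a_{ik}, a_{jk}}$. Writing $r_S$ for the block row of $\tdbfM$ indexed by $S \in \sub_2\src(Q_I)$ (and assuming without loss of generality $i < j < k$ so that the sign convention of Remark~\ref{rmk:exist-cond-case} applies uniformly), the core identity to check is
\begin{equation*}
r_{\{i,k\}} = M_{a_{ik}, a_{ij}} \cdot r_{\{i,j\}} + M_{a_{ik}, a_{jk}} \cdot r_{\{j,k\}},
\end{equation*}
verified column by column using the commutativity relations $M_{y,x} = M_{y,c} M_{c,x}$ for $x \le c \le y$ in $\bfP$. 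Columns $i$ and $k$ are immediate; column $j$ uses the cancellation $M_{a_{ik}, a_{ij}} M_{a_{ij}, a_j} = M_{a_{ik}, a_j} = M_{a_{ik}, a_{jk}} M_{a_{jk}, a_j}$ together with the opposite signs of the $a_j$-entries in rows $\{i,j\}$ and $\{j,k\}$; all other columns vanish on both sides.

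This identity makes the replacement of $r_{\{i,k\}}$ by $r_{\{i,k\}} - M_{a_{ik}, a_{ij}} \cdot r_{\{i,j\}} - M_{a_{ik}, a_{jk}} \cdot r_{\{j,k\}}$ an invertible block left-multiplication, hence preserves $\rank \tdbfM$ and, after trivial extension by the identity on the $\tilde{\beta}$-row, also $\rank \sbmat{\tdbfM \\ \tilde{\beta}}$. After the operation $r_{\{i,k\}}$ becomes the zero row, which may be deleted without altering either rank, so \eqref{eq:nsrc1snkrank w/o conditions} is valid with the shortened $\tdbfM$. The dual statement for $\hatbfM$ follows by applying the same argument to $Q_I\op$, where meets in $\snk(Q_I)$ correspond to joins in $\snk(Q_I\op) = \src(Q_I\op)$. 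The main obstacle is the careful column-by-column sign bookkeeping in the core identity; once the identity is in hand, the rank-preservation is standard block linear algebra.
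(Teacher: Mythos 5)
Your proof is correct and takes essentially the same approach as the paper: both zero out the $\{i,k\}$ block row by subtracting $M_{a_{ik},a_{ij}}$ times the $\{i,j\}$ row and $M_{a_{ik},a_{jk}}$ times the $\{j,k\}$ row, the paper phrasing this as two successive elementary row operations while you state the equivalent linear-dependency identity directly. (Minor typo at the end: $\snk(Q_I\op) = \src(Q_I\op)$ should read $\src(Q_I\op) = \snk(Q_I)$.)
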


\begin{proof}
The equivalence of the three statements is trivial.
Now assume that one of them holds. Then all of them hold.
By (3), we have $a_{ij}\leq a_{ik}$ and $a_{jk}\leq a_{ik}$. Thus there exist morphisms $p_{a_{ik},a_{ij}}$ and $p_{a_{ik},a_{jk}}$. The following row operations on $\tdbfM$ can be done keeping the ranks of both $\tdbfM$ and $\bmat{\tdbfM\\ \tilde{\be}}$ (to understand these operations easily, look at
the $\tdbfM$ in Remark \ref{rmk:exist-cond-case} for $(i,j,k) = (1,2,3)$):
\begin{itemize}
\item 
To the $\{i,k\}$ row, add the row obtained from the $\{i,j\}$ row by
the left multiplication with $-M^{\xi_I}_{a_{ik},a_{ij}}$.
\item
To the $\{i,k\}$ row, add the row obtained from the $\{j,k\}$ row by
the left multiplication with $-M^{\xi_I}_{a_{ik},a_{jk}}$.
\end{itemize}
By these operations, the $\{i,k\}$ row of $\tdbfM$ becomes zero, and we can delete the $\{i,k\}$ row without changing the value of the right-hand side of \eqref{eq:nsrc1snkrank w/o conditions}.
\end{proof}

Using Lemma~\ref{lem:degeneratedcase}, the formula in the 2D-grid case becomes much simpler. As an example, we exhibit the formula in the case where $\xi = \tot$ for the later use.

\begin{cor}[Specialization to 2D-grids]
\label{cor:2Dcase-tot-formula}
Let $\bfP$ be a 2D-grid and let $\xi = \tot$ be the total compression system for $A$ ($:= \k[\bfP]$). Take an interval $I\in \bbI$ with $\src({I}) = \{a_1,\ldots, a_n\}$ and $\snk({I}) = \{b_1,\ldots,b_m\}$. Without loss of generality, we assume that the first coordinate (i.e., the $x$-coordinate in~\cref{exm:2D-grid} of $a_{i}$ (\textnormal{resp.}\ $b_{j}$) is strictly less than that of $a_{i+1}$ ($i\in [n-1]$) (\textnormal{resp.}\ $b_{j+1}$ ($j\in [m-1]$)). Then we have
\begin{equation}
\label{eq:nsrc1snkrank2D}
\rank_{I}^{\tot} M=
\rank \bmat{
  \tdbfM & \mathbf{0} \\
  \checkbfM & \hatbfM \\
}
- \rank \bmat{
  \tdbfM & \mathbf{0} \\
  \mathbf{0} & \hatbfM \\
},
\end{equation}
where $\tdbfM$ has the form:
\[
\scalebox{0.95}{$
\begin{blockarray}{ccccccc}
& \scalebox{0.7}{$1$} & \scalebox{0.7}{$2$} & \scalebox{0.7}{$3$} & \scalebox{0.7}{$\cdots$} & \scalebox{0.7}{$n-1$} & \scalebox{0.7}{$n$} \\
\begin{block}{c[cccccc]}
  \scalebox{0.7}{$12$} & M_{a_{12},a_{1}} & -M_{a_{12},a_{2}} & \mathbf{0} & \cdots & \mathbf{0} & \mathbf{0} \\
  \scalebox{0.7}{$23$} & \mathbf{0} & M_{a_{23},a_{2}} & -M_{a_{23},a_{3}} & \cdots & \mathbf{0} & \mathbf{0} \\
  \scalebox{0.7}{$\vdots$} & \vdots & \vdots & \vdots &  & \vdots & \vdots \\
  \scalebox{0.7}{$n\!-\!2,n\!-\!1$} & \mathbf{0} & \mathbf{0} & \mathbf{0} & \cdots & -M_{a_{n-2,n-1},a_{n-1}} & \mathbf{0} \\
  \scalebox{0.7}{$n\!-\!1,n$} & \mathbf{0} & \mathbf{0} & \mathbf{0} & \cdots & M_{a_{n-1,n},a_{n-1}} & -M_{a_{n-1,n},a_{n}} \\
\end{block}
\end{blockarray}
$},
\]
$\checkbfM$ has the form:
\[
\scalebox{0.95}{$
\begin{blockarray}{ccccccc}
& \scalebox{0.7}{$1$} & \scalebox{0.7}{$2$} & \scalebox{0.7}{$3$} & \scalebox{0.7}{$\cdots$} & \scalebox{0.7}{$n-1$} & \scalebox{0.7}{$n$} \\
\begin{block}{c[cccccc]}
  \scalebox{0.7}{$1$} & M_{b_{1},a_{1}} & \mathbf{0} & \mathbf{0} & \cdots & \mathbf{0} & \mathbf{0} \\
  \scalebox{0.7}{$2$} & \mathbf{0} & \mathbf{0} & \mathbf{0} & \cdots & \mathbf{0} & \mathbf{0} \\
  \scalebox{0.7}{$\vdots$} & \vdots & \vdots & \vdots &  & \vdots & \vdots \\
  \scalebox{0.7}{$m-1$} & \mathbf{0} & \mathbf{0} & \mathbf{0} & \cdots & \mathbf{0} & \mathbf{0} \\
  \scalebox{0.7}{$m$} & \mathbf{0} & \mathbf{0} & \mathbf{0} & \cdots & \mathbf{0} & \mathbf{0} \\
\end{block}
\end{blockarray}
$},
\]
and $\hatbfM$ has the form:
\[
\scalebox{0.95}{$
\begin{blockarray}{ccccc}
& \scalebox{0.7}{$12$} & \scalebox{0.7}{$23$} & \scalebox{0.7}{$\cdots$} & \scalebox{0.7}{$m-1, m$} \\
\begin{block}{c[cccc]}
  \scalebox{0.7}{$1$} & M_{b_{1},b_{12}} & \mathbf{0} & \cdots & \mathbf{0} \\
  \scalebox{0.7}{$2$} & -M_{b_{2},b_{12}} & M_{b_{2},b_{23}} & \cdots & \mathbf{0} \\
  \scalebox{0.7}{$3$} & \mathbf{0} & -M_{b_{3},b_{23}} & \cdots & \mathbf{0} \\
  \scalebox{0.7}{$\vdots$} & \vdots & \vdots & & \vdots \\
  \scalebox{0.7}{$m-1$} & \mathbf{0} & \mathbf{0} & \cdots & M_{b_{m-1},b_{m-1,m}} \\
  \scalebox{0.7}{$m$} & \mathbf{0} & \mathbf{0} & \cdots & -M_{b_{m},b_{m-1,m}}\\
\end{block}
\end{blockarray}
$}.
\]
\end{cor}

In particular, when $(n,m) = (2,2)$ we have the following.

\begin{exm}
\label{exm:2-2case-general}
Let $M \in \mod A$ and $I^{\xi}$ be
of $(2,2)$-type with
$\src({I^{\xi}}) = \{a_1,a_2\}$ and $\snk({I^{\xi}}) = \{b_1, b_2\}$.
Assume that both $x:= a_1 \vee a_2$ and $y:= b_1 \wedge b_2$ exist.
Since $\snk({I^{\xi}}) = \{b_1, b_2\}$, we have $a_1 \le b_1$ or $a_1 \le b_2$,
and hence we may assume that $a_1 \le b_1$ without loss of generality. Then we have
\[
\begin{aligned}
\mult^\xi_I(M)
&=
\rank \scalebox{0.85}{$\bmat{M_{x,a_1} & -M_{x,a_2} & \mathbf{0}\\
M_{b_1,a_1} & \mathbf{0} & M_{b_1,y}\\
\mathbf{0} & \mathbf{0} & -M_{b_2,y}
}$}
- \rank \scalebox{0.9}{${\bmat{
M_{x,a_1}, -M_{x,a_2}}}$} - \rank \scalebox{0.9}{$\bmat{M_{b_1,y}\\-M_{b_2,y}}$}\\
&= \rank \scalebox{0.9}{$\bmat{M_{x,a_1} & M_{x,a_2} & \mathbf{0}\\
M_{b_1,a_1} & \mathbf{0} & M_{b_1,y}\\
\mathbf{0} & \mathbf{0} & M_{b_2,y}
}$}
- \rank \scalebox{0.9}{$\bmat{
M_{x,a_1}, M_{x,a_2}}$}
-\rank \scalebox{0.9}{$\bmat{M_{b_1,y}\\M_{b_2,y}}$}.
\end{aligned}
\]
\end{exm}

\begin{rmk}
\label{rmk:code}
In this research, we developed a computational project hosted on GitHub for computing interval rank invariant and interval replacement under the total and source-sink compression systems of persistence modules over any $d$D-grid, mainly based on Theorem~\ref{thm:general w/o conditions} and Remark~\ref{rmk:exist-cond-case}. We believe this project will be useful and can be integrated into the topological data analysis pipeline to provide algebraic descriptors from data. For more details on the implementation and to access the code, please visit the project repository at \url{https://github.com/GauthierE/interval-replacement}.
\end{rmk}

\section{Essential covers relative to compression systems}
\label{sec:Essential cover relative to compression systems}

In \cref{General case}, \cref{thm:general w/o conditions} gives a general, explicit formula to compute the interval multiplicity invariant under any compression system $\xi$, using a persistence module as input. Nevertheless, the persistence module is usually latent in practical analysis and hard to obtain in most situations. Thus, how to compute the invariants under $\xi$ directly from the level of filtration without computing the persistent homology in advance becomes a critical problem to be solved from the TDA perspective. This is also the key step to bringing our theory to the ground of applications. For this reason, we will introduce a potential technique in this section to achieve the purpose.

On the other hand, for a compression system $\xi$, it sometimes occur that
$\mult^\xi_I M = \rank^{\tot}_I M$ for some $I \in \bbI$ and $M \in \mod A$.
In this section, we will give a sufficient condition for this to hold.
This gives an alternative proof of
Theorem in \cite[Theorem 3.12]{deyComputingGeneralizedRank2024} by Dey--Kim--M{\'e}moli
for the case where $\bfP$ is a 2D-grid.

We will use formal additive hull of a linear category $B$
to consider matrices with entries morphisms in $B$ in a natural way, which makes it possible to unify the formulas for all cases by
using the empty matrices.

Roughly speaking, the formal additive hull $\Ds B$ of $B$ is defined
as follows:
The objects are the set of all finite sequences
$(x_i)_{i \in [l]} = (x_1,\dots, x_l)$ with $x_1, \dots, x_l \in B_0$
and $l \ge 0$.
For any $x = (x_i)_{i \in [l]}$, $y = (y_j)_{j \in [m]}$,
the set of morphisms from $x$ to $y$ is defined as the set of matrices
$\bmat{\al_{ji}}_{(j,i) \in [m]\times [l]}$, where
$\al_{ji} \in B(x_i, y_j)$ for all $(j,i) \in [m]\times [l]$.
The composition is given by the usual matrix multiplication.
See Appendix~\ref{sec:form-add-hull} for details and empty matrices.

\begin{exm}
\label{exm:Ds-ze}
Let $\ze \colon \bfZ \to \bfP$ be an order-preserving map between posets.
Then by \eqref{eq:k_linear_ext_ob} and \eqref{eq:k_linear_ext_mor} we have a linear functor $\k[\ze] \colon \k[\bfZ] \to \k[\bfP]$,
which yields a linear functor
$\Ds \k[\ze] \colon \Ds \k[\bfZ] \to \Ds \k[\bfP]$.
If $\al\coloneqq [\al_{ji}]_{(j,i)\in [n]\times [m]}$ is a morphism in $\Ds \k[\bfZ]$,
we denote $(\Ds \k[\ze])(\al)$ simply by $\ze(\al) = [\ze(\al_{ji})]_{(j,i)\in [n]\times [m]}$.
\end{exm}

\begin{prp}
\label{prp:univ-formal-add-hull}
Let $B$ be a linear category and $\calC$ an additive linear category.
Then each linear functor $F \colon B \to \calC$ uniquely extends to
a linear functor $\hat{F} \colon \Ds B \to \calC$,
which we denote by the same letter $F$ if there seems to be no confusion.
\end{prp}

\begin{proof}
Define a linear functor $\hat{F} \colon \Ds B \to \calC$ 
as the composite $\hat{F}\coloneqq \et_\calC \circ (\Ds F)$
(Definition \ref{dfn:formal-add-hull}).
Namely,
for each morphism $\al = \bmat{\al_{ji}}_{(j,i)\in [n]\times [m]}
\colon (x_i)_{i\in [m]} \to (y_j)_{j \in [n]}$ in $\Ds B$, we set
\[
\hat{F}(\al)\coloneqq \bmat{F(\al_{ij})}_{j,i} \colon
\Ds_{i \in [m]} F(x_i) \to \Ds_{j \in [n]} F(y_j).
\]
It is easy to see that this is the unique extension of $F$.
\end{proof}

\begin{ntn}
\label{ntn:W(g)}
Let $B$ be a linear category, $W$ a $B$-module, and $m, n$ positive integers,
and consider a morphism $\bfg = \bmat{g_{ji}}_{(j,i)\in [n]\times [m]}
\colon (x_i)_{i\in [m]} \to (y_j)_{j \in [n]}$ 
in $\Ds B$.
Then by applying the convention in \cref{prp:univ-formal-add-hull}
in the case where $\calC = \mod\k$,
we write
\[
W(\bfg)\coloneqq \hat{W}(\bfg) = \bmat{W(g_{ij})}_{j,i} \colon
\Ds_{i \in [m]} W(x_i) \to \Ds_{j \in [n]} W(y_j).
\]
\end{ntn}

Recall that $R_{I}\colon \mod A\to \mod \k[I^\xi]$ is the restriction functor induced by $\xi_{I}\colon I^\xi\to \bfP$, which is given in \cref{ntn:tot}. For every $M\in \mod A$, $R_{I}(M) = M\circ F_{I} = M\circ \k[\xi_I]$. By \cref{ntn:W(g)}, \cref{thm:general w/o conditions} can be restated as follows. By giving suitable total orders to the sets
$\src(I^\xi), \snk_1(I^\xi), \src_1(I^\xi)$ and $\snk(I^\xi)$
we regard these as objects in $\Ds \k[I^\xi]$.
Then we can consider a morphism $\bfg \colon \src(I^\xi) \ds \snk_1(I^\xi)
\to \src_1(I^\xi)\ds \snk(I^\xi)$ in $\Ds \k[I^\xi]$ defined in the following theorem.

\begin{thm}
\label{thm:restate-int-rk-inv-formula}
Let $\xi = \left(\xi_I \colon I^{\xi} \to \bfP\right)_{I\in \bbI}$ be a compression system. Fix an interval $I$ of $\bfP$. Choose any $(b, a) \in \snk(I^\xi) \times \src(I^\xi)$ with $(a, b)\in [\leq]_{I^\xi}$, and set 
\[
\bfg\coloneqq \bfg((b, a))\coloneqq\left[
\begin{array}{c|c}
\bfg_1 & \bfzero\\
\hline
\bfg_3 & \bfg_2
\end{array}
\right],
\]
where $\bfg_1 \coloneqq \bmat{\tilde{p}_{a,\bfa_c}}_{(\bfa_c, a) \in \src_1(I^\xi) \times \src(I^\xi)}$ with the entries given by
\[
\begin{aligned}
\tilde{p}_{a,\bfa_c}\coloneqq
\begin{cases}
p_{c,a}, & \textnormal{if } a = \udl{\bfa},\\
-p_{c,a}, & \textnormal{if } a = \ovl{\bfa},\\
\mathbf{0}, & \textnormal{if } a \not\in \bfa,
\end{cases}
\end{aligned}
\]
for all $\bfa_c \in \src_1(I^\xi)$ and $a \in \src(I^\xi)$; and $\bfg_2 \coloneqq \bmat{\hat{p}_{b,\bfb_d}}_{(\bfb_d, b) \in \snk(I^\xi) \times \snk_1(I^\xi)}$ with the entries given by
$$
\begin{aligned}
\hat{p}_{b,\bfb_d}\coloneqq
    \begin{cases}
    p_{b,d}, & \textnormal{if } b=\udl{\bfb},\\
    -p_{b,d}, & \textnormal{if } b=\ovl{\bfb},\\
    \mathbf{0}, & \textnormal{if } b \not\in \bfb,
    \end{cases}
\end{aligned}
$$
for all $b \in \snk(I^\xi)$ and $\bfb_d \in \snk_1(I^\xi)$; and $\bfg_3$ is the block matrix with the size $|\snk(I^\xi)|\times |\src(I^\xi)|$, the $(b, a)$-entry of $\bfg_3$, given by $p_{b,a}$, is the only nonzero entry. Then for any $M \in \mod A$ we have
\begin{equation}
\label{eq:formula-int-rk-inv-general-mor}
\mult_{I}^{\xi} M = \rank R_{I}(M)(\bfg) - \rank R_{I}(M)(\bfg_1) - \rank R_{I}(M)(\bfg_2).
\end{equation}
\end{thm}

Sometimes one of the objects $\src(I^\xi), \snk_1(I^\xi), \src_1(I^\xi)$ and $\snk(I^\xi)$
in $\Ds \k[I^\xi]$ are empty sequences.
To deal with these cases, we make the following remark.

\begin{rmk}
\label{rmk:usage-empty-mat}
Let $B$ be a linear category, and $x, x', y, y'' \in (\Ds B)_0$.
Consider a morphism $\bfg = \bmat{\bfg_{11} & \bfg_{12}\\\bfg_{21} & \bfg_{22}}
\colon x \ds x' \to y \ds y'$ in $\Ds B$.

(1) If $x' = ()$, then $\bmat{\bfg_{12}\\\bfg_{22}} = \sfJ_{(|y|+|y'|), 0}$,
and we have $\bfg = \bmat{\bfg_{11}\\\bfg_{21}} \colon x \to y \ds y'$.

(2) If $y = ()$, then $\bmat{\bfg_{11} & \bfg_{12}} = \sfJ_{0, (|x|+|x'|)}$, and we have
$\bfg = \bmat{\bfg_{21} & \bfg_{22}} \colon x' \to y \ds y'$.

(3) Similar remarks were used for Theorem \ref{thm:general w/o conditions}
through the equivalence $\ph'$ given in Example \ref{exm:Ds-k}.
\end{rmk}

\begin{dfn}
\label{dfn:covers}
Let $\ze \colon \bfZ \to \bfP$ be an order-preserving map, and $\al\colon x \to y$ a morphism in $\Ds \k[\bfP]$. We say that $\ze$ \emph{covers} $\al$ if there exists a morphism $\al' \colon x' \to y'$ in $\Ds\k[\bfZ]$ such that $\ze(\al') = \al$ (see \cref{exm:Ds-ze} for $\ze(\al')$).
\end{dfn}

\begin{dfn}
\label{dfn:ess-cov2-int-rk-inv}
Let $\xi = \left(\xi_I \colon I^{\xi} \to \bfP\right)_{I\in \bbI}$ be a compression system, and $I$ an interval of $\bfP$.

(1) A morphism $\bfg\coloneqq \bmat{\bfg_1 & \mathbf{0}\\ \bfg_3 & \bfg_2}$ in $\Ds\k[I^\xi]$ is called an $I$-\emph{multiplicity matrix} under $\xi$
if for any $M \in \mod A$,  we have a formula
\begin{equation}
\label{eq:comp-multiplicity-formula-ess}
\mult_{I}^{\xi} M = \rank \bmat{R_{I}(M)(\bfg_1) & \mathbf{0}\\
R_{I}(M)(\bfg_3) & R_{I}(M)(\bfg_2)\\
}
- \rank \bmat{R_{I}(M)(\bfg_1) & \mathbf{0}\\
\bfzero & R_{I}(M)(\bfg_2)\\
}.
\end{equation}

(2) Let $\ze \colon \bfZ \to \bfP$ be an order-preserving map.
We say that $\ze$ \emph{essentially covers} $I$ \emph{relative to $\xi$}
(or that $\ze$ is an \emph{essential cover} of $I$ \emph{relative to $\xi$})
if there exists an order-preserving map $\ze_{I}\colon \bfZ\to I^\xi$ that makes
the diagram
\[
\begin{tikzcd}[ampersand replacement=\&]
	\bfZ \&\& \bfP \\
	\& I^\xi
	\arrow["\ze", from=1-1, to=1-3]
	\arrow["{\ze_I}"', dashed, from=1-1, to=2-2]
	\arrow["{\xi_I}"', from=2-2, to=1-3]
\end{tikzcd}
\]
commutative, and covers an $I$-multiplicity matrix $\bfg$ under $\xi$.
\end{dfn}

We remark here that~\cref{thm:restate-int-rk-inv-formula} guarantees the existence
of an $I$-multiplicity matrix $\bfg$ under $\xi$.
We also caution the reader that this $\bfg$ is not unique in general, for example, due to the redundancy explained in~\cref{rmk:redund-matrix} and \cref{lem:degeneratedcase}.

\begin{lem}
\label{lem:direct-sum-rank}
Let $B$ be a linear category, $W$ a $B$-module, and $m, n$ positive integers.
For each matrix $\bfg = [g_{ji}]_{(j,i)\in [n]\times [m]}$ 
with entries
$g_{ji} \colon x_i \to y_j$ morphisms in $B$,
we set
\[
W(\bfg):= [W(g_{ji})]_{j,i}\colon
\Ds_{i \in [m]} W(x_i) \to \Ds_{j \in [n]} W(y_j)
\]
to be the linear map expressed by this matrix.
Assume that we have a direct sum decomposition $W \iso W_1 \ds W_2$
of $B$-modules.
Then we have an equivalence $W(\bfg) \iso W_1(\bfg) \ds W_2(\bfg)$
of linear maps.
In particular, the equality 
\[
\rank W(\bfg) = \rank W_1(\bfg) + \rank W_2(\bfg)\]
holds.
\end{lem}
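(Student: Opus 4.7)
The plan is to reduce the statement to the elementary fact that the rank of a block-diagonal matrix equals the sum of the ranks of its diagonal blocks. The key observation is that a direct sum decomposition $W \iso W_1 \ds W_2$ in $\mod B$ amounts to specifying, for every object $x$ of $B$, a compatible vector-space decomposition $W(x) \iso W_1(x) \ds W_2(x)$ such that for every morphism $g \colon x \to y$ in $B$, the linear map $W(g)$ decomposes as the block-diagonal map $W_1(g) \ds W_2(g)$ under these identifications. This is just the unpacking of the definition of a direct sum in the functor category $\mod B$.

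First, I would apply this decomposition object-by-object to obtain canonical isomorphisms
\[
\Ds_{i \in [m]} W(x_i) \iso \Ds_{i \in [m]} (W_1(x_i) \ds W_2(x_i)) \quad \text{and} \quad \Ds_{j \in [n]} W(y_j) \iso \Ds_{j \in [n]} (W_1(y_j) \ds W_2(y_j)).
\]
Next, I would compose with the permutation isomorphism that groups the $W_1$-summands together and the $W_2$-summands together:
\[
\Ds_{i \in [m]} (W_1(x_i) \ds W_2(x_i)) \iso \left(\Ds_{i \in [m]} W_1(x_i)\right) \ds \left(\Ds_{i \in [m]} W_2(x_i)\right),
\]
and similarly for the target side. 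Because each entry $W(g_{ji})$ corresponds under these identifications to the block-diagonal map $W_1(g_{ji}) \ds W_2(g_{ji})$, the overall matrix $W(\bfg) = [W(g_{ji})]_{j,i}$ transforms into the block-diagonal linear map $W_1(\bfg) \ds W_2(\bfg)$, yielding the claimed equivalence of linear maps.

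The rank equality is then immediate: rank is invariant under equivalence of linear maps and is additive on direct sums. I do not expect any real obstacle in this proof; the only point requiring care is the bookkeeping to verify that the permutation of direct summands indeed intertwines the matrix representation of $W(\bfg)$ with $W_1(\bfg) \ds W_2(\bfg)$, but this is entirely formal and follows from the definition of the direct sum of functors.
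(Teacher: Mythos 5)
Your proposal is correct and follows essentially the same route as the paper: unpack the module isomorphism $W \iso W_1 \ds W_2$ into pointwise vector-space isomorphisms compatible with all structure maps, assemble them over the index sets, and conjugate by the permutation isomorphism that regroups the $W_1$- and $W_2$-summands to exhibit $W(\bfg)$ as equivalent to the block-diagonal map $W_1(\bfg) \ds W_2(\bfg)$. The paper simply records the same argument more explicitly via commutative diagrams and names the shuffle permutation $\si_k$; there is no substantive difference.
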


\begin{proof}
Let $f \colon W \to W_1 \ds W_2$ be an isomorphism of $B$-modules.
Then for any $i \in [m], j \in [n]$, we have a commutative diagram
\[
\begin{tikzcd}[column sep=70pt]
W(x_i) & \Nname{y}W(y_j)\\
\Nname{x12}W_1(x_i) \ds W_2(x_i) & W_1(y_j) \ds W_2(y_j)
\Ar{1-1}{x12}{"f_{x_i}" '}
\Ar{y}{2-2}{"f_{y_j}"}
\Ar{1-1}{y}{"W(g_{ji})"}
\Ar{x12}{2-2}{"W_1(g_{ji}) \ds W_2(g_{ji})" '}
\end{tikzcd}
\]
with both $f_{x_i}$ and $f_{y_j}$ isomorphisms.
This yields the commutative diagram
\[
\begin{tikzcd}[column sep=90pt]
\Nname{x}\DDs_{i\in [m]} W(x_i) & \Nname{y}\DDs_{j\in [n]}W(y_j)\\
\Nname{x12}\DDs_{i\in [m]}(W_1(x_i) \ds W_2(x_i)) & \Nname{y12}\DDs_{j\in [n]}(W_1(y_j) \ds W_2(y_j))\\
\Nname{+x12}(\DDs_{i\in [m]}W_1(x_i)) \ds (\DDs_{i\in [m]}W_2(x_i)) &\Nname{+y12}(\DDs_{j\in [n]}W_1(y_j)) \ds (\DDs_{j\in [n]}W_2(y_j))
\Ar{x}{x12}{"\DDs_{i\in [m]}f_{x_i}" '}
\Ar{x12}{+x12}{"\si_x" '}
\Ar{y12}{+y12}{"\si_y"}
\Ar{y}{2-2}{"\DDs_{j\in [n]}f_{y_j}"}
\Ar{x}{y}{"{[W(g_{ji})]_{j,i}}","= W(\bfg)" '}
\Ar{x12}{y12}{"{[W_1(g_{ji}) \ds W_2(g_{ji})]_{j,i}}" '}
\Ar{+x12}{+y12}{"{[W_1(g_{ji})]_{j,i} \ds [W_2(g_{ji})]_{j,i}}", "= W_1(\bfg)\ds W_2(\bfg)" '}
\end{tikzcd},
\]
where $\si_x, \si_y$ are given by the permutation matrices corresponding to the permutation
$\si_k$ (for $k = m, n$, respectively) of the set $[2k]$ defined by
\[
\si_k(i):=\begin{cases}
\ell, & (i = 2\ell -1,\, \exists \ell \in [k])\\
k + \ell, & (i = 2\ell,\, \exists \ell \in [k])
\end{cases}
\text{ for all }i \in [2k],
\]
the nonzero entries of which are the identity maps.
Then since all vertical maps above are isomorphisms, the assertion holds.
\end{proof}

Before giving the main theorem, we need the following notation.

\begin{ntn}\label{def:ds-counting}
Let $M\in \mod A$. If $M\iso L^{n} \ds N$ with $n \ge 0$ such that $N$ has no direct summand isomorphic to $L$, then we set $\bar{d}_{M}(L)\coloneqq n$. In particular, if $L$ is indecomposable, then $\bar{d}_{M}(L)$ coincides with $d_{M}(L)$.
Moreover, by the Krull--Schmidt theorem, we easily see that
if $L = \Ds_{i\in [m]} L_i$ for some $m \ge 1$ with each $L_i$ indecomposable,
then $\bar{d}_M(L) = \min_{i\in [m]}d_M(L_i)$.
\end{ntn}

We are now in a position to state the main theorem of this subsection, which enables one to compute the interval rank invariants by computing the multiplicity in some essential poset. From now on, the restriction functor induced by $\ze$ will be denoted by $R_{\ze}$.

\begin{thm}
\label{thm:ess-cov-int-rk-inv}
Let $\xi = \left(\xi_I \colon I^{\xi} \to \bfP\right)_{I\in \bbI}$ be a compression system. Fix an interval $I$ of $\bfP$ and let $\ze\colon \bfZ\to \bfP$ be an order-preserving map that essentially covers $I$ relative to $\xi$. Then for every $M\in \mod A$ we have
\begin{align}
\label{eq:ess-cov-int-rk-inv}
\mult_I^\xi M = \bar{d}_{R_{\ze}(M)}(R_{\ze}(V_I)).
\end{align}
\end{thm}

\begin{proof}
We set $r = \mult_I^\xi M$ and $s = \bar{d}_{R_{\ze}(M)}(R_{\ze}(V_I))$ for convenience. By the definition of $I$-multiplicity under $\xi$, we have the isomorphism $R_{I}(M)\cong \left[R_{I}(V_I)\right]^r\ds N$. By \cref{dfn:ess-cov2-int-rk-inv} there exists an order-preserving map $\ze_{I}\colon \bfZ\to I^\xi$ such that $\ze = \xi_{I}\circ \ze_{I}$. Applying the restriction functor $R_{\ze_{I}}$ induced by $\ze_{I}$ to the isomorphism above yields $R_{\ze}(M)\cong \left[R_{\ze}(V_I)\right]^r\ds R_{\ze_{I}}(N)$. By \cref{def:ds-counting}, $s$ is the maximal number of copies of $R_{\ze}(V_I)$ that can be taken as a direct summand of $R_{\ze}(M)$ such that no further copies of $R_{\ze}(V_I)$ remain in the complement. This implies $s\geq r$.
	
On the other hand, since $s = \bar{d}_{R_{\ze}(M)}(R_{\ze}(V_I))$, we can write $R_{\ze}(M) \iso \left[R_{\ze}(V_I)\right]^s \ds L$ for some module $L$ in $\mod \k[\bfZ]$. Take $\bfg = \bmat{\bfg_1 & \mathbf{0}\\ \bfg_3 & \bfg_2}$ to be an $I$-multiplicity matrix under $\xi$ provided in \cref{thm:restate-int-rk-inv-formula}. Because $\ze_{I}$ covers $\bfg$, there exists a morphism $\bfg^{\bfZ} = \bmat{\bfg^{\bfZ}_1 & \mathbf{0}\\ \bfg^{\bfZ}_3 & \bfg^{\bfZ}_2}$ in $\Ds\k[\bfZ]$ such that $\ze_{I}(\bfg^{\bfZ}) \coloneqq \k[\ze_{I}](\bfg^{\bfZ}) = \bfg$. Then by applying \cref{lem:direct-sum-rank} to the isomorphism $R_{\ze}(M) \iso \left[R_{\ze}(V_I)\right]^s \ds L$, we have the following equalities:
\begin{equation}
\label{eq:eq:1stRI(M)_case-int-rk}
\begin{aligned}
\rank\bmat{R_{\ze}(M)(\bfg^{\bfZ}_{1}) & \mathbf{0}\\
R_{\ze}(M)(\bfg^{\bfZ}_3) & R_{\ze}(M)(\bfg^{\bfZ}_2)}
=&\, s \rank \bmat{R_{\ze}(V_I)(\bfg^{\bfZ}_{1}) & \mathbf{0}\\
R_{\ze}(V_I)(\bfg^{\bfZ}_3) & R_{\ze}(V_I)(\bfg^{\bfZ}_2)}\\
&+\rank \bmat{L(\bfg^{\bfZ}_{1}) & \mathbf{0}\\
L(\bfg^{\bfZ}_3) & L(\bfg^{\bfZ}_2)
},
\end{aligned}
\end{equation}
and
\begin{equation}
\label{eq:2ndRI(M)_case-int-rk}
\begin{aligned}
\rank\bmat{R_{\ze}(M)(\bfg^{\bfZ}_{1}) & \mathbf{0}\\
\mathbf{0} & R_{\ze}(M)(\bfg^{\bfZ}_2)\\
}
=&\, s \rank\bmat{R_{\ze}(V_I)(\bfg^{\bfZ}_{1}) & \mathbf{0}\\
\mathbf{0} & R_{\ze}(V_I)(\bfg^{\bfZ}_2)
}\\
&+ \rank \bmat{L(\bfg^{\bfZ}_{1}) & \mathbf{0}\\
\mathbf{0} & L(\bfg^{\bfZ}_2)
}.
\end{aligned}
\end{equation}
Note that $R_{\ze}(M) = R_{\ze_{I}}(R_{I}(M)) = R_{I}(M)\circ \k[\ze_{I}]$ for all $M\in \mod A$. Then \eqref{eq:eq:1stRI(M)_case-int-rk} and \eqref{eq:2ndRI(M)_case-int-rk} become
\begin{equation}
\label{eq:eq:1stRI(M)_case-int-rk-re}
\begin{aligned}
\rank\bmat{R_{I}(M)(\bfg_{1}) & \mathbf{0}\\
R_{I}(M)(\bfg_3) & R_{I}(M)(\bfg_2)}
=&\, s \rank \bmat{R_{I}(V_I)(\bfg_{1}) & \mathbf{0}\\
R_{I}(V_I)(\bfg_3) & R_{I}(V_I)(\bfg_2)}\\
&+\rank \bmat{L(\bfg^{\bfZ}_{1}) & \mathbf{0}\\
L(\bfg^{\bfZ}_3) & L(\bfg^{\bfZ}_2)
},
\end{aligned}
\end{equation}
and
\begin{equation}
\label{eq:2ndRI(M)_case-int-rk-re}
\begin{aligned}
\rank\bmat{R_{I}(M)(\bfg_{1}) & \mathbf{0}\\
\mathbf{0} & R_{I}(M)(\bfg_2)\\
}
=&\, s \rank\bmat{R_{I}(V_I)(\bfg_{1}) & \mathbf{0}\\
\mathbf{0} & R_{I}(V_I)(\bfg_2)
}\\
&+ \rank \bmat{L(\bfg^{\bfZ}_{1}) & \mathbf{0}\\
\mathbf{0} & L(\bfg^{\bfZ}_2)
}.
\end{aligned}
\end{equation}
By applying formula \eqref{eq:formula-int-rk-inv-general-mor}
to $M = V_I$, we have the equality
\begin{equation}
\label{eq:written-formula-by-mor-Z-for-VI-int-rk}
   d_{R_{I}(V_I)}(R_{I}(V_I)) =
\rank \bmat{R_{I}(V_I)(\bfg_1) & \mathbf{0}\\
R_{I}(V_I)(\bfg_3) & R_{I}(V_I)(\bfg_2)\\
}
- \rank \bmat{R_{I}(V_I)(\bfg_1) & \mathbf{0}\\
\bfzero & R_{I}(V_I)(\bfg_2)\\
}.
\end{equation}
Noticing equalities \eqref{eq:formula-int-rk-inv-general-mor} and \eqref{eq:written-formula-by-mor-Z-for-VI-int-rk}, formula \eqref{eq:eq:1stRI(M)_case-int-rk-re} minus formula \eqref{eq:2ndRI(M)_case-int-rk-re} implies
\begin{align*}
    r & = s\cdot d_{R_{I}(V_I)}(R_{I}(V_I)) + \rank \bmat{L(\bfg^{\bfZ}_{1}) & \mathbf{0}\\
L(\bfg^{\bfZ}_3) & L(\bfg^{\bfZ}_2)
} - \rank \bmat{L(\bfg^{\bfZ}_{1}) & \mathbf{0}\\
\mathbf{0} & L(\bfg^{\bfZ}_2)
}\nonumber \\
& = s + \rank \bmat{L(\bfg^{\bfZ}_{1}) & \mathbf{0}\\
L(\bfg^{\bfZ}_3) & L(\bfg^{\bfZ}_2)
} - \rank \bmat{L(\bfg^{\bfZ}_{1}) & \mathbf{0}\\
\mathbf{0} & L(\bfg^{\bfZ}_2)
}\geq s.
\end{align*}
Hence we have $r = s$, and the proof is completed. 
\end{proof}

\cref{thm:ess-cov-int-rk-inv} provides us a sufficient condition under which two compression systems induce the same invariants. We state in the following corollary.

\begin{cor}
\label{cor:sufficient condition when two compression systems induce the same interval rank invariants}
    Let $\xi = \left(\xi_I \colon I^{\xi} \to \bfP\right)_{I\in \bbI}$ and $\ze = \left(\ze_I \colon I^{\ze} \to \bfP\right)_{I\in \bbI}$ be two compression systems for $A$ ($:=\k[\bfP]$). If for every interval $I$ of $\bfP$, $\ze_I$ essentially covers $I$ relative to $\xi$ or $\xi_I$ essentially covers $I$ relative to $\ze$, then for each $M\in \mod A$, 
\[
\mult^{\xi}_{\bbI} M = \mult^{\ze}_{\bbI} M
\]
holds.
In particular, if for every interval $I$ of $\bfP$, $\xi_I$ essentially covers $I$ relative to $\tot$, then $\xi$ is also a rank compression system, and
\[
\rank^{\xi}_{\bbI} M = \rank^{\tot}_{\bbI} M
\]
holds.
\end{cor}

\begin{proof}
    By noticing Definitions~\ref{dfn:comp-mult}, \ref{dfn:int-mult-inv}, the assert follows immediately from \cref{thm:ess-cov-int-rk-inv}. 
\end{proof}

By using the essential cover relative to the total compression system on the 2D-grid, we can easily find zigzag posets essentially covering all intervals of the 2D-grid. Recall the notations for pairwise joins and meets in~\cref{dfn: existence condition of pairwise joins/meets}.

\begin{exm}
\label{exm:zz}
    Let $\bfP$ be a 2D-grid. For each $I^\tot = I\in \bbI$ with $\src({I}) = \{a_1,\ldots, a_n\}$ and $\snk({I}) = \{b_1, \ldots, b_m\}$, we assume that the first coordinate of $a_{i}$ (\textnormal{resp.}\ $b_{j}$) is strictly less than that of $a_{i+1}$ $(i\in [n-1])$ (\textnormal{resp.} $b_{j+1}$ ($j\in [m-1]$)), and we assign a (not full) subposet $I^{\zz}$ of $I$ with elements
\[
\src(I) \cup \setc*{a_{i,i+1}}{i \in [n-1]} \cup
\snk(I) \cup \setc*{b_{j,j+1}}{j \in [m-1]},
\]
and the order relation is partially inherited from $I$:
\[
a_i\leq a_{i,i+1},\, a_{i+1}\leq a_{i,i+1},\, b_j\leq b_{j,j+1},\, b_{j+1}\leq b_{j,j+1},\, \text{and}\,\, a_1\leq b_1\ (i\in [n-1],\ j\in [m-1]).
\]

It is clear that the poset $I^{\zz}$ has the following Hasse quiver:
\[\begin{tikzcd}[ampersand replacement=\&]
	\&\&\& {b_1} \\
	{a_1} \& {a_{12}} \&\& {b_{12}} \& {b_2} \\
	\& {a_2} \& \rotldots \&\& \rotldots \& \space \\
	\&\& \space \& \rotldots \&\& \rotldots \& {b_{m-1}} \\
	\&\&\& {a_{n-1}} \& {a_{n-1,n}} \&\& {b_{m-1,m}} \& {b_m} \\
	\&\&\&\& {a_n}
	\arrow[curve={height=-25pt}, from=2-1, to=1-4]
	\arrow[from=2-1, to=2-2]
	\arrow[from=2-4, to=1-4]
	\arrow[from=2-4, to=2-5]
	\arrow[from=3-2, to=2-2]
	\arrow[from=3-2, to=3-3]
	\arrow[from=3-5, to=2-5]
	\arrow[from=4-6, to=4-7]
	\arrow[from=5-4, to=4-4]
	\arrow[from=5-4, to=5-5]
	\arrow[from=5-7, to=4-7]
	\arrow[from=5-7, to=5-8]
	\arrow[from=6-5, to=5-5]
\end{tikzcd}.
\]

Set $\zz_{I}\colon I^{\zz}\hookrightarrow I\hookrightarrow\bfP$ to be the usual inclusion map. It is not difficult to check that the family $\left(\zz_{I}\colon I^{\zz}\hookrightarrow \bfP\right)_{I\in \bbI}$ is a rank compression system. We denote this compression system by $\zz$.
\end{exm}

By~\cref{thm:ess-cov-int-rk-inv} we can show the following.

\begin{cor}
\label{cor:tot-zz-case}
	Let $\bfP$ be the 2D-grid, and we let $\zz = \left(\zz_{I}\colon I^{\zz}\hookrightarrow \bfP\right)_{I\in \bbI}$ be the compression system defined above, and $\tot$ the total compression system. Then interval rank invariants under $\zz$ and $\tot$ coincide, i.e.,
	\[
	\rank^{\tot}_{\bbI} = \rank^{\zz}_{\bbI}.
	\]
\end{cor}

\begin{proof}
	We show that for every $I\in \bbI$, $\zz_{I}$ essentially covers $I$ relative to $\tot$. By~\cref{cor:2Dcase-tot-formula} and \cref{thm:restate-int-rk-inv-formula}, there exists a morphism $\bfg = \bmat{\bfg_1 & \mathbf{0}\\ \bfg_3 & \bfg_2}$ in $\Ds\k[I]$ such that~\eqref{eq:comp-multiplicity-formula-ess} holds. Here $\bfg_1$ has the form:
\[
\scalebox{0.95}{$
\begin{blockarray}{ccccccc}
& \scalebox{0.7}{$1$} & \scalebox{0.7}{$2$} & \scalebox{0.7}{$3$} & \scalebox{0.7}{$\cdots$} & \scalebox{0.7}{$n-1$} & \scalebox{0.7}{$n$} \\
\begin{block}{c[cccccc]}
  \scalebox{0.7}{$12$} & p_{a_{12},a_{1}} & -p_{a_{12},a_{2}} & \mathbf{0} & \cdots & \mathbf{0} & \mathbf{0} \\
  \scalebox{0.7}{$23$} & \mathbf{0} & p_{a_{23},a_{2}} & -p_{a_{23},a_{3}} & \cdots & \mathbf{0} & \mathbf{0} \\
  \scalebox{0.7}{$\vdots$} & \vdots & \vdots & \vdots &  & \vdots & \vdots \\
  \scalebox{0.7}{$n-2,n-1$} & \mathbf{0} & \mathbf{0} & \mathbf{0} & \cdots & -p_{a_{n-2,n-1},a_{n-1}} & \mathbf{0} \\
  \scalebox{0.7}{$n-1,n$} & \mathbf{0} & \mathbf{0} & \mathbf{0} & \cdots & p_{a_{n-1,n},a_{n-1}} & -p_{a_{n-1,n},a_{n}} \\
\end{block}
\end{blockarray}
$},
\]
$\bfg_3$ has the form:
\[
\scalebox{0.95}{$
\begin{blockarray}{ccccccc}
& \scalebox{0.7}{$1$} & \scalebox{0.7}{$2$} & \scalebox{0.7}{$3$} & \scalebox{0.7}{$\cdots$} & \scalebox{0.7}{$n-1$} & \scalebox{0.7}{$n$} \\
\begin{block}{c[cccccc]}
  \scalebox{0.7}{$1$} & p_{b_{1},a_{1}} & \mathbf{0} & \mathbf{0} & \cdots & \mathbf{0} & \mathbf{0} \\
  \scalebox{0.7}{$2$} & \mathbf{0} & \mathbf{0} & \mathbf{0} & \cdots & \mathbf{0} & \mathbf{0} \\
  \scalebox{0.7}{$\vdots$} & \vdots & \vdots & \vdots &  & \vdots & \vdots \\
  \scalebox{0.7}{$m-1$} & \mathbf{0} & \mathbf{0} & \mathbf{0} & \cdots & \mathbf{0} & \mathbf{0} \\
  \scalebox{0.7}{$m$} & \mathbf{0} & \mathbf{0} & \mathbf{0} & \cdots & \mathbf{0} & \mathbf{0} \\
\end{block}
\end{blockarray}
$},
\]
and $\bfg_2$ has the form:
\[
\scalebox{0.95}{$
\begin{blockarray}{ccccc}
& \scalebox{0.7}{$12$} & \scalebox{0.7}{$23$} & \scalebox{0.7}{$\cdots$} & \scalebox{0.7}{$m-1, m$} \\
\begin{block}{c[cccc]}
  \scalebox{0.7}{$1$} & p_{b_{1},b_{12}} & \mathbf{0} & \cdots & \mathbf{0} \\
  \scalebox{0.7}{$2$} & -p_{b_{2},b_{12}} & p_{b_{2},b_{23}} & \cdots & \mathbf{0} \\
  \scalebox{0.7}{$3$} & \mathbf{0} & -p_{b_{3},b_{23}} & \cdots & \mathbf{0} \\
  \scalebox{0.7}{$\vdots$} & \vdots & \vdots & & \vdots \\
  \scalebox{0.7}{$m-1$} & \mathbf{0} & \mathbf{0} & \cdots & p_{b_{m-1},b_{m-1,m}} \\
  \scalebox{0.7}{$m$} & \mathbf{0} & \mathbf{0} & \cdots & -p_{b_{m},b_{m-1,m}}\\
\end{block}
\end{blockarray}
$}.
\]
From the definition of $\zz_{I}$, we naturally have the following commutative diagram:
\[
\begin{tikzcd}[ampersand replacement=\&]
	I^{\zz} \&\& \bfP \\
	\& I^\tot = I
	\arrow["\zz_{I}", from=1-1, to=1-3]
	\arrow["{\ze_I}"', from=1-1, to=2-2]
	\arrow["{\tot_I} = \io_I"', from=2-2, to=1-3]
\end{tikzcd}
\]
for the inclusion map $\ze_{I}\colon I^{\zz}\to I$, and $\ze_{I}$ covers $\bfg$ because $\ze_{I}(\bfg) = \bfg$. We abuse the notation $\bfg$ since $\ze_{I}$ is the inclusion, and we remark that the first $\bfg$ is a morphism in $\Ds \k[I^{\zz}]$. Thus for every $M\in \mod A$,
\[
\rank_I^\tot M = \bar{d}_{R_{\zz_{I}}(M)}(R_{\zz_{I}}(V_I))
\]
by~\cref{thm:ess-cov-int-rk-inv}. Notice that $R_{\zz_{I}}(V_I) = V_{\zz_{I}}$ is an interval module in $\mod \k[I^{\zz}]$, hence an indecomposable module by~\cref{lem:com-mult-full-int}. It follows that
\[
\bar{d}_{R_{\zz_{I}}(M)}(R_{\zz_{I}}(V_I)) = d_{R_{\zz_{I}}(M)}(R_{\zz_{I}}(V_I)) = \rank^{\zz}_{I} M.
\]
Therefore, the assertion follows.
\end{proof}

\begin{rmk}
\cref{cor:tot-zz-case} above gives an alternative proof of Theorem in \cite[Theorem 3.12]{deyComputingGeneralizedRank2024} by Dey--Kim--M{\'e}moli for the case where $\bfP$ is a 2D-grid because the interval rank invariant $\rank_{\bbI}^\tot$ coincides with their generalized rank invariant. The latter statement follows by \cite[Lemma 3.1]{chambersPersistentHomologyDirected2018},
but the description of the proof was imprecise,
and in the process of making it accurate we found a small gap in the proof.
Therefore, we give a precise proof of it by filling the gap below.
\end{rmk}

We first review the definition of the generalized rank.

\begin{dfn}
\label{dfn:concrete-dfn-GRI}
Let $I$ be a finite connected poset, and $M \in \mod \k[I]$. Since $I$ is finite and $M \in \mod \k[I]$, both $\varprojlim M$ and $\varinjlim M$ are easily constructed in $\mod \k$. By definition, we have a commutative diagram
\[
\begin{tikzcd}
& \Nname{Mx}M(x)\\
\Nname{lim}\varprojlim M && \Nname{colim} \varinjlim M\\
& \Nname{My} M(y)
\Ar{lim}{Mx}{"\pi_x"}
\Ar{lim}{My}{"\pi_y" '}
\Ar{Mx}{colim}{"\si_x"}
\Ar{My}{colim}{"\si_y" '}
\Ar{Mx}{My}{"M(p_{y,x})"}
\end{tikzcd}
\]
for any $(x, y) \in [\le]_I$, which shows that for any $x,y \in I$, we have $\si_x\pi_x = \si_y \pi_y$ if $x$ and $y$ are in the same connected component of $I$. But since $I$ is connected, the equality above holds for all $x, y \in I$. The common linear map is denoted by $\thf_M \colon \varprojlim M \to \varinjlim M$.

Now, for a (locally finite) poset $\bfP$, a finite interval subposet $I$ of $\bfP$, and $M \in \mod \k[\bfP]$, the $\rank$ of the linear map $\thf_{R_I^\tot(M)}$ for the module $R_I^\tot(M) \in \mod \k[I]$ is called the \emph{generalized rank} of $M$ at $I$. The family $\rank M:= (\rank \Th_{R_I^\tot(M)})_{I \in \bbI}$ is called
the \emph{generalized rank invariant} of $M$.
\end{dfn}

\begin{rmk}
\label{rmk:concide-gri-int-rk}
By the following statement first stated in \cite{chambersPersistentHomologyDirected2018}
as Lemma 3.1, it follows immediately that the generalized rank invariant
of $M$ coincides with the interval rank invariant of $M$
under the total compression system: $\rank M = \rank^\tot_\bbI M$.
However, the proof seems to be not accurate enough,
and we found a small gap in it.
\end{rmk}

We now give a complete proof of \cite[Lemma 3.1]{chambersPersistentHomologyDirected2018} below, in which the gap is filled.

\begin{lem}
\label{lem:gen-rk-inv}
Let $I$ be a finite connected poset, and $M \in \mod \k[I]$. Then $M$ has a direct sum decomposition
\[
M \iso V_{I}^{s} \ds N
\]
as $\k[I]$-modules for some $N$,
where $\rank \thf_{M} = s,\, \rank \thf_{N} = 0$.
Hence in particular, we have $d_M(V_I) = \rank \thf_{M}$.
\end{lem}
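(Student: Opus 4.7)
The plan is to combine the Krull--Schmidt theorem with two natural Hom-identifications for $V_I$ in $\mod \k[I]$. First, I would apply Theorem~\ref{thm:KS} to write $M \iso V_I^d \oplus N$ with $d := d_M(V_I)$ and $N$ having no direct summand isomorphic to $V_I$; this gives the required decomposition. Since $\varprojlim V_I = \k = \varinjlim V_I$ with $\mu_{V_I} = \id_\k$, additivity of $\varprojlim$ and $\varinjlim$ on finite direct sums yields, after the obvious identifications,
\[
\mu_M = \id_{\k^d} \oplus \mu_N, \quad \text{so} \quad \rank \mu_M = d + \rank \mu_N.
\]
It then suffices to prove $\rank \mu_N = 0$, which is where the substantive argument lies.

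The key tool is a pair of natural isomorphisms
\[
\Hom_{\k[I]}(V_I, N) \iso \varprojlim N, \qquad \Hom_{\k[I]}(N, V_I) \iso (\varinjlim N)^*,
\]
coming from the universal properties of (co)limits: a morphism $\phi \colon V_I \to N$ is a compatible family $(\phi_x(1_\k) \in N(x))_{x \in I}$, i.e.\ an element $v \in \varprojlim N$, while a morphism $\psi \colon N \to V_I$ is a cocone with tip $\k$, i.e.\ a linear functional $\tilde\psi$ on $\varinjlim N$. Moreover, because $I$ is connected, a scalar‐valued natural transformation of constant functors is a single scalar, so $\End_{\k[I]}(V_I) = \k$. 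Unwinding the definitions, the composition $\psi \circ \phi \in \End(V_I) = \k$ is computed at any vertex $x$ as $\tilde\psi(\sigma_x(v_x)) = \tilde\psi(\mu_N(v))$, independently of $x$.

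Finally I would exploit that $V_I$ is indecomposable with local endomorphism ring $\k$: any endomorphism of $V_I$ is either zero or invertible, so any composition $V_I \xrightarrow{\phi} N \xrightarrow{\psi} V_I$ must vanish — otherwise, after rescaling, it would furnish a splitting exhibiting $V_I$ as a direct summand of $N$, contradicting the choice of $N$. Combined with the identification of the previous paragraph, this gives $\tilde\psi(\mu_N(v)) = 0$ for all $v \in \varprojlim N$ and all $\tilde\psi \in (\varinjlim N)^*$; since dual functionals separate points on the finite-dimensional space $\varinjlim N$, we conclude $\mu_N = 0$, whence $\rank \mu_N = 0$ as required. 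The main obstacle I anticipate is the careful bookkeeping in verifying that $\psi \circ \phi$ corresponds to $\tilde\psi \circ \mu_N$ under the two Yoneda-type identifications; this is presumably the step where the gap in \cite[Lemma 3.1]{chambers2018persistent} appeared, and once it is made precise the rest is a clean application of Krull--Schmidt and the local structure of $\End(V_I)$.
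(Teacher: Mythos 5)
Your proof is correct, and it takes a genuinely different route from the paper's. The paper constructs the splitting explicitly: it chooses a complement $P$ of $\Ker\mu_M$ inside $\varprojlim M$ and a complement $T$ of $\Im\mu_M$ inside $\varinjlim M$, assembles the component maps $\phi_x := \pi_x\sigma$ and $\rho_x := \pi\sigma_x$ into $\k[I]$-linear morphisms $\phi'\colon V_I^d \to M$ and $\rho'\colon M \to V_I^d$, and checks $\rho'\phi' = \id$; the whole point of producing the retraction $\rho'$ is to fill precisely the gap in \cite[Lemma 3.1]{chambers2018persistent}, where a vector-space splitting was asserted to be a $\k[I]$-module splitting without justification. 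You instead invoke Krull--Schmidt up front to get $M \iso V_I^d \ds N$ with $d = d_M(V_I)$ and $N$ having no $V_I$-summand, and then prove $\mu_N = 0$ directly via the identifications $\Hom_{\k[I]}(V_I, N) \iso \varprojlim N$ and $\Hom_{\k[I]}(N, V_I) \iso (\varinjlim N)^*$, under which the composite $\psi\circ\phi \in \End_{\k[I]}(V_I) = \k$ is the scalar $\tilde\psi(\mu_N(v))$; since $N$ has no $V_I$-summand, no such composite can be nonzero (it would otherwise be an isomorphism, exhibiting $V_I$ as a summand of $N$), so $\tilde\psi(\mu_N(v)) = 0$ for all $\phi,\psi$, and separation of points by functionals gives $\mu_N = 0$. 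Your version is cleaner and more conceptual: it trades the hands-on construction of a retraction for the fact that $V_I$ is a brick and for the additivity of $\varprojlim$, $\varinjlim$. What the paper's version buys is an explicit pair of splitting morphisms, which makes visible exactly what was missing in Chambers--Letscher's argument. Both proofs use connectedness of $I$ in the same two places, namely to make $\mu_{(-)}$ well-defined and to get $\End_{\k[I]}(V_I) = \k$.
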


\begin{proof}
There exist some vector subspaces $P \subseteq \varprojlim M$ and $T \subseteq \varinjlim M$ such that $P\ds \Ker \thf_M = \varprojlim M$ and $\Im \thf_M \ds T = \varinjlim M$. Let $\si \colon P \to \varinjlim M$ be the inclusion and $\pi \colon \varinjlim M \to \Im \thf_M$ the projection with respect to this decomposition. We set $\ph_x\coloneqq \pi_x\si$, $\ro_x\coloneqq \pi \si_x$ for all $x \in I$. Then we have the following commutative diagram:
\[
\begin{tikzcd}
&& \Nname{Mx}M(x)\\
\Nname{P}P &\Nname{lim}\varprojlim M && \Nname{colim} \varinjlim M & \Nname{I}\Im \thf_M\\
&& \Nname{My} M(y)
\Ar{lim}{Mx}{"\pi_x"}
\Ar{lim}{My}{"\pi_y" '}
\Ar{Mx}{colim}{"\si_x"}
\Ar{My}{colim}{"\si_y" '}
\Ar{Mx}{My}{"M(p_{y,x})", pos=0.65}
\Ar{lim}{colim}{"\thf_M", pos=0.3, crossing over}
\Ar{P}{Mx}{"\ph_x", bend left=10pt}
\Ar{P}{My}{"\ph_y" ', bend right=10pt}
\Ar{P}{lim}{"\si"}
\Ar{Mx}{I}{"\ro_x", bend left=10pt}
\Ar{My}{I}{"\ro_y" ', bend right=10pt}
\Ar{colim}{I}{"\pi"}
\end{tikzcd}.
\]
Since $\thf_M$ restricts to an isomorphism $\mu' \colon P \to \Im \thf_M$, we have $\mu' = \pi \thf_M \si$. Thus $\dim P = \dim \Im \thf_M = \rank \thf_M$. Set $s\coloneqq \rank \thf_M$, the common value. Then there exists an isomorphism $\al \colon \k^{s} \to P$, which gives an isomorphism $\be\coloneqq (\mu'\al)\inv \colon \Im \thf_M \to \k^{s}$. Set $\ph_x'\coloneqq \ph_x \al \colon \k^{s} \to M(x)$ and $\ro'_x\coloneqq \be \ro_x \colon M(x) \to \k^{s}$. Then $\Cok \ph'_x = \Cok \ph_x$, $\Ker \ro'_x = \Ker \ro_x$ for all $x \in I$, and we have the following two commutative diagrams with exact rows:
\begin{equation}
\label{eq:2-sh-ex-seq}
\begin{aligned}
&\begin{tikzcd}
0 & \k^{s} & M(x) & \Cok \ph_x & 0\\
0 & \k^{s} & M(y) & \Cok \ph_y & 0
\Ar{1-1}{1-2}{}
\Ar{1-2}{1-3}{"\ph'_x"}
\Ar{1-3}{1-4}{"\ps_x"}
\Ar{1-4}{1-5}{}
\Ar{2-1}{2-2}{}
\Ar{2-2}{2-3}{"\ph'_y" '}
\Ar{2-3}{2-4}{"\ps_y" '}
\Ar{2-4}{2-5}{}
\Ar{1-2}{2-2}{equal}
\Ar{1-3}{2-3}{"M(p_{y,x})"}
\Ar{1-4}{2-4}{"f_{y,x}", dashed}
\end{tikzcd}
,\text{ and }\\
&\begin{tikzcd}
0 & \Ker \ro_x & M(x) & \k^{s} & 0\\
0 & \Ker \ro_y & M(y) & \k^{s} & 0
\Ar{1-1}{1-2}{}
\Ar{1-2}{1-3}{"\ta_x"}
\Ar{1-3}{1-4}{"\ro'_x"}
\Ar{1-4}{1-5}{}
\Ar{2-1}{2-2}{}
\Ar{2-2}{2-3}{"\ta_y" '}
\Ar{2-3}{2-4}{"\ro'_y" '}
\Ar{2-4}{2-5}{}
\Ar{1-2}{2-2}{"g_{y,x}", dashed}
\Ar{1-3}{2-3}{"M(p_{y,x})"}
\Ar{1-4}{2-4}{equal}
\end{tikzcd},
\end{aligned}
\end{equation}
where the vertical map $f_{y,x}$ (\textnormal{resp.}\ $g_{y,x}$) is the unique linear map making the diagram commutative, and $\ta_z \colon \Ker \ro_z \to M(z)$ is the inclusion map for all $z \in I$. The uniqueness of $f_{y,x}$ (\textnormal{resp.}\ $g_{y,x}$) for all $(x,y) \in [\le]_I$ defines a $\k[I]$-module $C$ (\textnormal{resp.}\ $K$) by setting $C(x)\coloneqq \Cok \ph_x,\, C(p_{y,x})\coloneqq f_{y,x}$ (\textnormal{resp.}\ $K(x)\coloneqq \Ker \ro_x,\, K(p_{y,x})\coloneqq g_{y,x}$) for all $x \in I,\, (x,y) \in [\le]_I$. Set $\ga\coloneqq (\ga_x)_{x \in I}$ for all $\ga \in \{\ph', \ps, \ta, \ro'\}$. Then the commutative diagrams above show that $\ph', \ps, \ta, \ro'$ are morphisms of $\k[I]$-modules, and give us the following short exact sequences of $\k[I]$-modules:
\begin{equation}
\label{eq:ex-seq-PC-KP}
0 \to V_I^{s} \ya{\ph'} M \ya{\ps} C \to 0 \text{ and }\\
0 \to K \ya{\ta} M \ya{\ro'} V_I^{s} \to 0.
\end{equation}

We claim $\ro' \ph' = \id_{V_I}$. Indeed, for each $x \in I$, we have $\ro'_x\ph'_x = \be \ro_x \ph_x \al = \be \mu' \al = \id_{\k^{s}}$. As a consequence, the short exact sequences above split, and hence $M$ has direct sum decompositions $V_I^{s} \ds C \iso M\, (\iso K \ds V_I^{s})$ as $\k[I]$-modules. By the additivity of both $\varprojlim$ and $\varinjlim$, we have $\rank \thf_M = s \rank \thf_{V_I} + \rank \thf_{C}$. Here note that $\thf_{V_I}$ is given by the identity $\id_\k \colon \k \to \k$, thus $\rank \thf_{V_I} = 1$, which together with $\rank \thf_M = s$ shows that $\rank \thf_C = 0$. Therefore the assertion holds for $N\coloneqq C$.

Note that $N$ does not have direct summand isomorphic to $V_I$ because $\rank \thf_N$$= 0$. Hence we have $d_M(V_I) = s = \rank \thf_M$.
\end{proof}

\begin{rmk}
In the proof of \cite[Lemma 3.1]{chambersPersistentHomologyDirected2018}, the authors said that the decomposition $M(x) \iso P \ds \Cok \ph_x$ is preserved by $M(p_{y,x})$, which is equivalent to the existence of the commutative diagram with exact rows with a unique morphism $f_{y,x}$ in \eqref{eq:2-sh-ex-seq}. They continued to say that this fact establishes a direct sum $M \iso V_I^{s} \ds C$. This assertion is obvious as vector spaces, but as $\k[I]$-modules it is not clear. This fact was not proved in their paper. Namely, the missing part is to show that the exact sequence in \eqref{eq:ex-seq-PC-KP} on the left splits over $\k[I]$. For this, we need one more exact sequence in \eqref{eq:ex-seq-PC-KP} on the right that serves us the necessary retraction $\ro'$ for $\ph'$.
\end{rmk}

\begin{exm}
\label{exm:int-rk_D_4_cases}
	Consider a poset $\bfP$ (of Dynkin type $\bbD$) having the following Hasse quiver:
\[
\begin{tikzcd}[ampersand replacement=\&]
	\& 4 \\
	1 \& 2 \& 3
	\arrow[from=1-2, to=2-2]
	\arrow[from=2-1, to=2-2]
	\arrow[from=2-2, to=2-3]
\end{tikzcd}.
\]
Let $M_1$ and $M_2$ be two persistence modules over $\bfP$ given by
\[M_1\coloneqq
\begin{tikzcd}[ampersand replacement=\&]
	\& \k \\
	\k \& {\k^2} \& \k
	\arrow["\sbmat{1 \\ 1}", from=1-2, to=2-2]
	\arrow["\sbmat{1 \\ 1}"', from=2-1, to=2-2]
	\arrow["\sbmat{1,\, 0}"', from=2-2, to=2-3]
\end{tikzcd}
\quad \text{and}\quad
M_2\coloneqq
\begin{tikzcd}[ampersand replacement=\&]
	\& \k \\
	\k \& {\k^2} \& \k
	\arrow["\sbmat{1 \\ 0}", from=1-2, to=2-2]
	\arrow["\sbmat{0 \\ 1}"', from=2-1, to=2-2]
	\arrow["\sbmat{1,\, 1}"', from=2-2, to=2-3]
\end{tikzcd}.
\]
We compute the $\bfP$-rank of both modules under $\xi = \tot$. In this case, $I = \bfP = I^\tot$.

By~\cref{thm:restate-int-rk-inv-formula}, there exists a morphism $\bfg = \bmat{\bfg_1 & \mathbf{0}\\ \bfg_3 & \bfg_2}$ in $\Ds\k[\bfP]$ such that~\eqref{eq:comp-multiplicity-formula-ess} holds. Here $\bfg$ has the form:
\[
\bfg\coloneqq\left[
\begin{array}{c|c}
\bfg_1 & \bfzero\\
\hline
\bfg_3 & \bfg_2
\end{array}
\right] = 
\left[
\begin{array}{c|c}
p_{2, 1} & -p_{2, 4}\\
\hline
p_{3, 1} & \bfzero
\end{array}
\right].
\]
Hence, it is now clear that if we take the (not full) subposet $\bfZ$ of $\bfP$ given by
\[\bfZ\coloneqq
\begin{tikzcd}[ampersand replacement=\&]
	\& 4 \\
	1 \& 2 \& 3
	\arrow[from=1-2, to=2-2]
	\arrow[from=2-1, to=2-2]
	\arrow[curve={height=20pt}, from=2-1, to=2-3]
\end{tikzcd},
\]
then the inclusion map $\io\colon \bfZ\hookrightarrow \bfP$ essentially covers $\bfP$ relative to $\tot$. By~\cref{thm:ess-cov-int-rk-inv} it suffices to compute $\bar{d}_{R_{\io}(M_{j})}(R_{\io}(V_I)) = d_{R_{\io}(M_{j})}(V_{\bfZ})$ for $j\in \{1, 2\}$. Now, because
\[
R_{\io}(M_1)=
\begin{tikzcd}[ampersand replacement=\&]
	\& \k \\
	\k \& \k^2 \& \k
	\arrow["\sbmat{1 \\ 1}", from=1-2, to=2-2]
	\arrow["\sbmat{1 \\ 1}", from=2-1, to=2-2]
	\arrow["1"', curve={height=20pt}, from=2-1, to=2-3]
\end{tikzcd}
\iso 
\begin{tikzcd}[ampersand replacement=\&]
	\& \k \\
	\k \& \k \& \k
	\arrow["1", from=1-2, to=2-2]
	\arrow["1", from=2-1, to=2-2]
	\arrow["1"', curve={height=20pt}, from=2-1, to=2-3]
\end{tikzcd}
\ds
\begin{tikzcd}[ampersand replacement=\&]
	\& 0 \\
	0 \& \k \& 0
	\arrow[from=1-2, to=2-2]
	\arrow[from=2-1, to=2-2]
	\arrow[curve={height=20pt}, from=2-1, to=2-3]
\end{tikzcd}
\]
and
\[
R_{\io}(M_2)=
\begin{tikzcd}[ampersand replacement=\&]
	\& \k \\
	\k \& \k^2 \& \k
	\arrow["\sbmat{1 \\ 0}", from=1-2, to=2-2]
	\arrow["\sbmat{0 \\ 1}", from=2-1, to=2-2]
	\arrow["1"', curve={height=20pt}, from=2-1, to=2-3]
\end{tikzcd}
\iso 
\begin{tikzcd}[ampersand replacement=\&]
	\& 0 \\
	\k \& \k \& \k
	\arrow[from=1-2, to=2-2]
	\arrow["1", from=2-1, to=2-2]
	\arrow["1"', curve={height=20pt}, from=2-1, to=2-3]
\end{tikzcd}
\ds
\begin{tikzcd}[ampersand replacement=\&]
	\& \k \\
	0 \& \k \& 0
	\arrow["1", from=1-2, to=2-2]
	\arrow[from=2-1, to=2-2]
	\arrow[curve={height=20pt}, from=2-1, to=2-3]
\end{tikzcd},
\]
we conclude that $\rank^{\tot}_{\bfP}M_1 = 1$, but $\rank^{\tot}_{\bfP}M_2 = 0$.
\end{exm}

\begin{exm}
\label{exm:int-rk_D_4_cases_2nd}
	Consider another poset $\bfP$ (of Dynkin type $\bbD$) having the following Hasse quiver:
\[
\begin{tikzcd}[ampersand replacement=\&]
	\& 4 \\
	1 \& 2 \& 3
	\arrow[from=2-2, to=1-2]
	\arrow[from=2-2, to=2-1]
	\arrow[from=2-2, to=2-3]
\end{tikzcd}.
\]
Let $M$ be a persistence module over $\bfP$ given by
\[M\coloneqq
\begin{tikzcd}[ampersand replacement=\&]
	\& \k \\
	\k \& {\k^3} \& {\k^2}
	\arrow["\sbmat{1,\, 0,\, 0}", from=2-2, to=1-2]
	\arrow["\sbmat{1,\, 0,\, 0}", from=2-2, to=2-1]
	\arrow["\sbmat{1 & 0 & 0\\0 & 1 & 1}"', from=2-2, to=2-3]
\end{tikzcd}.
\]
We compute the $\bfP$-rank of $M$ under $\xi = \tot$. Again in this case, $I = \bfP = I^\tot$.

By~\cref{thm:restate-int-rk-inv-formula}, there exists an $I$-multiplicity matrix $\bfg$ under $\tot$ in $\Ds\k[\bfP]$.
Here we first take $\bfg$ to be:
\begin{equation}
\label{eq:original-morphism}
\bfg\coloneqq\left[
\begin{array}{c|c}
\bfg_3 & \bfg_2
\end{array}
\right] = 
\left[
\begin{array}{c|ccc}
\bfzero & p_{1, 2} & p_{1, 2} & \bfzero\\
p_{3, 2} & -p_{3, 2} & \bfzero & p_{3, 2}\\
\bfzero & \bfzero & -p_{4, 2} & -p_{4, 2}
\end{array}
\right].
\end{equation}
Notice that the last column of $\bfg_2$ is the linear combination of its first two columns, hence we may take another morphism $\tilde{\bfg}$ in $\Ds\k[\bfP]$ given by
\[
\tilde{\bfg}\coloneqq \left[
\begin{array}{c|c}
\bfg_3 & \tilde{\bfg}_2
\end{array}
\right] = 
\left[
\begin{array}{c|cc}
\bfzero & p_{1, 2} & p_{1, 2}\\
p_{3, 2} & -p_{3, 2} & \bfzero\\
\bfzero & \bfzero & -p_{4, 2}
\end{array}
\right],
\]
such that $\rank M(\bfg) - \rank M(\bfg_2) = \rank M(\tilde{\bfg}) - \rank M(\tilde{\bfg}_2)$. 
This shows that the new morphism $\tilde{\bfg}$ 
is also an $I$-multiplicity matrix under $\tot$.

Now, let us take the following zigzag poset
\[\bfZ\coloneqq
\begin{tikzcd}[ampersand replacement=\&]
	2 \&\& {2'} \&\& {2''} \\
	\& 3 \&\& 1 \&\& 4
	\arrow[from=1-1, to=2-2]
	\arrow[from=1-3, to=2-2]
	\arrow[from=1-3, to=2-4]
	\arrow[from=1-5, to=2-4]
	\arrow[from=1-5, to=2-6]
\end{tikzcd}
\]
and define the order-preserving map $\ze\colon \bfZ\to \bfP$ by
\[
\ze(x)\coloneqq
\begin{cases}
	2, & \textnormal{if } x\in \{2, 2', 2''\},\\
	x, & \textnormal{if } x\in \{1, 3, 4\}.
\end{cases}
\]
Then $\ze$ essentially covers $\bfP$ relative to $\tot$. Indeed, we have the following equality:
\[
\k[\ze]\left( \left[
\begin{array}{c|cc}
\bfzero & p_{1, 2'} & p_{1, 2''}\\
p_{3, 2} & -p_{3, 2'} & \bfzero\\
\bfzero & \bfzero & -p_{4, 2''}
\end{array}
\right] \right) = \left[
\begin{array}{c|cc}
\bfzero & p_{1, 2} & p_{1, 2}\\
p_{3, 2} & -p_{3, 2} & \bfzero\\
\bfzero & \bfzero & -p_{4, 2}
\end{array}
\right]
\]
Hence by~\cref{thm:ess-cov-int-rk-inv} it suffices to compute $\bar{d}_{R_{\ze}(M)}(R_{\ze}(V_I)) = d_{R_{\io}(M_{j})}(V_{\bfZ})$. Now, because
\begin{align*}
	R_{\ze}(M) &=
\begin{tikzcd}[ampersand replacement=\&]
	{\k^3} \&\& {\k^3} \&\& {\k^3} \\
	\& {\k^2} \&\& \k \&\& \k
	\arrow["\sbmat{1 & 0 & 0\\0 & 1 & 1}"', from=1-1, to=2-2]
	\arrow["\sbmat{1 & 0 & 0\\0 & 1 & 1}"', from=1-3, to=2-2]
	\arrow["\sbmat{1,\, 0,\, 0}"', from=1-3, to=2-4]
	\arrow["\sbmat{1,\, 0,\, 0}", from=1-5, to=2-4]
	\arrow["\sbmat{1,\, 0,\, 0}", from=1-5, to=2-6]
\end{tikzcd}\\
	& \cong \sbmat{1 & \space & 1 & \space & 1 & \space \\ \space & 1 & \space & 1 & \space & 1} \ds \sbmat{1 & \space & 1 & \space & 0 & \space \\ \space & 1 & \space & 0 & \space & 0} \ds \sbmat{1 & \space & 0 & \space & 0 & \space \\ \space & 0 & \space & 0 & \space & 0} \ds \sbmat{0 & \space & 1 & \space & 0 & \space \\ \space & 0 & \space & 0 & \space & 0}\ds \sbmat{0 & \space & 0 & \space & 1 & \space \\ \space & 0 & \space & 0 & \space & 0}^{2},
\end{align*}
we conclude that $\rank^{\tot}_{\bfP}M = 1$.
\end{exm}

We highlight that in the example above, 
finding a new $I$-multiplicity matrix $\tilde{\bfg}$ under $\tot$ is crucial for finding the zigzag poset $\bfZ$. Indeed, we first notice that $\ze$ does not cover the original choice of $\bfg$ given in~\eqref{eq:original-morphism}. Next, it is straightforward to verify that the following order-preserving map $\ze'\colon \bfZ'\to \bfP$ covers both $\bfg$ and $\tilde{\bfg}$:
\[
\bfZ'\coloneqq \begin{tikzcd}[ampersand replacement=\&,sep=small]
	\&\& {2'} \&\& {2''} \\
	2 \& 3 \&\& 1 \&\& 4 \\
	\\
	\&\&\& {2'''}
	\arrow[from=1-3, to=2-2]
	\arrow[from=1-3, to=2-4]
	\arrow[from=1-5, to=2-4]
	\arrow[from=1-5, to=2-6]
	\arrow[from=2-1, to=2-2]
	\arrow[from=4-4, to=2-2]
	\arrow[from=4-4, to=2-6]
\end{tikzcd},
\ \text{and }
\ze'(x)\coloneqq
\begin{cases}
	2, & \textnormal{if } x\in \{2, 2', 2'', 2'''\},\\
	x, & \textnormal{if } x\in \{1, 3, 4\}.
\end{cases}
\]
However, $\bfZ'$ is not the zigzag poset.

\section{Examples}\label{sec6}
Although the interval rank invariant of a persistence module $M$ under a compression system $\xi$ captures more information than the rank invariant, it can still not retrieve all the information contained in $M$ in general. Namely, it is possible to construct $\xi$ and two objects $M, N \in \mod A$ not isomorphic to each other such that $\de^\xi_M(I) = \de^\xi_N(I)$ for all $I \in \bbI$.
We now give such examples. Throughout this section, finite posets are given by their Hasse quivers without specifying.

\begin{exm}
(1) Define a poset $\bfP_1$ and persistence modules $M(\theta)$ over $\bfP_1$ by
\[
\bfP_1:= \begin{tikzcd}
1 & 2 \\
3 & 4
	\arrow[from=1-1, to=1-2]
	\arrow[from=2-2, to=1-2]
	\arrow[from=2-2, to=2-1]
	\arrow[from=1-1, to=2-1]
\end{tikzcd}
, \quad
M(\theta):= \begin{tikzcd}
\bbR & \bbR \\
\bbR & \bbR
	\arrow["1", from=1-1, to=1-2]
	\arrow["\theta"', from=2-2, to=1-2]
	\arrow["1", from=2-2, to=2-1]
	\arrow["1"', from=1-1, to=2-1]
\end{tikzcd}
\]
for $\theta \in \bbR \setminus \{0,1\}$. We take $\xi:= \tot$ (see Example \ref{exm:total}). Let $\theta_1, \theta_2 \in \bbR \setminus \{0,1\}$ such that $\theta_1 \neq \theta_2$. Then $M(\theta_1)$ and $M(\theta_2)$ are clearly not isomorphic to each other but they have the same interval replacement. One can compute the interval replacement of $M(\theta)$ for $\theta=\theta_1, \theta_2$ by using Remark \ref{rmk:altdefdelta}:
\begin{table}[h]
    \caption{Computation of $\delta_{M(\theta)}^\xi(I)$ for $\theta \in \bbR \setminus \{0,1\}$.}
    \label{tab:table1}
    \begin{tabular}{c|c|c}
      \textbf{Interval} & \textbf{$I$-rank} & \textbf{Signed interval multiplicity}\\
        $I$ & $\rank^\xi_I M(\theta)$ & $\delta_{M(\theta)}^\xi(I)$ \\
      \hline
      $\{1,2,3,4\}$ & 0 & 0\\
      $\{1,2,3 \}$ & 1 & 1\\
      $\{1,2,4 \}$ & 1 & 1\\
      $\{1,3,4 \}$ & 1 & 1\\
      $\{2,3,4 \}$ & 1 & 1\\
      $\{1,2 \}$ & 1 & -1\\
      $\{1,3 \}$ & 1 & -1\\
      $\{2,4 \}$ & 1 & -1\\
      $\{3,4 \}$ & 1 & -1\\
      $\{1 \}$ & 1 & 0\\
      $\{2 \}$ & 1 & 0\\
      $\{3 \}$ & 1 & 0\\
      $\{4 \}$ & 1 & 0\\
    \end{tabular}
\end{table}

(2) Define a poset $\bfP_2$ and persistence modules $N(\theta)$ over $\bfP_2$ by
\[
\bfP_2:=
\begin{tikzcd}[column sep=20pt]
& 7 &&& 8 \\
3 &&& 4\\
& 5 &&& 6 \\
1 &&& 2
	\arrow[from=4-1, to=4-4]
	\arrow[from=3-2, to=3-5]
	\arrow[from=4-1, to=3-2]
	\arrow[from=4-4, to=3-5]
	\arrow[from=4-1, to=2-1]
	\arrow[from=3-2, to=1-2]
	\arrow[from=3-5, to=1-5]
	\arrow[from=4-4, to=2-4]
	\arrow[from=2-1, to=1-2]
	\arrow[from=1-2, to=1-5]
	\arrow[from=2-1, to=2-4]
	\arrow[from=2-4, to=1-5]
\end{tikzcd}
, \quad
N(\theta):=
\begin{tikzcd}[column sep=20pt]
& \bbR &&& 0 \\
 \bbR &&& \bbR && {} \\
& \bbR &&& \bbR \\
 0 &&& \bbR
	\arrow[from=4-1, to=4-4]
	\arrow["1"{pos=0.4}, from=3-2, to=3-5]
	\arrow[from=4-1, to=3-2]
	\arrow["\theta"', from=4-4, to=3-5]
	\arrow[from=4-1, to=2-1]
	\arrow["1"{pos=0.3}, from=3-2, to=1-2]
	\arrow[from=3-5, to=1-5]
	\arrow["1"'{pos=0.3}, from=4-4, to=2-4]
	\arrow["1", from=2-1, to=1-2]
	\arrow[from=1-2, to=1-5]
	\arrow["1", from=2-1, to=2-4]
	\arrow[from=2-4, to=1-5]
\end{tikzcd}
\]
for $\theta \in \bbR \setminus \{0,1\}$.\\

(3) Define a poset $\bfP_3$ and persistence modules $L(\theta)$ over $\bfP_3$ by
\[
\bfP_3:=
\begin{tikzcd}
1 & 2 & 3\\
4 & 5 & 6\\
7 & 8 & 9
	\arrow[from=2-2, to=1-2]
	\arrow[from=2-2, to=2-1]
	\arrow[from=2-2, to=3-2]
	\arrow[from=2-2, to=2-3]
	\arrow[from=1-2, to=1-1]
	\arrow[from=2-1, to=1-1]
	\arrow[from=3-2, to=3-1]
	\arrow[from=2-1, to=3-1]
	\arrow[from=1-2, to=1-3]
	\arrow[from=2-3, to=1-3]
	\arrow[from=3-2, to=3-3]
	\arrow[from=2-3, to=3-3]
\end{tikzcd}
, \quad
L(\theta):=
\begin{tikzcd}
\bbR & \bbR & \bbR &&& {} \\
\bbR & 0 & \bbR \\
\bbR & \bbR & \bbR
	\arrow[from=2-2, to=1-2]
	\arrow[from=2-2, to=2-1]
	\arrow[from=2-2, to=3-2]
	\arrow[from=2-2, to=2-3]
	\arrow["1"', from=1-2, to=1-1]
	\arrow["1", from=2-1, to=1-1]
	\arrow["1", from=3-2, to=3-1]
	\arrow["1"', from=2-1, to=3-1]
	\arrow["1", from=1-2, to=1-3]
	\arrow["1"', from=2-3, to=1-3]
	\arrow["\theta"', from=3-2, to=3-3]
	\arrow["1", from=2-3, to=3-3]
\end{tikzcd}
\]
for $\theta \in \bbR \setminus \{0,1\}$.
\end{exm}

We give another example satisfying commutativity relations non-trivially that shows
the incompleteness of the interval rank invariant
under a rank compression system. In \cite{kim2021generalized} such an example was given for poset
of Dynkin type $\bbD_4$ and by a pair $(M, N)$ of decomposable modules.
We are interested in having such examples for 2D-grids and a minimal one,
i.e., a pair $(M, N)$ of non-isomorphic indecomposable modules.
The smallest 2D-grid would be commutative ladders $\bfP = G_{n, 2}$.
But for $n \le 4$, there exists no such example.
Indeed, since the Auslander--Reiten quiver is finite and directed,
$M \not\iso N$ implies that $\udim M \ne \udim N$.
Thus $\dim M(x) \ne \dim N(x)$ for some $x \in \bfP$, and hence
$\rank^\xi_I M \ne \rank^\xi_I N$ for $I = \{x\}$.
We now give such an example for $n = 5$.
In this sense, the following example is one of the minimal ones.

\begin{exm}
\label{exm:M-lambda}
Let $\la \in \k$ and $M_\la$ be the following
representation of $\bfP:= G_{5,2}$:
\[\begin{tikzcd}
\k & {\k^2} & {\k^2} & \k & 0 \\
0 & \k & {\k^2} & {\k^2} & \k
\arrow["{\binom{1}{0}}", from=1-1, to=1-2]
\arrow["\id", from=1-2, to=1-3]
\arrow["{(\lambda,-1)}", from=1-3, to=1-4]
\arrow[from=1-4, to=1-5]
\arrow[from=2-1, to=1-1]
\arrow[from=2-1, to=2-2]
\arrow["{\binom{0}{1}}"', from=2-2, to=1-2]
\arrow["{\binom{0}{1}}"', from=2-2, to=2-3]
\arrow["\id"', from=2-3, to=2-4]
\arrow["{(1,-1)}"', from=2-4, to=2-5]
\arrow["\id"', from=2-3, to=1-3]
\arrow["{(\lambda,-1)}"', from=2-4, to=1-4]
\arrow[from=2-5, to=1-5]
\end{tikzcd}.
\]
Then it is easy to see that the endomorphism algebra of $M_\la$ is
isomorphic to $\k$, and hence $M_\la$ is indecomposable, and
if $\la \ne \mu$ in $\k$, then $\Hom_{\bfP}(M_\la, M_\mu) = 0$.
Thus $M_\la \iso M_\mu$ if and only if $\la = \mu$.
Let $\la \ne \mu$ in $\k \setminus \{0, 1\}$. In view of Theorem \ref{thm:general w/o conditions} and Remark \ref{rmk:exist-cond-case}, we verify by utilizing our computational project that for $\xi \in \{\tot, \ss\}$, $\rank^\xi_{\bbI} M_\la = \rank^\xi_{\bbI} M_\mu$.

The dimension vector of $M_\la$ is taken from \cite[A2.\ The frames of the tame concealed algebras]{ringel2006tame}
for $\tilde{\bbE}_7$, and the representation $M_\la$
is constructed by modifying a homogeneous representation of
$\tilde{\bbD}_4$ in
\cite[Chapter 6 Tables]{dlab1976indecomposable}.
\end{exm}

To close the paper, we demonstrate an application of utilizing the interval replacement to distinguish filtrations.

\begin{exm}
\label{exm:app-of-int-rep}
	Let $\calF_1$ and $\calF_2$ be two filtrations indexed by the 2D-grid (\cref{fig:two-filtrations}). We consider the $1$st homology $H_{1}(\blank;\bbZ/2\bbZ)$ and denote $M_{j}\coloneqq H_{1}(\blank;\bbZ/2\bbZ)\circ \calF_{j}$ ($j \in \{1, 2\}$). By implementing our computational project mentioned above, the interval replacements of $M_{j}$ under $\tot$ and $\ss$ are given as follows (interval modules are written as their dimension vectors).
	\begin{align}
		\de^{\tot}(M_{1}) & = \br{\de^{\tot}(M_{1})_{+}} - \br{\de^{\tot}(M_{1})_{-}}\nonumber\\
		& = \br{\sbmat{1 & 1 & 1 & 0 & 0\\
1 & 1 & 1 & 1 & 0} \ds \sbmat{1 & 1 & 1 & 1 & 1\\
0 & 0 & 1 & 1 & 1} \ds \sbmat{0 & 1 & 1 & 1 & 1\\
0 & 1 & 1 & 1 & 1} \ds \sbmat{0 & 1 & 1 & 1 & 0\\
0 & 0 & 1 & 1 & 1}} - \br{\sbmat{0 & 1 & 1 & 1 & 1\\
0 & 0 & 1 & 1 & 1}}, \label{eq:repl_M_1_tot}\\
\de^{\ss}(M_{1}) & = \br{\de^{\ss}(M_{1})_{+}} - \br{\de^{\ss}(M_{1})_{-}}\nonumber\\
		& = \br{\sbmat{1 & 1 & 1 & 0 & 0\\
1 & 1 & 1 & 1 & 0} \ds \sbmat{1 & 1 & 1 & 1 & 0\\
0 & 0 & 1 & 1 & 1} \ds \sbmat{1 & 1 & 1 & 1 & 1\\
0 & 1 & 1 & 1 & 1} \ds \sbmat{0 & 1 & 1 & 1 & 0\\
0 & 1 & 1 & 1 & 1}} - \br{\sbmat{1 & 1 & 1 & 1 & 0\\
0 & 1 & 1 & 1 & 1}}, \label{eq:repl_M_1_ss}
	\end{align}
and for any $\xi\in \{\tot, \ss\}$,
	\begin{align}
	\label{eq:repl_M_2}
		\de^{\xi}(M_{2}) & = \br{\de^{\xi}(M_{2})_{+}} - \br{\de^{\xi}(M_{2})_{-}}\nonumber\\
		& = \br{\sbmat{1 & 1 & 1 & 0 & 0\\
1 & 1 & 1 & 1 & 0} \ds \sbmat{1 & 1 & 1 & 1 & 1\\
0 & 0 & 1 & 1 & 1} \ds \sbmat{0 & 1 & 1 & 1 & 0\\
0 & 1 & 1 & 1 & 1}}.
	\end{align}
By observing~\eqref{eq:repl_M_1_tot},  \eqref{eq:repl_M_1_ss}, and \eqref{eq:repl_M_2}, one can distinguish filtrations $\calF_1$ and $\calF_2$ by their distinct interval replacement invariants under either $\tot$ or $\ss$. Moreover, $M_1$ is not interval-decomposable because of the existence of negative part $\de^{\tot}(M_{1})_{-}$. In comparison, $M_2$ is interval-decomposable because there is no negative part of its replacement, and $M_{2}\cong \de^{\xi}(M_{2})_{+}$.

\end{exm}

\begin{figure}[ht]
  \centering
  \subfloat[%
    Filtration $\calF_1$%
    \label{fig:sub:Filtration_ess_cov_int_mult_a}%
  ]{%
    \includegraphics[width=\textwidth]{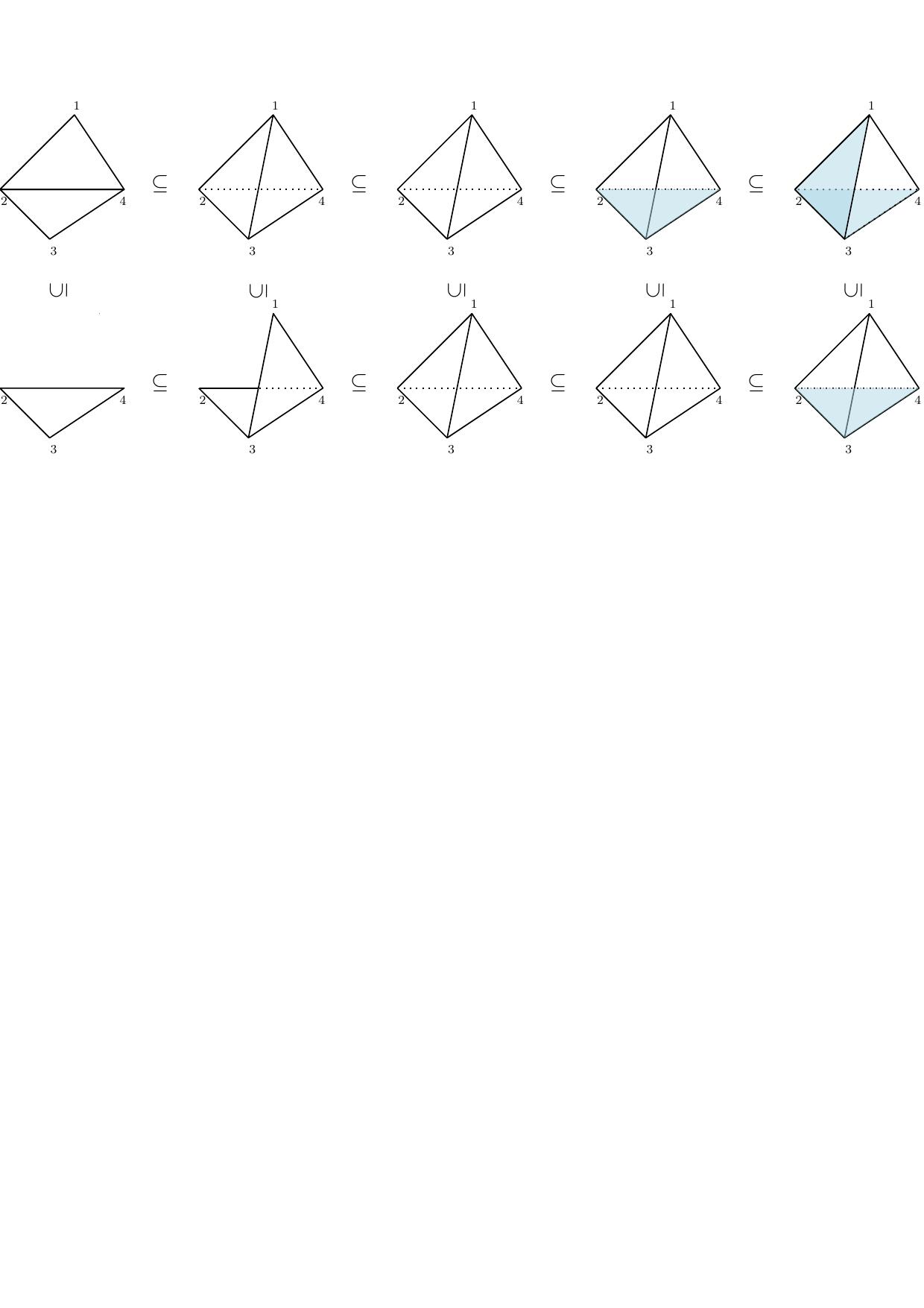}%
  }

  \vspace{1ex}

  \subfloat[%
    Filtration $\calF_2$%
    \label{fig:sub:Filtration_ess_cov_int_mult_b}%
  ]{%
    \includegraphics[width=\textwidth]{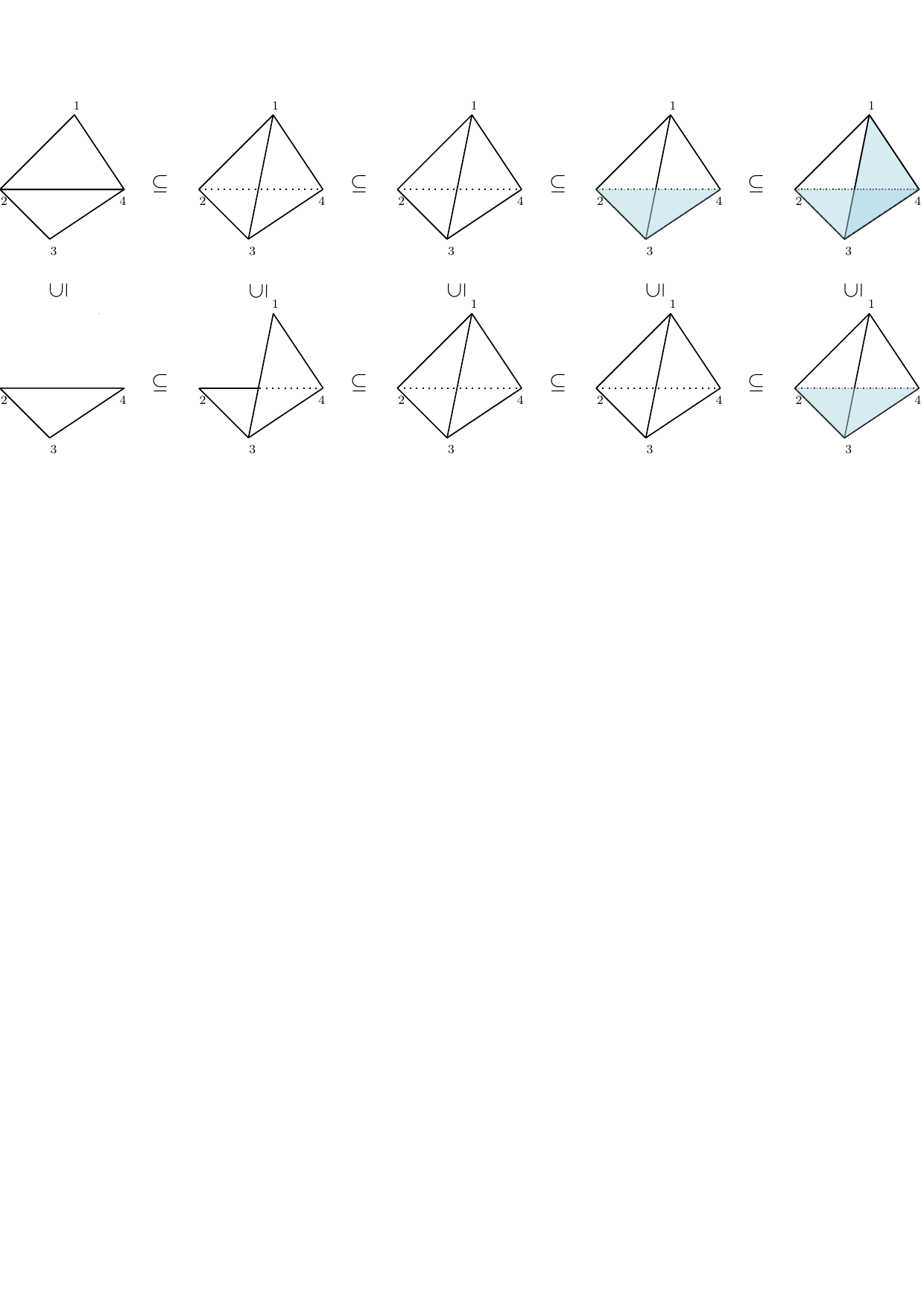}%
  }

  \caption{Two filtrations indexed by $G_{5,2}$ in \cref{exm:app-of-int-rep}}
  \label{fig:two-filtrations}
\end{figure}

\begin{appendices}

\section{Original definition of a compression system}
\label{Original definition of a compression system}
To deal with not only incidence categories of posets but also
linear categories $A$ defined by bound quivers $(Q, R)$ as $A = \k[Q]/R$,
the original definition of a compression system was described
in a quiver language, which is recorded below for the future use. To begin with, we summarize some fundamental but necessary notations for the paper to be self-contained. For a thorough treatment, we refer the reader to \cite{assemElementsRepresentationTheory2006}.

\begin{dfn}
(1) A \emph{quiver} is a quadruple $Q=(Q_0,Q_1,s,t)$ of
sets $Q_0, Q_1$ and maps $s,t : Q_1 \to Q_0$. If we draw each $x \in Q_0$ as a vertex, and each $\al \in Q_1$ with $s(\al) = x$ and $t(\al) = y$ as an arrow $x \ya{\ \al\ } y$, then $Q$ can be expressed as a directed graph.
For this, elements of $Q_1$ are said to be the arrows of $Q$,
and $s(\al), t(\al)$ are said to be the \emph{source} and
\emph{target} of the arrow $\al$.

(2) Let $Q, Q'$ be quivers.
A \emph{quiver morphism} $F$ from $Q$ to $Q'$ is a pair $(F_0, F_1)$ of maps
$F_i \colon Q_i \to Q'_i$\ ($i = 0,1$) such that for any arrow $a \colon x \to y$,
in $Q$, $F_1(a)$ is an arrow $F_0(x) \to F_0(y)$.
By abuse of notation, we write $F(a) = F_1(a), F(x):= F_0(x)$
for all $a \in Q_1$, $x \in Q_0$.

(3) A \emph{path} from $x$ to $y$ of \emph{length} $n \ge 0$
is a symbol $p = (y | \alpha_n,\ldots,\alpha_1 | x)$
consisting of $\al_1, \dots, \al_n \in Q_1$ such that
$y = t(\alpha_n)$, $x = s(\alpha_1)$
and $s(\alpha_{i+1}) = t(\alpha_i)$ for $i \in [n-1]$.
If $n = 0$, we require $x = y$ and set $e_x\coloneqq (x||x)$.
The symbol $p$ is sometimes expressed by $p \colon x \leadsto y$.
To extend definitions of source and target to paths, we set $\ovl{s}(p)\coloneqq x,\, \ovl{t}(p)\coloneqq y$ and call them the \emph{source} and \emph{target} of $p$.

(4) A path $p$ of lenghth at least 1 is called an \emph{oriented cycle} if $\ovl{s}(p) = \ovl{t}(p)$. 
$Q$ is said to be \emph{acyclic} if $Q$ has no oriented cycles.
Furthermore, we write $Q_n$ to be the set of all paths of length $n$, and the set of all paths of $Q$ is denoted by $Q_{\geq 0}$, thus we identify the paths of length 0 and 1 with vertices and arrows, respectively.

(5) The {\em path category} $\k[Q]$ of a finite quiver $Q$ is defined as follows:
The set of objects of $\k[Q]$ is given by $Q_0$.
For any $x, y \in Q_0$, we set $\k[Q](x,y)$ to be the $\k$-vector space
with basis $\{p \in Q_{\ge 0} \mid p \colon x \leadsto y\}$,
the identity morphism $\id_x$ at $x \in Q_0$ is given by $\id_x:= e_x$, and
the composition is given by concatenation of paths:
for any paths $p = (y | \alpha_m,\ldots,\alpha_1 | x)$ and
$q = (z | \be_n,\ldots,\be_1 | y)$ with $m,n \ge 0$,
$q\circ p:= (z | \be_n,\ldots,\be_1, \al_m,\ldots, \al_1 | x)$. 

(6) A \emph{walk} between $x$ and $y$ in $Q$ is a sequence $(p_i)_{i=1}^{2n}$ of paths
in $Q$ of length $\ge 0$ with $n \ge 1$ having the following form:
\[
\begin{tikzcd}[column sep =20pt]
& x_1 && x_2 && \cdots && x_n\\
x && y_1 && y_2 & \cdots & y_{n-1}&&  y
\Ar{1-2}{2-1}{"p_1" ', rightsquigarrow}
\Ar{1-2}{2-3}{"p_2", rightsquigarrow}
\Ar{1-4}{2-3}{"p_3" ', rightsquigarrow}
\Ar{1-4}{2-5}{"p_4", rightsquigarrow}
\Ar{1-6}{2-5}{"p_5" ', rightsquigarrow}
\Ar{1-6}{2-7}{"p_{2n-2}", rightsquigarrow}
\Ar{1-8}{2-7}{"p_{2n-1}", rightsquigarrow}
\Ar{1-8}{2-9}{"p_{2n}", rightsquigarrow}
\end{tikzcd}.
\]
\end{dfn}

\begin{dfn}
A quiver $Q$ is said to be \emph{connected} if for any vertices $x, y$ in $Q$, there exists a walk between $x$ and $y$ in $Q$.
\end{dfn}

\begin{dfn}
Let $Q$ be a quiver, and $Q'$ a full subquiver of $Q$.

(1) $Q'$ is said to be \emph{convex} in $Q$
if for any $x,y \in Q'_0$, and any path $p \colon x \leadsto y$ in $Q$, all vertices of $p$ are in $Q'_0$ (and thus $p$ is a path in $Q'$).

(2) $Q'$ is called an \emph{interval} of $Q$ if 
$Q'$ is convex and connected. 
The set of all interval subquivers of $Q$ is denoted by $\bbI(Q)$ ($\bbI$ for short).

(3) A {\em segment} of $Q$ is an interval $Q'$ such that $Q'$ has a unique
source $x$ and a unique sink $y$, and is denoted by $[x,y]$.
The set of all segments of $Q$ is denoted by $\Seg(Q)$.
\end{dfn}

We regard a category $\calC$ to be a quiver
$U(\calC):= (\calC_0, \calC_1, \dom, \cod)$
(called the {\em underlying quiver} of $\calC$)
with a structure given by
the family $(\id_x)_{x \in \calC_0}$ of identities and the composition of $\calC$,
where $\calC_0$ (resp.\ $\calC_1$) is the class of objects (resp.\ morphisms)
and $\dom\ (\text{resp.\ }\cod) \colon \calC_1 \to \calC_0$ is a map
sending $f \colon X \to Y$ in $\calC_1$ to the domain $X$
(resp.\ codomain $Y$) of $f$.

Then a functor $F \colon \calC \to \calC'$ between categories
is given by a quiver morphism $F \colon U(\calC) \to U(\calC')$ satisfying
the axiom of a functor, which is called the {\em underlying quiver morphism} of $F$
and is denoted by $U(F)$ (actually we have $F = U(F)$).

We are now in a position to state the definition of a compression system
in quiver language.
For a quiver $Q$, we denote by $\com_Q$ the ideal of the category
$\k[Q]$ generated by the full commutativity relations in $Q$.

\begin{dfn}
\label{dfn:assignment}
Let $Q$ be an acyclic finite quiver without multiple arrows, and set
$A:= \k[Q]/\com_Q$.
A \emph{compression system} for $A$ is a family $\xi:= (\xi_I)_{I \in \bbI}$ of quiver morphisms $\xi_I \colon Q_I^{\xi} \to U(A)$ from
a connected finite quiver $Q_I^{\xi}$
satisfying the following two conditions for each $I \in \bbI(Q)$:

\begin{enumerate}
\item
$\xi_I$ factors through the inclusion morphism $U(\k[I]) \hookrightarrow U(A)$ of quivers; and
\item
The image $\xi_I((Q_I^{\xi})_0)$ of vertices contains $\src(I) \cup \snk(I)$.
\end{enumerate}
The compression system $\xi$ for $A$ is called a \emph{rank compression system}
if the following is satisfied:
\begin{enumerate}
\item[(3)]
If $I = [x,y] \in \Seg(Q)$ and $p \in A(x,y)$, then
there exists a morphism $q \in \k[Q_I^{\xi}](x,y)$
such that
$\k[\xi_I](q) = p$, where $\k[\xi_I] \colon \k[Q_I^{\xi}] \to A$
is the linear functor that is a unique extension of $\xi_I$.
\end{enumerate}
Let $I \in \bbI$.
Then we set $B_I:= \k[Q_I^{\xi}]/\Ker \k[\xi_I]$.
Note here that $\com_{Q_I^{\xi}} \subseteq \Ker \k[\xi_I]$.
Then $\k[\xi_I]$ induces a functor $\widetilde{\xi_I} \colon B_I \to A$.
The restriction functor
$R^\xi_I \colon \mod A \to \mod B_I$ is defined by
sending $M$ to $M \circ \widetilde{\xi_I}$ for all $M \in \mod A$.
The functor $R^\xi_I$ is simply denoted by $R_I$
if there seems to be no confusion.
\end{dfn}

Note that the definition above can be generalized to the case where
$A = \k[Q]/R$ for any ideal $R$ of $\k[Q]$ although
$\com_{Q_I^{\xi}} \subseteq \Ker \k[\xi_I]$ does not hold in general.

We now make a bridge between the quiver language and the poset language.
Recall that $H(\bfP)$ is the Hasse quiver of $\bfP$ defined in \cref{dfn:ss-intervals}.

\begin{rmk}
\label{rmk:ACQ=FP}
Let $\ACQ$ be the category of all acyclic finite quivers without multiple arrows, where the morphisms are given by quiver morphisms,
and $\FP$ the category of all finite posets and order-preserving maps.
If $Q \in \ACQ$,
then we have a finite poset $O(Q):= (Q_0, \preceq) \in \FP$, where $x \preceq y$
if and only if there exists a path $x \leadsto y$ for all $x, y \in Q_0$.
Conversely, if $\bfP \in \FP$, then $H(\bfP) \in \ACQ$.
These induce isomorphisms $O \colon \ACQ \to \FP$ and
$H \colon \FP \to \ACQ$ of categories, which are inverses to each other.

We usually identify $\ACQ$ and $\FP$ by these isomorphisms.
Note that if $Q = H(\bfP)$ for a $\bfP \in \FP_0$, then
$\k[\bfP] \iso \k[Q]/\com_Q$.
\end{rmk}

\begin{rmk}
Let $I$ be a full subposet of $\bfP$, set $Q:= H(\bfP)$, and let
$Q'$ be the full subquiver of $Q$ with $Q'_0 = I$.
Then clearly $I$ is convex in $\bfP$ if and only if $Q'$ is convex in $Q$.
For connectedness, consider the following conditions:
\begin{enumerate}
\item
$I$ is connected as a poset.
\item
The Hasse quiver $H(I)$ of $I$ is connected as a quiver.
\item
$Q'$ is connected as a quiver.
\end{enumerate}
Then (1) and (2) are equivalent
because for any $x, y \in I$, we have $x \le y$ if and only if
there exists a path $x \leadsto y$ in $H(I)$ by definition of a Hasse quiver.

Moreover, (3) implies (1) because for any $x, y \in I$,
if there exists a path $x \leadsto y$ in $Q'$, then $x \le y$ in $I$.

However, (1) does not imply (3).
For example, consider the case, where
$\bfP = \{1 < 2 < 3\}$ and $ I:= \{1, 3\}$.
In this case, $Q = (1 \to 2 \to 3)$, $H(I) = (1\to 3)$ and $Q'= (1\ \ 3)$.
Thus $I$ is connected, but $Q'$ is not connected.

Nevertheless, if $I$ is convex in $\bfP$
(or equivalently $Q'$ is convex in $Q$),
then (1) implies (3), and
all the conditions above are equivalent.
Indeed,
if $x \le y$ in $I$, then there exists a path $p \colon x \leadsto y$ in $Q$, with $x,y \in Q'$, then $p$ is a path in $Q'$ because $Q'$ is convex in $Q$.

Therefore, $I$ is an interval of $\bfP$ if and only if
$Q'$ is an interval of $Q$.
\end{rmk}

\begin{rmk}
\label{rmk:simplified-ver}
Let $\bfP \in \FP$ and set $Q:= H(\bfP)$.
Thus we have $Q_0 = \bfP$, and we regard $\k[\bfP] = \k[Q]/\com_Q$.
Then

(1) The coset of each path $p \colon x \leadsto y$ in $Q$
is identified with the morphism $p_{y,x}$ in $\bfP$.

(2) Since $A = \k[\bfP]$ is isomorphic to $\k[Q]/\com_Q$, the category $\mod A$ of persistence modules is isomorphic to the category $\rep_\k(Q,\linebreak[3]\com_Q)$ of $\k$-representations of the bound quiver $(Q, \com_Q)$. We usually identify these two categories.

(3) In Definition \ref{dfn:assignment}, if we restrict ourselves to the case where $\xi_I$ factors through the inclusion $U(I) \hookrightarrow U(A)$
instead of Definition \ref{dfn:assignment} (1) above,
then Definition \ref{dfn:comp-sys-simplified-ver} is obtained.
Note here that $I^\xi:= O(Q_I^{\xi})$ (Remark \ref{rmk:ACQ=FP})
is a finite connected poset,
and that $\Ker \k[\xi_I] = \com_{Q_I^{\xi}}$, and hence
in this case $B_I = \k[Q_I^{\xi}]/\com_{Q_I^{\xi}} = \k[I^\xi]$.
\end{rmk}

\begin{dfn}
Let $Q, Q'$ be quivers.
Then the product quiver $T:= Q \times Q'$ is defined as follows.

$T_0:= Q_0 \times Q'_0$,
$T_1:= (Q_1 \times Q'_0) \cup (Q_0 \times Q'_1)$.
For any $a \colon x \to y$ in $Q_1$ and $x' \in Q'_0$, we have
$(a,x') \colon (x,x') \to (y,x')$, and for any $x \in Q_0$ and
$a' \colon x' \to y'$ in $Q'_1$, we have
$(x, a') \colon (x,x') \to (x,y')$.
\end{dfn}

\begin{dfn}
\label{dfn:An-quiver}
A quiver {\em of Dynkin type} $\bbA_n$\ ($n \ge 1$) is a quiver of the form
1 --- 2 --- $\cdots$ --- $n$, where --- are arrows either $\to$ or $\leftarrow$,
namely it corresponds to a zigzag poset.
\end{dfn}

\begin{dfn}
In general for each $d \ge 2$, a $d$D-grid is defined as the product quiver of Dynkin quivers of type
$\bbA_{n_1},\dots, \bbA_{n_d}$ ($n_1, \dots n_d \ge 2$)
with full commutativity relations, which correspond to the
product poset of $d$ zigzag posets.
In our paper, we restrict ourselves only to the equioriented case,
namely, by the word ``$d$D-grid'' we mean the product of $d$ 
totally ordered (finite) sets as in Section~\ref{sec2}.
\end{dfn}

\section{Formal additive hulls}
\label{sec:form-add-hull}

\begin{dfn}
\label{dfn:formal-add-hull}
(1) For each linear category $B$, a linear category $\Ds B$, called the
\emph{formal additive hull} of $B$, is defined as follows:

{\bf Objects.} The set of objects is given by
$$
(\Ds B)_0\coloneqq \{(x_i)_{i \in [l]} = (x_1,\dots, x_l) \mid x_1, \dots, x_l \in B_0,
\, l \ge 0\}.
$$
Note that if $l = 0$ above, then $[l] = \emptyset$, and $(x_i)_{i \in [l]}$
is an empty sequence $()$.
For each $x = (x_i)_{i \in [l]} \in (\Ds B)_0$, we set $|x|:= l$, and call it
the \emph{size} of $x$.

\medskip
{\bf Morphisms.} For any $x, y \in (\Ds B)_0$ with $x = (x_i)_{i \in [l]},\, y = (y_j)_{j \in [m]}$
the set of morphisms from $x$ to $y$ is defined by setting
$$
(\Ds B)(x,y)\coloneqq \big\{\bmat{\al_{ji}}_{(j,i) \in [m]\times [l]} \mid \al_{ji} \in B(x_i, y_j) \text{ for all }(j,i) \in [m]\times [l]\big\},
$$
where $\bmat{\al_{ji}}_{(j,i) \in [m]\times [l]}$ is a matrix of size $(m,l)$,
which is defined to be the triple $(m, l, (\al_{ji})_{(j,i) \in [m]\times [l]})$ of
integers $l, m \ge 0$ and a family of morphisms $\al_{ji} \in B(x_i, y_j)$.
Note that if $l = 0$, then $x = ()$, and we have
\begin{equation}
\label{eq:emptymat-to}
(\Ds B)((), y) = \{\sfJ_{m,0}\},
\end{equation}
where we set $\sfJ_{m,0}:= (m, 0, ())$;
if $m = 0$, then $y = ()$, and we have
\begin{equation}
\label{eq:to-emptymat}
(\Ds B)(x, ()) = \{\sfJ_{0,l}\},
\end{equation}
where we set $\sfJ_{0,l}:= (0, l, ())$.
In particular, we have $(\Ds B)((), ()) = \{\sfJ_{0,0}\}$,
where $\sfJ_{0,0} = (0,0,())$.
The matrices $\sfJ_{m,0}, \sfJ_{0,l}, \sfJ_{0,0}$ are called
the \emph{empty matrices} of size
$(m, 0),\, (0, l),\, (0,0)$, respectively.
We give a structure of a vector space to $(\Ds B)(x, y)$
by the usual addition and and scalar multiplication of matrices.
In particular, if $l=0$ or $m=0$,
then $(\Ds B)(x, y)$ becomes a trivial vector space.

\medskip
{\bf Composition.} For any $x, y, z \in (\Ds B)_0$
with $x = (x_i)_{i \in [l]},\, y = (y_j)_{j \in [m]},\, z = (z_k)_{k \in [n]}$,
the composition
$$
(\Ds B)(y,z) \times (\Ds B)(x,y) \to (\Ds B)(x,z),\ 
(\be, \al) \mapsto \be \cdot \al
$$
is defined by the usual matrix multiplication
$$
\bmat{\be_{kj}}_{(k,j)\in [n]\times [m]} \cdot \bmat{\al_{ji}}_{(j,i)\in [m]\times [l]}
:= \bmat{\sum_{j\in [m]} \be_{kj}\al_{ji}}_{(k,i)\in [n]\times [l]}
$$
for all $\al = \bmat{\al_{ji}}_{(j,i)\in [m]\times [l]}$
and $\be = \bmat{\be_{kj}}_{(k,j)\in [n]\times [m]}$.
In particular, if $l=0$, then $\be \cdot \sfJ_{m,0} = \sfJ_{n,0}$;
if $m=0$, then $\sfJ_{n,0}\cdot \sfJ_{0,l} = (l,n,(0)_{(k,i)\in [n]\times [l]}) = 0_{n,l}$;
and if $n=0$, then $\sfJ_{0,m}\cdot \al = \sfJ_{0,l}$.
Thus
if morphisms $\be,\, \al$ have size $(k, p),\, (q, l)$
with $k,l,p,q \ge 0$, respectively,
and the composite $\be\cdot \al$ is defined, then $p = q$, and the size of $\be \cdot \al$ is $(k,l)$ as in the case of usual matrix multiplication.

As easily seen, $\Ds B$ is a linear category.
Note that equalities \eqref{eq:emptymat-to} and
\eqref{eq:to-emptymat} show that $()$ is a zero object in $\Ds B$.
Moreover, we have
$$
\begin{aligned}
&(x_i)_{i \in [m]} \iso (x_1) \ds \cdots \ds (x_m),\\
&(x_i)_{i \in [m]} \ds (y_j)_{j \in [n]} \iso (x_1,\dots,x_n,y_1,\dots, y_n), \text{ and}\\
&(x_1) \ds \cdots \ds (x_m) \iso (x_1 \ds \cdots \ds x_m)\text{ if $x_1 \ds \cdots \ds x_m$ exists in $B$}
\end{aligned}
$$
for all $x_1,\dots, x_m, y_1,\dots, y_n \in B_0$.
Thus $\Ds B$ turns out to be an additive category.

We regard $B$ as a full subcategory of $\Ds B$ by the embedding
$(f \colon x \to y) \mapsto (\bmat{f}\colon (x) \to (y))$ for all morphisms $f$ in $B$.
In the sequel, we will frequently consider the case
where $B = \k[S]$ for a finite poset $S$.

Note that if $B$ is additive, then we have an equivalence
$\et_B \colon \Ds B \to B$ that sends
$(x_i)_{i\in [m]}$ to $\Ds_{i\in [m]} x_i$,
and each morphism
\[
\bmat{\al_{ji}}_{(j,i)\in [n]\times [m]}
\colon (x_i)_{i\in [m]} \to (y_j)_{j \in [n]}
\]
in $\Ds B$ to
$\bmat{\al_{ji}}_{(j,i)\in [n]\times [m]}
\colon \Ds_{i\in [m]} x_i \to \Ds_{j \in [n]} y_j$ in $B$.
In particular, it sends $()$ to 0.

(2) Let $F \colon B \to C$ be a linear functor between linear categories.
Then a functor $\Ds F \colon \Ds B \to \Ds C$ is defined as follows:
We set 
$(\Ds F)((x_i)_{i\in [m]})\coloneqq (F(x_i))_{i\in [m]}$
for each object $(x_i)_{i\in [m]} \in (\Ds B)_0$,
and for each morphism
$$
\al\coloneqq [\al_{ji}]_{(j,i)\in [n]\times [m]} \colon (x_i)_{i\in [m]} \to (y_j)_{j\in [n]},
$$
we set
$$
(\Ds F)(\al)\coloneqq [F(\al_{ji})]_{(j,i)\in [n]\times [m]} \colon (F(x_i))_{i\in [m]} \to (F(y_j))_{j\in [n]}.
$$
In particular, $(\Ds F)(())\coloneqq ()$, and
$F(\sfJ)\coloneqq \sfJ$  for all $\sfJ \in \{\sfJ_{n,0}, \sfJ_{0,m} \mid m, n \ge 0\}$.
For example, $\sfJ_{0,m} \colon (x_i)_{i \in [m]} \to ()$ is sent to
$\sfJ_{0,m} \colon (F(x_i))_{i \in [m]} \to ()$.
If there is no confusion, we denote $\Ds F$ simply by $F$.

Since $()$ is a zero object in $\Ds B$,
we may write $() = 0$ in $\Ds B$.
\end{dfn}

\begin{exm}
\label{exm:Ds-k}
Regard the field $\k$ as a linear category with only one object $\ast$
having the set $\k$ of morphisms, where the composition is given by the multiplication of $\k$.
Then we can define an isomorphism $\ph$ of linear categories from $\Ds \k$ to the
full subcategory $\free \k$ of $\mod \k$ consisting of $\k^n$ with $n \ge 0$ as follows:

For each $n \ge 0$, we set $\ast^n:= (\overbrace{\ast, \dots, \ast}^n)$,
and define $\ph(\ast^n):= \k^n$.
For each $l, m \ge 0$, and a morphism $\al = \bmat{\al_{j,i}}_{(j,i) \in [m]\times [l]} \colon \ast^l \to \ast^m$, we set
$\ph(\al)$ to be the linear map $\k^l \to \k^m$ defined by the
left multiplication of the matrix $\al$ by noting that $\al$ is a usual matrix over $\k$.
In this case, the empty matrices $\sfJ_{m,0}$ and $\sfJ_{0,l}$ are sent by $\ph$ to
the zero maps $0 \to k^m$ and $k^l \to 0$, respectively.

Then the composite $\ph' \colon \Ds \k \ya{\ph} \free \k \hookrightarrow \mod \k$
turns out to be an equivalence.
\end{exm}

\begin{rmk}
\label{rmk:cond3=ess-cov}
As explained just before Theorem \ref{thm:restate-int-rk-inv-formula},
our $I$-multiplicity matrix under $\xi$ is a morphism
$\bfg \colon \src(I^\xi) \ds \snk_1(I^\xi) \to \src_1(I^\xi) \ds \snk(I^\xi)$.
Recall that condition (3) in the definition of rank compression system (Definition \ref{dfn:comp-sys-simplified-ver}) holds if and only if
$\xi_I$ covers $p_{y,x}$ for all $x \le y$ in $\bfP$.
Note that $p_{y,x}$ can be seen as a morphism
$$
\left[\begin{array}{c|c}
\sfJ_{0,1} & \sfJ_{0,0}\\
\hline
p_{y,x} & \sfJ_{1,0}
\end{array}\right] \colon \src(I^\xi) \ds () \to () \ds \snk(I^\xi)
$$
in $\Ds\k[I^\xi]$.
If (3) is satisfied, then for any $M \in \mod A$, 
we have
$$
\mult^\xi_I M = \rank^\xi_I M =  \rank M(\left[\begin{array}{c|c}
\sfJ_{0,1} & \sfJ_{0,0}\\
\hline
p_{y,x} & \sfJ_{1,0}
\end{array}\right]) - \rank M(\left[\begin{array}{c|c}
\sfJ_{0,1} & \sfJ_{0,0}\\
\hline
0 & \sfJ_{1,0}
\end{array}\right]).
$$
Thus, $p_{y,x}$ is an $I$-multiplicity matrix under $\xi$
for the segment $I = [x,y]$,
and we can say that $\xi_I$ essentially covers $I$ relative to $\xi$.
The converse does not hold in general.
\end{rmk}

\backmatter

\bmhead*{Acknowledgements}
The authors would like to acknowledge Ond\v{r}ej Draganov for his assistance in refining the code for computing interval rank variant and interval replacement under the source-sink compression system, as well as for providing valuable suggestions to optimize and accelerate the code in GitHub.

\section*{Declarations}

\begin{itemize}
\item Funding: Hideto Asashiba was partially supported by JSPS Grant-in-Aid for Scientific Research (C) 18K03207, JSPS Grant-in-Aid for Transformative Research Areas (A) (22A201), and by Osaka Central Advanced Mathematical Institute
(MEXT Promotion of Distinctive Joint Research Center Program JPMXP0723833165). Enhao Liu was supported by JST SPRING, Grant Number JPMJSP2110.
\item Competing interests: the authors declare that they have no financial conflicts of interest with regard to the content of this article.
\end{itemize}

\end{appendices}

\bibliography{sn-bibliography}

\end{document}